\tikzset{
  symbol/.style={
    draw=none,
    every to/.append style={
      edge node={node [sloped, allow upside down, auto=false]{$#1$}}
    }
  }
}
\titleformat{\chapter}{\centering\Large\bfseries}{\thechapter}{1em}{}
\titleformat{\section}{\normalsize\bfseries}{\thesection}{0.8em}{}
\titleformat{\subsection}{\normalsize\bfseries}{\thesubsection}{0.6em}{}
  \def\infty{infinity}%
  \def\mathbb#1{#1}%
  \def\mathcal#1{#1}%
  \def\mathfrak#1{#1}%
  \def\mathrm#1{#1}%
  \def\mathbf#1{#1}%
  \def\operatorname#1{#1}%
\DeclareRobustCommand{\Linfty}{\texorpdfstring{$L_\infty$}{L-infinity}}
\DeclareRobustCommand{\Twoalgebras}{\texorpdfstring{$2$-Algebras }{2-algebras}}
\DeclareRobustCommand{\Twoalgebra}{\texorpdfstring{$2$-Algebra }{2-algebra}}
\DeclareRobustCommand{\Gspace}{\texorpdfstring{$G$-space }{G-space}}
\DeclareRobustCommand{\nplectic}{\texorpdfstring{$n$-plectic }{n-plectic}}
\newcommand{\bra}{[\hspace{-0.17em}[}
\newcommand{\ket}{]\hspace{-0.17em}]}
\theoremstyle{definition}
\newtheorem{lem}[subsection]{Lemma}
\newtheorem{cor}[subsection]{Corollary}
\newtheorem{pro}[subsection]{Proposition}
\newtheorem{exa}[subsection]{Example}
\newtheorem{defn}[subsection]{Definition}
\newtheorem{rem}[subsection]{Remark}
\newtheorem{thm}[subsection]{Theorem}
\newcommand{\Ad}{\mathrm{Ad}}
\newtheorem*{rep@theorem}{\rep@title}
\newcommand{\newreptheorem}[2]{%
  \newenvironment{rep#1}[1]{%
    \def\rep@title{#2 ##1}%
    \begin{rep@theorem}}{\end{rep@theorem}}}
\let\origdoublepage\cleardoublepage
\newcommand{\clearemptydoublepage}{%
  \clearpage{\pagestyle{empty}\origdoublepage}}
\let\cleardoublepage\clearemptydoublepage
\titleformat{\chapter}[display]
  {\centering\Large\bfseries}
  {\chaptername~\thechapter}
  {1.4ex}         
  {\Large}
  [\vspace{2ex}]  
\titlespacing*{\chapter}{0pt}{3ex plus 1ex minus .2ex}{4ex plus .2ex}
\begin{document}
\pagenumbering{roman}

\begin{titlepage}
    \begin{center}
        \vspace*{1cm}

        \large\uppercase{
        {Observables of Relative Structures and Lie 2-algebras associated with Quasi-Hamiltonian $G$-spaces}}

        \vspace{1.5cm}

        by

        \vspace{1.5cm}

        \Large{\uppercase{Dinamo Djounvouna}}

        \vspace{1.5cm}

        \normalsize A thesis submitted to the \\
        Department of Mathematics \\
        in conformity with the requirements for \\
        the degree of Doctor of Philosophy in Mathematics

        \vspace{1.5cm}




        \small Copyright \textcopyright\ Dinamo Djounvouna, 2025
    \end{center}
\end{titlepage}



   \chapter*{Abstract}

A manifold is said to be \( n \)-plectic if it is equipped with a closed, nondegenerate \( (n+1) \)-form. This thesis develops the theory of \emph{relative \( n \)-plectic structures}, where the classical condition is replaced by a closed, nondegenerate \emph{relative} \( (n+1) \)-form defined with respect to a smooth map. Analogous to how \( n \)-plectic manifolds give rise to \( L_\infty \)-algebras of observables, we show that relative \( n \)-plectic structures naturally induce corresponding \( L_\infty \)-algebras. These structures provide a conceptual bridge between the frameworks of quasi-Hamiltonian \( G \)-spaces and \( 2 \)-plectic geometry.

As an application, we examine the relative \( 2 \)-plectic structure canonically associated to quasi-Hamiltonian \( G \)-spaces. We show that every quasi-Hamiltonian \( G \)-space defines a closed, nondegenerate relative \( 3 \)-form, and that the group action induces a Hamiltonian infinitesimal action compatible with this structure. We then construct explicit homotopy moment maps as \( L_\infty \)-morphisms from the Lie algebra \( \mathfrak{g} \) into the Lie \( 2 \)-algebra of relative observables, extending the moment map formalism to the higher and relative geometric setting.

 \addcontentsline{toc}{chapter}{Abstract}

\chapter*{Acknowledgements}

I would like to begin by expressing my deepest gratitude to my supervisor, Dr. Derek Krepski. His unwavering support, guidance, and insightful feedback have been instrumental throughout this Ph.D. journey. Coming from a background in algebraic geometry, I am especially thankful to him for introducing me to the area of multisymplectic geometry and for his constant encouragement and mentorship.

I am also thankful to the members of my committee, Dr. Adam Clay and Dr. Eric Schippers, for their valuable suggestions and feedback, which greatly improved the quality of this thesis. I sincerely thank Dr. Lisa Jeffrey, my external examiner, for taking the time to review this work and provide thoughtful comments.

With heartfelt remembrance, I acknowledge my late Master's supervisor and mentor, Prof. Marco Garuti. His support during my studies at the Università di Padova and AIMS-Cameroon left a lasting impact on my academic path.

I am deeply grateful to my wife, Delphine Poloksi, whose love, patience, and encouragement have been my greatest source of strength throughout this journey.

I would also like to thank Dr. Shaun Lui and Dr. Nicholas Harland for giving me the opportunity to teach as a sessional instructor—an experience that has shaped my aspirations for a teaching career. I extend my thanks to the entire staff and support team in the Department of Mathematics, as well as to my fellow graduate students for their friendship and support.

Finally, I gratefully acknowledge the financial support I received, including the Faculty of Graduate Studies Scholarships, Jiri Sichler Memorial Scholarship in Algebra, and funding through GETS and SEGS. This support allowed me to focus fully on my research and teaching responsibilities.


\tableofcontents{}



\thispagestyle{noheaderfooter}

\pagenumbering{arabic}


 \chapter{Introduction and summary} \label{ch1}

\section{Introduction}

A multisymplectic manifold \cite{Cantrijn1998, Ryvkin_2019}, also known as an \(n\)-plectic manifold \cite{rogers2011higher}, is a smooth manifold $M$ equipped with closed and nondegenerate \((n+1)\)-form $\omega$. The form \(\omega\) is called the \(n\)-plectic structure. When \(\omega\) is only closed, then \((M, \omega)\) is called pre-\(n\)-plectic manifold, and the form \(\omega\) is the pre-\(n\)-plectic structure. A symplectic manifold is a special case of \(n\)-plectic manifold, being a smooth manifold endowed with a closed and nondegenerate $2$-form. So, multisymplectic manifolds extend symplectic manifolds by generalizing closed, nondegenerate $2$-forms with closed, nondegenerate \((n+1)\)-forms for \(n > 1\). However, this extension should not be interpreted to mean that every property found in 
  symplectic geometry is preserved in multisymplectic geometry. Instead, for \(n=1\), we recover the symplectic case.  
   The primary differences between symplectic and multisymplectic manifolds are as follows. First, symplectic manifolds are necessarily even-dimensional, as the nondegeneracy of a symplectic 2-form \(\omega\) requires an even number of linearly independent tangent vectors at each point. In contrast, multisymplectic manifolds, which are equipped with a closed nondegenerate differential form of degree \(k > 2\), can exist on manifolds of arbitrary dimension. 

Second, the well-known Darboux Theorem in symplectic geometry, which guarantees a local canonical form for the symplectic structure, does not hold in the multisymplectic case. Darboux's Theorem is stated as follows:

\begin{repthm}{\ref{thm:darboux}'}[Darboux's Theorem]\label{thm:darboux1}
Let \((M, \omega)\) be a symplectic manifold, and let \(p\) be any point in \(M\). Then there exists a local chart \((\mathcal{U}, x_1, \ldots, x_n, y_1, \ldots, y_n)\) centered at \(p\) such that:
\[
\omega = \sum_{i=1}^n dx_i \wedge dy_i.
\]
\end{repthm}

This local representation, often referred to as \emph{Darboux coordinates}, implies that an \(n\)-dimensional symplectic manifold is locally diffeomorphic to \((\mathbb{R}^{2n}, \omega_0)\) equipped with its standard symplectic form. Consequently, \((\mathbb{R}^{2n}, \omega_0)\) serves as the canonical model for any \(2n\)-dimensional symplectic manifold.

This result extends to \emph{volume forms}---that is, closed nondegenerate top-degree forms on a manifold---where a similar local trivialization exists (see \cite[Theorem 3.6]{ryvkin2025darbouxtypetheoremsmultisymplectic} for instance). However, unlike the symplectic case, \emph{Darboux's theorem fails in general for multisymplectic geometry}. For instance, consider \((\mathbb{R}^6, \omega)\), where the 3-form 
\[
\omega = dx^1 \wedge dx^3 \wedge dx^5 - dx^1 \wedge dx^4 \wedge dx^6 - dx^2 \wedge dx^3 \wedge dx^6 + x_2 \cdot dx^2 \wedge dx^4 \wedge dx^5
\]
can be shown to be closed and nondegenerate. This structure defines a 2-plectic manifold \((\mathbb{R}^6, \omega)\). However, as shown in \cite[Proposition 2.3, Corollary 2.4]{ryvkin2016multisymplectic}, the 2-plectic manifold \((\mathbb{R}^6, \omega)\) does not admit Darboux coordinates near the origin \(0\).

Symplectic geometry and multisymplectic geometry still share several fundamental features.
While the theory of quantization in \emph{multisymplectic geometry} is still in its formative stages and lacks the depth and maturity found in symplectic geometry \cite{wernli2023lecturesgeometricquantization, batesweinstein_quantization, IOOS2021107840}, there remain important conceptual bridges between the two frameworks. Key ideas such as \emph{reduction}, \emph{prequantization}, and the use of \emph{(homotopy) moment maps} appear in both settings, though often in more higher-categorical forms in the multisymplectic case. Notably, reduction in the multisymplectic context has been explored in \cite{blacker2023reduction, Blacker2018PolysymplecticRA, de_Le_n_2024, ECHEVERRIAENRIQUEZ2018415}, while early steps toward a prequantization framework can be found in \cite{Sevestre2020OnTP, Derek2008}, to mention a few of the references.

Just as a symplectic manifold gives rise to a Poisson algebra of observables, a multisymplectic manifold naturally gives rise to higher structures generalizing Poisson brackets—namely, \( L_\infty \)-algebras. In the symplectic case, the Lie algebra of observables is given by the space \( C^\infty(M) \) of smooth functions on a symplectic manifold \( (M, \omega) \), equipped with the Poisson bracket. The symplectic form \( \omega \in \Omega^2(M) \) is a closed, non-degenerate 2-form, which allows one to associate to each smooth function \( f \in C^\infty(M) \) a unique Hamiltonian vector field \( X_f \) defined by the equation \( \iota_{X_f} \omega = df \). The Poisson bracket of two functions \( f, g \in C^\infty(M) \) is then defined as \( \{f, g\} := \omega(X_f, X_g) \). This bracket is bilinear, antisymmetric, satisfies the Jacobi identity, and obeys the Leibniz rule, thereby endowing \( C^\infty(M) \) with the structure of a Lie algebra. Thus, the Lie algebra of observables in symplectic geometry is \( (C^\infty(M), \{\cdot, \cdot\}) \), where the bracket encodes the infinitesimal dynamics of the system.

Similarly, multisymplectic geometry features analogous structures known as \(L_\infty\)-algebras, as shown in \cite[Theorem 3.14]{rogers2011higher} and \cite[Theorem 4.6]{callies2016homotopy}. See section \ref{section: L_infinity algebra} for a brief background on \(L_\infty\)-algebras.

\begin{repthm}{\ref{def:RogersAlgebra1}'}
	Given an $n$-plectic manifold $(M,\omega)$, there exists an $L_\infty$-algebra $L_{\infty}(M,\omega)=(L,\{l_k\}_{k\geq 1})$ with 
	\begin{itemize}
		\item 
		the underlying graded vector space $L$, where
	\begin{equation*}
		L^i=\begin{cases}
			\Omega_{\mathrm{Ham}}^{n-1}(M,\omega) 
			& \quad~\text{if } i=0
			\\
			\Omega^{n-1+i}(M) 
		 	&  \quad~\text{if } 1-n \leq i\leq -1
		 	\\			
			0 & \quad ~\text{otherwise,}
		\end{cases}
	\end{equation*}

	\item 
		$n+1$ nontrivial multibrackets $\lbrace l_k : L^{\wedge k} \to L\rbrace_{1\leq k\leq n+1}$, given by
	\begin{displaymath}
		l_1(\alpha) = 
		\begin{cases}
			0 & \quad\text{if~} \deg(\alpha) = 0	
			\\
			\mathrm{d} \alpha & \quad \text{if~} \deg(\alpha) \leq -1,
		\end{cases}
	\end{displaymath}
	and, for $ 2 \leq k \leq n+1$, as
	\begin{displaymath}
		l_k(\alpha_1,\dots,\alpha_k) = 
		\begin{cases}
			\epsilon(k) \iota_{ v_{\alpha_k}}\ldots\iota_{ v_{\alpha_1}}\omega
			& \quad\text{if~} \deg(\alpha_i)=0 \text{ for } 1\leq i \leq k
			\\
			0 & \quad\text{otherwise,}		
		\end{cases}
	\end{displaymath}
	\end{itemize}

\noindent		where $ v_{\alpha_k}$ denotes any Hamiltonian vector field corresponding to $\alpha_k\in \Omega^{n-1}_{\mathrm{Ham}}(M,\omega)$ and $\epsilon(k) = - (-1)^{\frac{k(k+1)}{2}}$ is the total Koszul sign.         
  
\end{repthm}

In this thesis, we will be interested in the construction of an \(L_\infty\)-algebra arising from structures defined \emph{relative} to a smooth map. Let \( F: M \to N \) be a smooth map between manifolds. The \emph{algebraic mapping cone} of the pullback map \( F^* : \Omega^\bullet(N) \to \Omega^\bullet(M) \) is defined by
\[
\Omega^\bullet(F) := \mathrm{Cone}^\bullet(F^*) = \Omega^{\bullet-1}(M) \oplus \Omega^\bullet(N),
\]
with differential
\[
d(\alpha, \beta) := \left(F^* \beta + d\alpha,\ -d\beta\right).
\]
This differential is called the \emph{relative de Rham differential}, and elements of \( \Omega^n(F) \) are called \emph{relative \(n\)-forms}.

A \emph{relative \((n+1)\)-form} \( (\omega_M, \omega_N) \in \Omega^{n+1}(F) \) is said to be \emph{closed} if \( d(\omega_M, \omega_N) = 0 \), and \emph{nondegenerate} if, for every point \( x \in M \) and every pair \( (u, v) \in T_x M \times T_{F(x)} N \) satisfying \( v = dF_x(u) \), the condition
\[
\iota_{(u, v)}(\omega_M, \omega_N) = \left( \iota_u \omega_M(x),\, \iota_v \omega_N(F(x)) \right) = (0, 0)
\]
implies \( (u, v) = (0, 0) \). When both conditions are satisfied, we refer to \( (F, \omega_M, \omega_N) \) as a \emph{relative \(n\)-plectic structure}.

\medskip

A motivational example of a relative 3-form arises in the theory of \emph{quasi-Hamiltonian \(G\)-spaces}, introduced by Alekseev, Malkin, and Meinrenken in \cite{10.4310/jdg/1214460860}. Unlike classical Hamiltonian spaces, the moment map in this theory takes values in the Lie group \(G\), rather than in the dual of its Lie algebra.

A quasi-Hamiltonian \(G\)-space is a triple \( (M, \omega, \Phi) \), where:
\begin{itemize}
    \item \( M \) is a smooth \(G\)-manifold;
    \item \( \omega \in \Omega^2(M)^G \) is a \(G\)-invariant 2-form;
    \item \( \Phi: M \to G \) is a \(G\)-equivariant map (the group-valued moment map),
\end{itemize}
satisfying the conditions:
\begin{enumerate}
    \item \( d\omega = -\Phi^* \eta \), where \( \eta \) is the Cartan 3-form on \(G\), bi-invariant under both left and right translations;
    \item \( \iota_{v_x} \omega = \frac{1}{2} \Phi^* \left( (\theta^L + \theta^R) \cdot x \right) \) for all \( x \in \mathfrak{g} \);
    \item \( \ker \omega_m \cap \ker d\Phi_m = \{0\} \) for all \( m \in M \).
\end{enumerate}

Here, \( \theta^L \) and \( \theta^R \) are the left and right Maurer–Cartan forms on \(G\).  The relevant definitions of \( \theta^L \), \( \theta^R \), and \( \eta \) are recalled in Section~\ref{sec:cartan_forms}.

While the 2-form \( \omega \) in the definition of a quasi-Hamiltonian is generally neither closed nor nondegenerate, the relative 3-form \( (\omega, \eta) \) is both closed (see Proposition~\ref{closed}) and nondegenerate (see Theorem~\ref{Non-deg}). Therefore, the data \( (M, \Phi, \omega) \) defines a relative 2-plectic structure.  According to Rogers’ work, a 2-plectic manifold gives rise to a Lie 2-algebra of observables (see \cite{rogers2011higher} and \cite{rogers20132plectic}). Just as a 2-plectic manifold gives rise to a Lie 2-algebra, the relative 2-plectic structure arising from a quasi-Hamiltonian \(G\)-space leads to a Lie 2-algebra (see Theorem \ref{theorem:semi strict Lie2}). This analogy motivates our interest in extending the notion of observables and moment maps to the relative setting.

In multisymplectic geometry, it is well established that an \(n\)-plectic manifold gives rise to an \(L_\infty\)-algebra of observables~\cite{rogers2011higher}. Moreover, when a Lie group acts on such a manifold in a Hamiltonian fashion, one can define a \emph{homotopy moment map}~\cite[Theorem 9.6]{callies2016homotopy}.

We aim to extend these constructions to the \emph{relative setting} for the Lie \(2\)-lgebra associated with a quasi-Hamiltonian \(G\)-spce.
The construction of an \(L_\infty\)-algebra in multisymplectic geometry relies on two key ingredients: Hamiltonian \((n-1)\)-forms and their associated Hamiltonian vector fields. To carry out a similar construction in the relative setting, one must first identify appropriate analogues of these geometric objects. This, in turn, requires a careful formulation of standard operations—such as the Lie derivative and the interior product—in the context of a smooth map between manifolds.

Although the relative differential is well-established in the literature—most notably through the construction of the algebraic mapping cone—explicit definitions of the \emph{relative Lie derivative} and \emph{relative interior product} are not commonly formulated.  In this thesis, we include relative analogues of these classical Cartan operations both for completeness and to establish the sign conventions adopted throughout. These operations are defined as \emph{pairs} acting on relative differential forms and are constructed to be fully compatible with the relative de Rham differential, thereby extending the standard Cartan calculus to the relative setting in a coherent way.

\begin{repdefn}{\ref{def:relative interior and Lie}'}
Let \( F: M \to N \) be a smooth map, and let \( (\alpha, \beta) \in \Omega^n(F) := \Omega^{n-1}(M) \oplus \Omega^n(N) \) be a relative \(n\)-form. Let \( (u, v) \in \mathfrak{X}(F) \) be a pair of vector fields such that \( v = dF(u) \). We define:

\begin{enumerate}
    \item The \emph{relative interior product} (or contraction) of \( (\alpha, \beta) \) with \( (u, v) \) is the relative \((n-1)\)-form:
    \[
    \iota_{(u,v)}(\alpha,\beta) := \left(\iota_u \alpha,\; -\iota_v \beta\right).
    \]

    \item The \emph{relative Lie derivative} of \( (\alpha, \beta) \) along \( (u, v) \) is the relative \(n\)-form:
    \[
    \mathcal{L}_{(u,v)}(\alpha,\beta) := \left(\mathcal{L}_u \alpha,\; \mathcal{L}_v \beta\right).
    \]
\end{enumerate}
\end{repdefn}

These adapted operations form the basis for defining \emph{relative Hamiltonian forms} and \emph{relative Hamiltonian vector fields}. To validate these constructions, we establish that the classical relations of Cartan calculus remain valid under our definitions. In particular, we prove that Cartan’s magic formula and its associated identities hold in the relative setting, as shown in the following proposition.

\begin{reppro}{\ref{relative cartan magic formula}'}
Let \( F: M \to N \) be a smooth map between manifolds, and let \( (u, v), (a, b) \in \mathfrak{X}(F) \) be pairs of \(F\)-related vector fields. Then the following identities hold for relative differential forms:
\begin{enumerate}
    \item \textbf{Commutator of Lie derivatives:}
    \[
    \mathcal{L}_{(u,v)} \circ \mathcal{L}_{(a,b)} - \mathcal{L}_{(a,b)} \circ \mathcal{L}_{(u,v)} = \mathcal{L}_{[(u,v), (a,b)]}.
    \]

    \item \textbf{Anti-symmetry of contractions:}
    \[
    \iota_{(u,v)} \circ \iota_{(a,b)} + \iota_{(a,b)} \circ \iota_{(u,v)} = 0.
    \]

    \item \textbf{Lie derivative commutes with the differential:} If \( u \) is \(F\)-related to \( v \), then
    \[
    d \circ \mathcal{L}_{(u,v)} - \mathcal{L}_{(u,v)} \circ d = 0.
    \]

    \item \textbf{Lie derivative and contraction:}
    \[
    \mathcal{L}_{(u,v)} \circ \iota_{(a,b)} - \iota_{(a,b)} \circ \mathcal{L}_{(u,v)} = \iota_{[(u,v), (a,b)]}.
    \]

    \item \textbf{Cartan's magic formula (relative version):} If \( u \) is \(F\)-related to \( v \), then
    \[
    \iota_{(u,v)} \circ d + d \circ \iota_{(u,v)} = \mathcal{L}_{(u,v)}.
    \]
\end{enumerate}
\end{reppro}

These formulas, proven in Section~\ref{section:relative cartan magic formulas}, confirm that the classical Cartan identities hold in the relative framework, thereby supporting the use of these operations in defining relative observables.

With this structure in place, we extend the construction of Rogers~\cite[Theorem 3.14]{rogers2011higher} to the relative setting, demonstrating that closed and nondegenerate relative \((n+1)\)-forms naturally give rise to \(L_\infty\)-algebra structures.

\section{Historical Origins and Motivation for  \Linfty-Algebras}

The concept of \( L_\infty \)-algebras, also known as strongly homotopy Lie algebras \cite{Lada1993}, emerged in the mid-1980s from several independent developments in deformation theory, rational homotopy theory, and mathematical physics. 
A major motivation came from the realization that many deformation problems are governed by differential graded Lie algebras (DGLAs). However, these DGLAs often fail to be uniquely associated to a deformation problem, even up to isomorphism. This led to the introduction of homotopy Lie algebras that generalize Lie algebras by incorporating higher homotopies to satisfy weakened Jacobi identities \cite{Stasheff2016}. 

\begin{quote}
    \emph{"Every deformation problem is controlled by a differential graded Lie algebra—but not uniquely, even up to isomorphism."} — Stasheff
\end{quote}

This guiding principle continues to shape modern developments in mathematical physics and higher geometry \cite{rogers2011higher, Forger2005, FORGER_2013}.

\subsection{Stasheff’s Retrospective on the Origins of \Linfty-Algebras}

As observed by Stasheff \cite{Stasheff2016}, \( L_\infty \)-algebras often appeared in disguise long before their axiomatic definition. One early source was Dennis Sullivan, a leading figure in rational homotopy theory, who was motivated by questions arising from the discretization of partial differential equations governing fluid dynamics. In particular, he sought a discrete version of volume-preserving vector fields that preserved key physical invariants such as energy, helicity, and the ``frozen-in'' property of vorticity. In doing so, he encountered a Lie bracket that satisfied the Jacobi identity only \emph{up to homotopy}—a phenomenon that could not be accounted for by standard Lie algebra theory. 

A turning point occurred when Sullivan spoke with Schechtman, who introduced him to the idea that such ``Lie algebras up to homotopy'' could be encoded as derivations on free graded commutative algebras endowed with differential structures. This insight provided a unifying algebraic language that clarified the previously mysterious behavior observed in Sullivan’s models. What had appeared to be anomalous or pathological was, in fact, an instance of a richer structure: an \( L_\infty \)-algebra.

Simultaneously, in the Soviet school, Vladimir Drinfel'd had introduced similar concepts in private correspondence with Schechtman. In letters dating from 1983 and 1988 \cite{Drinfeld2014Letter, Stasheff2016}, Drinfel'd described what he called \emph{Lie-Sugawara} algebras: differentials of square zero on the cofree cocommutative coalgebra generated by a graded vector space—precisely the modern coalgebraic formulation of an \( L_\infty \)-algebra. Despite the clarity of his constructions, Drinfel'd saw himself more as elucidating known ideas than inventing new ones, even speculating that his work might already be implicit in Quillen's foundational paper on rational homotopy theory.

Concurrently, in the work of Mike Schlessinger and Jim Stasheff on the Lie algebraic structure of tangent cohomology and its role in deformation theory \cite{Schlessinger1985TheLA}, the need to generalize beyond strict differential graded Lie algebras (DGLAs) was becoming increasingly apparent. Their investigations naturally led to the consideration of morphisms defined only up to coherent homotopies—ideas that pointed directly toward the formalism of \( L_\infty \)-algebras \cite{Stasheff1988, Stasheff1992, Stasheff1997, Stasheff1998}.

The convergence of these threads became especially clear during discussions at the Institut des Hautes \'{E}tudes Scientifiques (IHES) in the early 1990s, where Sullivan, Schechtman, Ginzburg, Kapranov, and others recognized that familiar objects from rational homotopy theory—such as Sullivan’s models—were governed by \( L_\infty \)-structures in disguise. What had previously been viewed as ad hoc constructions or technical artifacts were reinterpreted as special cases of a general and flexible homotopical algebra.

In this way, Dennis Sullivan came to the retrospective realization that he had, in effect, \emph{invented} \( L_\infty \)-algebras. The invention lay not in an act of axiomatic definition, but in uncovering the deep algebraic coherence hidden within models he had already constructed—models that, unbeknownst to him at the time, belonged to a new and powerful class of homotopy-theoretic structures \cite{Stasheff2016}.

\vspace{0.5em}
For more detailed accounts, see Stasheff’s retrospective \cite{stasheff2018linftyainftystructures, Stasheff2016}, as well as comprehensive surveys on the use of \( L_\infty \)-structures in physics and geometry.

\subsection{Applications in Physics}

In theoretical physics, \( L_\infty \)-algebras have emerged as indispensable tools across a wide range of contexts. As observed already in the early 1980s, such structures often appeared in disguise. In particular, the work of D’Auria and Fré on supergravity (1982) introduced what they termed \emph{free differential algebras}, which in hindsight can be recognized as precursors of \( L_\infty \)-algebras \cite{Stasheff2016}. 

A few years later, deeper algebraic structures surfaced in the BRST formalism within Batalin–Fradkin–Vilkovisky theory \cite{BatalinFradkin1983, FradkinFradkina1978, FradkinVilkovisky1975}. There, gauge symmetries and their higher-order relations naturally pointed to \( L_\infty \)-type structures, as was later made precise in the mathematical work of Stasheff and collaborators \cite{Stasheff1988, Stasheff1992, Stasheff1997, Stasheff1998}. 

A decisive breakthrough came in string theory: in 1989 Zwiebach discovered that the closed string field theory is governed by an \( L_\infty \)-algebra structure \cite{Zwiebach1993}, which encodes both the interactions and the gauge symmetries of the theory. This result motivated Lada and Stasheff to provide the first systematic algebraic formulation of \( L_\infty \)-algebras \cite{Lada1993}, thereby establishing a rigorous framework for the structures physicists had encountered. 

By the late 1990s, applications had spread further. In particular, Roytenberg and Weinstein showed that Courant algebroids admit a natural \( L_\infty \)-algebra description \cite{roytenberg1998courant}, thus linking the theory of higher homotopy algebras with generalized complex geometry and Dirac structures. These developments opened the door to a broad range of interactions between geometry, mathematical physics, and higher algebra.

\subsection{Modern Relevance}

Today, \( L_\infty \)-algebras serve as a foundational language in derived geometry, deformation quantization, and multisymplectic geometry. In particular, they provide the natural algebraic framework for describing observables and symmetries in higher geometric contexts such as \( n \)-plectic \cite{rogers2011higher, ryvkin2016multisymplectic, callies2016homotopy, Fiorenza2014, CohomologicalFramework2015}. The foundational work by Rogers \cite{rogers2011higher} and Zambon et al. \cite{ callies2016homotopy} provided explicit constructions of these algebras, showing that closed and nondegenerate \((n+1)\)-forms on a manifold naturally induce \(L_\infty\)-algebras. This correspondence has since become a cornerstone of higher symplectic geometry, allowing the extension of classical moment map theory and Poisson brackets to field-theoretic and higher categorical contexts.

Building on these developments, the present thesis proposes a generalization to the \emph{relative} setting. That is, we investigate how multisymplectic structures, observables, and symmetry actions behave when defined not on a single manifold, but relative to a smooth map \( F: M \to N \). This shift is motivated by examples such as quasi-Hamiltonian \( G \)-spaces, where the interaction between a manifold and a group target exhibits striking analogies with 2-plectic structures. In the next section, we motivate this perspective in detail and explain why relative geometry provides a natural framework for organizing such examples.

\section{Motivation for Relative Multisymplectic Geometry}
\label{sec:motivation_relative}

This work began with a concrete observation: the defining equations of quasi-Hamiltonian \( G \)-spaces bear a close resemblance to those of 2-plectic geometry—except that they hold \emph{relative to a map}. In the quasi-Hamiltonian framework introduced by Alekseev, Malkin, and Meinrenken~\cite{10.4310/jdg/1214460860}, one considers a \( G \)-manifold \( M \) equipped with a 2-form \( \omega \) and a group-valued moment map \( \Phi: M \to G \), satisfying
\[
d\omega + \Phi^* \eta=0,
\]
where \( \eta \) is a canonical 3-form on the Lie group \( G \). Here, \( \omega \) is neither closed nor nondegenerate in the usual sense, yet the pair \( (\omega, \eta) \) satisfies a kind of geometric consistency that mirrors the multisymplectic condition, albeit in a relative form. The nondegeneracy of the relative 3-form is controlled by the condition \[\ker \omega_m \cap \ker (d\Phi_m )= \{ 0 \}, \quad \forall m\in M,\]
a condition that comes from the definition of a quasi-Hamiltonian \(G\)-space itself.

This analogy led to a natural question: can we interpret quasi-Hamiltonian \( G \)-spaces as examples of 2-plectic geometry formulated \emph{relative} to a map? More broadly, can we extend multisymplectic geometry to accommodate such relative structures, where closedness and nondegeneracy conditions are expressed not absolutely, but in relation to a smooth map \( F: M \to N \)? This was the original motivation for the theory developed in this thesis.

Rather than treating the failure of closedness as a defect, we shift perspective and consider a pair of forms \( (\omega_M, \omega_N) \), one on \( M \) and one on \( N \), satisfying the relation
\[d\omega_M + F^* \omega_N=0
\]
and 
\[
\ker(\omega_M)_m \cap \ker(dF_m) = \{0\} \quad \text{for all } m \in M,
\]
with \( \omega_N \in \Omega^{n+1}(N) \)  nondegenerate in the usual \( n \)-plectic sense.

Then the relative form \( (\omega_M, \omega_N) \) is nondegenerate in the sense of Definition~\ref{relative nondegenerate}.

This defines what we call a \emph{relative \( n \)-plectic structure}. From this viewpoint, quasi-Hamiltonian \( G \)-spaces become prototypical examples of relative 2-plectic manifolds, with the moment map playing the role of the smooth map. This reinterpretation provides a unifying framework that clarifies the geometric content of quasi-Hamiltonian \(G\)-spaces and their relation to higher geometry.

Once this perspective is adopted, it becomes natural to develop the entire formalism of multisymplectic geometry—observables, Hamiltonian vector fields, and moment maps—in the relative setting. We show that relative \( n \)-plectic structures give rise to \( L_\infty \)-algebras of observables, extending foundational work by Rogers~\cite{rogers2011higher} and Zambon et al. \cite{ callies2016homotopy}. We also construct homotopy moment maps in the relative setting, and demonstrate that the quasi-Hamiltonian case arises as a special instance of this general theory.

Although this thesis is primarily motivated by the analogy with quasi-Hamiltonian spaces, we believe that the framework of relative multisymplectic geometry has broader potential. In particular, it opens new avenues for studying geometric structures governed by maps, such as those appearing in gauge theories with boundaries and group-valued moment maps.

The present work focuses on laying the foundations of this relative theory and clarifying its relationship with existing structures in multisymplectic and quasi-Hamiltonian geometry. Having introduced the main conceptual motivation, we now turn to a summary of the principal results established in this thesis.

\section{Main Results}

This section summarizes the main contributions of this thesis, which center around extending multisymplectic geometry into the relative setting. Our goal is to demonstrate how geometric and algebraic structures—such as \(L_\infty\)-algebras of observables and homotopy moment maps—can be formulated and studied when differential forms and vector fields are defined relative to a smooth map \(F: M \to N\).

\subsection{\Linfty-algebras of Relative Observables}

The first result is the construction of an \(L_\infty\)-algebra of observables associated to any relative \(n\)-plectic structure. This provides a direct analogue of the well-known construction for \(n\)-plectic manifolds due to Rogers~\cite[Theorem 3.14]{rogers2011higher}.

\begin{repthm} {\ref{Relative L-infinity algebra main thm}'}\label{them4.2.1'}
Given a relative $n$-plectic structure  $(F,\omega_M, \omega_N)$, there is a Lie $n$-algebra
\[
\resizebox{\textwidth}{!}{$
\begin{array}{ccccccccccccccc}
    0 & \longrightarrow & L_{n-1} & \longrightarrow & L_{n-2}  & \longrightarrow & \cdots & \longrightarrow & L_{k-2} & \longrightarrow & \cdots & \longrightarrow & L_{1} & \longrightarrow & L_0\\
     &  & \parallel &  & \parallel &  &  &  & \parallel &  &  &  & \parallel &  & \parallel \\
     &  & \Omega^{0}(F)&  & \Omega^{1}(F)  &  &  &  & \Omega^{k-2}(F) &  &  &  & \Omega^{n-2}(F) &  & \Omega^{n-1}_{\textrm{Ham}}(F)\\ 
\end{array}
$}\]
denoted $\mathrm{L}_\infty(F,\omega_M, \omega_N)=(\mathrm{L},\{l_{k} \})$ with 
\begin{itemize}
    \item 
underlying graded vector space \(L_0=\Omega_{\mathrm{Ham}}^{n-1}(F)\) and \(L_i=\Omega^{n-1-i}(F) \) for  \(0 \leq i < n-1\)

and 
\item maps  $\left \{l_{k} : \mathrm{L}^{\otimes k} \to \mathrm{L}\, \, |\, \,  1
  \leq k < \infty \right\}$ defined as
\[ 
l_{1}(f, \alpha)=\mathrm{d}(f, \alpha)=(F^*\alpha+\mathrm{d}f, -\mathrm{d}\alpha),
\]
if $\deg(f, \alpha)>0$ and

\begin{align*}
&l_{k}\left((f_1,\alpha_1), \ldots, (f_k,\alpha_k) \right)\\
&\qquad\qquad =
\begin{cases}
(0, 0) & \text{if $\deg{(f_1,\alpha_1)\otimes \cdots\otimes (f_k,\alpha_k)} > 0$}, \\
(-1)^{\frac{k}{2}+1} \iota_{(u_k, v_k)}\cdots \iota_{(u_1, v_1)}  (\omega_M, \omega_N)  & \text{if
  $\deg{(f_1,\alpha_1)\otimes \cdots\otimes (f_k,\alpha_k)}=0$ }\\
  & \text{and $k$ even},\\
(-1)^{\frac{k-1}{2}}\iota_{(u_k, v_k)}\cdots \iota_{(u_1, v_1)}  (\omega_M, \omega_N)  & \text{if
  $\deg{(f_1,\alpha_1)\otimes \cdots\otimes (f_k,\alpha_k)}=0$}\\
  & \text{and $k$ is odd},
\end{cases}
\end{align*}
for $k>1$, where $(u_i, v_i)$ is the unique Hamiltonian vector field
associated to $(f_i, \alpha_{i}) \in \Omega_{\mathrm{Ham}}^{n-1}(F)$.
\end{itemize}
\end{repthm}
Next, we generalize this construction to the case of relative pre-\(n\)-plectic structures, where the nondegeneracy condition is not required. This yields an extended class of \(L_\infty\)-algebras based on compatible pairs of vector fields and differential forms, extending the result of Callies et al.~\cite[Theorem 4.7]{callies2016homotopy} to the relative setting.

\begin{repthm}{\ref{theorem:Lie algebra of obs for pre-n-plectic}'}
Given a relative  pre-$n$-plectic structure \((F,\omega_M, \omega_N)\), there is a Lie $n$-algebra
\[
\resizebox{\textwidth}{!}{$
\begin{array}{ccccccccccccccc}
    0 & \longrightarrow & L_{n-1} & \longrightarrow & L_{n-2}  & \longrightarrow & \cdots & \longrightarrow & L_{k-2} & \longrightarrow & \cdots & \longrightarrow & L_{1} & \longrightarrow & L_0\\
     &  & \parallel &  & \parallel &  &  &  & \parallel &  &  &  & \parallel &  & \parallel \\
     &  & \Omega^{0}(F)&  & \Omega^{1}(F)  &  &  &  & \Omega^{k-2}(F) &  &  &  & \Omega^{n-2}(F) &  & \widetilde{\Omega^{n-1}_{\textrm{Ham}}}(F)\\ 
\end{array}
$}
\]
denoted $\mathrm{\textbf{Ham}}_{\infty}(F,\omega_M, \omega_N)$, with 
\begin{itemize}
    \item underlying graded vector space \(L\):
\begin{equation*}
\begin{split}
L_{0} & = \widetilde{\Omega^{n-1}_{\mathrm{Ham}}}(F)=\{ (u,  v)\oplus (f,\alpha) \in \mathfrak{X}(F)\oplus \Omega_{\mathrm{Ham}}^{n-1}(F) \, \mid \, \mathrm{d}(f, \alpha)= -\iota_{(u,v)}(\omega_M, \omega_N) \} \\
L_{i} & = \Omega^{n-1-i}(F) ,\quad 0 \leq i < n-1,
\end{split}
\end{equation*}

and 
\item structure maps:
\begin{equation*}
\begin{split}
\tilde{l}_{1}(f, \alpha)&=
\begin{cases}
0\oplus \mathrm{d}(f, \alpha) & \text{if $\deg{(f, \alpha)}=1$},\\
\mathrm{d} (f, \alpha) & \text{if $\deg{(f, \alpha)} >1$,}
\end{cases}\\
\tilde{l}_{2}(x_{1},x_{2}) &= 
\begin{cases}
\left( [(u_1, v_{1}), (u_2, v_{2})], \{(f_1, \alpha_1), (f_2, \alpha_2)\} \right) &
\text{if $\deg{x_{1} \otimes x_{2}}  = 0$,}\\
 0  & \text{otherwise},
\end{cases}
\end{split}
\end{equation*}
and, for $k > 2$: 
\[
l_k(x_1, \ldots, x_k) =
\begin{cases}
 0 & \text{if } \deg{x_1\otimes \cdots\otimes x_k}> 0\\
 \epsilon(k) \iota_{(u_k, v_k)}\cdots \iota_{(u_1, v_1)}  (\omega_M, \omega_N)  & \text{if } \deg{x_1\otimes \cdots\otimes x_k} = 0
\end{cases}
\]
where \(x_i=(u_i,  v_i)\oplus (f_i,\alpha_i)\) in \(L_0\) for \(i=1,2, \ldots, k\),  and \(x_i=(f_i,\alpha_i)\) in \(L_j\), for \(j>0\), and for \(i=1,2, \ldots, k\).

\end{itemize}
\end{repthm}

Moreover, by modifying the above construction to avoid reference to Hamiltonian vector fields, we obtain a purely form-based version of the \( L_\infty \)-algebra, once again extending the classical pre-\(n\)-plectic framework, as given in \cite[Theorem 4.6]{callies2016homotopy} , to the relative context.

\begin{repthm}{\ref{n-algebra from relative structure}'}
Given a relative pre-\(n\)-plectic structure  \((F,\omega_M, \omega_N)\), there exists a Lie \(n\)-algebra 
\[
\resizebox{\textwidth}{!}{$
\begin{array}{ccccccccccccccc}
    0 & \longrightarrow & L_{n-1} & \longrightarrow & L_{n-2}  & \longrightarrow & \cdots & \longrightarrow & L_{k-2} & \longrightarrow & \cdots & \longrightarrow & L_{1} & \longrightarrow & L_0\\
     &  & \parallel &  & \parallel &  &  &  & \parallel &  &  &  & \parallel &  & \parallel \\
     &  & \Omega^{0}(F) &  & \Omega^{1}(F) &  &  &  & \Omega^{k-2}(F) &  &  &  & \Omega^{n-2}(F) &  & \Omega^{n-1}_{\textrm{Ham}}(F)\\ 
\end{array}
$}
\]
denoted again \(L_\infty(M, \omega) = (L, \{l_k\})\), with the underlying graded vector space:
\[
L_i =
\begin{cases} 
   \Omega^{n-1}_{\mathrm{Ham}}(F) & \text{if } i = 0, \\
    \Omega^{n-1-i}(F) & \text{if } 0 \leq i < n-1,
\end{cases}
\]
and structure maps
\[
l_k : L^{\otimes k} \to L \quad \text{for } 1 \leq k < \infty,
\]
defined as:
\[
l_1(\alpha) = \mathrm{d}(f,\alpha), \quad \text{if } \deg(f, \alpha)> 0,
\]
and
\[
\resizebox{\textwidth}{!}{$
l_k((f_1, \alpha_1), \ldots, (f_k, \alpha_k)) =
\begin{cases}
 0, & \text{if } \deg\big((f_1,\alpha_1)\otimes \cdots\otimes (f_k,\alpha_k)\big) > 0, \\[1.5ex]
 \epsilon(k)\, \iota_{(u_k, v_k)} \cdots \iota_{(u_1, v_1)} (\omega_M, \omega_N), & \text{if } \deg\big((f_1,\alpha_1)\otimes \cdots\otimes (f_k,\alpha_k)\big) = 0.
\end{cases}
$}
\]
for \(k > 1\), where \((u_i, v_{i})\) is any Hamiltonian vector field associated to \((f_i, \alpha_i) \in\Omega^{n-1}_{\mathrm{Ham}}(F)\).
\end{repthm}

These constructions are not isolated: we further show that in the case where the relative form is nondegenerate—i.e., when we have a relative \(n\)-plectic structure—the complex of relative forms gives rise to a strict \(L_\infty\)-quasi-isomorphism between the algebra of Hamiltonian forms and the more general pre-\(n\)-plectic model. This result establishes an equivalence between the two descriptions at the homotopical level.

\[
\xymatrix{
\Omega^0(F) \ar[r]^-{\mathrm{d}} \ar[d]^{\mathrm{id}} & \Omega^{1}(F) \ar[d]^{\mathrm{id}} \ar[r]^-{\mathrm{d}} & \cdots \ar[r]^-{\mathrm{d}}
& \Omega^{n-2}(F) \ar[d]^{\mathrm{id}} \ar[r]^-{\mathrm{d}} & \Omega_{\mathrm{Ham}}^{n-1}(F) \ar[d]^{\phi} \\
\Omega^0(F) \ar[r]^-{\mathrm{d}} & \Omega^{1}(F) \ar[r]^-{\mathrm{d}} & \cdots \ar[r]^-{\mathrm{d}} &
\Omega^{n-2}(F) \ar[r]^-{0 \oplus \mathrm{d}} & \widetilde{\Omega_{\mathrm{Ham}}^{n-1}(F)} 
}
\]
\vspace{-2mm}
\[
L_{\infty}(F,\omega_M, \omega_N) \xrightarrow{\sim} \mathrm{\textbf{Ham}}_{\infty}(F,\omega_M, \omega_N),
\]
where the map \( \phi(f, \alpha) = (u, v) \oplus (f, \alpha) \) uses the unique Hamiltonian vector field associated to \((f, \alpha)\).

\subsection{The Differential Graded Leibniz Algebra Associated to a Relative \nplectic Structure}

In 2-plectic geometry, observables are modeled by Hamiltonian 1-forms, and the resulting algebraic structure is not a Lie algebra but a Lie 2-algebra. Two well-known brackets used in this context are the \emph{hemi-bracket} and the \emph{semi-bracket}.

\paragraph{Hemi-bracket.} Given Hamiltonian 1-forms \( \alpha, \beta \), the hemi-bracket is defined by
\[
\{\alpha, \beta\}_{\text{h}} := \mathcal{L}_{v_\alpha} \beta,
\]
where \( v_\alpha \) is the Hamiltonian vector field associated with \( \alpha \). 

\begin{itemize}
    \item The hemi-bracket is \emph{not} skew-symmetric.
    \item It satisfies the Jacobi identity. 
    \item It gives rise to a \emph{hemistrict Lie 2-algebra}.
\end{itemize}

\paragraph{Semi-bracket.} The semi-bracket is given by
\[
\{\alpha, \beta\}_{\text{s}} := \iota_{v_\beta} \iota_{v_\alpha} \omega,
\]
where \( \omega \) is the 2-plectic form.

\begin{itemize}
    \item The semi-bracket is skew-symmetric.
    \item The Jacobi identity holds only up to an exact form.
    \item It defines a \emph{semistrict Lie 2-algebra}.
\end{itemize}

\medskip

Replacing the semi-bracket with the hemi-bracket leads naturally to the structure of a \emph{differential graded Leibniz algebra}, as introduced in~\cite[Proposition 6.3]{Rogers_2011}. We recall the definition for context:

\begin{defn}
A \emph{differential graded Leibniz algebra} is a triple \( (L, \delta, \langle \cdot, \cdot \rangle) \) consisting of:
\begin{itemize}
    \item a graded vector space \(L = \displaystyle \bigoplus_{i \in \mathbb{Z}} L_i\),
    \item a differential \( \delta: L \to L \) of degree \( -1 \), and
    \item a bilinear bracket \( \langle \cdot, \cdot \rangle : L \otimes L \to L \) of degree 0,
\end{itemize}
satisfying the following identities for all homogeneous \( x, y, z \in L \):
\begin{align*}
\delta \circ \delta &= 0, \\
\delta \langle x, y \rangle &= \langle \delta x, y \rangle + (-1)^{\deg(x)} \langle x, \delta y \rangle, \\
\langle x, \langle y, z \rangle \rangle &= \langle \langle x, y \rangle, z \rangle + (-1)^{\deg(x)\deg(y)} \langle y, \langle x, z \rangle \rangle.
\end{align*}
\end{defn}

\medskip

We now extend this structure to the relative setting. Building on the above and following the construction of~\cite[Proposition 6.3]{Rogers_2011}, we prove that any relative \(n\)-plectic structure induces a differential graded Leibniz algebra.

\begin{reppro}{\ref{proposition: Leibniz dg algebra}'}
Given a relative \(n\)-plectic structure \( (F, \omega_M, \omega_N) \), there exists a differential graded Leibniz algebra
\[
\mathrm{Leib}(F,\omega_M, \omega_N) = (L, \delta, \langle \cdot, \cdot \rangle),
\]
where:
\begin{itemize}
    \item The underlying graded vector space is given by
    \[
    L_0 = \Omega_{\mathrm{Ham}}^{n-1}(F), \qquad L_i = \Omega^{n-1-i}(F) \quad \text{for } 0 < i \leq n-1.
    \]

    \item The differential \( \delta: L \to L \) is defined by
    \[
    \delta(f, \alpha) = d(f, \alpha) = \left( F^*\alpha + d f,\; -d\alpha \right),
    \]
    whenever \( \deg(f, \alpha) > 0 \).

    \item The bracket \( \langle \cdot, \cdot \rangle: L \otimes L \to L \) is given by
    \[
    \langle (f, \alpha), (g, \beta) \rangle =
    \begin{cases}
    \mathcal{L}_{(u_1, v_1)} (g, \beta) & \text{if } \deg(f, \alpha) = 0, \\
    (0, 0) & \text{if } \deg(f, \alpha) > 0,
    \end{cases}
    \]
    where \( (u_1, v_1) \) is the Hamiltonian vector field associated to \( (f, \alpha) \in \Omega_{\mathrm{Ham}}^{n-1}(F) \).
\end{itemize}
\end{reppro}

Now we discuss the Lie 2-algebra arising from a quasi-Hamiltonian \(G\)-space.

\subsection{Lie \Twoalgebras  Associated with a Quasi-Hamiltonian \Gspace}

As an application of Theorem~\ref{Relative L-infinity algebra main thm}, we now restrict to the case \(n = 2\), corresponding to relative 2-plectic structures. In this setting, we prove that a quasi-Hamiltonian \(G\)-space naturally gives rise to a Lie 2-algebra. In fact, two Lie 2-algebras arise depending on whether we use the semi-bracket or the hemi-bracket, resulting respectively in a semi-strict and a hemi-strict Lie 2-algebra structure.

We first describe the semi-strict version.

\begin{repthm}{\ref{theorem:semi strict Lie2}'}
Let \( (M, \Phi, \omega) \) be a quasi-Hamiltonian \(G\)-space. Then there exists a semistrict Lie 2-algebra
\[
\cdots \longrightarrow 0 \longrightarrow L_1 \longrightarrow L_0
\quad \text{with} \quad
L_1 = \Omega^0(\Phi), \quad L_0 = \Omega^1_{\mathrm{Ham}}(\Phi),
\]
denoted \( L_\infty(M, \Phi, \omega) = (L, [\cdot, \cdot], J) \), with the following structure:
\begin{itemize}
    \item The differential \( d: L_1 \to L_0 \) is the relative differential:
    \[
    f \mapsto (\Phi^* f, -df).
    \]
    \item The alternator \(S: L_0 \otimes L_0 \to L_1\) is trivial:
    \[
    S\left((f_1, \beta_1), (f_2, \beta_2)\right) = 0.
    \]
    \item The bracket is given by the \emph{semi-bracket}.
    \item The Jacobiator is the trilinear map \(J: L_0^{\otimes 3} \to L_1\), defined by:
    \[
    J\left((f_1, \beta_1), (f_2, \beta_2), (f_3, \beta_3)\right)
    = -\iota_{(u_1, v_1)} \iota_{(u_2, v_2)} \iota_{(u_3, v_3)} (\omega, \eta),
    \]
    where each \((u_i, v_i)\) is the Hamiltonian vector field corresponding to \((f_i, \beta_i)\).
\end{itemize}
\end{repthm}

We now describe the hemi-strict variant obtained by modifying the alternator and using the hemi-bracket:

\begin{repthm}{\ref{theorem:hemi strict Lie2}'}
Let \( (M, \Phi, \omega) \) be a quasi-Hamiltonian \(G\)-space. Then there exists a hemistrict Lie 2-algebra
\[
\cdots \longrightarrow 0 \longrightarrow L_1 \longrightarrow L_0
\quad \text{with} \quad
L_1 = \Omega^0(\Phi), \quad L_0 = \Omega^1_{\mathrm{Ham}}(\Phi),
\]
denoted \( L_\infty(M, \Phi, \omega)_h = (L, [\cdot, \cdot]_h, J) \), with the following structure:
\begin{itemize}
    \item The differential \( d: L_1 \to L_0 \) is again given by
    \[
    f \mapsto (\Phi^* f, -df).
    \]
    \item The alternator \(S: L_0 \otimes L_0 \to L_1\) is given by:
    \[
    S\left((f_1, \beta_1), (f_2, \beta_2)\right)
    = -\left(\iota_{(u_1, v_1)}(f_2, \beta_2) + \iota_{(u_2, v_2)}(f_1, \beta_1)\right).
    \]
    \item The bracket is given by the \emph{hemi-bracket}.
    \item The Jacobiator is trivial:
    \[
    J\left((f_1, \beta_1), (f_2, \beta_2), (f_3, \beta_3)\right) = 0.
    \]
\end{itemize}
\end{repthm}

These two Lie 2-algebras are shown to be quasi-isomorphic in Theorem~\ref{isomorphism}.

We conclude this sequence of results by constructing a \emph{homotopy moment map} for the infinitesimal action induced by the group action on a quasi-Hamiltonian \(G\)-space.

\subsection{Homotopy Moment Map}

In Section \ref{sec:homotopy_moment_maps}, we construct a homotopy moment map for Lie 2-algebras associated with quasi-Hamiltonian \( G \)-spaces.  Given a quasi-Hamiltonian \(G\)-space \((M, \Phi, \omega)\), we get a closed and non-degenerate \(3\)-form \((\omega, \eta)\) in the relative setting, where \(\eta\) is the Cartan bi-invariant 3-form. From Theorem \ref{theorem:semi strict Lie2}, we obtain a Lie 2-algebra structure \( L_\infty(M, \Phi, \omega) \). The Lie group \(G\) acts on the manifold \(M\) since \(M\) is a \(G\)-manifold by definition of a quasi-Hamiltonian \(G\)-space. The induced infinitesimal 
\[
(u, v): \mathfrak{g} \to \mathfrak{X}(\Phi)
\]
is Hamiltonian. Here, \(\mathfrak{X}(\Phi)\) denotes the pairs of vector fields \((u,v)\), where \(u\) is a vector field on \(M\) and \(v\) is a vector field on \(G\) and they are \(\Phi\)-related.
\begin{replem}{\ref{lemma homotopy relative}'}
   Let \( (M, \Phi, \omega) \) be a quasi-Hamiltonan \(G\)-space. The induced action  
     \begin{align*}
  (u, v):\mathfrak{g} &\to  \mathfrak{X}(\Phi)\\ 
   x & \mapsto  \left(u_x, v_{x} \right)
  \end{align*}
 is Hamiltonian (that is, the infinitesimal generators are Hamiltonian vector fields).
\end{replem}

 Following a similar construction in \cite{callies2016homotopy}, we prove in Theorem \ref{proposition: relative moment map} that the natural infinitesimal action $(u, v): \mathfrak{g} \to \mathfrak{X}(\Phi)$ lifts to a morphism of \(L_\infty\)-algebras $(f): \mathfrak{g} \to L_\infty(M, \Phi, \omega)$. That is, the data of a homotopy moment map $(f): \mathfrak{g} \to L_\infty(M, \Phi, \omega)$.

\begin{repthm}{\ref{proposition: relative moment map}'}
Let \(L_\infty(M, \Phi, \omega)\) be the Lie 2-algebra associated with the quasi-Hamiltonian \(G\)-space \( (M, \Phi, \omega) \), as defined in Theorem \ref{theorem:semi strict Lie2}. 

Let 
  \begin{align*}
  (u, v):\mathfrak{g} &\to  \mathfrak{X}(\Phi)\\ 
   x & \mapsto  (u_x, v_{x})
  \end{align*}

be the infinitesimal action of \(\mathfrak{g} \) on the space of relative vector fields \( \mathfrak{X}(\Phi) \). 

Then this action lifts to an \(L_\infty\)-morphism 
\[
(f) = (f_1, f_2): \mathfrak{g} \to L_\infty(M, \Phi, \omega)
\]
such that the diagram
\begin{equation*}
\label{the_lift}
\xymatrix{
    &&  L_\infty (M, \Phi, \omega) \ar[d]^{\pi} \\
     \mathfrak{g} \ar @{-->}[urr]^{(f)} \ar[rr]^{(u_{-}, v_{-})} && \mathfrak{X}_{\mathrm{Ham}}(\Phi)
}
\end{equation*}
commutes.

The components of the \(L_\infty\)-morphism \( (f_1, f_2) \) are given by:
\begin{align*}
    f_1: \mathfrak{g} &\to L_\infty(M, \Phi, \omega), \\
    x &\mapsto \left( 0, -\left(\frac{\theta^{L}+\theta^{R}}{2}\right)\cdot x \right),
\end{align*}
and
\begin{align*}
    f_2: \mathfrak{g} \otimes \mathfrak{g} &\to L_\infty(M, \Phi, \omega), \\
    x \otimes y &\mapsto \left( 0, \frac{1}{2} \iota_{v_x} \left( \left( \theta^L + \theta^R \right)\cdot y \right) \right).
\end{align*}
\end{repthm}

This completes the sequence of main results developed in the thesis. We now outline the structure of the document and the logical progression of ideas across the chapters in the following section.

\section{Organization of the Thesis}

This thesis is organized into five chapters, each focusing on distinct but interconnected aspects of relative \(n\)-plectic geometry and its applications to Lie \(n\)-algebras and quasi-Hamiltonian \(G\)-spaces. Below, we provide an overview of the structure and contents of each chapter:

\subsection*{Chapter 2: Preliminaries on Complexes and Cartan Forms}
This chapter introduces essential background material needed for the thesis. We begin with a review of homological algebra, focusing on complexes and the theory of relative (co)homology. The foundational concepts are presented to establish the necessary framework for later chapters. Primary references include \cite{Weibel1994}, \cite{BottTu1982}, and \cite{Loring2020}, which provide further depth on the topics discussed.

\subsection*{Chapter 3: Basics on Multisymplectic Geometry}
 This chapter focuses on:
\begin{itemize}
    \item Definitions and examples of multisymplectic manifolds.
    \item The role of multisymplectic geometry in the formulation of Hamiltonian classical field theory, as outlined in \cite{Marsden1998} and \cite{Forger2005}.
\end{itemize}
This chapter sets the stage for understanding \(n\)-plectic structures and their generalizations in the relative setting.

\subsection*{Chapter 4: Observables of Relative \nplectic Structures}
Here, we establish the central result that relative \(n\)-plectic structures give rise to both \(L_\infty\)-algebras and differential graded (dg) Lie algebras:
\begin{itemize}
    \item In Section \ref{section:relative cartan magic formulas}, we define relative Lie derivatives, contractions, and brackets, and verify that they satisfy Cartan’s calculus identities.
    \item We construct \(L_\infty\)-algebras for relative \(n\)-plectic structures, generalizing the results of \cite{callies2016homotopy, rogers2011higher, Fiorenza2014, ryvkin2016multisymplectic} (Theorem \ref{Relative L-infinity algebra main thm}).
    \item We prove the equivalence of the dg Lie algebra and the \(L_\infty\)-algebra associated with a relative \(n\)-plectic structure (Proposition \ref{proposition: Leibniz dg algebra}).
    \item Specific cases, such as quasi-Hamiltonian \(G\)-spaces, are shown to give rise to Lie \(2\)-algebras, including the Atiyah and Courant Lie algebras (Theorems \ref{theorem: atiyah lie 2-algebra} and \ref{theorem: courant lie 2-algebra}).
\end{itemize}

\subsection*{Chapter 5: Homotopy moment for the Lie $2$-algebras arising from a quasi-Hamiltonian $G$-space}
Building on the constructions of Chapter 4, this chapter focuses on quasi-Hamiltonian \(G\)-spaces and their associated Lie \(2\)-algebras:
\begin{itemize}

    \item We prove that quasi-Hamiltonian \(G\)-spaces give rise to relative \(3\)-plectic structures, which in turn lead to \(L_\infty\)-algebras and dg Lie algebras (Theorem \ref{Relative L-infinity algebra main thm}).
    \item Using the hemi-bracket and semi-bracket, we construct two distinct Lie \(2\)-algebras and prove their equivalence (Theorem \ref{isomorphism}).
    \item Applications to specific examples, such as group actions and moment maps, are discussed to illustrate the theory.
\end{itemize}

\chapter{Preliminaries on Complexes and Cartan Forms}
This chapter lays the algebraic and geometric groundwork required for the development of relative \(n\)-plectic structures.
In the first part, we review homological algebra, with particular emphasis on relative (co)homology. We begin by recalling basic notions such as chain complexes, exact sequences, and homotopies, before constructing the algebraic mapping cone. 

The second part of the chapter focuses on Maurer–Cartan forms and their associated structural identities, including the Cartan 3-form and its equivariant extension. These results will be important in constructing the homotopy moment maps in Chapter~\ref{chapter 5}. Due to the technical nature of these computations, we have chosen to collect the necessary theorems, lemmas, and propositions here to simplify the proof of Theorem \ref{proposition: relative moment map} and enhance the overall clarity of the exposition.
The chapter concludes with a brief review of quasi-Hamiltonian \(G\)-spaces.

\section{Relative Homology/Cohomology} \label{section:relative cohomology}

We begin by revisiting concepts in homological algebra relevant to this thesis, starting with an overview of complexes, followed by an introduction to  relative (co)homology. 
For those readers seeking a more detailed exploration of these topics, we recommend consulting primary references such as \cite{Weibel1994}, \cite{BottTu1982}, \cite{Shahbazi_2006}, \cite{Guillemin1999}, and \cite[Chapter 21]{Loring2020}.

\subsection{Chain Complexes and Homology}\label{subsection:Chain Complexes and Homology}
Although the concepts defined here are applicable across abelian categories in general, our discussion will be confined to the category of \(R\)-modules, with \(R\) being a commutative ring. Let $\mathbf{Mod}_R$ denote the category of right $R$-modules.

\begin{defn}[Exact Sequences]\label{exact sequence}
    A sequence of \(R\)-module homomorphisms
\[ L \xrightarrow{f} M \xrightarrow{g} N \]
is said to be \emph{exact} if \(\text{Ker } g = \text{Im } f\). This condition inherently implies that \(g \circ f = 0\).

More generally, a sequence of homomorphisms
\[ \cdots \xrightarrow{f_{n+1}} M_n \xrightarrow{f_n} M_{n-1} \xrightarrow{f_{n-1}} \cdots \]
is defined as \emph{exact} if, for each index \(n\), the sequence
\[ M_{n+1} \xrightarrow{f_{n+1}} M_n \xrightarrow{f_n} M_{n-1} \]
satisfies the exactness condition, i.e., \(\text{Ker } f_n = \text{Im } f_{n+1}\).
\end{defn}

\begin{defn}[Short Exact Sequences] \label{definition: Short Exact Sequences}
An exact sequence of the form
\[ 0 \rightarrow L \xrightarrow{f} M \xrightarrow{g} N \rightarrow 0 \]
is specifically called a \emph{short exact sequence}. 
\end{defn}

This configuration highlights a situation where \(f\) is injective, \(g\) is surjective, and the image of \(f\) (i.e., \(\text{Im } f\)) matches the kernel of \(g\) (i.e., \(\text{Ker } g\)).

\begin{defn} \label{definition: chain complex}
    A \emph{chain complex} of $R$-modules $(C_\bullet, d_\bullet)$ is a family $\{C_n\}_{n\in\mathbb{Z}}$ of $R$-modules together with $R$-modules maps $d=d_n:C_n\to C_{n-1}$ called differentials such that  the composition of any two consecutive maps is the zero map. That is $d_{n}\circ d_{n+1}=0$. 
    \end{defn}
    
    For notational simplicity, the property $d_{n}\circ d_{n+1}=0$ is often represented as \(d^2 = 0\).   
    The complex may be written out as follows
    \begin{equation}\label{eq0}
    \begin{split}
        \cdots \rightarrow C_{n} \xrightarrow{\quad d_n\quad} C_{n-1} \xrightarrow{\quad d_{n-1}\quad} C_{n-2} \xrightarrow{\quad d_{n-2}\quad} C_{n-3} \rightarrow \cdots
    \end{split}
\end{equation}

\begin{defn}
    For a chain complex of $R$-modules $C_\bullet$, 
    \begin{itemize}
        \item the morphism $d_n$ are called differentials,
        \item the elements of $C_n$ are called the $n$-chains,
        \item the elements in the kernel $Z_n:=\ker(d_n:C_n\to C_{n-1})$  are called $n$-cycles,
        \item the elements in the image $B_n:=\mathrm{im}(d_{n+1}: C_{n+1}\to C_{n})$ are called the $n$-boundaries
    \end{itemize}
\end{defn}

Given that \(d_{n} \circ d_{n+1} = 0\), it follows that there are canonical inclusions

\[0 \hookrightarrow \operatorname{B}_n \hookrightarrow \operatorname{Z}_n \hookrightarrow C_n.\]

\begin{itemize}
    \item The quotient \(\operatorname{H}_n(C_{\bullet}) \coloneqq \operatorname{Z}_n / \operatorname{B}_n\), is called the degree-\(n\) \emph{chain homology} of \(C_{\bullet}\). This gives rise to a short exact sequence:
    
    \[0 \rightarrow \operatorname{B}_n \rightarrow \operatorname{Z}_n \rightarrow \operatorname{H}_n(C_{\bullet}) \rightarrow 0.\]
\end{itemize}

\begin{defn}[Cochain Complex of \(R\)-modules]
A \emph{ cochain complex} of \(R\)-modules consists of a sequence of \(R\)-module homomorphisms \(\{d^n: C^n \rightarrow C^{n+1}\}\), where each \(C^n\) is an \(R\)-module and each \(d^n\) is a differential that increases the degree by one. This structure forms a chain complex of $R$-modules in the opposite category of \(R\)-modules, denoted as \((C^\bullet, d^\bullet)\).
\end{defn}

\begin{rem}
A complex of $R$-modules is also referred to as \emph{homologically graded} if its differentials decrease the degree (chain complexes of $R$-modules), and \emph{cohomologically graded} if its differentials increase the degree  (cochain complexes of $R$-modules). 
\end{rem}

\begin{defn}[Morphism of chain complexes of $R$-modules]
   Let $(C_\bullet, d_\bullet)$ and $(D_\bullet, \delta_\bullet)$ be chain complexes of $R$-modules.  A \emph{morphism of chain complexes of $R$-modules} $f : C_\bullet \to D_\bullet $ is given by a family of morphisms $f_n : C_n \to D_n$ such that all the diagrams 
  \[  \xymatrix{ C_n \ar[r]^{d_ n} \ar[d]_{f_n} &  C_{n - 1} \ar[d]^{f_{n - 1}} \\  D_n \ar[r]^{\delta_n} &  D_{n - 1} }  \]
 commute. That is, $f_{n - 1}\circ d_n=\delta_n\circ f_n$ for all $n$.
\end{defn}

\begin{defn}
A \emph{homotopy} $h$ between a pair of morphisms of chain complexes of $R$-modules $f, g : C_\bullet \to D_\bullet $ is a collection of morphisms $h_n : C_n \to D_{n+ 1}$ such that we have
  \[  f_ n - g_ n = d_{n+ 1} \circ h_ n + h_{n - 1} \circ d_ n  \]
 for all $n$. 
 
 Two morphisms $f, g : C_\bullet \to D_\bullet $ are said to be \emph{homotopic} if a homotopy between $f$ and $g$ exists. 
 \end{defn}

\noindent
To simplify notation, we suppress subscripts on the differentials and uniformly write \( \mathrm{d} \) for all maps in the chain complex \( D \), instead of the previously used \( \mathrm{d}_n \). This aligns with the conventions adopted in later applications throughout the thesis.

\subsection{Algebraic Mapping Cone for Chain Complexes} \label{subsection:Algebraic Mapping Cone for Chain Complexes}

We now transition from the basic notions of chain complexes of $R$-modules and homotopies presented in the previous subsection to the mapping cone construction. This will ultimately lead to the definition of relative (co)homology and the algebraic model of relative differentials.

\begin{defn} \label{defn map cone}
    Let \(f_{\bullet}: C_\bullet \longrightarrow D_\bullet\) be a morphism of chain complexes of $R$-modules. The \emph{algebraic mapping cone} of \(f\), denoted by \(\operatorname{Cone}_{\bullet}(f)\), is defined as the chain complex where \(\operatorname{Cone}_n(f) = C_{n-1} \oplus D_{n}\), equipped with the differential
    \[
    \mathrm{d}(\alpha, \beta) = (f^*\beta + \mathrm{d}\alpha,\, -\mathrm{d} \beta).
    \]
\end{defn}

There are other possible choices for the sign convention in the definition of the differential; however, for this work, we adopt the above definition to ensure compatibility with the moment map on a quasi-Hamiltonian \(G\)-space.

\begin{lem}\label{lemma:d^2=0}
  The differential \(\mathrm{d}\) on \(\operatorname{Cone}_{\bullet}(f)\) satisfies \(\mathrm{d}^2 = 0\).
\end{lem}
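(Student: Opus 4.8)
The plan is to prove $\mathrm{d}^2 = 0$ by a direct two-step computation on an arbitrary element $(\alpha, \beta) \in \operatorname{Cone}_n(f) = C_{n-1} \oplus D_n$, while keeping track of precisely which structural hypotheses force each resulting term to vanish. The only facts available---and, as it turns out, the only ones needed---are that $C_\bullet$ and $D_\bullet$ are themselves chain complexes, so that $\mathrm{d}^2 = 0$ holds internally in each, together with the fact that $f$ is a \emph{morphism} of chain complexes, which supplies the commutation relation $\mathrm{d} \circ f^* = f^* \circ \mathrm{d}$ between the map and the differentials.

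First I would apply the differential of Definition~\ref{defn map cone} once, obtaining $\mathrm{d}(\alpha,\beta) = (f^*\beta + \mathrm{d}\alpha,\, -\mathrm{d}\beta)$, and then feed this pair back into the same formula. Abbreviating $\alpha' = f^*\beta + \mathrm{d}\alpha$ and $\beta' = -\mathrm{d}\beta$, the definition yields
\[
\mathrm{d}^2(\alpha,\beta) = \bigl( f^*\beta' + \mathrm{d}\alpha',\ -\mathrm{d}\beta' \bigr) = \bigl( -f^*\mathrm{d}\beta + \mathrm{d}f^*\beta + \mathrm{d}^2\alpha,\ \mathrm{d}^2\beta \bigr).
\]
The second component vanishes at once because $\mathrm{d}^2 = 0$ in $D_\bullet$, and within the first component the term $\mathrm{d}^2\alpha$ vanishes because $\mathrm{d}^2 = 0$ in $C_\bullet$. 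This leaves only the two cross terms $-f^*\mathrm{d}\beta + \mathrm{d}f^*\beta$ to deal with.

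The crux of the argument---the one step genuinely worth isolating---is the cancellation of these surviving cross terms, which is exactly the chain-map condition $\mathrm{d}f^*\beta = f^*\mathrm{d}\beta$; substituting it turns the remainder into $-f^*\mathrm{d}\beta + f^*\mathrm{d}\beta = 0$. I would stress that the minus sign attached to the second slot in the cone differential is essential here rather than cosmetic: without it the two cross terms would \emph{add}, producing $2\,f^*\mathrm{d}\beta$, which has no reason to vanish. Thus it is precisely the sign convention, interacting with the chain-map relation, that makes $\mathrm{d}$ a differential. Concluding that $\mathrm{d}^2(\alpha,\beta) = (0,0)$ for every $(\alpha,\beta)$ gives $\mathrm{d}^2 = 0$. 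There is no serious obstacle beyond careful sign bookkeeping; the single point demanding attention is the interplay between the internal sign in $\beta' = -\mathrm{d}\beta$ and the commutation of $f^*$ with $\mathrm{d}$.
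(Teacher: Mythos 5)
Your computation is correct and follows exactly the same route as the paper's proof: expand $\mathrm{d}^2(\alpha,\beta)$ directly, kill $\mathrm{d}^2\alpha$ and $\mathrm{d}^2\beta$ using that $C_\bullet$ and $D_\bullet$ are complexes, and cancel the cross terms $-f^*\mathrm{d}\beta + \mathrm{d}f^*\beta$ via the chain-map condition. Your added remark about the necessity of the sign in the second slot is a nice observation but does not change the argument.
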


\begin{proof}
To verify that \(\mathrm{d}^2 = 0\), we compute the action of \(\mathrm{d}\) twice on an arbitrary element \((\alpha, \beta) \in \operatorname{Cone}_n(f) = C_{n-1} \oplus D_n\):
    \begin{align*}
    \mathrm{d}^2(\alpha, \beta) &= \mathrm{d} \left(f^*\beta + \mathrm{d}\alpha, -\mathrm{d} \beta\right) \\
        &= \left(f^*(-\mathrm{d} \beta) + \mathrm{d}(f^*\beta + \mathrm{d}\alpha), -\mathrm{d}(-\mathrm{d} \beta)\right) \\
        &= \left(-f^*(\mathrm{d} \beta) + \mathrm{d}f^*\beta + \mathrm{d}^2\alpha, \mathrm{d}^2 \beta\right) \\
        &= \left(-f^*(\mathrm{d} \beta) + f^*(\mathrm{d} \beta) + 0, 0\right) \quad \text{since } \mathrm{d}^2 = 0 \text{ and } f \circ \mathrm{d} = \mathrm{d} \circ f \\
        &= (0, 0).
\end{align*}

\end{proof}
The homology of the mapping cone gives rise to the notion of \emph{relative homology} for the map \(f_\bullet\).

\begin{defn}

The \emph{relative homology} of \(f_{\bullet}\) is defined as
\[
H_n(f) := H_n(\operatorname{Cone}_{\bullet}(f)).
\]
\end{defn}

This construction fits naturally into a short exact sequence of chain complexes of $R$-modules:
\[
0 \rightarrow D_{n} \xrightarrow{j_n} \operatorname{Cone}_n(f) \xrightarrow{k_n} C_{n-1} \rightarrow 0,
\]
where \(j_n(\beta) = (0, \beta)\) and \(k_n(\alpha, \beta) = \alpha\), for each integer \(n\). These sequences are compatible with the differentials, meaning they form a short exact sequence of chain complexes of $R$-modules. This induces a long exact sequence in homology:
\begin{equation}\label{eq0}
    \begin{split}
        \cdots \rightarrow H_{n}(D) \xrightarrow{j_*} H_{n}(f) \xrightarrow{k_*} H_{n-1}(C) \xrightarrow{\delta} H_{n-1}(D) \rightarrow \cdots,
    \end{split}
\end{equation}
where \(\delta\) is the connecting homomorphism.

\begin{lem}
    The connecting homomorphism \(\delta\) is characterized by \(\delta[\gamma] = [f(\gamma)]\) for \(\gamma \in C_{n-1}\).
\end{lem}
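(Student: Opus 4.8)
The plan is to run the standard snake-lemma construction of the connecting homomorphism attached to the short exact sequence of complexes
\[
0 \rightarrow D_{n} \xrightarrow{\ j_n\ } \operatorname{Cone}_n(f) \xrightarrow{\ k_n\ } C_{n-1} \rightarrow 0,
\]
with \( j_n(\beta) = (0,\beta) \) and \( k_n(\alpha,\beta) = \alpha \), and then to read off the effect of \(\delta\) on a representative by a direct diagram chase. Recall the recipe defining \(\delta\): given a class \([\gamma] \in H_{n-1}(C)\), choose a cycle \(\gamma \in C_{n-1}\), so \(\mathrm{d}\gamma = 0\); lift it through the surjection \(k_n\) to an element of \(\operatorname{Cone}_n(f)\); apply the cone differential \(\mathrm{d}\) of Definition~\ref{defn map cone}; observe that the resulting element lies in \(\ker k_{n-1} = \operatorname{im} j_{n-1}\); and pull it back through the injection \(j_{n-1}\) to obtain a cycle in \(D_{n-1}\) whose class is, by definition, \(\delta[\gamma]\).

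First I would produce an explicit lift. Since \(k_n(\alpha,\beta) = \alpha\), the natural lift of \(\gamma\) is \((\gamma,0) \in \operatorname{Cone}_n(f) = C_{n-1}\oplus D_n\). Applying the cone differential and using that \(\gamma\) is a cycle, the contribution lying in the \(C\)-summand drops out, and the coupling term of the cone differential—the term through which the chain map \(f\) enters—survives, leaving precisely \( j_{n-1}\bigl(f(\gamma)\bigr) \); here I would simply track the defining formula of \(\mathrm{d}\) on \(\operatorname{Cone}_\bullet(f)\) and cancel the \(\mathrm{d}\gamma\) contribution against closedness of \(\gamma\). Because this element already lies in \(\operatorname{im} j_{n-1}\) and \(j_{n-1}\) is injective, the pullback is immediate and yields \(f(\gamma) \in D_{n-1}\). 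Moreover \(f(\gamma)\) is automatically a cycle, since \(f\) is a morphism of complexes and \(\gamma\) is a cycle, so \(\mathrm{d} f(\gamma) = f(\mathrm{d}\gamma) = 0\). Hence \(\delta[\gamma] = [f(\gamma)]\).

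Finally I would verify that this assignment is independent of the two choices made, which is exactly the well-definedness step built into the snake lemma. Replacing the lift \((\gamma,0)\) by any other preimage \((\gamma,\beta_0)\) changes it by \( j_n(\beta_0) \); since \(j\) is a chain map, applying \(\mathrm{d}\) alters the output by \( j_{n-1}(\mathrm{d}\beta_0) \), i.e. by a boundary in \(D_{n-1}\), leaving \([f(\gamma)]\) unchanged. Replacing \(\gamma\) by \(\gamma + \mathrm{d}\mu\) with \(\mu \in C_n\) changes \(f(\gamma)\) by \(f(\mathrm{d}\mu) = \mathrm{d} f(\mu)\), again a boundary. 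Thus both ambiguities cancel in homology, and \(\delta[\gamma] = [f(\gamma)]\) is well posed.

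The only real point of care is the bookkeeping in the middle paragraph: one must apply the concrete cone differential to the chosen lift and confirm that, once the \(\mathrm{d}\gamma\) term vanishes, what remains is exactly \(f(\gamma)\) inside the \(D\)-summand—up to the fixed sign convention adopted in Definition~\ref{defn map cone}—so that it genuinely sits in \(\operatorname{im} j_{n-1}\). I do not anticipate any deeper obstruction: the statement is the standard identification of the abstract connecting map of a mapping cone with the induced map \(f\) on homology, and the content of the lemma lies precisely in making that identification explicit with the sign conventions of this thesis.
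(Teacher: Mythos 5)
Your proof is correct and is essentially the paper's own argument: both lift the cycle $\gamma$ to $(\gamma,0)\in\operatorname{Cone}_n(f)$, apply the cone differential, observe the result lies in $\operatorname{im} j_{n-1}$, and read off $f(\gamma)$; you additionally verify independence of the two choices, which the paper omits. One caveat on exactly the point you flag as the "only real point of care": with the differential as literally written in Definition~\ref{defn map cone}, namely $\mathrm{d}(\alpha,\beta)=(f^*\beta+\mathrm{d}\alpha,-\mathrm{d}\beta)$, the coupling term acts on the $D$-summand, so $\mathrm{d}(\gamma,0)=(\mathrm{d}\gamma,0)$ and no $f(\gamma)$ ever appears; the identification $\mathrm{d}(\gamma,0)=(0,f(\gamma))$ only holds under the other standard cone convention, $\partial(c,d)=(\partial c, f(c)+\partial d)$ up to signs, which is what the paper's own proof silently switches to. So your middle step is right in spirit, but the bookkeeping you defer to Definition~\ref{defn map cone} would in fact fail as stated there---the discrepancy is an inconsistency in the source rather than in your argument.
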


\begin{proof}
    For \(\gamma \in C_{n-1}\), we observe that \(k_n(\gamma, 0) = \gamma\). The short exact sequence yields an element \(\gamma' \in D_{n-1}\) such that
    \[
    j_{n-1}(\gamma') = \partial(\gamma, 0) = (\partial\gamma, f(\gamma)),
    \]
    where \(\partial\) is the differential on \(\operatorname{Cone}_{\bullet}(f)\). By definition, \(\delta[\gamma] = [\gamma']\). Since \(j_{n-1}(\gamma') = (0, \gamma')\), we deduce \(f(\gamma) = \gamma'\), which implies
    \[
    \delta[\gamma] = [f(\gamma)].
    \]
\end{proof}

The mapping cone also characterizes quasi-isomorphisms:
\begin{defn}
    A chain map \(f_{\bullet}: C_{\bullet} \rightarrow D_{\bullet}\) is a \emph{quasi-isomorphism} if it induces an isomorphism in homology, i.e., \(H_{\bullet}(C) \overset{\cong}\rightarrow H_{\bullet}(D)\).
\end{defn}

\begin{cor}
    A chain map \(f_{\bullet}: C_{\bullet} \rightarrow D_{\bullet}\) is a quasi-isomorphism if and only if \(H_{\bullet}(f) = 0\).
\end{cor}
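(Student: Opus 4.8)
The plan is to reduce the statement to a diagram chase on the long exact sequence in relative homology derived above, using crucially the preceding lemma that identifies the connecting homomorphism $\delta$ with the induced map $f_*$ on homology. Under the substitution $\delta = f_*$, the long exact sequence becomes
\[
\cdots \to H_n(D) \xrightarrow{j_*} H_n(f) \xrightarrow{k_*} H_{n-1}(C) \xrightarrow{f_*} H_{n-1}(D) \xrightarrow{j_*} H_{n-1}(f) \to \cdots,
\]
so that the relative homology $H_\bullet(f)$ precisely measures the failure of $f_*$ to be an isomorphism. Since, by definition, $f$ is a quasi-isomorphism exactly when $f_* \colon H_m(C) \to H_m(D)$ is an isomorphism in every degree $m$, the corollary will follow by exactness alone.

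For the forward implication I would assume $f_*$ is an isomorphism in each degree and examine the five-term segment
\[
H_n(C) \xrightarrow{f_*} H_n(D) \xrightarrow{j_*} H_n(f) \xrightarrow{k_*} H_{n-1}(C) \xrightarrow{f_*} H_{n-1}(D).
\]
Surjectivity of the leftmost $f_*$ forces $\ker j_* = \operatorname{im} f_* = H_n(D)$, hence $j_* = 0$; injectivity of the rightmost $f_*$ gives $\operatorname{im} k_* = \ker f_* = 0$. Exactness at $H_n(f)$ then yields $\ker k_* = \operatorname{im} j_* = 0$, so $k_*$ is injective with trivial image, forcing $H_n(f) \cong \operatorname{im} k_* = 0$ for every $n$.

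For the converse I would assume $H_n(f) = 0$ for all $n$ and chase the segment
\[
0 = H_n(f) \xrightarrow{k_*} H_{n-1}(C) \xrightarrow{f_*} H_{n-1}(D) \xrightarrow{j_*} H_{n-1}(f) = 0.
\]
Exactness at $H_{n-1}(C)$ gives $\ker f_* = \operatorname{im} k_* = 0$, so $f_*$ is injective; exactness at $H_{n-1}(D)$ gives $\operatorname{im} f_* = \ker j_* = H_{n-1}(D)$ (as $j_*$ lands in the zero group), so $f_*$ is surjective. As $n$ is arbitrary, $f_*$ is an isomorphism in every degree and $f$ is a quasi-isomorphism.

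Since every step is pure exactness together with the two obvious facts that a map into the zero group vanishes and a map out of the zero group vanishes, there is no genuine obstacle here. The only substantive ingredient is the identification $\delta = f_*$, which has already been secured by the previous lemma; I would simply emphasize that it is precisely this identification that lets the mapping cone \enquote{detect} quasi-isomorphisms.
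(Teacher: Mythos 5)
Your proof is correct and follows essentially the same route as the paper: the paper's own argument is just the one-line observation that, via the identification $\delta = f_*$ from the preceding lemma, $f$ is a quasi-isomorphism precisely when the connecting homomorphism in the long exact sequence is an isomorphism, which by exactness is equivalent to the vanishing of $H_\bullet(f)$. You have simply written out the standard diagram chase that the paper leaves implicit, and both directions of your chase are accurate.
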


\begin{proof}
    The map \(f\) is a quasi-isomorphism if and only if the connecting homomorphism \(\delta\) in the long exact sequence \eqref{eq0} is an isomorphism. 
   
\end{proof}

An important feature of the mapping cone is that it behaves well under homotopy equivalence of chain maps.

\begin{pro}\cite{shahbazi2006prequantization}
Any homotopy between chain maps \(f, g : C_\bullet \to D_\bullet\) induces an isomorphism of chain complexes between \(\operatorname{Cone}(f)_\bullet\) and \(\operatorname{Cone}(g)_\bullet\).
\end{pro}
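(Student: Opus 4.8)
The plan is to exploit the fact that $\operatorname{Cone}_n(f) = C_{n-1}\oplus D_n = \operatorname{Cone}_n(g)$ share the same underlying graded $R$-module and differ only in their differentials, so that an isomorphism between them should be a degree-zero "shear" built from the homotopy $h=\{h_n\}$ satisfying $f_n-g_n = \mathrm{d}\circ h_n + h_{n-1}\circ\mathrm{d}$. Concretely, following the sign convention of Definition~\ref{defn map cone}, in which the cross term $f^*\beta$ lands in the first summand, I would define
\[
\Phi : \operatorname{Cone}_\bullet(f)\longrightarrow \operatorname{Cone}_\bullet(g),\qquad \Phi(\alpha,\beta) := \bigl(\alpha + h(\beta),\ \beta\bigr),
\]
for $(\alpha,\beta)\in C_{n-1}\oplus D_n$. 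The aim is then to verify that $\Phi$ is a well-defined degree-zero $R$-linear map, that it commutes with the two cone differentials, and that it is bijective; together these give an isomorphism of chain complexes.

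First I would dispatch the easy structural points. The map $\Phi$ is manifestly $R$-linear and preserves degree, since $h(\beta)$ lies in the same degree as $\alpha$. Bijectivity is immediate and independent of the homotopy identity: the map $\Psi(\alpha,\beta):=(\alpha-h(\beta),\,\beta)$ is a two-sided inverse, so $\Phi$ is already an isomorphism of graded $R$-modules.

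The key step is to check that $\Phi$ is a morphism of chain complexes, i.e. $\Phi\circ \mathrm{d}_f = \mathrm{d}_g\circ \Phi$, where $\mathrm{d}_f(\alpha,\beta)=(f^*\beta+\mathrm{d}\alpha,\,-\mathrm{d}\beta)$ and $\mathrm{d}_g$ is the analogous differential for $g$. Expanding both composites and cancelling the terms $\mathrm{d}\alpha$ and $-\mathrm{d}\beta$ that are common to $\operatorname{Cone}(f)$ and $\operatorname{Cone}(g)$, the equality collapses, on the $C$-component, to
\[
f^*\beta - g^*\beta = \mathrm{d}\bigl(h(\beta)\bigr) + h\bigl(\mathrm{d}\beta\bigr),
\]
which is exactly the defining homotopy relation $f-g=\mathrm{d}h+h\mathrm{d}$ evaluated on $\beta$. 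Thus the single nontrivial verification reduces precisely to the hypothesis that $h$ is a homotopy between $f$ and $g$, and nothing further is required. Conceptually, this is the whole content of the proposition: the homotopy is exactly the data needed to transport one cone differential onto the other.

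I do not anticipate a genuine mathematical obstacle here, as the computation is routine once $\Phi$ is correctly guessed. The one place demanding care—and the step most likely to introduce errors—is matching the sign conventions of the paper's cone differential so that the shear is by $+h$ (rather than $-h$) and so that the homotopy identity emerges with the correct signs; an incompatible choice would leave a spurious sign in the $C$-component and spoil the chain-map condition. Once the formula for $\Phi$ is aligned with Definition~\ref{defn map cone} and with the stated homotopy relation, the argument is complete.
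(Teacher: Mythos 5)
Your proof is correct and is essentially the same argument as the paper's: both exhibit the isomorphism as a unipotent shear by the homotopy $h$, with the chain-map condition reducing exactly to the relation $f-g=\mathrm{d}h+h\mathrm{d}$ and the inverse given by shearing by $-h$. The only cosmetic difference is that you shear the $C$-component by $h(\beta)$, consistently with the paper's stated convention $\mathrm{d}(\alpha,\beta)=(f^*\beta+\mathrm{d}\alpha,-\mathrm{d}\beta)$ from Definition~\ref{defn map cone}, whereas the paper's own proof silently switches to the opposite convention (cross-term in the second slot) and therefore shears the $D$-component by $-h(\alpha)$; your version is in fact the one that matches the definition used elsewhere in the text.
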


\begin{proof}
Suppose there exists a homotopy \( h : C_\bullet \to D_{\bullet+1} \) between \( f \) and \( g \), i.e., \( g - f = \mathrm{d} h + h \mathrm{d} \) where \(\mathrm{d} \) denotes the boundary operator in the complexes \( C_\bullet \) and \( D_\bullet \).

Define a map \( F : \operatorname{Cone}_\bullet(f) \to \operatorname{Cone}_\bullet(g) \) by setting
\[
F(\alpha, \beta) = (\alpha, -h(\alpha) + \beta).
\]
for each \((\alpha, \beta) \in \operatorname{Cone}_\bullet(f)\), where \(\alpha \in C_\bullet\) and \(\beta \in Y_{\bullet+1}\).

We verify that \( F \) is a chain map:
\begin{align*}
\mathrm{d} F(\alpha, \beta) &= \mathrm{d}(\alpha, -h(\alpha) + \beta) \\
&= (\mathrm{d} \alpha, g(\alpha) - \mathrm{d} h(\alpha) + \mathrm{d} \beta) \\
&= (\mathrm{d} \alpha, f(\alpha) + (\mathrm{d} h(\alpha) + h \mathrm{d} \alpha) - \mathrm{d} h(\alpha) + \mathrm{d} \beta) \\
&= (\mathrm{d} \alpha, f(\alpha) + h \mathrm{d} \alpha + \mathrm{d} \beta) \\
&= F(\mathrm{d} \alpha, f(\alpha) + \mathrm{d} \beta) \\
&= F \mathrm{d} (\alpha, \beta),
\end{align*}
where the third equality follows from the homotopy condition \( g - f = \mathrm{d} h + h \mathrm{d} \).

The inverse of \( F \) can be constructed as follows:
Define \( F^{-1} : \operatorname{Cone}_\bullet(g) \to \operatorname{Cone}_\bullet(f) \) by
\[
F^{-1}(\alpha, \beta) = (\alpha, h(\alpha) + \beta).
\]
A similar calculation shows that \( F^{-1} \) is also a chain map, and by the definitions, it is clear that \( F \) and \( F^{-1} \) are inverses of each other, thus establishing the isomorphism of chain complexes.
\end{proof}
  
This concludes the algebraic setup. In the next sections, we shift our attention to the differential-geometric side of the story, focusing on Lie group actions and the associated Maurer–Cartan forms. 

\section{Group Action and Generating Vector Fields}

We review the basic structure of Lie groups and Lie algebras, the notion of group actions on manifolds, and the associated infinitesimal actions given by fundamental vector fields. This prepares the ground for introducing Maurer–Cartan forms.

\begin{defn}
A \emph{Lie algebra} is a vector space $\mathfrak{g}$ equipped with a bilinear skew-symmetric map $\displaystyle [\cdot,\cdot]:\mathfrak{g}\times \mathfrak{g}\to \mathfrak{g}$ that satisfies the following identity called the Jacobi identity:
\begin{eqnarray*}
[x,[y,z]]+[z,[x,y]]+[y,[z,x]]=0 \qquad \mbox{for all $x,y,z\in \mathfrak{g}$}.
\end{eqnarray*}
\end{defn}

\begin{defn}
Let $(\mathfrak{g}, [\cdot, \cdot ]_\mathfrak{g})$ and $(\mathfrak{h},  [\cdot, \cdot ]_\mathfrak{h})$ be two Lie algebras. A \emph{homomorphism of Lie algebras} from $\mathfrak{g}$ to $\mathfrak{h}$ is a linear map $\phi : \mathfrak{g} \to \mathfrak{h}$ such that for all $x,y \in \mathfrak{g}$ we have
 $$\phi([x,y]_{\mathfrak{g}}) = [\phi(x),\phi(y)]_{\mathfrak{h}} \,.$$ 
\end{defn}

\begin{defn}
An \emph{action} of a Lie group $G$ on a manifold $M$ is a group homomorphism  
\begin{eqnarray*}
\mathcal{A}:G &\to & \mathrm{Diff}(M)\\
g & \mapsto & \mathcal{A}_g
\end{eqnarray*}
from the group $G$ into the group of diffeomorphisms $\mathrm{Diff}(M)$, such that the action map 
\begin{eqnarray*}
\mathcal{A}:G\times M&\to &M\\
(g,m) & \mapsto & \mathcal{A}_g(m):=g\cdot m
\end{eqnarray*}
is smooth.

 By a \emph{$G$-manifold}, we mean a manifold $M$ together with a smooth action $\mathcal{A}:G\times M\to M$ of the group $G$ on M. 
\end{defn}

Let $\mathfrak{X}(M)$ denote the space of vector fields on $M$. We can view vector fields as derivations on $C^\infty(M)$, the algebra of smooth functions on $M$. Considering the usual bracket $[X,Y]=X\circ Y-Y\circ X$, we can see that $\mathfrak{X}(M)$ is a Lie algebra.

\begin{defn}
Let \( G \) be a Lie group with its corresponding Lie algebra \( \mathfrak{g} \), identified with the tangent space at the identity element \( e \) of \( G \), denoted by \( T_eG \). Define the actions of \( G \) on itself:
\begin{itemize}
    \item Left multiplication \( \mathrm{L}_g \) by \( g \) is given by \( \mathrm{L}_g(h) = g \cdot h \).
    \item Right multiplication \( \mathrm{R}_g \) by \( g \) is given by \( \mathrm{R}_g(h) = h \cdot g \).
    \item The adjoint action \( \mathrm{Ad}_g \) (or conjugation) by \( g \) is given by \( \mathrm{Ad}_g(h) = g h g^{-1} \).
\end{itemize}

\end{defn}

\begin{defn}[Left- and Right-Invariant Vector Fields]
Let \( G \) be a Lie group with identity element \( e \), and let \( \mathfrak{g} = T_e G \) denote its Lie algebra.

\begin{itemize}
    \item A vector field \( X \in \mathfrak{X}(G) \) is called \emph{left-invariant} if
    \[
    (\mathrm{L}_g)_* X(h) = X(gh) \quad \text{for all } g,h \in G,
    \]
    where \( \mathrm{L}_g(h) = gh \) denotes left multiplication. For \( x \in \mathfrak{g} \), the left-invariant vector field \( x^L \) is defined by:
    \[
    x^L(g) := (\mathrm{L}_g)_* x.
    \]

    \item A vector field \( X \in \mathfrak{X}(G) \) is called \emph{right-invariant} if
    \[
    (\mathrm{R}_g)_* X(h) = X(hg) \quad \text{for all } g,h \in G,
    \]
    where \( \mathrm{R}_g(h) = hg \) denotes right multiplication. For \( x \in \mathfrak{g} \), the right-invariant vector field \( x^R \) is defined by:
    \[
    x^R(g) := (\mathrm{R}_g)_* x.
    \]
\end{itemize}
\end{defn}

\begin{defn}
Let \(G\) be a Lie group acting on a manifold \(M\), and let \(\mathfrak{g}\) be the corresponding Lie algebra of \(G\), i.e., \(\mathfrak{g} = T_e G\). The action of \(G\) on \(M\) induces an action of \(\mathfrak{g}\) on \(M\), defined as follows:
\[
\mathfrak{g} \times M \to M, \quad (x, m) \mapsto \frac{d}{dt}\bigg|_{t=0} \exp(-t x) \cdot m =: v_x(m),
\]
where \(\exp: \mathfrak{g} \to G\) is the exponential map of \(G\). The vector field \( v_x\) on \(M\) is called the \emph{generating vector field} corresponding to \(x \in \mathfrak{g}\).
\end{defn}

\begin{defn}\label{def:infinitesimal_action}
Let $\mathfrak{g}$ be a Lie algebra and $M$ a smooth manifold. An \emph{infinitesimal action} of $\mathfrak{g}$ on $M$ is a Lie algebra homomorphism
\[
\rho: \mathfrak{g} \to \mathfrak{X}(M),
\]
where $\mathfrak{X}(M)$ denotes the Lie algebra of smooth vector fields on $M$ equipped with the standard Lie bracket. Explicitly, $\rho$ satisfies:
\begin{enumerate}
    \item \textbf{Linearity}: $\rho(ax + by) = a\rho(x) + b\rho(y)$ for all $x,y \in \mathfrak{g}$, $a,b \in \mathbb{R}$;
    \item \textbf{Lie Bracket Compatibility}: $\rho([x,y]_{\mathfrak{g}}) = [\rho(x), \rho(y)]_{\mathfrak{X}(M)}$ for all $x,y \in \mathfrak{g}$.
\end{enumerate}
For $x \in \mathfrak{g}$, we typically denote $\rho(x) \in \mathfrak{X}(M)$ by $v_x$ and call it the \emph{fundamental vector field} associated to $x$.
\end{defn}

\begin{rem}
This definition is equivalent to specifying a smooth left action of the corresponding simply connected Lie group $G$ on $M$ at the infinitesimal level. The fundamental vector fields $v_x$ generate the flow of the group action.
\end{rem}

\begin{exa}
For the conjugation action of $G$ on itself, the infinitesimal action is:
\[
\rho: \mathfrak{g} \to \mathfrak{X}(G), \quad x \mapsto v_x := x^L -x^R,
\]
where $x^L$ and $x^R$ are the left- and right-invariant vector fields corresponding to $x \in \mathfrak{g}$.
\end{exa}

This sets the stage for the introduction of Maurer–Cartan forms, which will be used in Chapter~\ref{chapter 5} to construct the homotopy moment map associated with the Lie 2-algebra of observables arising from a quasi-Hamiltonian \(G\)-space.

\section{The Maurer-Cartan Form and Cartan 3-Form on Lie Groups}
\label{sec:cartan_forms}
We turn to the left- and right-invariant Maurer–Cartan forms, their defining properties, and their behaviour under conjugation. The identities established in this section will facilitate the exposition 
 of the proof of Theorem \ref{proposition: relative moment map} . 

Let \( G \) be a Lie group with Lie algebra \( \mathfrak{g} \). When considering the conjugation action of \( G \) on itself, we define for any \( x \in \mathfrak{g} \) the fundamental vector field:
\begin{equation}
     v_x = x^L - x^R
\end{equation}
where \( x^L \) and \( x^R \) are the left- and right-invariant vector fields associated to \( x \), respectively.

\begin{defn}[Maurer--Cartan $1$-forms]\label{def:MC-abstract} \cite{Sharpe1997}
Let $G$ be a Lie group with identity $e$ and Lie algebra $\mathfrak g:=T_eG$.
The left and right Maurer--Cartan $1$-forms are the $\mathfrak g$-valued $1$-forms
$\theta^L,\theta^R\in\Omega^1(G;\mathfrak g)$ defined by
\[
\theta^L_g := (dL_{g^{-1}})_g : T_gG \longrightarrow T_eG\simeq\mathfrak g,
\qquad
\theta^R_g := (dR_{g^{-1}})_g : T_gG \longrightarrow T_eG\simeq\mathfrak g,
\]
for each $g\in G$, where $L_h(k)=hk$ and $R_h(k)=kh$ denote left/right translations.
Equivalently, for any smooth curve $\gamma$ in $G$ with $\gamma(0)=g$, $\gamma'(0)=v$,
\[
\theta^L_g(v)=\left.\frac{d}{dt}\right|_{0}\big(g^{-1}\gamma(t)\big),\qquad
\theta^R_g(v)=\left.\frac{d}{dt}\right|_{0}\big(\gamma(t)g^{-1}\big).
\]
\end{defn}

\begin{pro}[Matrix-group expressions]\label{prop:MC-matrix}
Suppose $G\subset \mathrm{GL}_n(\mathbb R)$ (or $\mathrm{GL}_n(\mathbb C)$) is a matrix Lie group.
Viewing $g$ as the matrix of coordinate functions on $G$ and $dg$ as the matrix of
$1$-forms $(dg_{ij})$, one has, as $\mathfrak g$-valued $1$-forms,
\[
 \theta^L = g^{-1}\,dg \quad\text{and}\quad \theta^R = dg\,g^{-1}.
\]
In particular, $\theta^R = \Ad_g(\theta^L)=g\,\theta^L\,g^{-1}$.
\end{pro}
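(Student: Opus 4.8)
The plan is to exploit the linearity of matrix multiplication, which renders the differentials of left and right translation trivial to compute for matrix groups. First I would fix the meaning of the symbol $dg$: since $\mathrm{GL}_n$ is an open subset of the matrix algebra $\mathfrak{gl}_n = M_n(\mathbb{R})$ (resp. $M_n(\mathbb{C})$), every tangent space $T_g\mathrm{GL}_n$ is canonically identified with $\mathfrak{gl}_n$, and under this identification the matrix-valued $1$-form $dg = (dg_{ij})$ is characterized by $dg(v) = v$. Restricting to $G \subset \mathrm{GL}_n$, a tangent vector $v \in T_g G$ is represented by a matrix, and $T_e G \simeq \mathfrak{g} \subset \mathfrak{gl}_n$. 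This bookkeeping observation is what converts the abstract Definition~\ref{def:MC-abstract} into matrix algebra.

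Next I would compute $\theta^L$. Because left translation $L_{g^{-1}}(k) = g^{-1}k$ is the restriction of a linear map, its differential at any point is that same linear map; hence $(dL_{g^{-1}})_g(v) = g^{-1}v$ for $v \in T_g G$. Combined with $dg(v) = v$, this yields $\theta^L_g(v) = g^{-1}v = (g^{-1}\,dg)(v)$, establishing $\theta^L = g^{-1}\,dg$. The computation for $\theta^R$ is identical: since $R_{g^{-1}}(k) = kg^{-1}$ is also linear, $(dR_{g^{-1}})_g(v) = vg^{-1}$, and therefore $\theta^R = dg\,g^{-1}$.

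For the final identity I would simply factor $dg\,g^{-1} = g\,(g^{-1}\,dg)\,g^{-1}$ and recognize the conjugation as the adjoint action: differentiating $C_g(h) = ghg^{-1}$ at the identity in a direction $\xi \in \mathfrak{g}$ gives $\Ad_g(\xi) = g\xi g^{-1}$ for matrix groups, so $\theta^R = g\,\theta^L\,g^{-1} = \Ad_g(\theta^L)$.

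The main obstacle here is conceptual rather than computational: the algebra is immediate, but one must carefully justify the identifications $T_g G \hookrightarrow \mathfrak{gl}_n$ and $\mathfrak{g} \simeq T_e G \hookrightarrow \mathfrak{gl}_n$, and confirm that the matrix products $g^{-1}v$ and $vg^{-1}$ genuinely land in $\mathfrak{g}$ rather than merely in $\mathfrak{gl}_n$. This is guaranteed by the abstract definition, since $\theta^L_g = (dL_{g^{-1}})_g$ maps $T_g G$ into $T_e G = \mathfrak{g}$ (and likewise for $\theta^R$), the matrix expressions being nothing more than the explicit representatives of these differentials. Verifying that the matrix picture is consistent with this intrinsic $\mathfrak{g}$-valued description is the one point that requires genuine care.
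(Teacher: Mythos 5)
Your proof is correct and follows essentially the same route as the paper's: both arguments reduce to the observation that left and right translation are restrictions of linear maps on the ambient matrix algebra (the paper phrases this via a curve $\gamma$ with $\gamma'(0)=v$, you invoke the linearity of the differential directly, but these are the same computation), and both conclude with the factorization $dg\,g^{-1}=g\,(g^{-1}dg)\,g^{-1}=\Ad_g(\theta^L)$. Your closing remark about checking that $g^{-1}v$ lands in $\mathfrak g$ rather than merely in $\mathfrak{gl}_n$ is a worthwhile point of care that the paper leaves implicit.
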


\begin{proof}
Let $v\in T_gG$ and choose a smooth curve $\gamma:(-\varepsilon,\varepsilon)\to G$
with $\gamma(0)=g$ and $\gamma'(0)=v$. Then, in matrix notation,
\[
(g^{-1}dg)_g(v)=g^{-1}\gamma'(0)=\left.\frac{d}{dt}\right|_{0}\big(g^{-1}\gamma(t)\big)
=(dL_{g^{-1}})_g(v)=\theta^L_g(v),
\]
and similarly
\[
(dg\,g^{-1})_g(v)=\gamma'(0)g^{-1}=\left.\frac{d}{dt}\right|_{0}\big(\gamma(t)g^{-1}\big)
=(dR_{g^{-1}})_g(v)=\theta^R_g(v).
\]
Since this holds for every $g$ and $v\in T_gG$, we obtain the stated identities of
$\mathfrak g$-valued $1$-forms. Finally,
\begin{equation}\label{equation 2.5}
    \theta^R = dg\,g^{-1} = g\,(g^{-1}dg)\,g^{-1} = \Ad_g(\theta^L).
\end{equation}

\end{proof}

\begin{rem}[Immediate consequences]
From Proposition~\ref{prop:MC-matrix} one recovers the standard properties:
$(L_h)^*\theta^L=\theta^L$, $(R_h)^*\theta^R=\theta^R$, and the Maurer--Cartan equations
$d\theta^L+\tfrac12[\theta^L,\theta^L]=0$, $d\theta^R-\tfrac12[\theta^R,\theta^R]=0$,
where $[\ ,\ ]$ is the wedge–bracket induced by the Lie bracket on $\mathfrak g$.
\end{rem}

\begin{lem}[Contractions of Maurer--Cartan Forms]\label{interior product of cartan form}
Let $x \in \mathfrak{g}$, and define $ v_x = x^L - x^R$ as the vector field associated with the infinitesimal conjugation action. Then:
\begin{align*}
\iota_{ v_x} \theta^L &= (1 - \operatorname{Ad}_{g^{-1}})x, \\\\
\iota_{ v_x} \theta^R &= (\operatorname{Ad}_g - 1)x.
\end{align*}
\end{lem}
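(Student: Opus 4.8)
The plan is to reduce the whole statement to four elementary contractions and then use linearity of the interior product in its vector-field slot. Since $v_x = x^L - x^R$, we have $\iota_{v_x} = \iota_{x^L} - \iota_{x^R}$, so each of the two identities is assembled from the four quantities $\iota_{x^L}\theta^L$, $\iota_{x^R}\theta^L$, $\iota_{x^L}\theta^R$, $\iota_{x^R}\theta^R$. Two of these (the ``diagonal'' ones) recover $x$ itself, while the two ``mixed'' ones produce the adjoint-twisted copies $\operatorname{Ad}_{g^{-1}}x$ and $\operatorname{Ad}_g x$; these four values are exactly what combine to give the stated answers.

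First I would record the diagonal contractions. Using Definition~\ref{def:MC-abstract}, $\theta^L_g = (dL_{g^{-1}})_g$ while $x^L(g) = (dL_g)_e x$, so by the chain rule $\iota_{x^L}\theta^L\big|_g = (dL_{g^{-1}})_g (dL_g)_e\, x = d(L_{g^{-1}}\circ L_g)_e\, x = x$, and symmetrically $\iota_{x^R}\theta^R = x$. In the matrix model of Proposition~\ref{prop:MC-matrix}, where $\theta^L = g^{-1}dg$, $\theta^R = dg\,g^{-1}$, $x^L(g) = gx$, and $x^R(g) = xg$, these are simply $g^{-1}(gx) = x$ and $(xg)g^{-1} = x$. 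Next I would compute the mixed terms, which is where the adjoint enters: $\iota_{x^R}\theta^L\big|_g = (dL_{g^{-1}})_g(dR_g)_e\, x = d(L_{g^{-1}}\circ R_g)_e\, x$, and since $L_{g^{-1}}\circ R_g$ is the map $h \mapsto g^{-1}hg = \operatorname{Ad}_{g^{-1}}(h)$, differentiating at $e$ yields $\operatorname{Ad}_{g^{-1}}x$. Likewise $R_{g^{-1}}\circ L_g$ is $h\mapsto ghg^{-1} = \operatorname{Ad}_g(h)$, giving $\iota_{x^L}\theta^R = \operatorname{Ad}_g x$. The matrix check is immediate: $g^{-1}(xg) = \operatorname{Ad}_{g^{-1}}x$ and $(gx)g^{-1} = \operatorname{Ad}_g x$.

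Assembling the pieces, $\iota_{v_x}\theta^L = \iota_{x^L}\theta^L - \iota_{x^R}\theta^L = x - \operatorname{Ad}_{g^{-1}}x = (1 - \operatorname{Ad}_{g^{-1}})x$, and $\iota_{v_x}\theta^R = \iota_{x^L}\theta^R - \iota_{x^R}\theta^R = \operatorname{Ad}_g x - x = (\operatorname{Ad}_g - 1)x$, which is exactly the claim. There is no real obstacle here; the only point requiring care is bookkeeping the conjugation convention $\operatorname{Ad}_g(h) = ghg^{-1}$ consistently, so that $L_{g^{-1}}\circ R_g$ is identified with $\operatorname{Ad}_{g^{-1}}$ and not $\operatorname{Ad}_g$ (and dually for the right form). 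With the convention fixed in the definition of the adjoint action, the two signs appearing in the final answer fall out automatically.
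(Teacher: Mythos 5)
Your proposal is correct and follows essentially the same route as the paper: decompose $\iota_{v_x}=\iota_{x^L}-\iota_{x^R}$ and evaluate the four basic contractions, with the diagonal ones giving $x$ and the mixed ones giving $\operatorname{Ad}_{g^{-1}}x$ and $\operatorname{Ad}_g x$. The only difference is that you derive the mixed contractions explicitly from the differentials of $L_{g^{-1}}\circ R_g$ and $R_{g^{-1}}\circ L_g$ (with a matrix check), whereas the paper simply cites the identity $\operatorname{Ad}_g\theta^L=\theta^R$ from Proposition~\ref{prop:MC-matrix}.
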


\begin{proof}
This follows directly from the definition $ v_x = x^L - x^R$ and the identity $\operatorname{Ad}_g \theta^L = \theta^R$ established in \eqref{equation 2.5}. Using the defining properties of left and right invariant forms, we compute:
\[
\iota_{ v_x} \theta^L = \iota_{x^L} \theta^L - \iota_{x^R} \theta^L = x - \operatorname{Ad}_{g^{-1}}x = (1 - \operatorname{Ad}_{g^{-1}}) x,
\]
\[
\iota_{ v_x} \theta^R = \iota_{x^L} \theta^R - \iota_{x^R} \theta^R = \operatorname{Ad}_g x - x = (\operatorname{Ad}_g - 1)x.
\]
\end{proof}

\begin{lem}[Bracket Evaluation via Maurer--Cartan Form]
\label{lemma:MC-bracket}
Let $X, Y$ be left-invariant vector fields on a Lie group $G$. Then the left-invariant Maurer--Cartan form $\theta^L$ satisfies:
\[
\theta^L([X, Y]) = [\theta^L(X), \theta^L(Y)].
\]
\end{lem}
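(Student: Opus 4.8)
The plan is to reduce the statement to two standard facts: that $\theta^L$ evaluated on a left-invariant vector field returns a \emph{constant} $\mathfrak g$-valued function, and that the Lie bracket of left-invariant vector fields is again left-invariant and realizes the bracket on $\mathfrak g$. Once these are in hand, both sides are constant functions with the same value at the identity, and equality is immediate.

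First I would evaluate $\theta^L$ on a left-invariant field. If $X$ is left-invariant then $X(g) = (dL_g)_e X(e)$, so by the definition $\theta^L_g = (dL_{g^{-1}})_g$ we obtain
\[
\theta^L_g\big(X(g)\big) = (dL_{g^{-1}})_g (dL_g)_e\, X(e) = \big(d(L_{g^{-1}}\circ L_g)\big)_e X(e) = X(e),
\]
using $L_{g^{-1}}\circ L_g = \mathrm{id}_G$. Hence $\theta^L(X)$ is the constant $\mathfrak g$-valued function equal to $x := X(e)$, and likewise $\theta^L(Y)$ is the constant $y := Y(e)$. In particular the right-hand side $[\theta^L(X),\theta^L(Y)]$ is the constant $[x,y]_{\mathfrak g}$.

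Second I would show that $[X,Y]$ is itself left-invariant. This is naturality of the Lie bracket of vector fields under diffeomorphisms: for any diffeomorphism $\phi$ one has $\phi_*[X,Y] = [\phi_*X,\phi_*Y]$. Applying this to $\phi = L_g$ and using $(L_g)_*X = X$, $(L_g)_*Y = Y$ gives $(L_g)_*[X,Y] = [X,Y]$ for all $g$, so $[X,Y]$ is left-invariant and is determined by its value $[X,Y](e)$ at the identity. Under the identification of $\mathfrak g$ with the left-invariant vector fields (where $x \leftrightarrow x^L$ and the bracket on $\mathfrak g$ \emph{is} the commutator bracket), this value is exactly $[x,y]_{\mathfrak g}$. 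Applying the first step to the left-invariant field $[X,Y]$ then yields $\theta^L([X,Y]) = [X,Y](e) = [x,y]_{\mathfrak g} = [\theta^L(X),\theta^L(Y)]$, which is the claim.

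The only subtle point, and thus the main obstacle, is the second step: one must invoke left-invariance of the commutator together with the identification of the abstract bracket on $\mathfrak g$ with the commutator of left-invariant fields; if $\mathfrak g$ carried a bracket defined by other means, this identification would be the standard theorem one needs to cite, while everything else is a direct unwinding of definitions. As an alternative route, one may instead read the identity off the Maurer–Cartan equation $d\theta^L + \tfrac12[\theta^L,\theta^L] = 0$ recorded after Proposition~\ref{prop:MC-matrix}, using the invariant formula $d\theta^L(X,Y) = X(\theta^L(Y)) - Y(\theta^L(X)) - \theta^L([X,Y])$: since $\theta^L(X)$ and $\theta^L(Y)$ are constant by the first step, the first two terms vanish and the claim follows; however, this is less self-contained, as the Maurer–Cartan equation is typically derived from the present lemma.
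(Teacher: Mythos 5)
Your proof is correct and follows essentially the same route as the paper: both arguments rest on the two facts that $\theta^L$ sends a left-invariant field $x^L$ to the constant $x$ and that $[x^L,y^L]=[x,y]^L$, the paper simply asserting these where you justify them. Your version is a more carefully detailed unwinding of the same idea, and your remark that the Maurer--Cartan-equation route would be circular here is apt, since the paper indeed derives that equation from this lemma.
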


\begin{proof}
Since $\theta^L$ is left-invariant and acts as the identity on the Lie algebra $\mathfrak{g}$ under evaluation at the identity, it preserves Lie brackets when applied to left-invariant vector fields. That is, for any $X = x^L$, $Y = y^L$ with $x, y \in \mathfrak{g}$, we have
\[
\theta^L(X) = x, \quad \theta^L(Y) = y, \quad \text{and} \quad [X, Y] = [x, y]^L,
\]
so
\[
\theta^L([X, Y]) = \theta^L([x, y]^L) = [x, y] = [\theta^L(X), \theta^L(Y)].
\]
\end{proof}

So, the bracket structure is preserved under the Maurer–Cartan form.
This confirms that the Maurer–Cartan form not only captures the Lie algebra structure but also faithfully preserves it under the Lie bracket operation when evaluated on invariant vector fields.

\begin{pro}[Maurer--Cartan Structure Equations]  \cite[Theorem 15.3]{Loring2020}  \label{mauercartan identity}
The Maurer--Cartan forms $\theta^L$ and $\theta^R$ satisfy the following differential identities:
\[
d\theta^L = -\frac{1}{2}[\theta^L, \theta^L], \quad \text{and} \quad d\theta^R = \frac{1}{2}[\theta^R, \theta^R].
\]
\end{pro}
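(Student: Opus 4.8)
The plan is to reduce the identity to its evaluation on invariant vector fields, where the Maurer--Cartan forms become constant and the exterior derivative collapses to a single bracket term. First I would invoke the intrinsic formula for the exterior derivative of a $\mathfrak{g}$-valued $1$-form $\alpha$, namely $d\alpha(X,Y) = X(\alpha(Y)) - Y(\alpha(X)) - \alpha([X,Y])$ (applied componentwise in $\mathfrak{g}$), and specialize to $\alpha = \theta^L$ with $X = x^L$, $Y = y^L$ left-invariant. Since $\theta^L(x^L) = x$ and $\theta^L(y^L) = y$ are constant $\mathfrak{g}$-valued functions on $G$, the first two terms vanish, leaving $d\theta^L(x^L, y^L) = -\theta^L([x^L, y^L])$. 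Applying Lemma~\ref{lemma:MC-bracket} together with $[x^L, y^L] = [x,y]^L$ gives $d\theta^L(x^L, y^L) = -[x, y] = -[\theta^L(x^L), \theta^L(y^L)]$.

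Next I would match this against the wedge--bracket. With the convention $[\theta^L, \theta^L](X, Y) = [\theta^L(X), \theta^L(Y)] - [\theta^L(Y), \theta^L(X)] = 2[\theta^L(X), \theta^L(Y)]$, the computation above reads $d\theta^L(x^L, y^L) = -\tfrac{1}{2}[\theta^L, \theta^L](x^L, y^L)$. Because the left-invariant vector fields span $T_gG$ at every point $g$, both sides agree as $\mathfrak{g}$-valued $2$-forms, establishing $d\theta^L = -\tfrac{1}{2}[\theta^L, \theta^L]$.

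For $\theta^R$ I would run the identical argument on right-invariant fields $x^R, y^R$, the only change being the opposite bracket relation $[x^R, y^R] = -[x,y]^R$ for right-invariant vector fields. This single sign flip propagates through the computation and yields $d\theta^R = +\tfrac{1}{2}[\theta^R, \theta^R]$. Alternatively, since Proposition~\ref{prop:MC-matrix} already records $\theta^L = g^{-1}dg$ and $\theta^R = dg\,g^{-1}$ for matrix groups, one could differentiate directly using $d(g^{-1}) = -g^{-1}(dg)g^{-1}$ to obtain $d\theta^L = -\theta^L \wedge \theta^L$ and $d\theta^R = +\theta^R \wedge \theta^R$, then convert each wedge-square to a wedge-bracket via $[\beta,\beta] = 2\,\beta \wedge \beta$.

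The main obstacle I anticipate is purely bookkeeping: pinning down the factor of $\tfrac{1}{2}$ and the sign consistently across the two conventions (the intrinsic Lie-bracket-on-forms versus the matrix wedge-square), and correctly tracing why $\theta^L$ acquires a minus sign while $\theta^R$ does not---a discrepancy that reflects the homomorphism versus anti-homomorphism behaviour of left- and right-invariant vector fields. No analytic or geometric difficulty arises; the entire content is the careful matching of these normalizations.
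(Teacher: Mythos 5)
Your proposal is correct and follows essentially the same route as the paper: evaluate on invariant vector fields, use the global formula for $d$ to reduce to $d\theta^L(X,Y)=-\theta^L([X,Y])$, apply Lemma~\ref{lemma:MC-bracket}, and match against the normalization $[\theta^L,\theta^L](X,Y)=2[\theta^L(X),\theta^L(Y)]$. Your explicit tracking of the sign flip for $\theta^R$ via the anti-homomorphism $[x^R,y^R]=-[x,y]^R$, and the alternative matrix computation, are exactly the "analogous" argument and the coordinate remark the paper records.
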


\begin{proof}
Let $X, Y$ be left-invariant vector fields. Using the global formula for the exterior derivative and the left-invariance of $\theta^L$, we compute:
\[
d\theta^L(X, Y) = -\theta^L([X, Y]).
\]
By Lemma~\ref{lemma:MC-bracket}, we have:
\[
-\theta^L([X, Y]) = -[\theta^L(X), \theta^L(Y)].
\]
Now observe that the bracket $[\theta^L, \theta^L]$ satisfies:
\[
[\theta^L, \theta^L](X, Y) = [\theta^L(X), \theta^L(Y)] - [\theta^L(Y), \theta^L(X)] = 2[\theta^L(X), \theta^L(Y)],
\]
since the Lie bracket is antisymmetric. Thus,
\[
d\theta^L(X, Y) = -[\theta^L(X), \theta^L(Y)] = -\frac{1}{2}[\theta^L, \theta^L](X, Y),
\]
which proves the identity. The expression for $d\theta^R$ follows analogously, or by applying the identity $\theta^R = \operatorname{Ad}_g \theta^L$ and using the $\operatorname{Ad}$-equivariance of the Lie bracket.
\end{proof}

\begin{rem}[Coordinate Verification of the Maurer--Cartan Equation]
An alternative derivation of the Maurer--Cartan identities can be obtained via explicit matrix differentials. Observe:
\[
0 = d(g^{-1}g) = dg^{-1}g + g^{-1}dg \quad \implies \quad d(g^{-1}) = -g^{-1}dg\,g^{-1}.
\]
Using this, we compute:
\[
d(g^{-1}dg) = dg^{-1} \wedge dg + g^{-1}d^2g = -g^{-1}dg\,g^{-1}dg,
\]
\[
d(dg\,g^{-1}) = -dg\,g^{-1} \wedge dg\,g^{-1} = dg\,g^{-1}dg\,g^{-1},
\]
which matches the structure of $d\theta^L$ and $d\theta^R$ derived above. These coordinate computations provide a concrete check of the abstract differential identities.
\end{rem}

We now recall the Cartan 3-form using the Maurer–Cartan structure and some of the results needed in this work.

\begin{defn}[Cartan 3-Form]
Let \( G \) be a Lie group with Lie algebra \( \mathfrak{g} \), and let \( \cdot  \) denote an Ad-invariant inner product on \( \mathfrak{g} \). The \emph{Cartan 3-form} \( \eta \in \Omega^3(G) \) is defined by
\[
\eta = \frac{1}{12} \theta^L \cdot [\theta^L, \theta^L].
\]
where \( \theta^L \in \Omega^1(G, \mathfrak{g}) \) is the left-invariant Maurer–Cartan form on \( G \).
\end{defn}

\begin{pro}[Contraction of the Cartan 3-Form]\label{prop:cartan-contraction}
Let \( X \) be a vector field on \( G \). Then the contraction of the Cartan 3-form \( \eta \) satisfies
\[
\iota_X \eta = -\frac{1}{2} \iota_X \theta^L \cdot d\theta^L.
\]
\end{pro}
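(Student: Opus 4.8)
The plan is to peel off a single copy of $\theta^L$ from the Cartan $3$-form by applying the interior product as a graded antiderivation, and then to rewrite the outcome using the Maurer--Cartan equation of Proposition~\ref{mauercartan identity}. Write $\theta=\theta^L$ for brevity and recall $\eta=\tfrac{1}{12}\,\theta\cdot[\theta,\theta]$, where $\cdot$ denotes the $\mathrm{Ad}$-invariant inner product pairing the $\mathfrak g$-values and $[\,\cdot\,,\cdot\,]$ is the associated wedge--bracket. Since $\iota_X$ is an antiderivation of degree $-1$ and these are ordinary forms on $G$, it suffices to understand how $\iota_X$ distributes over this pairing.

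First I would record the two algebraic facts the argument rests on. Because $[\theta,\theta]$ is a $2$-form built from the bracket, the Leibniz rule together with the antisymmetry of $[\,\cdot\,,\cdot\,]$ gives $\iota_X[\theta,\theta]=2\,[\iota_X\theta,\theta]$ (the two terms produced by contracting each slot coincide after relabelling). Second, $\mathrm{Ad}$-invariance of the inner product yields the cyclic identity $\theta\cdot[\iota_X\theta,\theta]=-\,\iota_X\theta\cdot[\theta,\theta]$; this is the step where invariance is genuinely used. With these in hand, I would apply the graded Leibniz rule, using the sign $(-1)^{\deg\theta}=-1$, to obtain
\[
\iota_X\big(\theta\cdot[\theta,\theta]\big)=\iota_X\theta\cdot[\theta,\theta]-\theta\cdot\iota_X[\theta,\theta]=\iota_X\theta\cdot[\theta,\theta]-2\,\theta\cdot[\iota_X\theta,\theta].
\]
Substituting the cyclic identity collapses the right-hand side to $3\,\iota_X\theta\cdot[\theta,\theta]$, so that $\iota_X\eta=\tfrac14\,\iota_X\theta\cdot[\theta,\theta]$. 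Finally I would invoke the Maurer--Cartan equation $d\theta^L=-\tfrac12[\theta^L,\theta^L]$, equivalently $[\theta,\theta]=-2\,d\theta$, to convert this into
\[
\iota_X\eta=\tfrac14\,\iota_X\theta\cdot(-2\,d\theta)=-\tfrac12\,\iota_X\theta^L\cdot d\theta^L,
\]
which is exactly the assertion.

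The main obstacle is bookkeeping: producing the combinatorial factor $3$ and tracking every sign. The factor $3$ encodes the fact that each of the three $\theta$-slots of $\eta$ contributes equally under contraction, and that equality rests precisely on the total antisymmetry of the structure form $e_a\cdot[e_b,e_c]$ guaranteed by $\mathrm{Ad}$-invariance. The one computation I would carry out honestly is this cyclic identity (either invariantly, as above, or by expanding in a basis $\theta=\theta^a e_a$ with totally antisymmetric $f_{abc}=e_a\cdot[e_b,e_c]$ and checking that the three Leibniz terms coincide); everything else is a direct application of the antiderivation property and the already-established Maurer--Cartan identity.
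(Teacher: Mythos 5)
Your proposal is correct and follows essentially the same route as the paper: apply $\iota_X$ as a graded antiderivation to $\tfrac{1}{12}\theta^L\cdot[\theta^L,\theta^L]$, use $\mathrm{Ad}$-invariance (cyclicity) to combine the resulting terms into $\tfrac14\,\iota_X\theta^L\cdot[\theta^L,\theta^L]$, and finish with the Maurer--Cartan equation $[\theta^L,\theta^L]=-2\,d\theta^L$. The only cosmetic difference is that you isolate the cyclic identity as an explicit lemma (with a basis check via the totally antisymmetric $f_{abc}$), whereas the paper folds it silently into the final line of its computation.
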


\begin{proof}
We begin by computing the contraction explicitly:
\[
\begin{aligned}
\iota_X \eta &= \frac{1}{12} \left( \iota_X \theta^L \cdot [\theta^L, \theta^L] - \theta^L \cdot \iota_X [\theta^L, \theta^L] \right) \\
&= \frac{1}{12} \left( \iota_X \theta^L \cdot [\theta^L, \theta^L] - \theta^L \cdot [\iota_X \theta^L, \theta^L] + \theta^L \cdot [\theta^L, \iota_X \theta^L] \right) \\
&= \frac{1}{12} \left( \iota_X \theta^L \cdot [\theta^L, \theta^L] + \theta^L \cdot [\theta^L, \iota_X \theta^L] - \theta^L \cdot [\iota_X \theta^L, \theta^L] \right) \\
&= \frac{1}{12} \left( \iota_X \theta^L \cdot [\theta^L, \theta^L] + 2 \theta^L \cdot [\theta^L, \iota_X \theta^L] \right) \\
&= \frac{1}{4} \iota_X \theta^L \cdot [\theta^L, \theta^L] = -\frac{1}{2} \iota_X \theta^L \cdot d\theta^L,
\end{aligned}
\]
where we have used the identity $d\theta^L = -\tfrac{1}{2}[\theta^L, \theta^L]$ from the Maurer--Cartan equation.
\end{proof}

This 3-form \( \eta \) is bi-invariant, meaning it is invariant under both left and right multiplication by elements of \( G \). Bi-invariance of \( \eta \) implies that it is closed.

\begin{thm}\cite[Section 9.10.1]{Guillemin1999} \label{eta is closed}
The 3-form \(\eta\) defined above is closed, i.e., \(d\eta = 0\).
\end{thm}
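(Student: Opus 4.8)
The plan is to reduce $d\eta$ to a single quartic expression in the Maurer--Cartan form and then kill it using Ad-invariance together with the Jacobi identity. First I would exploit the Maurer--Cartan structure equation from Proposition~\ref{mauercartan identity}, which gives $[\theta^L,\theta^L] = -2\,d\theta^L$, to rewrite the Cartan form as $\eta = -\tfrac16\,\theta^L\cdot d\theta^L$. Applying the graded Leibniz rule for the exterior derivative on the Ad-invariant pairing (for $\mathfrak g$-valued forms $\alpha$ of degree $p$ and $\beta$ one has $d(\alpha\cdot\beta) = d\alpha\cdot\beta + (-1)^p\,\alpha\cdot d\beta$) and using $d^2\theta^L = 0$, I would obtain
\[
d\eta = -\tfrac16\,\big(d\theta^L\cdot d\theta^L\big) = -\tfrac{1}{24}\,[\theta^L,\theta^L]\cdot[\theta^L,\theta^L],
\]
so that closedness is equivalent to the vanishing of the quartic $4$-form $[\theta^L,\theta^L]\cdot[\theta^L,\theta^L]$.

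The heart of the argument is showing this quartic form vanishes. Here I would invoke the Ad-invariance of the inner product, which is exactly the hypothesis in the definition of $\eta$ and which says that the trilinear form $(x,y,z)\mapsto x\cdot[y,z]$ on $\mathfrak g$ is totally antisymmetric. Expanding in a basis $\{e_a\}$ of $\mathfrak g$ with $\theta^L = \sum_a\theta^a\,e_a$, the form $[\theta^L,\theta^L]\cdot[\theta^L,\theta^L]$ becomes $\sum \theta^a\wedge\theta^b\wedge\theta^c\wedge\theta^d\,\big([e_a,e_b]\cdot[e_c,e_d]\big)$. Since the wedge $\theta^a\wedge\theta^b\wedge\theta^c\wedge\theta^d$ is totally antisymmetric in the indices $(a,b,c,d)$, only the totally antisymmetric part of $[e_a,e_b]\cdot[e_c,e_d]$ survives; moving a bracket across the pairing by invariance and then applying the Jacobi identity $[[e_a,e_b],e_c] + [[e_b,e_c],e_a] + [[e_c,e_a],e_b] = 0$ makes this antisymmetrization vanish. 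Equivalently, and perhaps more cleanly, I would first prove the coordinate-free identity $[\theta^L,[\theta^L,\theta^L]] = 0$: the graded Jacobi identity applied to three copies of the odd-degree form $\theta^L$, combined with the graded antisymmetry $[\theta^L,[\theta^L,\theta^L]] = -[[\theta^L,\theta^L],\theta^L]$, forces $3\,[\theta^L,[\theta^L,\theta^L]] = 0$, and then Ad-invariance rewrites $[\theta^L,\theta^L]\cdot[\theta^L,\theta^L]$ up to sign as $\theta^L\cdot[\theta^L,[\theta^L,\theta^L]]$, which is zero.

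I expect the main obstacle to be the sign bookkeeping in the graded wedge--bracket calculus: the antisymmetry rule $[\alpha,\beta] = -(-1)^{|\alpha||\beta|}[\beta,\alpha]$ and the invariance identity $[\alpha,\beta]\cdot\gamma = \alpha\cdot[\beta,\gamma]$ (up to the appropriate Koszul sign) must be applied to $\mathfrak g$-valued forms rather than to elements of $\mathfrak g$, and it is easy to drop a sign when transporting a bracket across the pairing. To keep the argument transparent and insulate it from these conventions, I would carry out the decisive vanishing in the basis computation above, where the only inputs are the total antisymmetry of the structure constants $f_{abc} = [e_a,e_b]\cdot e_c$ (from Ad-invariance) and the Jacobi identity, both of which are sign-robust. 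The remaining steps---the Leibniz rule and the substitution of the Maurer--Cartan equation from Proposition~\ref{mauercartan identity}---are routine.
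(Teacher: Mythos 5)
Your proof is correct and follows essentially the same route as the paper's: both reduce $d\eta$ to the quartic expression $\langle[\theta^L,\theta^L],[\theta^L,\theta^L]\rangle$ via the Maurer--Cartan equation and kill it using Ad-invariance of the pairing together with the graded Jacobi identity $[\theta^L,[\theta^L,\theta^L]]=0$. The only difference is cosmetic: you substitute the Maurer--Cartan equation into $\eta$ before differentiating, which sidesteps the three-term cancellation the paper performs when applying the graded Leibniz rule to $d[\theta,\theta]$.
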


\begin{proof}
Let $\theta:=\theta^L\in\Omega^1(G;\mathfrak g)$ be the left Maurer--Cartan form and
\[
\eta \;=\; \frac{1}{12}\,\langle \theta,\,[\theta,\theta]\rangle \in \Omega^3(G)
\]
the Cartan $3$-form, where $[\ ,\ ]$ denotes the wedge–bracket of $\mathfrak g$-valued forms
and $\langle-,-\rangle$ is an $\Ad$-invariant inner product on $\mathfrak g$.
We use the following standard identities:
\begin{enumerate}
\item (Maurer--Cartan) \quad $d\theta+\tfrac12[\theta,\theta]=0$.
\item (Graded derivation) \quad $d[\alpha,\beta]=[d\alpha,\beta]+(-1)^{|\alpha|}[\alpha,d\beta]$.
\item (Graded invariance/cyclicity) \quad
$\langle\alpha,[\beta,\gamma]\rangle
= (-1)^{|\alpha||\beta|}\langle[\alpha,\beta],\gamma\rangle$,
for homogeneous $\mathfrak g$-valued forms $\alpha,\beta,\gamma$.
\end{enumerate}
Since $|\theta|=1$, we compute
\[
d\eta
= \frac{1}{12}\Big(\langle d\theta,\,[\theta,\theta]\rangle
- \langle \theta,\,d[\theta,\theta]\rangle\Big)
= \frac{1}{12}\Big(\langle d\theta,\,[\theta,\theta]\rangle
- \langle \theta,\,[d\theta,\theta]-[\theta,d\theta]\rangle\Big),
\]
where we used (2) in the second step.
Applying (3) with the indicated degrees gives
\[
\langle \theta,\,[d\theta,\theta]\rangle=\langle[\theta,d\theta],\theta\rangle,
\qquad
\langle \theta,\,[\theta,d\theta]\rangle=-\,\langle[\theta,\theta],d\theta\rangle.
\]
Moreover, by (3) and graded skew-symmetry,
\[
\langle d\theta,\,[\theta,\theta]\rangle
= \langle [d\theta,\theta],\theta\rangle
= -\,\langle[\theta,d\theta],\theta\rangle.
\]
Substituting these into the expression for $d\eta$ yields the cancellation
\[
d\eta
= \frac{1}{12}\Big(-\,\langle[\theta,d\theta],\theta\rangle
- \langle[\theta,d\theta],\theta\rangle
+ \langle[\theta,\theta],d\theta\rangle\Big)
= -\,\frac{1}{12}\,\langle[\theta,\theta],d\theta\rangle.
\]
Using the Maurer--Cartan equation (1), $d\theta=-\tfrac12[\theta,\theta]$, we obtain
\[
d\eta
= \frac{1}{24}\,\langle[\theta,\theta],\,[\theta,\theta]\rangle.
\]
Finally, apply (3) once more with $\alpha=\theta$, $\beta=\theta$, $\gamma=[\theta,\theta]$:
since $|\theta|=1$,
\[
\langle[\theta,\theta],\,[\theta,\theta]\rangle
= (-1)^{|\theta||\theta|}\,\langle \theta,\,[\theta,[\theta,\theta]]\rangle
= -\,\langle \theta,\,[\theta,[\theta,\theta]]\rangle.
\]
The graded Jacobi identity implies $[\theta,[\theta,\theta]]=0$, hence
$\langle[\theta,\theta],[\theta,\theta]\rangle=0$ and therefore $d\eta=0$.
\end{proof}

\begin{pro}[Nondegeneracy and the center] \label{prop:2plectic-iff-center0}
Let \(\eta\) be the Cartan \(3\)-form above. Then for any \(X\in\mathfrak g\),
\[
\iota_{X^L}\eta=0 \quad\Longleftrightarrow\quad X\in\mathfrak z(\mathfrak g),
\]
where \(X^L\) is the left-invariant vector field generated by \(X\).
Consequently, \(\eta\) is \(2\)-plectic (i.e.\ \(\iota_v\eta=0\Rightarrow v=0\))
if and only if the center \(\mathfrak z(\mathfrak g)\) is trivial.
\end{pro}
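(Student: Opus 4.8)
The plan is to reduce $\iota_{X^L}\eta$ to an explicit bilinear expression on $\mathfrak g$ and then convert its vanishing into a bracket condition using the invariance and nondegeneracy of the inner product.

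First I would invoke Proposition~\ref{prop:cartan-contraction}, which gives $\iota_{X^L}\eta = -\tfrac{1}{2}\,(\iota_{X^L}\theta^L)\cdot d\theta^L$. Because $X^L$ is left-invariant, $\theta^L(X^L)$ is the constant $\mathfrak g$-valued function equal to $X$, so $\iota_{X^L}\theta^L = X$. Substituting the Maurer--Cartan equation $d\theta^L = -\tfrac{1}{2}[\theta^L,\theta^L]$ from Proposition~\ref{mauercartan identity} then yields $\iota_{X^L}\eta = \tfrac{1}{4}\,X\cdot[\theta^L,\theta^L]$, a scalar $2$-form.

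Next I would evaluate this $2$-form on a pair of left-invariant vector fields $Y^L,Z^L$. Using the definition of the wedge--bracket of $\mathfrak g$-valued $1$-forms together with $\theta^L(Y^L)=Y$ and $\theta^L(Z^L)=Z$, one gets $[\theta^L,\theta^L](Y^L,Z^L)=2[Y,Z]$, whence $\iota_{X^L}\eta(Y^L,Z^L)=\tfrac12\langle X,[Y,Z]\rangle$. Since left-invariant vector fields span each tangent space, $\iota_{X^L}\eta=0$ as a form is equivalent to $\langle X,[Y,Z]\rangle=0$ for all $Y,Z\in\mathfrak g$. Applying the cyclic identity $\langle X,[Y,Z]\rangle=\langle[X,Y],Z\rangle$, which follows from $\Ad$-invariance of $\langle\cdot,\cdot\rangle$, and then invoking nondegeneracy of the inner product, I would conclude that this holds for all $Y,Z$ precisely when $[X,Y]=0$ for every $Y$, i.e.\ when $X\in\mathfrak z(\mathfrak g)$. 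This establishes the equivalence. For the final ``consequently'' statement I would use left-invariance of $\eta$ to reduce pointwise nondegeneracy to the identity: any tangent vector at $g$ equals $X^L_g$ for a unique $X\in\mathfrak g$, and vanishing of $\iota_{X^L_g}\eta_g$ at a single point is equivalent to $\iota_{X^L}\eta=0$ globally. Hence $\eta$ fails to be $2$-plectic exactly when some nonzero $X$ satisfies $\iota_{X^L}\eta=0$, which by the equivalence means exactly $\mathfrak z(\mathfrak g)\neq\{0\}$.

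The main obstacle is essentially bookkeeping: correctly extracting the factor $[\theta^L,\theta^L](Y^L,Z^L)=2[Y,Z]$ from the wedge--bracket convention and tracking the numerical constants through the contraction formula and the Maurer--Cartan substitution, so that the resulting pairing reads cleanly as a nonzero scalar multiple of $\langle X,[Y,Z]\rangle$. The conceptual heart---invariance plus nondegeneracy of $\langle\cdot,\cdot\rangle$ turning bracket-orthogonality into centrality---is then immediate.
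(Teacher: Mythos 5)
Your proposal is correct and follows essentially the same route as the paper: both reduce the claim to the identity $\eta_e(X,Y,Z)=\tfrac12\langle X,[Y,Z]\rangle$ and then convert bracket-orthogonality into centrality via $\Ad$-invariance and nondegeneracy of the inner product. The only difference is presentational — you derive that formula explicitly from Proposition~\ref{prop:cartan-contraction} and the Maurer--Cartan equation, whereas the paper asserts it directly at the identity; your bookkeeping of the constants is correct.
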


\begin{proof}
By left-invariance, it suffices to test at the identity \(e\in G\).
Identifying \(T_eG\simeq\mathfrak g\), we have
\[
\eta_e(X,Y,Z)=\tfrac12\,\langle X,[Y,Z]\rangle.
\]
Fix \(X\in\mathfrak g\). Then \(\iota_{X^L}\eta=0\) iff
\(\eta_e(X,Y,Z)=0\) for all \(Y,Z\in\mathfrak g\), i.e.
\[
\langle X,[Y,Z]\rangle=0\quad\forall\,Y,Z.
\]
Using \(\mathrm{Ad}\)-invariance,
\(\langle X,[Y,Z]\rangle=\langle [X,Y],Z\rangle\).
Since \(\langle\cdot,\cdot\rangle\) is nondegenerate, this holds for all \(Z\)
iff \([X,Y]=0\) for all \(Y\), i.e.\ \(X\in\mathfrak z(\mathfrak g)\).
The converse is immediate: if \(X\in\mathfrak z(\mathfrak g)\),
then \([Y,Z]\) is orthogonal to \(X\) for all \(Y,Z\), so \(\iota_{X^L}\eta=0\).
\end{proof}

Next, we extend the Cartan 3-form in an equivariant way.

\begin{pro}[Closed Equivariant Extension of the Cartan 3-Form]
\label{cor:closed-equivariant-extension}
Let $x \in \mathfrak{g}$ and let $ v_x$ denote the infinitesimal vector field on $G$ generated by conjugation: $ v_x = x^L - x^R$. Define the equivariant extension of $\eta$ by
\[
\eta_G(x) := \eta - \frac{1}{2}(\theta^L + \theta^R) \cdot x.
\]
Then $\eta_G$ is equivariantly closed:
\[
(d_G \eta_G)(x) := (d - \iota_{ v_x}) \eta_G(x) = 0.
\]
\end{pro}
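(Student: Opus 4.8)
The plan is to expand $d_G\eta_G(x)=(d-\iota_{v_x})\eta_G(x)$ and to sort the result by form-degree, since $\eta_G(x)$ is the sum of a $3$-form $\eta$ and a $1$-form $-\tfrac12(\theta^L+\theta^R)\cdot x$. Applying $d$ and $\iota_{v_x}$ produces pieces of form-degree $4$, $2$, and $0$, namely
\[
d\eta, \qquad -\iota_{v_x}\eta-\tfrac12\,d\big((\theta^L+\theta^R)\cdot x\big), \qquad \tfrac12\,\iota_{v_x}\big((\theta^L+\theta^R)\cdot x\big),
\]
and since these live in distinct form-degrees, $d_G\eta_G(x)=0$ holds if and only if each vanishes separately. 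Thus I would reduce the claim to three independent identities and verify them one at a time.

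Two of the three are immediate. The top piece is $d\eta$, which vanishes by Theorem~\ref{eta is closed}. For the bottom piece, I would compute the contraction using Lemma~\ref{interior product of cartan form}: since $\iota_{v_x}\theta^L=(1-\Ad_{g^{-1}})x$ and $\iota_{v_x}\theta^R=(\Ad_g-1)x$, their sum is $(\Ad_g-\Ad_{g^{-1}})x$, so the $0$-form equals $\tfrac12\big((\Ad_g-\Ad_{g^{-1}})x\big)\cdot x$. By $\Ad$-invariance and symmetry of the inner product one has $(\Ad_g x)\cdot x=x\cdot(\Ad_{g^{-1}}x)=(\Ad_{g^{-1}}x)\cdot x$, so the two terms cancel and the $0$-form piece vanishes.

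The remaining $2$-form identity, $-\iota_{v_x}\eta=\tfrac12\,d\big((\theta^L+\theta^R)\cdot x\big)$, is the heart of the matter and the step I expect to be the main obstacle. For the right-hand side I would note that $x$ is a constant in $\mathfrak g$, so $d\big((\theta^L+\theta^R)\cdot x\big)=(d\theta^L+d\theta^R)\cdot x$. For the left-hand side I would invoke Proposition~\ref{prop:cartan-contraction} to write $\iota_{v_x}\eta=-\tfrac12(\iota_{v_x}\theta^L)\cdot d\theta^L=-\tfrac12\big((1-\Ad_{g^{-1}})x\big)\cdot d\theta^L$, using Lemma~\ref{interior product of cartan form} again. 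The crux is to relate the $\Ad_{g^{-1}}$-term to $d\theta^R$: differentiating $\theta^R=\Ad_g\theta^L$ (equation~\eqref{equation 2.5}) through the structure equations of Proposition~\ref{mauercartan identity} and using the $\Ad$-equivariance of the wedge--bracket gives $d\theta^R=-\Ad_g\,d\theta^L$, equivalently $\Ad_g\,d\theta^L=-d\theta^R$. Combining this with $\Ad$-invariance of the inner product yields $(\Ad_{g^{-1}}x)\cdot d\theta^L=x\cdot(\Ad_g\,d\theta^L)=-x\cdot d\theta^R$, so that $\big((1-\Ad_{g^{-1}})x\big)\cdot d\theta^L=(d\theta^L+d\theta^R)\cdot x$. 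Substituting back shows $-\iota_{v_x}\eta=\tfrac12(d\theta^L+d\theta^R)\cdot x$, which matches the right-hand side and completes the proof. The delicate points to get right are the sign bookkeeping in $d\theta^R=-\Ad_g\,d\theta^L$ and the repeated, careful use of $\Ad$-invariance to move $\Ad_g$ across the inner product.
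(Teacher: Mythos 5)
Your proposal is correct and follows essentially the same route as the paper: both compute $\iota_{v_x}\eta=-d\big(\tfrac{\theta^L+\theta^R}{2}\big)\cdot x$ via Proposition~\ref{prop:cartan-contraction}, Lemma~\ref{interior product of cartan form}, and the identity $\Ad_g\,d\theta^L=-d\theta^R$, then kill the remaining $0$-form term using the skew-symmetry of $\Ad_g-\Ad_{g^{-1}}$ with respect to the invariant pairing. Your explicit sorting by form-degree is only an organizational refinement (arguably a cleaner presentation than the paper's), not a different argument.
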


\begin{proof}
Using the previous proposition and the identity $\iota_{v_x} \theta^L = (1 - \operatorname{Ad}_{g^{-1}}) x$, we have:
\[
\begin{aligned}
\iota_{ v_x} \eta &= -\frac{1}{2} \iota_{ v_x} \theta^L \cdot d\theta^L \\
&= -\frac{1}{2}(1 - \operatorname{Ad}_{g^{-1}}) x \cdot d\theta^L \\
&= -\frac{1}{2} d\theta^L \cdot x + \frac{1}{2} \operatorname{Ad}_g d\theta^L \cdot x.
\end{aligned}
\]
Now, using the coordinate identities:
\[
\operatorname{Ad}_g d(g^{-1} dg) = - d(dg\, g^{-1}) \quad \implies \quad \operatorname{Ad}_g d\theta^L = -d\theta^R.
\]
It follows that
\[
\iota_{ v_x} \eta = -\frac{1}{2} d\theta^L \cdot \xi - \frac{1}{2} d\theta^R \cdot x = -d\left( \frac{\theta^L + \theta^R}{2} \right) \cdot x.
\]
Putting this into the definition of $d_G$, we get:
\[
\begin{aligned}
(d_G \eta_G)(x) &= d\eta - \frac{1}{2}d(\theta^L + \theta^R) \cdot x - \iota_{ v_x} \eta + \iota_{ v_x} \left( \frac{1}{2}(\theta^L + \theta^R) \cdot x \right) \\
&= 0 + 0 + d\left( \frac{\theta^L + \theta^R}{2} \right) \cdot x + \frac{1}{2}(\theta^L + \theta^R)( v_x) \cdot x \\
&= \frac{1}{2}(\theta^L + \theta^R)(x^L - x^R) \cdot x \\
&= \frac{1}{2}(\operatorname{Ad}_g - \operatorname{Ad}_{g^{-1}}) x \cdot x = 0,
\end{aligned}
\]
since $\operatorname{Ad}_g - \operatorname{Ad}_{g^{-1}}$ is skew-symmetric with respect to the invariant inner product, making the pairing vanish.
\end{proof}

\begin{pro}\label{key cartan identity maurer form}
Let $\eta = \frac{1}{12} \theta^L \cdot [\theta^L, \theta^L]$ denote the Cartan 3-form on a Lie group $G$ equipped with an $\operatorname{Ad}$-invariant inner product on its Lie algebra $\mathfrak{g}$. Then for any $x, y \in \mathfrak{g}$, the following identity of $1$-forms holds:
\[
\frac{1}{2}(\theta^L + \theta^R) \cdot [x, y] + \iota_{ v_x} \iota_{ v_y} \eta = \frac{1}{2} d \iota_{ v_x} (\theta^L + \theta^R) \cdot y.
\]
\end{pro}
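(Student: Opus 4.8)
The plan is to reduce the asserted identity to a purely Lie-algebraic statement about the Maurer--Cartan forms, by exploiting the single-contraction formula for $\eta$ already obtained inside the proof of Proposition~\ref{cor:closed-equivariant-extension}, namely $\iota_{v_y}\eta = -d\!\left(\tfrac{\theta^L+\theta^R}{2}\right)\cdot y = -\tfrac12\, d(\theta^L+\theta^R)\cdot y$. Applying $\iota_{v_x}$ to this $2$-form and pulling the (constant) pairing with $y$ through the contraction gives $\iota_{v_x}\iota_{v_y}\eta = -\tfrac12\,\iota_{v_x}d(\theta^L+\theta^R)\cdot y$. I would then invoke Cartan's magic formula $\iota_{v_x}d = \mathcal{L}_{v_x} - d\iota_{v_x}$ to rewrite this as
\[
\iota_{v_x}\iota_{v_y}\eta = -\tfrac12\,\mathcal{L}_{v_x}(\theta^L+\theta^R)\cdot y + \tfrac12\,d\iota_{v_x}(\theta^L+\theta^R)\cdot y.
\]
Substituting this into the left-hand side of the asserted identity, the term $\tfrac12\,d\iota_{v_x}(\theta^L+\theta^R)\cdot y$ cancels against the right-hand side exactly, so the whole statement collapses to the single claim
\[
(\theta^L+\theta^R)\cdot[x,y] = \mathcal{L}_{v_x}(\theta^L+\theta^R)\cdot y.
\]

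The second step is to compute $\mathcal{L}_{v_x}(\theta^L+\theta^R)$ explicitly, where $v_x = x^L - x^R$. Here I would use that the flow of the right-invariant field $x^R$ is left translation $g\mapsto \exp(tx)g$, under which $\theta^L$ is invariant, giving $\mathcal{L}_{x^R}\theta^L = 0$; symmetrically $x^L$ generates right translation and $\theta^R$ is right-invariant, so $\mathcal{L}_{x^L}\theta^R = 0$. For the two surviving terms I would again apply Cartan's formula together with the Maurer--Cartan equations of Proposition~\ref{mauercartan identity}: since $\iota_{x^L}\theta^L = x$ is constant and $\iota_{x^L}[\theta^L,\theta^L] = 2[x,\theta^L]$, one finds $\mathcal{L}_{x^L}\theta^L = \iota_{x^L}d\theta^L = -[x,\theta^L]$, and likewise $\mathcal{L}_{x^R}\theta^R = [x,\theta^R]$. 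Combining the four pieces yields $\mathcal{L}_{v_x}(\theta^L+\theta^R) = -[x,\theta^L] - [x,\theta^R] = -[x,\,\theta^L+\theta^R]$.

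Finally I would pair with $y$ and use the $\operatorname{Ad}$-invariance of the inner product in its infinitesimal form $\langle[x,z],y\rangle = -\langle z,[x,y]\rangle$ (i.e.\ $\operatorname{ad}_x$ is skew-adjoint). This gives $\mathcal{L}_{v_x}(\theta^L+\theta^R)\cdot y = -\langle[x,\theta^L+\theta^R],y\rangle = \langle\theta^L+\theta^R,[x,y]\rangle = (\theta^L+\theta^R)\cdot[x,y]$, which is precisely the reduced claim. I expect the main obstacle to be bookkeeping the sign conventions consistently---especially getting the two vanishing Lie derivatives $\mathcal{L}_{x^R}\theta^L = \mathcal{L}_{x^L}\theta^R = 0$ right (which rests on correctly matching left/right invariant fields to the translations they generate) and tracking the single minus sign introduced by skew-adjointness of $\operatorname{ad}_x$; these must align with the conventions fixed earlier in Lemma~\ref{interior product of cartan form} and Proposition~\ref{cor:closed-equivariant-extension}.
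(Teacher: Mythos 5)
Your proposal is correct, and it proves the identity by a genuinely different route than the paper. The paper's proof is a pointwise verification: it tests both sides of the identity against an arbitrary left-invariant vector field $Z=z^L$, expands the triple contraction of $\eta$ and the differential of the function $\iota_{v_x}(\theta^L+\theta^R)\cdot y$ explicitly in terms of $\operatorname{Ad}_g$ using Lemma~\ref{interior product of cartan form}, and matches the two resulting expressions via $\operatorname{Ad}$-invariance of the inner product. You instead work globally: starting from the single-contraction formula $\iota_{v_y}\eta=-\tfrac12\,d(\theta^L+\theta^R)\cdot y$ (already established in the proof of Proposition~\ref{cor:closed-equivariant-extension}, so there is no circularity), Cartan's magic formula reduces the whole statement to the Lie-algebraic identity $\mathcal{L}_{v_x}(\theta^L+\theta^R)\cdot y=(\theta^L+\theta^R)\cdot[x,y]$, which you then verify from $\mathcal{L}_{x^R}\theta^L=\mathcal{L}_{x^L}\theta^R=0$, the Maurer--Cartan equations, and skew-adjointness of $\operatorname{ad}_x$. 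I checked your sign bookkeeping: $\mathcal{L}_{x^L}\theta^L=-[x,\theta^L]$, $\mathcal{L}_{x^R}\theta^R=[x,\theta^R]$, hence $\mathcal{L}_{v_x}(\theta^L+\theta^R)=-[x,\theta^L+\theta^R]$, and the minus sign from $\operatorname{ad}$-skewness lands exactly where it must. Your argument buys conceptual transparency (the identity is exhibited as Cartan's formula plus equivariance of $\theta^L+\theta^R$ under conjugation) and avoids the somewhat terse expansion of $\iota_Z\iota_{v_x}\iota_{v_y}\eta$ in the paper; the paper's computation, on the other hand, produces the intermediate $\operatorname{Ad}_g$-expressions that are reused verbatim in Lemma~\ref{Lie derivative of f2} and Lemma~\ref{triple contraction of cartan form}. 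Either proof is acceptable.
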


\begin{proof}
As usual, to verify the equality of differential 1-forms, it suffices to test both sides on a spanning set of vector fields. We evaluate both sides on an arbitrary left-invariant vector field $Z = z^L$ with $z \in \mathfrak{g}$, and define $X = x$, $Y = y$. We have:
\[
\resizebox{\textwidth}{!}{$
\begin{aligned}
\frac{1}{2} \iota_Z (\theta^L + \theta^R) \cdot [X, Y] + \iota_Z \iota_{v_x} \iota_{v_y} \eta 
&= \frac{1}{2}(1 + \operatorname{Ad}_g)Z \cdot [X, Y] + \frac{1}{2} \iota_Y \theta^L \cdot [\iota_X \theta^L, \iota_Z \theta^L] \\
&= \frac{1}{2} \left( Z \cdot [X, Y] + \operatorname{Ad}_g Z \cdot [X, Y] + (1 - \operatorname{Ad}_{g^{-1}})Y \cdot \left[ (1 - \operatorname{Ad}_{g^{-1}})X, Z \right] \right) \\
&= \frac{1}{2} \left( [Z, X] \cdot \operatorname{Ad}_{g^{-1}} Y + \operatorname{Ad}_{g^{-1}} X \cdot [Y, Z] \right).
\end{aligned}
$}
\]

 Recall that $\iota_{ v_x}(\theta^L + \theta^R) = (\operatorname{Ad}_g - \operatorname{Ad}_{g^{-1}}) X$, hence:
\[
\begin{aligned}
\frac{1}{2} d \iota_{ v_x} (\theta^L + \theta^R) \cdot Y &= \frac{1}{2} d \left( (\operatorname{Ad}_g - \operatorname{Ad}_{g^{-1}}) X \cdot Y \right) \\
&= \frac{1}{2} d \left( \operatorname{Ad}_g X \cdot Y - X \cdot \operatorname{Ad}_g Y \right).
\end{aligned}
\]
We have:
\[
\begin{aligned}
\left. \frac{d}{dt} \right|_{t=0} \operatorname{Ad}_{g \exp(tZ)} X &= \operatorname{Ad}_g [Z, X], \\
\left. \frac{d}{dt} \right|_{t=0} \operatorname{Ad}_{g \exp(tZ)} Y &= \operatorname{Ad}_g [Z, Y].
\end{aligned}
\]
Hence
\[
\frac{1}{2} d \iota_{ v_x} (\theta^L + \theta^R) \cdot Y=\frac{1}{2} \left( \operatorname{Ad}_g [Z, X] \cdot Y - X \cdot \operatorname{Ad}_g [Z, Y] \right).
\]

 Finally, using the $\operatorname{Ad}$-invariance of the inner product,
\[
\begin{aligned}
\operatorname{Ad}_g [Z, X] \cdot Y &= [Z, X] \cdot \operatorname{Ad}_{g^{-1}} Y, \\
X \cdot \operatorname{Ad}_g [Z, Y] &= \operatorname{Ad}_{g^{-1}} X \cdot [Y, Z],
\end{aligned}
\]
it follows that
\begin{align*}
\frac{1}{2} \iota_Z (\theta^L + \theta^R) \cdot [X, Y] + \iota_Z \iota_{ v_x} \iota_{ v_y} \eta &= \frac{1}{2} \left( [Z, X] \cdot \operatorname{Ad}_{g^{-1}} Y + \operatorname{Ad}_{g^{-1}} X \cdot [Y, Z] \right)\\
&= \frac{1}{2} \left( \operatorname{Ad}_g [Z, X] \cdot Y - X \cdot \operatorname{Ad}_g [Z, Y] \right)\\
&= \frac{1}{2} d \iota_{ v_x} (\theta^L + \theta^R) \cdot Y.
\end{align*}
This completes the proof.
\end{proof}

\begin{lem}\label{Lie derivative of f2}
Let \( G \) be a Lie group with Lie algebra \( \mathfrak{g} \), equipped with an invariant inner product \( \cdot \). Define
\[
f_2(y, x)(g) := \frac{1}{2} \left( (\operatorname{Ad}_g - \operatorname{Ad}_{g^{-1}})(y) \cdot x \right)
\]
for \( x, y \in \mathfrak{g} \) and \( g \in G \). Let \( v_z \) be the fundamental vector field associated to the conjugation action by \( z \in \mathfrak{g} \). Then the Lie derivative of \( f_2(y, x) \in C^\infty(G) \) along \( v_z \) satisfies:
\[
\mathcal{L}_{v_z} f_2(y, x) = f_2([z, y], x) - f_2(y, [z, x]).
\]
\end{lem}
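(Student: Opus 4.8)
The plan is to use the fact that on functions the Lie derivative is just the directional derivative, so that $\mathcal{L}_{v_z} f_2(y,x) = v_z\bigl(f_2(y,x)\bigr)$. Since $v_z = z^L - z^R$ is the generator of the conjugation action, its flow is $\phi_t(g) = \exp(-tz)\,g\,\exp(tz)$, and hence $\mathcal{L}_{v_z} f_2(y,x)(g) = \frac{d}{dt}\big|_{t=0} f_2(y,x)\bigl(\phi_t(g)\bigr)$. The entire computation then reduces to differentiating the $g$-dependent operator $\operatorname{Ad}_g - \operatorname{Ad}_{g^{-1}}$ along this curve and pairing the result with $x$.

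First I would substitute the definition of $f_2$ and note that the only dependence on the flow enters through $\operatorname{Ad}_{\phi_t(g)}$ and $\operatorname{Ad}_{\phi_t(g)^{-1}}$. Using the homomorphism property of $\operatorname{Ad}$ together with $\operatorname{Ad}_{\exp(tz)} = \exp(t\,\operatorname{ad}_z)$, one writes $\operatorname{Ad}_{\phi_t(g)} = e^{-t\,\operatorname{ad}_z}\operatorname{Ad}_g\, e^{t\,\operatorname{ad}_z}$ and the analogous expression for $g^{-1}$. Differentiating at $t=0$ yields $\frac{d}{dt}\big|_{0} \operatorname{Ad}_{\phi_t(g)}(y) = \operatorname{Ad}_g[z,y] - [z,\operatorname{Ad}_g y]$ and the corresponding identity with $g$ replaced by $g^{-1}$. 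Collecting terms, the derivative of $(\operatorname{Ad}_g - \operatorname{Ad}_{g^{-1}})(y)$ along the flow separates cleanly into a piece $(\operatorname{Ad}_g - \operatorname{Ad}_{g^{-1}})[z,y]$ and a piece $-[z,(\operatorname{Ad}_g - \operatorname{Ad}_{g^{-1}})y]$.

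The first piece, after pairing with $x$ and multiplying by $\tfrac12$, reassembles directly into $f_2([z,y],x)$. For the second piece I would invoke the $\operatorname{Ad}$-invariance of the inner product in its infinitesimal form, namely that $\operatorname{ad}_z$ is skew-adjoint, $\langle[z,a],b\rangle = -\langle a,[z,b]\rangle$. Applying this with $a = (\operatorname{Ad}_g - \operatorname{Ad}_{g^{-1}})y$ and $b = x$ moves the bracket off the first slot and onto $x$, producing a multiple of $(\operatorname{Ad}_g - \operatorname{Ad}_{g^{-1}})y \cdot [z,x]$, i.e.\ a multiple of $f_2(y,[z,x])$. Combining the two contributions then gives the stated identity $\mathcal{L}_{v_z} f_2(y,x) = f_2([z,y],x) - f_2(y,[z,x])$, with the precise relative sign dictated by the $\operatorname{ad}$-invariance pairing.

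The main obstacle is entirely one of sign bookkeeping, and it is concentrated in this last step. Three separate sign sources interact: the minus built into the definition of the fundamental vector field (and hence into the orientation of the conjugation flow $\phi_t$), the commutators produced by differentiating a two-sided conjugate, and the skew-adjointness of $\operatorname{ad}_z$. Only a careful and consistent reconciliation of these conventions pins down the relative sign between the two terms $f_2([z,y],x)$ and $f_2(y,[z,x])$, which is the whole content of the lemma; the remaining manipulations are routine once the flow and the $\operatorname{Ad}$-derivative formula are in hand.
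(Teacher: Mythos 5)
Your overall strategy coincides with the paper's: realize $\mathcal{L}_{v_z}f_2(y,x)$ as a $t$-derivative of $f_2(y,x)$ along the conjugation flow, differentiate $\operatorname{Ad}_{\phi_t(g)}$ using the homomorphism property of $\operatorname{Ad}$, and then move the resulting $\operatorname{ad}_z$ off the first slot with the infinitesimal $\operatorname{Ad}$-invariance $[z,a]\cdot b=-a\cdot[z,b]$. The one structural difference is the flow direction: you use $\phi_t(g)=\exp(-tz)\,g\,\exp(tz)$, which is in fact the flow of $v_z=z^L-z^R$ under the paper's convention $v_x(m)=\frac{d}{dt}\big|_{0}\exp(-tx)\cdot m$, whereas the paper's proof differentiates along $\exp(tz)\,g\,\exp(-tz)$ (the flow of $-v_z$ under that same convention).

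There is, however, a genuine gap at exactly the point you defer as ``sign bookkeeping'': you never carry it out, and when one does, your own intermediate formulas do not yield the stated identity. Your decomposition gives
\[
\frac{d}{dt}\Big|_{0}\bigl(\operatorname{Ad}_{\phi_t(g)}-\operatorname{Ad}_{\phi_t(g)^{-1}}\bigr)(y)
=(\operatorname{Ad}_g-\operatorname{Ad}_{g^{-1}})[z,y]\;-\;[z,(\operatorname{Ad}_g-\operatorname{Ad}_{g^{-1}})y],
\]
and pairing the second piece with $x$ produces
\[
-\tfrac12\,[z,(\operatorname{Ad}_g-\operatorname{Ad}_{g^{-1}})y]\cdot x
=+\tfrac12\,(\operatorname{Ad}_g-\operatorname{Ad}_{g^{-1}})y\cdot[z,x]
=+f_2(y,[z,x]),
\]
so your route ends at $\mathcal{L}_{v_z}f_2(y,x)=f_2([z,y],x)+f_2(y,[z,x])$ — a plus sign on the second term, not the minus sign in the statement. (Equivalently, using the antisymmetry $f_2(a,b)=-f_2(b,a)$, this is $f_2([z,y],x)-f_2([z,x],y)$, which is the equivariance formula one expects from $f_2(y,x)(\phi_t(g))=f_2(e^{t\operatorname{ad}_z}y,\,e^{t\operatorname{ad}_z}x)(g)$.) So as written your proof does not close: either the final sign of the lemma must be reexamined or an extra sign must be exhibited, and asserting that the ``ad-invariance pairing dictates'' the minus sign is not a substitute for the computation. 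Be aware that the paper's own proof is not clean here either: it differentiates along the reversed flow and then rewrites its last displayed line $-f_2(y,[z,x])+f_2([y,z],x)$ as $f_2([z,y],x)-f_2(y,[z,x])$, silently flipping the sign of the first term via $[y,z]=-[z,y]$. The tension you would encounter is therefore in the statement and conventions, not merely in your write-up, but it must be resolved explicitly before the lemma can be considered proved.
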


\begin{proof}
We compute the Lie derivative using the flow \( \phi_t^z(g) = \exp(tz) g \exp(-tz) \) of the conjugation action:
\[
\mathcal{L}_{v_z} f_2(y, x)(g) = \left. \frac{d}{dt} \right|_{t=0} f_2(y, x)(\phi_t^z(g)).
\]
Substitute the definition of \( f_2 \):
\[
\mathcal{L}_{v_z} f_2(y, x)(g) = \left. \frac{d}{dt} \right|_{t=0} \frac{1}{2} \left( (\operatorname{Ad}_{\phi_t^z(g)} - \operatorname{Ad}_{\phi_t^z(g)^{-1}})(y) \cdot x \right).
\]
Let \( h(t) := \phi_t^z(g) = \exp(tz) g \exp(-tz) \). Then:
\[
\resizebox{\textwidth}{!}{$
\begin{aligned}
\left. \frac{d}{dt} \right|_{t=0} \operatorname{Ad}_{h(t)}(y) &= \left. \frac{d}{dt} \right|_{t=0} h(t) y h(t)^{-1} \\
&= \left.\left( \frac{d}{dt} h(t) \right) y h(t)^{-1} \right|_{t=0} + \left. h(t) y \left( \frac{d}{dt} h(t)^{-1} \right) \right|_{t=0} \\
&= \left.\left( \frac{d}{dt} \exp(tz) g \exp(-tz) \right) y h(t)^{-1} \right|_{t=0} + \left. h(t) y \left( \frac{d}{dt} \exp(tz) g^{-1} \exp(-tz) \right) \right|_{t=0} \\
&= \left( z \exp(tz) g \exp(-tz) - \exp(tz) g z \exp(-tz) \right) y h(t)^{-1} \Big|_{t=0} \\
&\quad + h(t) y \left( z \exp(tz) g^{-1} \exp(-tz) - \exp(tz) g^{-1} z \exp(-tz) \right) \Big|_{t=0} \\
&= (zg - gz) y g^{-1} + g y (z g^{-1} - g^{-1} z) \\
&= (z g y g^{-1} - g z y g^{-1}) + (g y z g^{-1} - g y g^{-1} z) \\
&= (z \operatorname{Ad}_g(y) - \operatorname{Ad}_g(y) z) + g (y z - z y) g^{-1} \\
&= [z, \operatorname{Ad}_g(y)] + \operatorname{Ad}_g([y, z]).
\end{aligned}
$}
\]

Similarly,
\begin{align*}
    \left. \frac{d}{dt} \right|_{t=0} \operatorname{Ad}_{h(t)^{-1}}(y)
         &= [z, \operatorname{Ad}_{g^{-1}}(y)]+ \operatorname{Ad}_{g^{-1}}([y,z]).
\end{align*}

It follows that
\begin{align*}
    \mathcal{L}_{v_z} f_2(y, x)(g) &= \left. \frac{d}{dt} \right|_{t=0} \frac{1}{2} \left( (\operatorname{Ad}_{\phi_t^z(g)} - \operatorname{Ad}_{\phi_t^z(g)^{-1}})(y) \cdot x \right)\\
    &=\frac{1}{2}\left( [z, \operatorname{Ad}_g(y)]+ \operatorname{Ad}_g([y,z])-[z, \operatorname{Ad}_{g^{-1}}(y)]- \operatorname{Ad}_{g^{-1}}([y,z])\right)\cdot x
\end{align*}

Using invariance of the inner product, i.e., \( [a, b] \cdot c = -b \cdot [a, c] \), we rewrite:
\begin{align*}
[z, \operatorname{Ad}_g(y)] \cdot x &= -\operatorname{Ad}_g(y) \cdot [z, x], \\
[z, \operatorname{Ad}_{g^{-1}}(y)] \cdot x &= -\operatorname{Ad}_{g^{-1}}(y) \cdot [z, x],
\end{align*}
so that:
\begin{align*}
    \mathcal{L}_{v_z} f_2(y, x)(g) &= \frac{1}{2}\left( -\operatorname{Ad}_g(y) \cdot [z, x]+ \operatorname{Ad}_g([y,z])\cdot x +\operatorname{Ad}_{g^{-1}}(y) \cdot [z, x]- \operatorname{Ad}_{g^{-1}}([y,z])\cdot x\right)\\
    &= \frac{1}{2}\left( -\operatorname{Ad}_g(y) + \operatorname{Ad}_{g^{-1}}(y) \right)\cdot [z, x]+ \frac{1}{2}\left(  \operatorname{Ad}_g([y,z]) - \operatorname{Ad}_{g^{-1}}([y,z]) \right)\cdot x
\end{align*}

On the other hand,
\begin{align*}
f_2([z, y], x)(g) &= \frac{1}{2} \left( (\operatorname{Ad}_g - \operatorname{Ad}_{g^{-1}})([z, y]) \cdot x \right), \\
f_2(y, [z, x])(g) &= \frac{1}{2} \left( (\operatorname{Ad}_g - \operatorname{Ad}_{g^{-1}})(y) \cdot [z, x] \right).
\end{align*}

Therefore,
\begin{align*}
    \mathcal{L}_{v_z} f_2(y, x)(g)&=f_2([z, y], x)(g) - f_2(y, [z, x])(g)\\
    &=f_2([z, y], x)(g) + f_2(y, [x, z])(g).
\end{align*}

This completes the proof.
\end{proof}

We end this section with a Jacobi identity involving the Cartan 3-form.


Let \( G \) be a Lie group with Lie algebra \( \mathfrak{g} \), and let \( \eta \in \Omega^3(G) \) denote the Cartan 3-form.
Assume that \( G \) acts on itself by conjugation. The corresponding fundamental vector field for \( x \in \mathfrak{g} \) is
\[
v_x = x^L - x^R.
\]

We define the following auxiliary functions:
\begin{align*}
f_1(x) &= \frac{1}{2} \left( (\theta^L + \theta^R) \cdot x \right), \\
f_2(x, y) &= \iota_{v_x} f_1(y) = \frac{1}{2} \left( (\mathrm{Ad}_g - \mathrm{Ad}_{g^{-1}})(x) \cdot y \right), \\
f_2(x, y) &= \frac{1}{2} \langle (\mathrm{Ad}_g - \mathrm{Ad}_{g^{-1}})(x), y \rangle.
\end{align*}

\begin{lem}[Jacobiator identity for the Cartan 3-form]\label{triple contraction of cartan form}
For all \( x, y, z \in \mathfrak{g} \), the Cartan 3-form satisfies:
\[
\eta(v_x, v_y, v_z) = f_2(x, [y, z]) + f_2(y, [z, x]) + f_2(z, [x, y]).
\]
\end{lem}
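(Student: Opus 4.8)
The plan is to reduce the identity to a single pointwise computation in \(\mathfrak{g}\) by trivializing everything through the left Maurer--Cartan form. Writing \(P := \operatorname{Ad}_{g^{-1}}\), so that \(\operatorname{Ad}_g = P^{-1}\), I would use throughout that \(P\) is an orthogonal automorphism of \((\mathfrak{g},\cdot)\) preserving the bracket, i.e. \(P[u,w] = [Pu, Pw]\) and \(\langle Pu, Pw\rangle = \langle u, w\rangle\), together with the \(\operatorname{Ad}\)-invariance (cyclicity) \(\langle u, [w, t]\rangle = \langle w, [t, u]\rangle = \langle t, [u, w]\rangle\) of the inner product.

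The first step is to establish the evaluation formula
\[
\eta(a_1, a_2, a_3) = \tfrac{1}{2}\,\langle \theta^L(a_1),\, [\theta^L(a_2), \theta^L(a_3)]\rangle
\]
valid for arbitrary tangent vectors at any \(g \in G\). This follows from the definition \(\eta = \tfrac{1}{12}\,\theta^L \cdot [\theta^L, \theta^L]\) by expanding the wedge of the \(\mathfrak g\)-valued \(1\)-form \(\theta^L\) with the \(\mathfrak g\)-valued \(2\)-form \([\theta^L, \theta^L]\), using \([\theta^L, \theta^L](a_i, a_j) = 2[\theta^L(a_i), \theta^L(a_j)]\), and collapsing the three resulting terms into one via cyclicity; the resulting factor of \(6\) cancels the \(\tfrac{1}{12}\) to give \(\tfrac12\). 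This is consistent with the value \(\eta_e(X,Y,Z) = \tfrac12\langle X, [Y,Z]\rangle\) recorded in Proposition~\ref{prop:2plectic-iff-center0}, now promoted to arbitrary points by left-invariance of \(\eta\).

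The second step is to substitute the contraction \(\theta^L(v_x) = (1 - P)x\) from Lemma~\ref{interior product of cartan form} (and likewise for \(y, z\)), giving
\[
\eta(v_x, v_y, v_z) = \tfrac{1}{2}\,\langle (1-P)x,\, [(1-P)y,\, (1-P)z]\rangle.
\]
Expanding the inner bracket by bilinearity and using \(P[y,z] = [Py, Pz]\) produces four terms; pairing against \((1-P)x\) and moving every \(P\) off the \(x\)-slot by orthogonality reduces the left-hand side to a sum of six scalars of the shape \(\pm\langle x, [\,\cdot\,,\,\cdot\,]\rangle\) with entries drawn from \(\{y, z, Py, Pz, P^{-1}y, P^{-1}z\}\). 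For the third step I would expand the right-hand side: writing \(f_2(p, q) = \tfrac12\langle (P^{-1} - P)p,\, q\rangle\) and applying cyclicity to rotate each of the three summands into the \(x\)-slot, followed by orthogonality to strip \(P\) off \(x\), yields exactly the same six scalars. Matching the two lists term by term then completes the proof.

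The main obstacle is purely organizational: the six-term bookkeeping in the last two steps must be carried out with consistent sign and cyclicity conventions, since one misapplied invariance identity would destroy the cancellation. There is no genuine analytic difficulty—once the evaluation formula of the first step is in hand, the statement is an algebraic identity in the orthogonal, bracket-preserving operator \(P\), and the manipulations are forced by \(\operatorname{Ad}\)-invariance alone.
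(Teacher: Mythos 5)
Your proposal is correct, and it takes a genuinely different route from the paper. The paper derives the identity in three moves: it starts from the previously established double-contraction formula $\iota_{v_y}\iota_{v_x}\eta = \mathrm{d}f_2(y,x) - f_1([y,x])$ (Proposition~\ref{key cartan identity maurer form}), contracts once more with $v_z$ so that $\iota_{v_z}\mathrm{d}f_2(y,x)$ becomes the Lie derivative $\mathcal{L}_{v_z}f_2(y,x)$, and then invokes Lemma~\ref{Lie derivative of f2} to convert that Lie derivative into bracket terms. Your argument bypasses both auxiliary results entirely: you evaluate $\eta$ pointwise via $\eta(a_1,a_2,a_3)=\tfrac12\langle\theta^L(a_1),[\theta^L(a_2),\theta^L(a_3)]\rangle$ (which is exactly the paper's own formula from Proposition~\ref{prop:2plectic-iff-center0}, promoted to arbitrary points by left-invariance), substitute $\theta^L(v_x)=(1-\operatorname{Ad}_{g^{-1}})x$ from Lemma~\ref{interior product of cartan form}, and finish by pure algebra in the orthogonal, bracket-preserving operator $P=\operatorname{Ad}_{g^{-1}}$. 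I checked the bookkeeping: after expansion two of the eight terms cancel, the remaining six regroup as $\langle(P^{-1}-P)x,[y,z]\rangle + \langle x,[(P^{-1}-P)y,z]\rangle + \langle x,[y,(P^{-1}-P)z]\rangle$ up to the factor $\tfrac12$, and cyclicity turns this into exactly $f_2(x,[y,z])+f_2(y,[z,x])+f_2(z,[x,y])$. What each approach buys: the paper's proof is shorter on the page but inherits its correctness from two sign-sensitive lemmas, and indeed its final displayed line reads $f_2(x,[y,z])+f_2(y,[x,z])-f_2(z,[x,y])$, which does not literally agree with the cyclic sum in the statement — so your self-contained computation, which lands precisely on the stated identity, doubles as a useful consistency check on the signs. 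The only point to make explicit when writing it up is the wedge-product normalization used in passing from $\tfrac{1}{12}\,\theta^L\cdot[\theta^L,\theta^L]$ to the factor $\tfrac12$, but this is fixed by the paper's own evaluation $\eta_e(X,Y,Z)=\tfrac12\langle X,[Y,Z]\rangle$, so there is no gap.
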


\begin{proof}
By Proposition \ref{key cartan identity maurer form}, we have
\[
\iota_{v_y} \iota_{v_x} \eta = d f_2(y, x) - f_1([y, x]),
\]
so that
\[
\iota_{v_z} \iota_{v_y} \iota_{v_x} \eta = \iota_{v_z} d f_2(y, x) - \iota_{v_z} f_1([y, x]).
\]
Since \( f_2(y, x) \in C^\infty(G) \) and \( \iota_{v_z} \) acts trivially on functions:
\[
\iota_{v_z} d f_2(y, x) = \mathcal{L}_{v_z} f_2(y, x), \qquad \iota_{v_z} f_1([y, x]) = f_2([y, x], z).
\]
So:
\[
\iota_{v_z} \iota_{v_y} \iota_{v_x} \eta = \mathcal{L}_{v_z} f_2(y, x) - f_2([y, x], z).
\]

By Lemma~\ref{Lie derivative of f2}, 
\[
\mathcal{L}_{v_z} f_2(y, x) = f_2([z, y], x) - f_2(y, [z, x]).
\]

It follows that 
\[
\iota_{v_z} \iota_{v_y} \iota_{v_x} \eta = f_2([z, y], x) - f_2(y, [z, x]) - f_2([y, x], z).
\]

Finally, using antisymmetry of \( f_2 \) and the Lie bracket, we obtain:
\[
\iota_{v_z} \iota_{v_y} \iota_{v_x} \eta = f_2(x, [y, z]) + f_2(y, [x, z]) - f_2(z, [x, y]).
\]
\end{proof}

In the last section of this chapter, we turn to the theory of quasi-Hamiltonian \( G \)-spaces, where the Cartan 3-form plays a central role in defining group-valued moment maps.

\section{A brief review of quasi-Hamiltonian \( G \)-spaces}
\label{Group valued moment maps}

Throughout this exposition, we consider a compact Lie group \( G \). 
The concept of group-valued moment maps, introduced in \cite{10.4310/jdg/1214460860}, provides a framework for this generalization. Unlike traditional moment maps, group-valued moment maps accommodate settings where the symplectic structure is replaced by a weaker 2-form and where the moment map targets the Lie group itself. This leads to the following definition.

\begin{defn}[\cite{10.4310/jdg/1214460860}]
\label{def:quasiHamiltonianGSpace}
A \emph{quasi-Hamiltonian \(G\)-space} consists of a triple \((M, \omega, \Phi)\) where:
\begin{itemize}
    \item \(M\) is a \(G\)-manifold,
    \item \(\omega \in \Omega^2(M)^G\) is a \(G\)-invariant 2-form, and
    \item \(\Phi: M \to G\) is a \(G\)-equivariant map.
\end{itemize}
These data must satisfy the following conditions:
\begin{enumerate}
\item \(d\omega = - \Phi^* \eta\), 
\item \(\iota_{v_x}\omega = \frac{1}{2} \Phi^*\left( \theta^L + \theta^R\right)\cdot x  \quad \text{for all } x \in \mathfrak{g}\), and
\item at every point \(m\in M\), \( \ker \omega_m \cap \ker (d\Phi_m )= \{ 0 \} \).
\end{enumerate}

In this case, the map \(\Phi\) is called a \emph{Lie group valued moment map}.
\end{defn}

We now illustrate this definition with concrete examples, beginning with one of the most fundamental: conjugacy classes.

\begin{exa}[Conjugacy Classes as Quasi-Hamiltonian Spaces]
Let \( G \) be a compact Lie group with Lie algebra \( \mathfrak{g} \), and let \( \mathcal{C} \subset G \) be a conjugacy class. Define the moment map \( \Phi: \mathcal{C} \hookrightarrow G \) to be the inclusion map, and let the 2-form \( \omega \in \Omega^2(\mathcal{C}) \) be given at \( f \in \mathcal{C} \) by:
\[
\omega_f(v_x, v_y) = \frac{1}{2} \left( (y, \operatorname{Ad}_f x) - (x, \operatorname{Ad}_f y) \right),
\]
where \( v_x \) is the fundamental vector field associated to \( x \in \mathfrak{g} \), and \( (\cdot, \cdot) \) is an \( \operatorname{Ad} \)-invariant inner product on \( \mathfrak{g} \).

This 2-form satisfies the axioms of a quasi-Hamiltonian structure: it is \( G \)-invariant, the differential satisfies \( d\omega = -\Phi^* \eta \), and the vector fields \( v_x \) span the kernel of \( d\Phi \) transversally to the kernel of \( \omega \). See \cite[Proposition 3.1]{10.4310/jdg/1214460860} for a detailed proof.
\end{exa}

Next, we consider a higher-dimensional example that plays a central role in the fusion and reduction procedures: the double of the group.

\begin{exa}[The Double \( D(G) \)]
Let \( G \) be a compact Lie group equipped with an \( \operatorname{Ad} \)-invariant inner product on its Lie algebra \( \mathfrak{g} \). Define the \emph{double} as the triple \( (D(G), \omega, \Phi) \), where:
\begin{itemize}
    \item \( D(G) := G \times G \), with the diagonal conjugation action:
    \[
    g \cdot (a, b) := (gag^{-1}, gbg^{-1}),
    \]
    \item The moment map is defined as:
    \[
    \Phi(a, b) := aba^{-1}b^{-1},
    \]
    \item The 2-form is given by:
    \[
    \omega := \frac{1}{2} \left( \langle a^* \theta^L, b^* \theta^L \rangle - \langle a^* \theta^R, b^* \theta^R \rangle \right),
    \]
    where \( \theta^L, \theta^R \) are the Maurer–Cartan forms on \( G \), and \( \langle \cdot, \cdot \rangle \) is the invariant inner product on \( \mathfrak{g} \).
\end{itemize}

This example satisfies all the conditions in Definition~\ref{def:quasiHamiltonianGSpace}. For a full verification of its quasi-Hamiltonian properties, see \cite[Section 3.2]{10.4310/jdg/1214460860}.
\end{exa}

\vspace{1em}

In this chapter, we reviewed foundational tools from differential geometry and Lie theory—complexes, homology, Cartan calculus, the Maurer–Cartan form, and the theory of quasi-Hamiltonian \( G \)-spaces. We now turn our attention to multisymplectic geometry.
\chapter{Basics on Multisymplectic Geometry} \label{ch1}

A manifold equipped with a closed, nondegenerate \( (n+1) \)-form is called an \emph{\( n \)-plectic manifold}, and when \( n = 1 \), we recover the familiar symplectic case.
Multisymplectic geometry extends the ideas of symplectic geometry to higher-degree differential forms. At its core, it provides a natural geometric setting for describing classical field theories in multiple dimensions, in the same way symplectic geometry underpins the mechanics of point particles. This connection is explored in detail in \cite{Ryvkin_2019, Marsden1998, Forger2005}, where multisymplectic structures are shown to play a central role in the geometric formulation of field theories. 

The transition from symplectic to multisymplectic geometry is more than just a shift in form degree—it brings with it profound structural consequences. In particular, the failure of the nondegenerate \( (n+1) \)-form to define an isomorphism, as it does in the symplectic case, introduces new algebraic structures such as \( L_\infty \)-algebras. 

The primary aim of this chapter is to introduce the key notions and structures of multisymplectic geometry that will serve as the foundation for our later developments. We begin with the definition of \( n \)-plectic structures and explore how Hamiltonian forms and vector fields generalize to this setting. We then review the language of graded vector spaces and \( L_\infty \)-algebras, which arise naturally from the failure of the classical Poisson bracket to extend to higher-degree forms. Special attention will be given to the case of Lie 2-algebras.

\section{Multisymplectic manifolds}\label{Ssec:MultiSymBG}

The aim of this section is to give an overview of multisymplectic manifolds. Recall that a symplectic manifold is a pair $(M, \omega)$, where $\omega$ is a closed nondegenerate 2-form. In this case we refer to $\omega$ as the symplectic structure.   In general, multisymplectic manifolds are smooth manifolds equipped with a
closed, nondegenerate differential form of higher degree.

\begin{defn}[\cite{ baez2004higher, Cantrijn1998}]\label{def:MultisymplecticManifold}
	A \emph{pre-$n$-plectic structure} on $M$ is a closed $(n+1)$-form $\omega\in\Omega^{n+1}(M)$. A pre-$n$-plectic manifold is a pair  $(M, \omega)$, where $M$ is a smooth manifold and $\omega$ is a \emph{pre-$n$-plectic structure} on $M$. 
 \end{defn}

 \begin{defn}
     An $(n+1)$-form $\omega\in\Omega^{n+1}(M)$ is  nondegenerate if 
 \begin{align*}
     \omega^\flat:
		{TM}&\to {\Lambda^nT^*M}\\
		{v}&\mapsto {\iota_v\omega}
 \end{align*}
 is injective. That is, for all $x\in M$, for all $v \in T_xM$, $\iota_v\omega=0 \implies v=0$.

\end{defn}

\begin{defn}[\cite{baez2004higher, Cantrijn1998}]\label{def:MultisymplecticManifold}
	An \emph{$n$-plectic (or multisymplectic) structure} on $M$ is a closed nondegenerate  $(n+1)$-form $\omega\in\Omega^{n+1}(M)$. An $n$-plectic manifold is a  pair $(M, \omega)$, where 
 \begin{itemize}
     \item $M$ is a smooth manifold, and
     \item  $\omega$ is an \emph{$n$-plectic structure} (i.e; a closed and nondegenerate $(n+1)$-form) on $M$.
 \end{itemize}

 \end{defn}

\begin{defn}\label{multisymplectic map}
	Let $(M,\omega)$ and $(M',\omega')$ be $n$-plectic manifolds. A \emph{multisymplectic map} from $(M,\omega)$ to $(M',\omega')$ is a smooth map $\varphi: M \to M'$ satisfying $\varphi^* \omega' = \omega$.
 
\end{defn}

\begin{rem}
    The collection of all $n$-plectic manifolds along with multisymplectic maps as defined in Definition \ref{multisymplectic map} form a category. 
\end{rem}

\begin{exa}
The special case where \( n = 1 \) corresponds to \emph{symplectic manifolds}. 
\end{exa}

\begin{exa}
Let \( G \) be a compact Lie group, and let \( \mathfrak{g} \) be its corresponding Lie algebra. It is known that \( \mathfrak{g} \) admits an inner product \( \cdot \) that is invariant under the adjoint representation \( \mathrm{Ad}: G \to \mathrm{Aut}(\mathfrak{g}) \). The 3-form
\[
\eta = \theta^L \cdot [\theta^L, \theta^L]
\]
is bi-invariant, and hence closed (see Proposition \ref{eta is closed}). For any \( k \in \mathbb{R} \setminus \{0\} \), we define a one-parameter family of 3-forms by the formula
\[
\omega_k(x,y,z) = k(x \cdot [y,z]) \quad \text{for all } x,y,z \in \mathfrak{g}.
\]

\noindent Using the Maurer–Cartan form \( \theta^L \), the 3-form \( \omega_k \) gives rise to a one-parameter family of left-invariant 3-forms \( \nu_k \) on \( G \) by setting
\[
\nu_k = \omega_k(\theta^L, \theta^L, \theta^L) = k \left( \theta^L \cdot [\theta^L, \theta^L] \right).
\]
In the following, we will prove that \( (G, \nu_k) \) is a 2-plectic manifold.
\end{exa}

\begin{pro}[{\cite[Proposition~2.9]{rogers2011higher}}]\label{nondegenerate eta}
If \( G \) is a compact, simple Lie group, then \( (G, \nu_k) \) is a 2-plectic manifold.
\end{pro}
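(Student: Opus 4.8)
The plan is to verify the two defining conditions of a 2-plectic structure for $\nu_k$ separately, observing first that $\nu_k = k\,\theta^L\cdot[\theta^L,\theta^L] = 12k\,\eta$ is simply a nonzero scalar multiple of the Cartan 3-form $\eta$ from Section~\ref{sec:cartan_forms}. Here we use that $G$ is compact to guarantee the existence of an $\operatorname{Ad}$-invariant inner product on $\mathfrak g$ (obtained by averaging any inner product over $G$), which is the datum required to define $\eta$ and to make sense of the pairing appearing in $\nu_k$.

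\emph{Closedness.} Since $\nu_k = 12k\,\eta$ and $k\neq 0$, we have $d\nu_k = 12k\,d\eta$. By Theorem~\ref{eta is closed}, $\eta$ is closed, so $d\nu_k = 0$ immediately. No new computation is needed here; this step is purely a consequence of the bi-invariance (hence closedness) of $\eta$ already established.

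\emph{Nondegeneracy.} This is where the hypotheses of compactness and simplicity enter. Because $\nu_k$ differs from $\eta$ only by the nonzero factor $12k$, the form $\nu_k$ is nondegenerate if and only if $\eta$ is. By Proposition~\ref{prop:2plectic-iff-center0}, $\eta$ is 2-plectic precisely when the center $\mathfrak z(\mathfrak g)$ is trivial. Thus the entire problem reduces to the purely algebraic claim that a compact \emph{simple} Lie group has trivial center at the Lie-algebra level, i.e.\ $\mathfrak z(\mathfrak g)=0$.

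\emph{Triviality of the center.} I would finish by invoking the definition of a simple Lie algebra: $\mathfrak g$ is non-abelian and its only ideals are $0$ and $\mathfrak g$ itself. The center $\mathfrak z(\mathfrak g)$ is always an ideal of $\mathfrak g$, so it must equal either $0$ or all of $\mathfrak g$. If $\mathfrak z(\mathfrak g)=\mathfrak g$, then $\mathfrak g$ would be abelian, contradicting simplicity; hence $\mathfrak z(\mathfrak g)=0$. Combining this with Proposition~\ref{prop:2plectic-iff-center0} gives nondegeneracy, and together with closedness we conclude that $(G,\nu_k)$ is a 2-plectic manifold. I do not anticipate a serious obstacle in this argument, as the two substantive inputs (closedness of $\eta$ and the center criterion for nondegeneracy) are already proved in the excerpt; the only genuinely new ingredient is the elementary observation that simplicity forces the center to vanish, with compactness entering only to secure the $\operatorname{Ad}$-invariant inner product underlying the construction.
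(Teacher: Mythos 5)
Your proposal is correct and follows essentially the same route as the paper: closedness is deduced from Theorem~\ref{eta is closed} by linearity of $d$, and nondegeneracy is reduced via Proposition~\ref{prop:2plectic-iff-center0} to the triviality of $\mathfrak z(\mathfrak g)$, which follows from simplicity. Your write-up is in fact slightly more complete than the paper's, since you spell out the ideal argument for why a simple Lie algebra has trivial center and make explicit where compactness enters (existence of the $\operatorname{Ad}$-invariant inner product), both of which the paper leaves implicit.
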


\begin{proof}
By Theorem~\ref{eta is closed}, the 3-form \( \theta^L \cdot [\theta^L, \theta^L] \) is closed:
\[
\mathrm{d} \left( \theta^L \cdot [\theta^L, \theta^L] \right) = 0.
\]
It follows that
\[
\mathrm{d} \left( k \, \theta^L \cdot [\theta^L, \theta^L] \right) = k \, \mathrm{d} \left( \theta^L \cdot [\theta^L, \theta^L] \right) = 0,
\]
so \( \nu_k \) is also closed for any \( k \in \mathbb{R} \setminus \{0\} \).

It remains to prove that \( \nu_k \) is non-degenerate. Suppose \( x \in \mathfrak{g} \) is such that \( \omega_k(x, y, z) = 0 \) for all \( y, z \in \mathfrak{g} \). Since \(G\) is simple, then 
Then the center of $\mathfrak g$ is trivial:
\[
\mathfrak z(\mathfrak g)\;=\;\{x\in\mathfrak g:\ [x,y]=0\ \ \forall\,y\in\mathfrak g\}\;=\;\{0\}.
\]
It follows from Proposition \ref{prop:2plectic-iff-center0} that \(\nu_k\) is nondegenerate.

\end{proof}

\begin{exa}\cite[\S 6]{Cantrijn1998}\label{Multicotangent}
    For any smooth manifold $Q$, the associated \emph{multicotangent bundle}, denoted $M = \Lambda^n T^\ast Q$, naturally possesses an $n$-plectic structure. To see this, let $\pi_Q$ be the projection $\Lambda^n T^\ast Q \twoheadrightarrow Q$ and $\pi_M$  the projection $\Lambda^n T^\ast M \twoheadrightarrow M$.

    Define the \emph{tautological $n$-form} $\theta \in \Omega^n(M)$ as  the unique section of $\Gamma(\Lambda^n T^\ast M, M)$ that makes  the following diagram commute for any section $\alpha \in \Gamma(\Lambda^n T^\ast Q, Q)$:
    \begin{center}
    \begin{tikzcd}
        M = \Lambda^n T^\ast Q \arrow{r}{\theta} \arrow[r]{d}{\theta} & \Lambda^n T^\ast M \arrow{d}{\pi_M} \\
        Q \arrow{r}{\alpha} \arrow[swap]{u}{\alpha} & \Lambda^n T^\ast Q
    \end{tikzcd}
    \end{center}
    where $\sigma$ maps $Q$ to $M$ by pulling back along $\alpha$. This diagram commutes, implying $\theta$ pulled back by $\alpha$ equals $\alpha$. Thus $\alpha^* \theta = \alpha$. This means for any point $q \in Q$ and cotangent vector $\eta \in \Lambda^n T^\ast_q Q$, along with vectors $u_1, \dots, u_n \in T_{(q,\eta)}M$, we have:
    \[
    \theta_{(q,\eta)}(u_1, \dots, u_n) = \eta((\pi_Q)_* u_1, \dots, (\pi_Q)_* u_n),
    \]
    which can be equivalently expressed as:
    \[
    \theta|_{(q,\eta)} = (\pi_Q)^* \eta.
    \]

\noindent
Locally, if \( (q^1, \ldots, q^k) \) are coordinates on \( Q \) and \( p_I \) (for multi-indices \( I = (i_1, \ldots, i_n) \), with \( 1 \leq i_1 < \cdots < i_n \leq k \)) are the corresponding fiber coordinates, the tautological form can be expressed as:
\[
\theta_{|(q,p)} = \sum_{I} p_I \, dq^I,
\]
\noindent
where \( I = (i_1, \ldots, i_n) \) with \( 1 \leq i_1 < \cdots < i_n \leq k \), and \( dq^I := dq^{i_1} \wedge \cdots \wedge dq^{i_n} \).

    The canonical $n$-plectic form, $\omega$, is the $(n+1)$-form defined by:
    \[
    \omega := -d\theta,
    \]
    expressed in local coordinates as:
    \[
    \omega|_{(q, p)} = -\sum_{I} dp_I \wedge dq^I.
    \]
    This form is closed and nondegenerate, so that $(M, \omega)$ is an $n$-plectic manifold. 
\end{exa}

\begin{defn}\cite[Definition 2.1]{RYVKIN_2018-conserved}
Let \( (M, \omega) \) be an \(n\)-plectic manifold. We define the following:

\begin{itemize}

    \item An action of a Lie group \( G \) on \( M \) is a \emph{multisymplectic action} if every group element acts by multisymplectomorphisms. That is, for each \( g \in G \) and the induced map \( \mathcal{A}_g: M \to M \), it holds that
    \[
    \mathcal{A}_g^* \omega = \omega.
    \]

    \item A vector field \( v \in \mathfrak{X}(M) \)  is a \emph{multisymplectic vector field} or an \emph{infinitesimal symmetry} of \( (M, \omega) \) if it satisfies
    \[
    \mathcal{L}_v \omega = 0,
    \]
    where \( \mathcal{L}_v \) denotes the Lie derivative with respect to \( v \).

\end{itemize}
\end{defn}

\section{Multisymplectic vs. Symplectic Geometry}

Multisymplectic geometry, extending the principles of symplectic geometry, exhibits unique complexities due to its general structure and higher-degree forms. The first main difference between symplectic geometry and multisymplectic geometry arises in the Darboux Theorem, a well-known result in symplectic geometry, which states as follows.
\begin{thm}[Darboux's Theorem]\label{thm:darboux}\cite[Theorem 8.1]{dasilva2005symplectic}
Let \((M, \omega)\) be a symplectic manifold, and let \(p\) be any point in \(M\). Then there exists a local chart \((\mathcal{U}, x_1, \ldots, x_n, y_1, \ldots, y_n)\) centered at \(p\) such that:
\[
\omega = \sum_{i=1}^n dx_i \wedge dy_i.
\]
This local representation, often referred to as \emph{Darboux coordinates}, illustrates that locally, the symplectic structure \(\omega\) can always be simplified to a standard form, reflecting the intrinsic geometric properties of symplectic manifolds that are locally indistinguishable from \(\mathbb{R}^{2n}\) equipped with its standard symplectic form.
\end{thm}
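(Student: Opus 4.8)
The plan is to prove the statement via \emph{Moser's deformation argument}, which reduces the local classification of symplectic forms to a pointwise linear normal form together with an ODE argument. The first step is purely linear-algebraic: applying symplectic Gram--Schmidt to the nondegenerate alternating form $\omega_p$ on $T_pM$, I choose a basis of $T_pM$ in which $\omega_p$ takes the standard shape $\sum_i e^i\wedge f^i$. Declaring the dual basis to be the differentials at $p$ of the coordinate functions $(x_1,\dots,x_n,y_1,\dots,y_n)$ of an arbitrary chart centered at $p$, I obtain a chart in which the given form $\omega$ and the constant-coefficient form $\omega_0 := \sum_{i=1}^n dx_i\wedge dy_i$ \emph{agree at the point} $p$ (though not necessarily nearby). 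The theorem is then equivalent to producing a local diffeomorphism $\psi$ fixing $p$ with $\psi^*\omega = \omega_0$, since pulling the coordinates back along $\psi$ yields the desired Darboux chart.

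To construct $\psi$, I interpolate between the two forms. Set $\omega_t := (1-t)\omega_0 + t\,\omega$ for $t\in[0,1]$. Each $\omega_t$ is closed, and at $p$ one has $\omega_t|_p = \omega_0|_p$, which is nondegenerate; since nondegeneracy is an open condition and $[0,1]$ is compact, $\omega_t$ remains nondegenerate on a single neighborhood $\mathcal{U}$ of $p$ for all $t$ simultaneously. Shrinking $\mathcal{U}$ to be contractible, the Poincar\'e lemma applied to the closed form $\omega - \omega_0$ provides a primitive $\mu\in\Omega^1(\mathcal{U})$ with $\omega - \omega_0 = d\mu$; because $\omega - \omega_0$ vanishes at $p$, the explicit homotopy operator can be arranged so that $\mu_p = 0$.

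The heart of the argument is Moser's equation. Using nondegeneracy of $\omega_t$, I define a time-dependent vector field $X_t$ on $\mathcal{U}$ by the relation $\iota_{X_t}\omega_t = -\mu$. Since $\mu_p = 0$, this forces $X_t(p) = 0$, so $p$ is a stationary point of the (time-dependent) flow; consequently the flow $\psi_t$ of $X_t$ is defined on a possibly smaller neighborhood of $p$ for all $t\in[0,1]$ and fixes $p$. Differentiating along the flow and invoking Cartan's magic formula together with $d\omega_t = 0$,
\[
\frac{d}{dt}\bigl(\psi_t^*\omega_t\bigr)
= \psi_t^*\Bigl(\mathcal{L}_{X_t}\omega_t + \tfrac{d}{dt}\omega_t\Bigr)
= \psi_t^*\Bigl(d\,\iota_{X_t}\omega_t + (\omega - \omega_0)\Bigr)
= \psi_t^*\bigl(-d\mu + d\mu\bigr) = 0.
\]
Hence $\psi_t^*\omega_t$ is independent of $t$, so $\psi_1^*\omega = \psi_1^*\omega_1 = \psi_0^*\omega_0 = \omega_0$. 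Taking $\psi := \psi_1$ and pulling back the coordinate functions completes the construction.

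The step I expect to be the main obstacle is not any single computation but the careful handling of the domain and the flow: guaranteeing that a \emph{single} neighborhood works for all $t\in[0,1]$ (the openness-plus-compactness point), arranging the primitive $\mu$ to vanish at $p$ so that $p$ becomes a fixed point, and thereby ensuring the flow $\psi_t$ exists up to time $t=1$ rather than escaping $\mathcal{U}$. Once these domain issues are secured, the Cartan-calculus identity collapses the $t$-derivative to zero essentially formally. An alternative, more classical route builds the conjugate pairs $(x_i,y_i)$ one at a time using Hamiltonian flows and the relations $\{x_i,y_j\}=\delta_{ij}$, reducing the dimension by two at each stage; I would fall back on this inductive scheme if one wished to avoid the Poincar\'e lemma, but the Moser argument is cleaner and is the approach I would present.
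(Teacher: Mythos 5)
The paper does not prove this theorem; it is quoted as a known result with a citation to \cite[Theorem 8.1]{dasilva2005symplectic}, where the proof is precisely the Moser deformation argument you give. Your write-up is correct and complete: the pointwise linear normalization, the openness-plus-compactness argument for a uniform neighborhood, the choice of primitive $\mu$ with $\mu_p=0$ via the radial homotopy operator centered at $p$, and the Cartan-formula computation showing $\tfrac{d}{dt}\psi_t^*\omega_t=0$ are all handled properly, so nothing needs to be added.
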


Here are some keys differences.

 \subsection*{Differences in Dimensionality and Local Structure}

\paragraph{Absence of Even Dimensionality:}
    \emph{Symplectic manifolds} are inherently even-dimensional because they are defined by a nondegenerate closed 2-form, necessitating an even number of dimensions to pair up the coordinates effectively for the form \[\omega =\displaystyle \sum_{i=1}^n dx^i \wedge dx^{n+i}.\]
       
        \emph{Multisymplectic (or $n$-plectic) manifolds} do not require even dimensionality. They are characterized by a closed form of a higher degree (greater than 2), which allows them to adopt various dimensional configurations.

    \paragraph{Non-existence of Darboux Coordinates:}
    In \emph{symplectic geometry}, the Darboux theorem \cite[Theorem 8.1]{dasilva2005symplectic}  guarantees that it is always possible to find local coordinates (Darboux coordinates) around any point on a symplectic manifold where the symplectic form appears in a standard simplified form $\displaystyle \omega = \sum_{i=1}^n dx^i \wedge dx^{n+i}$.
    
         \emph{Multisymplectic geometry} does not generally support the existence of such simplifying local coordinate systems.        
Properties in symplectic geometry do not generally extend to multisymplectic manifolds \( (M, \omega) \). The following proposition demonstrates the failure of the Darboux theorem in a specific multisymplectic context:

\section{Why Darboux's Theorem Fails in Multisymplectic Geometry} \label{sec:darboux-failure}

One of the foundational results in symplectic geometry is the well-known \textbf{Darboux Theorem}. It states that, locally, all symplectic manifolds are geometrically the same.

When we move to \textbf{multisymplectic geometry}, where the symplectic 2-form \( \omega \) is replaced by a closed non-degenerate \( k \)-form for \( k \geq 3 \), the story changes significantly.
In contrast to the symplectic case \( (k=2) \), where all non-degenerate forms are locally equivalent under linear change of coordinates (i.e., they belong to a single \( \mathrm{GL}(2n, \mathbb{R}) \)-orbit), for \( k \geq 3 \), there are \emph{multiple} orbits. These orbits correspond to different \textit{linear types} of non-degenerate forms.

To understand the failure of Darboux's Theorem in this setting, consider the classification of non-degenerate \( 3 \)-forms on \( \mathbb{R}^6 \). According to results from Bryant \cite{bryant2005remarksgeometrycomplex6manifolds}  and Hitchin \cite{bryant2005remarksgeometrycomplex6manifolds}, the space of non-degenerate 3-forms on \( \mathbb{R}^6 \) decomposes into multiple distinct \( \mathrm{GL}(6, \mathbb{R}) \)-orbits, called \emph{linear types} of 3-forms. That is, not all non-degenerate 3-forms are related by a linear change of coordinates.

\begin{defn}
Two non-degenerate \( k \)-forms \( \alpha, \beta \in \Lambda^k V^* \) on a real vector space \( V \) have the same \emph{linear type} if there exists a linear isomorphism \( g \in \mathrm{GL}(V) \) such that \( g^* \beta = \alpha \).
\end{defn}

For example, on \( \mathbb{R}^6 \), there exist three distinct \( \mathrm{GL}(6, \mathbb{R}) \)-orbits of non-degenerate 3-forms. These orbits can be distinguished using an algebraic invariant:

\begin{thm}[\cite{bryant2005remarksgeometrycomplex6manifolds, Cantrijn1998} ] \label{classification theorem}
Let \( V \) be a six-dimensional real vector space, \( \Omega \in \Lambda^6 V^* \setminus \{0\} \) a volume form, and \( \{e^1, \dots, e^6\} \) an ordered basis of \( V^* \). Let \( \alpha \in \Lambda^3 V^* \) be non-degenerate. Then there exists a unique scalar \( \lambda_\alpha \in \mathbb{R} \) such that
\[
\mathrm{tr}(J_\alpha^2) = \lambda_\alpha \cdot (\Omega \otimes \Omega) \in (\Lambda^6 V^*)^{\otimes 2}.
\]
The value and sign of \( \lambda_\alpha \) are independent of the choice of volume form and determine the \emph{linear type} of \( \alpha \):
\begin{itemize}
    \item[(i)] \( \lambda_\alpha > 0 \iff \alpha \sim \alpha_{(i)} \),
    \item[(ii)] \( \lambda_\alpha < 0 \iff \alpha \sim \alpha_{(ii)} \),
    \item[(iii)] \( \lambda_\alpha = 0 \iff \alpha \sim \alpha_{(iii)} \),
\end{itemize}
where \( \sim \) denotes \( \mathrm{GL}(6, \mathbb{R}) \)-equivalence, and the canonical representatives are given by:
\begin{align*}
\alpha_{(i)} &= e^1 \wedge e^2 \wedge e^3 + e^4 \wedge e^5 \wedge e^6, \\
\alpha_{(ii)} &= e^1 \wedge e^3 \wedge e^5 - e^1 \wedge e^4 \wedge e^6 - e^2 \wedge e^3 \wedge e^6 - e^2 \wedge e^4 \wedge e^5, \\
\alpha_{(iii)} &= e^1 \wedge e^5 \wedge e^6 - e^2 \wedge e^4 \wedge e^6 + e^3 \wedge e^4 \wedge e^5.
\end{align*}
\end{thm}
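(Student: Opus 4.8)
The plan is to realize the scalar $\lambda_\alpha$ through the Hitchin endomorphism associated to $\alpha$, and then to classify non-degenerate $3$-forms by the sign of the square of this endomorphism. First I would recall the construction of $J_\alpha$. Using the canonical, basis-free isomorphism $\Lambda^5 V^* \cong V \otimes \Lambda^6 V^*$ induced by $v \otimes \sigma \mapsto \iota_v \sigma$, define
\[
J_\alpha : V \longrightarrow V \otimes \Lambda^6 V^*, \qquad J_\alpha(v) := (\iota_v \alpha) \wedge \alpha \in \Lambda^5 V^* \cong V \otimes \Lambda^6 V^*,
\]
so that $J_\alpha \in \mathrm{End}(V) \otimes \Lambda^6 V^*$. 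The crucial point is that this construction refers neither to the volume form $\Omega$ nor to the basis $\{e^i\}$; it is manifestly natural, hence $\mathrm{GL}(V)$-equivariant, so transforming $\alpha$ by $g \in \mathrm{GL}(V)$ conjugates $J_\alpha$ while scaling the $\Lambda^6 V^*$-factor by $\det g$.

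Next I would establish existence and uniqueness of $\lambda_\alpha$. Squaring gives $J_\alpha^2 \in \mathrm{End}(V) \otimes (\Lambda^6 V^*)^{\otimes 2}$, so $\mathrm{tr}(J_\alpha^2) \in (\Lambda^6 V^*)^{\otimes 2}$. Since $\dim_{\mathbb{R}} \Lambda^6 V^* = 1$, the one-dimensional space $(\Lambda^6 V^*)^{\otimes 2}$ is spanned by the nonzero element $\Omega \otimes \Omega$; hence there is a unique $\lambda_\alpha \in \mathbb{R}$ with $\mathrm{tr}(J_\alpha^2) = \lambda_\alpha\,(\Omega \otimes \Omega)$. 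For the independence statement, I would observe that $(\Lambda^6 V^*)^{\otimes 2}$ carries a canonical positive cone, namely the squares $\sigma \otimes \sigma$ with $\sigma \neq 0$; rescaling $\Omega \mapsto c\,\Omega$ sends $\lambda_\alpha \mapsto c^{-2}\lambda_\alpha$, so although the numerical value changes, the sign of $\lambda_\alpha$ (equivalently, the position of $\mathrm{tr}(J_\alpha^2)$ relative to this cone) is intrinsic. Combined with the equivariance of $J_\alpha$, this shows that $\mathrm{sign}(\lambda_\alpha)$ is a genuine $\mathrm{GL}(V)$-invariant, hence constant on linear-type orbits.

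The heart of the argument is the \emph{Hitchin identity}
\[
J_\alpha^2 = \tfrac{1}{6}\,\mathrm{tr}(J_\alpha^2)\cdot \mathrm{Id}_V = \tfrac{1}{6}\,\lambda_\alpha\,(\Omega\otimes\Omega)\cdot\mathrm{Id}_V,
\]
asserting that $J_\alpha^2$ is a scalar multiple of the identity. I would prove this by a direct multilinear computation in $\Lambda^\bullet V^*$, non-degeneracy of $\alpha$ guaranteeing $J_\alpha \neq 0$. Once this identity is in hand, fix $\Omega$ to identify $(\Lambda^6 V^*)^{\otimes 2} \cong \mathbb{R}$ and consider the trichotomy: if $\lambda_\alpha > 0$ then a suitable rescaling of $J_\alpha$ is an almost-product structure with $\pm 1$ eigenspaces $V = V_+ \oplus V_-$, and compatibility of $\alpha$ with this splitting forces $\alpha \sim \alpha_{(i)}$; if $\lambda_\alpha < 0$ then the rescaling is a complex structure, so $V$ becomes a complex $3$-space and $\alpha$ is the real part of a decomposable complex $3$-form, giving $\alpha \sim \alpha_{(ii)}$; if $\lambda_\alpha = 0$ then $J_\alpha$ is nilpotent and the degenerate normal form $\alpha_{(iii)}$ results. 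I would finish by evaluating $\lambda$ on the three canonical representatives to confirm the signs $+, -, 0$ respectively, which simultaneously shows the three orbits are distinct and that the sign is a complete invariant.

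The main obstacle is twofold. First, proving the Hitchin identity $J_\alpha^2 \propto \mathrm{Id}_V$ is the essential nontrivial input: it is a delicate antisymmetrization identity that does not follow formally and must be verified with care, either by a representative computation or by an invariant argument. Second, \emph{exhaustiveness}---that every non-degenerate $3$-form is $\mathrm{GL}(6,\mathbb{R})$-equivalent to one of $\alpha_{(i)}, \alpha_{(ii)}, \alpha_{(iii)}$---requires reconstructing the normal form from the eigenstructure of $J_\alpha$ in each case; the subtle point is ensuring that the stabilizer subgroups account for the full orbit, so that $\lambda_\alpha$ separates orbits rather than merely being invariant on them.
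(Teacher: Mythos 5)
The paper does not prove this theorem: it is imported verbatim from the cited literature (Bryant/Hitchin and Cantrijn--Ibort--de Le\'on), so there is no in-paper argument to compare yours against. Your outline is the standard Hitchin strategy for exactly this result, and its overall architecture --- the naturality of \( J_\alpha(v) = (\iota_v\alpha)\wedge\alpha \) under \( \mathrm{GL}(V) \), the one-dimensionality of \( (\Lambda^6 V^*)^{\otimes 2} \) giving existence and uniqueness of \( \lambda_\alpha \), the scaling \( \lambda_\alpha \mapsto c^{-2}\lambda_\alpha \) under \( \Omega \mapsto c\,\Omega \) showing the sign is intrinsic, and the trichotomy via the identity \( J_\alpha^2 = \tfrac16\,\mathrm{tr}(J_\alpha^2)\,\mathrm{Id}_V \) --- is correct and is how the result is actually proved in the sources.

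Two points deserve sharpening before this could count as a proof rather than a plan. First, the Hitchin identity \( J_\alpha^2 \propto \mathrm{Id}_V \) is the entire technical content of the positive- and negative-\(\lambda\) cases and you only assert it; you correctly flag this, but note that it holds for \emph{all} \( \alpha \in \Lambda^3 V^* \), not just nondegenerate ones, so nondegeneracy cannot be what makes it true --- it is a genuine antisymmetrization identity that must be verified. Second, and more seriously, your treatment of the case \( \lambda_\alpha = 0 \) is too quick: the locus \( \{\lambda_\alpha = 0\} \) is not a single \( \mathrm{GL}(6,\mathbb{R}) \)-orbit. It contains, among others, the decomposable forms \( e^1\wedge e^2\wedge e^3 \), forms of the type \( e^1\wedge(e^2\wedge e^3 + e^4\wedge e^5) \), and the zero form, all of which have \( J_\alpha \) nilpotent. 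Nilpotency of \( J_\alpha \) therefore does not ``result in'' the normal form \( \alpha_{(iii)} \); you must invoke the multisymplectic nondegeneracy hypothesis \( (\iota_v\alpha = 0 \Rightarrow v = 0) \) at precisely this point to exclude every other \( \lambda = 0 \) orbit, and this exclusion is a separate case analysis, not a consequence of the eigenstructure of \( J_\alpha \). Your stated equivalence (iii) is only true under that standing hypothesis, so the proof must make the dependence explicit.
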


Using this classification (Theorem \ref{classification theorem}), Ryvkin constructed a smooth family of 3-forms on \( \mathbb{R}^6 \) whose linear type changes from point to point:

\begin{pro}[{\cite[Proposition 2.3]{ryvkin2016multisymplectic}}]
Let \( f : \mathbb{R}^6 \to \mathbb{R} \) be a smooth function of \( x_2, x_4, x_5 \), and define:
\[
\omega_f = dx^1 \wedge dx^3 \wedge dx^5 - dx^1 \wedge dx^4 \wedge dx^6 - dx^2 \wedge dx^3 \wedge dx^6 + f(x) \cdot dx^2 \wedge dx^4 \wedge dx^5.
\]
Then \( \omega_f \) is a non-degenerate closed 3-form on \( \mathbb{R}^6 \), i.e., a multisymplectic form. Its linear type at a point \( p \) depends on the value of \( f(p) \):
\[
\omega_f|_p \sim
\begin{cases}
\alpha_{(i)} & \text{if } f(p) > 0, \\
\alpha_{(ii)} & \text{if } f(p) < 0, \\
\alpha_{(iii)} & \text{if } f(p) = 0.
\end{cases}
\]
\end{pro}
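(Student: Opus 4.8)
The plan is to verify the three assertions—closedness, nondegeneracy, and the pointwise linear type—in turn, treating the last as a problem in linear algebra on each tangent space. \emph{Closedness} is immediate: the first three summands of $\omega_f$ have constant coefficients, so only the last term contributes, and
\[
d\omega_f = df \wedge dx^2\wedge dx^4\wedge dx^5 .
\]
Since $f$ depends only on $x_2,x_4,x_5$, the one-form $df$ lies in $\operatorname{span}(dx^2,dx^4,dx^5)$, so every term of $df\wedge dx^2\wedge dx^4\wedge dx^5$ repeats a factor and vanishes; hence $d\omega_f=0$.

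For \emph{nondegeneracy}, fix $p$ and write $c:=f(p)$, so that $\omega_f|_p$ is the constant $3$-form $e^{135}-e^{146}-e^{236}+c\,e^{245}$ on $T_p\mathbb{R}^6$, where $e^{ijk}:=dx^i\wedge dx^j\wedge dx^k$. First I would compute the six contractions $\iota_{\partial_i}\omega_f|_p$ and then contract a general vector $v=\sum_i a_i\,\partial_i$. Collecting the coefficients of the basis $2$-forms shows that the coefficients of $e^{13},e^{14},e^{15},e^{16},e^{35},e^{36}$ in $\iota_v\omega_f|_p$ are, up to sign, exactly $a_5,a_6,a_3,a_4,a_1,a_2$. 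Thus $\iota_v\omega_f|_p=0$ forces $v=0$ \emph{regardless of the value of $c$}, so $\omega_f$ is nondegenerate at every point. Together with closedness, this already shows $(\mathbb{R}^6,\omega_f)$ is a $2$-plectic manifold.

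It remains to identify the linear type of $\omega_f|_p$ in terms of $\operatorname{sgn}(c)$, for which I would invoke the classification of Theorem~\ref{classification theorem}. The key simplification is that the support monomials $\{135,146,236,245\}$ are preserved by diagonal changes of basis $e^i\mapsto\mu_i e^i$: such a map multiplies the four coefficients by $\mu_1\mu_3\mu_5$, $\mu_1\mu_4\mu_6$, $\mu_2\mu_3\mu_6$, $\mu_2\mu_4\mu_5$, whose product equals $(\prod_i\mu_i)^2>0$. Hence the coefficient of $e^{245}$ can be rescaled to any value of the \emph{same} sign while the other three are fixed, which shows $\omega_f|_p\sim\omega_{+1}$ if $c>0$ and $\omega_f|_p\sim\omega_{-1}$ if $c<0$. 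Now $\omega_{-1}=e^{135}-e^{146}-e^{236}-e^{245}$ is literally $\alpha_{(ii)}$, giving type (ii) when $c<0$; and for $c>0$ I would exhibit the explicit basis $p^1=e^1+e^2,\ p^2=e^3+e^4,\ p^3=e^5-e^6,\ q^1=e^1-e^2,\ q^2=e^3-e^4,\ q^3=e^5+e^6$, for which a direct expansion gives $p^1\wedge p^2\wedge p^3+q^1\wedge q^2\wedge q^3=2\,\omega_{1}$, so after rescaling $\omega_1\sim e^1\wedge e^2\wedge e^3+e^4\wedge e^5\wedge e^6=\alpha_{(i)}$, i.e.\ type (i). Finally, for $c=0$ I would argue by continuity: the invariant $\lambda_{\omega_f|_p}$ of Theorem~\ref{classification theorem} is a polynomial in $c$, positive for $c>0$ (type (i)) and negative for $c<0$ (type (ii)), so $\lambda=0$ at $c=0$, which is precisely the condition for type (iii).

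The routine part is the bookkeeping in the contraction computation; the genuinely delicate step is producing the real change of basis for $c>0$ that exhibits $\omega_1$ as a sum of two decomposable $3$-forms, the defining feature of type (i). The natural guess of a diagonal transformation cannot work here, since diagonal maps preserve the monomial support $\{135,146,236,245\}\neq\{123,456\}$; it is the symmetric/antisymmetric combinations $e^i\pm e^j$ above that make the two simple summands appear, and finding them is the crux. The continuity argument for $c=0$ then lets me avoid a separate, more awkward search for the nilpotent normal form $\alpha_{(iii)}$.
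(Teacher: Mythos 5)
Your proof is correct, and it is worth noting at the outset that the paper itself offers no proof of this statement: it is imported verbatim as a citation to Ryvkin's work, so there is nothing internal to compare against. Your closedness and nondegeneracy computations check out (I verified that the coefficients of $e^{13},e^{14},e^{15},e^{16},e^{35},e^{36}$ in $\iota_v\omega_f|_p$ are $a_5,-a_6,-a_3,a_4,a_1,-a_2$, so $\iota_v\omega_f|_p=0$ forces $v=0$ independently of $c$), the identification $\omega_{-1}=\alpha_{(ii)}$ is literal, and your basis $p^i,q^i$ does give $p^1\wedge p^2\wedge p^3+q^1\wedge q^2\wedge q^3=2\omega_1$ with the cross terms cancelling in pairs, which together with the invertibility of the change of basis yields type (i). The continuity argument for $c=0$ is legitimate because $\lambda_{\omega_c}$ (for a fixed volume form) is polynomial in $c$ and, by the classification theorem, a nondegenerate form with $\lambda=0$ is necessarily of type (iii). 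Where your route differs from the cited source is in avoiding any direct computation of the Hitchin invariant $\mathrm{tr}(J_\alpha^2)$: the reference essentially evaluates $\lambda_{\omega_f|_p}$ as an explicit multiple of $f(p)$ and reads off all three cases at once from its sign, whereas you trade that single computation for an explicit normal-form reduction in the two open cases plus a continuity argument for the boundary case. Your approach is more hands-on and arguably more illuminating (it exhibits the decomposable summands for type (i) explicitly), at the cost of being case-by-case.

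One small gap to patch: in the diagonal-rescaling step, the observation that the four multipliers $\mu_1\mu_3\mu_5$, $\mu_1\mu_4\mu_6$, $\mu_2\mu_3\mu_6$, $\mu_2\mu_4\mu_5$ have product $(\prod_i\mu_i)^2>0$ only establishes the \emph{obstruction} (the sign of the fourth coefficient cannot be flipped while fixing the other three); it does not by itself show that any positive rescaling of the fourth coefficient is \emph{achievable}. For that you need the four exponent functionals to be linearly independent, or simply an explicit witness such as $(\mu_1,\ldots,\mu_6)=(\sqrt{c},1,1,1/\sqrt{c},1/\sqrt{c},1)$ for $c>0$, which fixes the first three coefficients and rescales the last to $1$. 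This is a one-line fix, but as written the word ``Hence'' is carrying an unproved claim.
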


This variable behavior of linear type contradicts the existence of Darboux charts, since any local coordinate transformation preserving \( \omega \) must preserve its linear type. Therefore, the multisymplectic manifold \( (\mathbb{R}^6, \omega_f) \) does \emph{not} admit an atlas of charts in which the multisymplectic form has constant coefficients, nor are neighborhoods around distinct points necessarily multisymplectomorphic.

\begin{cor}[{\cite[Corollary  2.4]{ryvkin2016multisymplectic}}]
Let \( f(x) = x_2 \). Then \( (\mathbb{R}^6, \omega_f) \) has no neighborhood around the origin in which the linear type of \( \omega_f \) is constant. Thus, there is no Darboux chart around 0. Moreover, the group of multisymplectomorphisms \( \mathrm{Diff}_{\omega_f}(\mathbb{R}^6) \) does not act transitively.
\end{cor}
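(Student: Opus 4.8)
The plan is to exploit the preceding Proposition (Ryvkin's Proposition~2.3), which records the pointwise linear type of $\omega_f$ in terms of the sign of $f$, together with the elementary fact that the linear type is a pointwise invariant under pullback by diffeomorphisms. Specializing to $f(x)=x_2$, the linear type of $\omega_f$ at a point $p$ is governed entirely by the sign of the coordinate $x_2(p)$: it is of type $\alpha_{(i)}$ on $\{x_2>0\}$, of type $\alpha_{(ii)}$ on $\{x_2<0\}$, and of type $\alpha_{(iii)}$ on the hyperplane $\{x_2=0\}$. The first step is then to observe that every open neighborhood $U$ of the origin meets all three regions, since $0$ lies on $\{x_2=0\}$ and $U$ is open; hence $\omega_f$ realizes all three distinct $\mathrm{GL}(6,\mathbb{R})$-orbits inside $U$, and its linear type is non-constant on every such $U$. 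This establishes the first assertion.

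To rule out a Darboux chart around $0$, I would argue by contradiction. A Darboux chart is by definition a local diffeomorphism $\varphi$ carrying $\omega_f$ to a form with constant coefficients (one of the canonical representatives of Theorem~\ref{classification theorem}). Because the pullback of a form by a diffeomorphism acts fiberwise through the linear isomorphism $d\varphi_p$, the linear type is preserved pointwise; a constant-coefficient model has constant linear type, which would force $\omega_f$ to have constant linear type on the chart domain, contradicting the non-constancy established above. For the failure of transitivity, I would note that any multisymplectomorphism $\psi\in\mathrm{Diff}_{\omega_f}(\mathbb{R}^6)$ satisfies $\psi^*\omega_f=\omega_f$, so at each $p$ the forms $\omega_f|_{\psi(p)}$ and $\omega_f|_p$ agree up to the linear isomorphism $d\psi_p$ and therefore share a linear type. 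Choosing $p$ with $x_2(p)>0$ and $q$ with $x_2(q)<0$ yields points of distinct linear type ($\alpha_{(i)}$ versus $\alpha_{(ii)}$), so no element of the group can send $p$ to $q$, and the action is not transitive.

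The computations here are essentially immediate once the preceding Proposition is in hand. The only genuinely conceptual point—and the thing to state carefully—is the pointwise invariance of linear type under pullback by diffeomorphisms, which is what simultaneously obstructs the existence of Darboux charts and forbids transitivity. I do not expect any serious obstacle beyond articulating this invariance cleanly and verifying that an arbitrary neighborhood of the origin straddles the hyperplane $\{x_2=0\}$.
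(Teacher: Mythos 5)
Your argument is correct and is essentially the one the paper intends: the text surrounding the corollary cites Ryvkin and explains that the result follows because any coordinate change preserving $\omega_f$ must preserve its pointwise linear type, while every neighborhood of the origin meets $\{x_2>0\}$, $\{x_2<0\}$, and $\{x_2=0\}$ and hence realizes all three distinct $\mathrm{GL}(6,\mathbb{R})$-orbits. Your write-up simply makes explicit the pointwise invariance of linear type under pullback, which is exactly the key point the paper alludes to.
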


While no general Darboux theorem exists in multisymplectic geometry, it is noteworthy that for some special classes of \( n \)-plectic manifolds, local normal form theorems \emph{do} exist. We refer the reader to \cite{Gracia2024} for a detailed and comprehensive treatment of such cases. Moreover, as noted in \cite{ryvkin2016multisymplectic}, if the necessary condition of \emph{locally constant linear type} is satisfied, and further structural constraints are imposed (e.g., flat connections, homogeneity), restricted Darboux-like theorems can be established (cf. also \cite{Cantrijn1998, echeverria2012invariant, martin1988darboux}).

 \section{Hamiltonian Structures}  
In the symplectic case, Hamiltonian vector fields are characterized by the property that the interior product with the symplectic form yields an exact 1-form. In the multisymplectic case, we deal instead with an \((n+1)\)-form and define Hamiltonian vector fields via their contraction with \(\omega\) producing an exact \(n\)-form.

A vector field \(v\) is multisymplectic if and only if the interior product of \(v\) with \(\omega\) is a closed form:
\[
d(\iota_v \omega) = 0.
\]
This condition is a direct consequence of Cartan’s identity:
\[
\mathcal{L}_v \omega = d(\iota_v \omega) + \iota_v (d\omega),
\]
and the assumption that \(\omega\) is closed (\(d\omega = 0\)). It simplifies to:
\[
\mathcal{L}_v \omega = d(\iota_v \omega).
\]
Therefore, the vanishing of the Lie derivative \(\mathcal{L}_v \omega\) implies that \(\iota_v \omega\) is closed.

Just as in symplectic geometry, we are particularly interested in vector fields for which this closed form is also exact. These are known as Hamiltonian vector fields in the multisymplectic context, as formally defined below.

\begin{defn}[Hamiltonian Vector Fields on Pre-\(n\)-Plectic Manifolds]\label{def:multisymplectic_Hamiltonian_vector_field}
Let \((M, \omega)\) be a pre-\(n\)-plectic manifold. An \((n-1)\)-form \(\alpha \in \Omega^{n-1}(M)\) is said to be \emph{Hamiltonian} if there exists a vector field \(v \in \mathfrak{X}(M)\) such that
\[
d\alpha = -\iota_v \omega.
\]
In this case, \(v\) is called the \emph{Hamiltonian vector field} associated to \(\alpha\). We denote the space of all Hamiltonian \((n-1)\)-forms by \(\Omega_{\mathrm{Ham}}^{n-1}(M, \omega)\) and the corresponding vector fields by \(\mathfrak{X}_{\mathrm{Ham}}(M, \omega)\). The identity
\[
d\alpha + \iota_v \omega = 0
\]
is known in the literature as the \emph{Hamilton--De Donder--Weyl equation} \cite{Forger2005, ryvkin2016multisymplectic}.

In the case where \(\omega\) is only pre-\(n\)-plectic, the vector field \(v\) solving this equation need not be unique. However, if \(\omega\) is nondegenerate—that is, if \((M, \omega)\) is \(n\)-plectic—then the uniqueness of the Hamiltonian vector field is guaranteed. In that case, we obtain a canonical map:
\begin{align*}
\Omega_{\mathrm{Ham}}^{n-1}(M, \omega) &\to \mathfrak{X}_{\mathrm{Ham}}(M, \omega), \\
\alpha &\mapsto v_\alpha
\end{align*}
\end{defn}

\noindent   where \(v_\alpha\) is the unique Hamiltonian vector corresponding to \(\alpha\). This structure provides the foundation for the algebraic framework of observables in multisymplectic geometry. To express these ideas rigorously, we now introduce the language of graded vector spaces, which underlies the construction of \(L_\infty\)-algebras discussed later.

\section{Graded Vector Spaces} \label{section: graded vector space}

In multisymplectic geometry, especially when describing higher algebraic structures such as \(L_\infty\)-algebras, we need to work within the framework of graded vector spaces. These spaces allow for the natural encoding of degrees associated with forms, brackets, and multilinear operations.

\begin{defn}[Graded vector space]
A $\mathbb{Z}$-\emph{graded vector space} is a vector space \(V\) decomposed as a direct sum
\[
V=\bigoplus_{k\in\mathbb{Z}} V^k ,
\]
where each \(V^k\) is the subspace of \emph{homogeneous elements of degree \(k\)}.
For a homogeneous element \(x\in V^k\) we call \(k\) its \emph{degree} and write
\(\deg(x)=k\) (equivalently, \( |x|=k \)).

If \(x_1, \dots, x_n \in V\) are homogeneous elements, we define their (total) degree as
\[
\deg(x_1 \otimes \cdots \otimes x_n) = \sum_{i=1}^n \deg(x_i).
\]
where \(\deg(x_i)\) denotes the degree of \(x_i\).
\end{defn}

Any element \(v \in V\) can be uniquely written as a finite sum \(v = \sum v^k\) where \(v^k \in V^k\) and \(v^k = 0\) for all but finitely many \(k\). We also define a grading shift for such spaces:

\begin{defn}
Given a graded vector space \(V\), its \(k\)-shifted version \(V[k]\) is defined by:
\[
V[k]^i := V^{i+k} \quad \text{for all } i \in \mathbb{Z}.
\]
\end{defn}

To work with multilinear maps on graded vector spaces, it is necessary to keep track of signs arising from permutations of graded elements. This leads us to the notion of Koszul signs and unshuffles.

\begin{defn}[Koszul sign for a swap]
For homogeneous $y,z$, the \emph{Koszul (graded) commutativity rule} is
\[
  y\,z \;=\; (-1)^{\,\deg(y)\, \deg(z)}\, z\,y .
\]

\end{defn}

\begin{defn}[Koszul sign for a permutation]
Let $x_1,\dots,x_n$ be homogeneous and let $\sigma\in S_n$.
The \emph{Koszul sign} $\varepsilon(\sigma;x_1,\dots,x_n)\in\{\pm1\}$ is the
(unique) scalar such that
\[
  x_1\cdots x_n \;=\; \varepsilon(\sigma;x_1,\dots,x_n)\; x_{\sigma(1)}\cdots x_{\sigma(n)},
\]
obtained by reordering via adjacent transpositions and the swap rule above.
\end{defn}

\begin{defn}
A permutation \(\sigma \in \mathcal{S}_{p+q}\) is called a \((p, q)\)-\emph{unshuffle} if 
\[
\sigma(1) < \cdots < \sigma(p) \quad \text{and} \quad \sigma(p+1) < \cdots < \sigma(p+q).
\]
The set of all \((p, q)\)-unshuffles is denoted \(\mathrm{Sh}(p, q)\).
\end{defn}

We illustrate this with several examples:
\begin{itemize}
\item \(\mathrm{Sh}(1, 1) = \{\mathrm{id}, (12)\}\),
\item \(\mathrm{Sh}(2, 1) = \{\mathrm{id}, (23), (123)\}\),
\item \(\mathrm{Sh}(1, 2) = \{\mathrm{id}, (12), (132)\}\),
\item \(\mathrm{Sh}(2, 2) = \{\mathrm{id}, (23), (13)(24), (123), (1243), (243)\}\),
\item \(\mathrm{Sh}(3, 1) = \{\mathrm{id}, (34), (234), (1234)\}\),
\item \(\mathrm{Sh}(1, 3) = \{\mathrm{id},  (12), (132), (1432)\}\),
  \item \(\mathrm{Sh}(p+q, 0) = \{\mathrm{id}\}\).   
\end{itemize}

Finally, we define skew-symmetric maps on graded vector spaces:

\begin{defn}
Let \(f: V^{\otimes n} \to W\) be a multilinear map between graded vector spaces. Then \(f\) is said to be \emph{skew-symmetric} if for all \(\sigma \in \mathcal{S}_n\), we have
\[
f(v_{\sigma(1)}, \dots, v_{\sigma(n)}) = (-1)^{\operatorname{sgn}(\sigma)} \cdot \epsilon(\sigma; v_1, \dots, v_n) \cdot f(v_1, \dots, v_n),
\]
where \(\operatorname{sgn}(\sigma) \in \{0,1\}\) is the parity of the permutation \(\sigma\), and \(\epsilon(\sigma; v_1, \dots, v_n)\) is the Koszul sign arising from exchanging graded elements \(v_1, \dots, v_n\) according to \(\sigma\).

\end{defn}

With these algebraic tools in place, we are now ready to explore one of the key algebraic structures arising in multisymplectic geometry: \(L_\infty\)-algebras.

\section{\Linfty-Algebras} \label{section: L_infinity algebra}

With the language of graded vector spaces in place, we are now ready to define and study $L_\infty$-algebras. These structures generalize Lie algebras by allowing for higher-order operations that satisfy generalized Jacobi identities up to coherent homotopies. This is especially relevant in multisymplectic geometry, where the structure of observables forms an $L_\infty$-algebra rather than a Lie algebra.

\begin{defn} \cite{Lada1993} \label{Linfty algebra} 
A \emph{Lie $L_{\infty}$-algebra} is a graded vector space $L$
equipped with a collection
\[\left \{l_{k}: L^{\otimes k} \to L| 1 \leq k < \infty \right\}\]
of
skew-symmetric multilinear maps with $\deg{l_{k}}=k-2$ such that the following identity holds for $1 \leq m < \infty$:
\begin{align} \label{generalized jacobi identity}
   \sum_{\substack{i+j = m+1, \\ \sigma \in \mathrm{Sh}(i,m-i)}}
  (-1)^{\operatorname{sgn}(\sigma)}\epsilon(\sigma)(-1)^{i(j-1)} l_{j}
   (l_{i}(x_{\sigma(1)}, \dots, x_{\sigma(i)}), x_{\sigma(i+1)},
   \ldots, x_{\sigma(m)})=0.
\end{align}
\end{defn}

The structure maps \( l_k : L^{\otimes k} \to L \) that appear in an \( L_\infty \)-algebra are commonly referred to as \emph{multibrackets} or \emph{higher brackets}. These maps are graded skew-symmetric and encode the higher-order operations that generalize the Lie bracket and Jacobi identity up to coherent homotopies.

 $L_\infty$-algebras can be further constrained by requiring the underlying graded vector space to be concentrated in low degrees.

\begin{defn}[Lie $n$-algebra]  \cite{Lada1993, rogers2011higher} \label{Ln algebra}
Let $(L, \{l_k\})$ be an $L_\infty$-algebra, where $L = \bigoplus_{i \in \mathbb{Z}} L_i$ is a graded vector space. We say that $L$ is \emph{concentrated in degrees $0,1,\dots,n-1$} if
\[
L_i = 0 \quad \text{for all } i \notin \{0,1,\dots,n-1\}.
\]
Then $(L, \{l_k\})$ is called a \emph{Lie $n$-algebra} if and only if its underlying graded vector space is concentrated in degrees $0$ through $n-1$.
\end{defn}

\begin{defn}[$L_\infty$-morphism identities in components]
Let $(L,\{\ell_k\}_{k\ge1})$ and $(L',\{\ell'_k\}_{k\ge1})$ be $L_\infty$-algebras.
An $L_\infty$-morphism $F_\bullet:L\to L'$ is a family of graded maps
$F_n:\wedge^n L\to L'$ of degree $|F_n|=1-n$ satisfying, for all $n\ge1$ and
homogeneous $x_1,\dots,x_n\in L$,
\begin{align*}\label{eq:L-infty-morphism}
&\sum_{i=1}^{n}\ \sum_{\sigma\in\mathrm{Sh}(i,n-i)}
(-1)^{i(n-i)}\,\epsilon(\sigma;x)\,
F_{\,n-i+1}\!\Big(\,\ell_i(x_{\sigma(1)},\dots,x_{\sigma(i)}),\ x_{\sigma(i+1)},\dots,x_{\sigma(n)}\Big)
\\
&= \sum_{k\ge1}\sum_{\substack{n_1+\cdots+n_k=n\\ n_j\ge1}}\ 
\sum_{\sigma\in\mathrm{Sh}(n_1,\dots,n_k)}
\frac{\epsilon(\sigma;x)}{k!}\,
\ell'_k\!\Big(F_{n_1}(x_{\sigma(1)},\dots,x_{\sigma(n_1)}),\ \dots,\ 
F_{n_k}(\dots)\Big).
\end{align*}
Here $\mathrm{Sh}(i,n-i)$ denotes the set of $(i,n-i)$-unshuffles, more generally
$\mathrm{Sh}(n_1,\dots,n_k)$ the set of $(n_1,\dots,n_k)$-unshuffles, and
$\epsilon(\sigma;x)$ is the Koszul sign determined by permuting
$(x_1,\dots,x_n)$ by $\sigma$ in the graded-antisymmetric setting.

\medskip
Equivalently (and often more useful), the first few cases are:

\begin{itemize}
\item[$n=1$] (chain map)
\[
F_1\ell_1(x)\;=\;\ell'_1 F_1(x).
\]

\item[$n=2$] (bracket up to $F_2$-homotopy)
\[
\begin{aligned}
F_1\ell_2(x_1,x_2)\;-\;\ell'_2(F_1x_1,F_1x_2)
&=\ \ell'_1 F_2(x_1,x_2)\;+\;F_2(\ell_1x_1,x_2)\;-\;(-1)^{|x_1|}\,F_2(x_1,\ell_1x_2).
\end{aligned}
\]

\item[$n=3$] (Jacobiator compatibility up to $F_2,F_3$)
\[
\begin{aligned}
&F_1\ell_3(x_1,x_2,x_3)\;-\;\ell'_3(F_1x_1,F_1x_2,F_1x_3) \\
&=\;\sum_{\sigma\in\mathrm{Sh}(2,1)}\!\!\epsilon(\sigma;x)\,
\Big\{
F_2\big(\ell_2(x_{\sigma(1)},x_{\sigma(2)}),x_{\sigma(3)}\big)
-\ell'_2\big(F_2(x_{\sigma(1)},x_{\sigma(2)}),F_1x_{\sigma(3)}\big)
\Big\} \\
&\quad\ +\ \ell'_1 F_3(x_1,x_2,x_3)\;-\;\sum_{j=1}^{3}(-1)^{|x_1|+\cdots+|x_{j-1}|}\,
F_3(x_1,\dots,\ell_1x_j,\dots,x_3).
\end{aligned}
\]
\end{itemize}

\noindent
 
\end{defn}

\begin{defn}\cite[Definition 3.6]{callies2016homotopy}\label{def:Linf-quasi-iso}
A morphism $(f_k)\!:\,(L,\ell_k)\to(L',\ell'_k)$ of $L_\infty$-algebras is an
\emph{$L_\infty$-quasi-isomorphism} if the induced morphism of complexes
\[
f_1:\ (L,\ell_1)\longrightarrow (L',\ell'_1)
\]
induces an isomorphism on cohomology:
\[
H^\ast(f_1):\ H^\ast(L)\xrightarrow{\ \cong\ } H^\ast(L').
\]
\end{defn}

Just as every symplectic manifold naturally gives rise to a Poisson algebra structure on the space of smooth functions, multisymplectic (or \(n\)-plectic) manifolds give rise to higher analogues of Poisson algebras. The appropriate algebraic structures in this setting are \(L_\infty\)-algebras, which encode higher-order brackets and relations generalizing those of Lie algebras. 

In a foundational result, Rogers~\cite{rogers2011higher} showed that every \(n\)-plectic manifold canonically defines an \(L_\infty\)-algebra of observables, thereby extending the symplectic case to the multisymplectic setting.

\begin{thm}[\emph{\cite[Theorem 3.14]{rogers2011higher}, see also \cite[Theorem 4.7]{callies2016homotopy}}]\label{def:RogersAlgebra1}
	Given an $n$-plectic manifold $(M,\omega)$, there exists an $L_\infty$-algebra $L_{\infty}(M,\omega)=(L,\{l_k\}_{k\geq 1})$ with 
    
	\begin{itemize}
		\item 
		the underlying graded vector space $L$, where
	\begin{equation*}
		L_i=\begin{cases}
			\Omega_{\mathrm{Ham}}^{n-1}(M,\omega) 
			& \quad~\text{if } i=0
			\\
			\Omega^{n-1+i}(M) 
		 	&  \quad~\text{if } 1-n \leq i\leq -1
		 	\\			
			0 & \quad ~\text{otherwise,}
		\end{cases}
	\end{equation*}

	\item 
		$n+1$ nontrivial multibrackets $\lbrace l_k : L^{\wedge k} \to L\rbrace_{1\leq k\leq n+1}$, given by
	\begin{displaymath}
		l_1(\alpha) = 
		\begin{cases}
			0 & \quad\text{if~} \deg(\alpha) = 0	
			\\
			\mathrm{d} \alpha & \quad \text{if~} \deg(\alpha) \leq -1,
		\end{cases}
	\end{displaymath}
	and, for $ 2 \leq k \leq n+1$, as
	\begin{displaymath}
		l_k(\alpha_1,\dots,\alpha_k) = 
		\begin{cases}
			\epsilon(k) \iota_{ v_{\alpha_k}}\ldots\iota_{ v_{\alpha_1}}\omega
			& \quad\text{if~} \deg(\alpha_i)=0 \text{ for } 1\leq i \leq k
			\\
			0 & \quad\text{otherwise,}		
		\end{cases}
	\end{displaymath}
	\end{itemize}

\noindent		where $ v_{\alpha_k}$ denotes any Hamiltonian vector field corresponding to $\alpha_k\in \Omega^{n-1}_{\mathrm{Ham}}(M,\omega)$ and $\epsilon(k) = - (-1)^{\frac{k(k+1)}{2}}$ is the total Koszul sign.         
  
\end{thm}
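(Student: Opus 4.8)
The plan is to verify, in order, that (i) each $l_k$ is well defined and graded skew-symmetric of the correct degree, and (ii) the collection $\{l_k\}$ satisfies the generalized Jacobi identity \eqref{generalized jacobi identity} for every $m\ge 1$, reducing the latter to a single contraction identity proved by induction. For well-definedness, note that for $k\ge 2$ the formula for $l_k$ refers to Hamiltonian vector fields $v_{\alpha_i}$; since $(M,\omega)$ is $n$-plectic, nondegeneracy guarantees that each $v_{\alpha_i}$ is \emph{unique}, so $l_k$ is unambiguous. Graded skew-symmetry in the degree-$0$ inputs follows from the antisymmetry of the iterated interior product $\iota_{v_k}\cdots\iota_{v_1}\omega$ under transposition of adjacent vector fields, together with the global sign $\epsilon(k)$; on inputs of negative degree both sides vanish, so the required symmetry is immediate. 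A degree count, using that each interior product lowers form degree by one, confirms that $l_k$ carries the grading prescribed in Definition~\ref{Linfty algebra}.

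The technical core is the claim that for Hamiltonian vector fields $v_1,\dots,v_k$ one has
\[
d\big(\iota_{v_k}\cdots\iota_{v_1}\omega\big)
= \sum_{1\le i<j\le k} (-1)^{i+j}\,
\iota_{v_k}\cdots\widehat{\iota_{v_j}}\cdots\widehat{\iota_{v_i}}\cdots\iota_{v_1}\,\iota_{[v_i,v_j]}\omega .
\]
I would prove this by induction on $k$. The base case $d\iota_{v_1}\omega=\mathcal L_{v_1}\omega-\iota_{v_1}d\omega=0$ uses that $\omega$ is closed and that each Hamiltonian $v_i$ is multisymplectic, i.e.\ $\mathcal L_{v_i}\omega=0$, which itself follows from $\iota_{v_i}\omega=-d\alpha_i$ and Cartan's magic formula. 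The inductive step writes $d\iota_{v_k}\Theta=\mathcal L_{v_k}\Theta-\iota_{v_k}d\Theta$ with $\Theta=\iota_{v_{k-1}}\cdots\iota_{v_1}\omega$, commutes $\mathcal L_{v_k}$ past the remaining contractions via $[\mathcal L_{v_k},\iota_{v_i}]=\iota_{[v_k,v_i]}$, and applies the inductive hypothesis to $d\Theta$. As a special case ($k=2$) this shows that $\iota_{v_2}\iota_{v_1}\omega$ is itself Hamiltonian with Hamiltonian vector field $[v_1,v_2]$, so that $l_2$ indeed lands in $\Omega^{n-1}_{\mathrm{Ham}}(M,\omega)=L_0$.

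I would then organize the verification of \eqref{generalized jacobi identity} by the total degree of the inputs. When all $x_i$ lie in degree $0$, the inner terms $l_i(x_{\sigma(1)},\dots)$ with $i=1$ vanish, and a term feeding into $l_j$ with $j\ge 2$ survives only if its first slot is again in degree $0$, forcing $i=2$; the sole remaining contribution is the $j=1$ term $l_1\big(l_m(x_1,\dots,x_m)\big)=d(\iota_{v_m}\cdots\iota_{v_1}\omega)$. Matching the $i=2$ terms, which insert $\iota_{[v_a,v_b]}$ by the $k=2$ case above, against this exact term is precisely the contraction identity. The mixed cases, in which at least one input has negative degree, are more rigid: every $l_k$ with $k\ge 2$ annihilates such inputs, so the identity collapses to the statement that $l_1=d$ is a derivation of the degree-$0$ bracket $l_2$, again a consequence of the $k=2$ identity. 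For $m>n+1$ all brackets of arity $>n+1$ vanish and the surviving terms cancel for the same reason.

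The conceptual content is entirely captured by the contraction identity together with the closedness and invariance of $\omega$; the genuine difficulty is the sign bookkeeping. Reconciling the three sign sources in \eqref{generalized jacobi identity}---the permutation parity $(-1)^{\operatorname{sgn}(\sigma)}$, the Koszul sign $\epsilon(\sigma)$ from reordering graded forms, and the weight $(-1)^{i(j-1)}$---with the signs $(-1)^{i+j}$ generated by the inductive contraction formula and the normalization $\epsilon(k)=-(-1)^{k(k+1)/2}$ is where the argument must be carried out with care. I expect this to be the most delicate and error-prone step: the role of $\epsilon(k)$ is precisely to absorb these signs so that, after antisymmetrizing over unshuffles, the bracket-on-bracket terms close up exactly against the exact term $l_1(l_m(\cdots))$.
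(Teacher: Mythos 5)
Your plan is essentially the proof strategy used in the paper: the paper does not reprove this statement (it is quoted from Rogers), but its proof of the relative analogue, Theorem~\ref{Relative L-infinity algebra main thm}, follows exactly your outline — well-definedness and graded skew-symmetry first, then the generalized Jacobi identity split by total degree and by the arity $j$ of the outer bracket, with the terms $2\le j\le m-2$ shown to vanish by a degree count and the surviving $j=1$ and $j=m-1$ terms cancelling against each other via an inductively proved contraction identity (Lemma~\ref{lemma derivative}). Your observation that the $k=2$ case of that identity is what makes $l_2$ land in $\Omega^{n-1}_{\mathrm{Ham}}$ is also exactly how the paper handles well-definedness.

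One concrete caution: your key identity is stated without the overall prefactor $(-1)^m$ that appears both in Rogers' Lemma~3.7 and in the paper's Lemma~\ref{lemma derivative},
\begin{equation*}
\mathrm{d}\,\iota_{v_m}\cdots\iota_{v_1}\omega
=(-1)^{m}\sum_{1\le i<j\le m}(-1)^{i+j}\,
\iota_{v_m}\cdots\widehat{\iota_{v_i}}\cdots\widehat{\iota_{v_j}}\cdots\iota_{v_1}\,\iota_{[v_i,v_j]}\omega ,
\end{equation*}
with the bracketed contraction in the same (innermost) position you use. For even $m$ this is invisible, but for odd $m$ your version is off by a global sign, and since the whole point of the normalization $\epsilon(k)=-(-1)^{k(k+1)/2}$ is to make the $l_1(l_m(\cdots))$ term cancel the $l_{m-1}(l_2(\cdots),\dots)$ terms, the cancellation would fail in odd arity as written. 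You flag the sign bookkeeping as the delicate step yourself; this is precisely where it bites, so the prefactor needs to be tracked through the inductive step (it arises from moving $\iota_{[v_i,v_j]}$ past the remaining $m-2$ contractions and from the $-\iota_{v_m}\mathrm{d}\Theta$ term). With that correction the argument closes up as in the paper.
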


This construction not only establishes the existence of an $L_\infty$-algebra associated to any $n$-plectic manifold, but also provides an explicit model for the algebra of observables in multisymplectic geometry. The graded vector space described above can be viewed as a truncation of the de Rham complex, with degrees inverted, and the higher brackets generalize the Poisson bracket of symplectic geometry to a homotopy Lie structure. This motivates the following definition.

\begin{defn}[\cite{rogers2011higher}, cf. also \cite{callies2016homotopy}]
	The \emph{$L_\infty$-algebra of observables} $L_\infty(M,\omega)$ 
	of the (pre)-$n$-plectic manifold $(M,\omega)$ consists of a chain complex $L_\bullet$

\[\begin{array}{ccccccccccccccc}
    0 & \to & L_{n-1} & \to & L_{n-2}  & \to & \cdots & \to & L_{k-2} & \to & \cdots & \to & L_{1} & \to & L_0\\
     &  & \parallel &  & \parallel &  &  &  & \parallel &  &  &  & \parallel &  & \parallel \\
      &  & \Omega^{0}(M)& \to & \Omega^{0}(M)  & \to & \cdots & \to & \Omega^{k-2}(M) & \to & \cdots & \to & \Omega^{n-2}(M) & \to & \Omega^{n-1}_{\textrm{Ham}}(M)\\ 
\end{array}\]

	which is a truncation of the de-Rham complex with inverted grading,
	endowed with $n$ (skew-symmetric) multibrackets $(2 \leq k \leq n+1)$
	\begin{equation}
		\begin{tikzcd}[column sep= small,row sep=0ex]
				[\cdot,\dots,\cdot]_k \colon& \Lambda^k\left(\Omega^{n-1}_{\textrm{Ham}}\right) 	\arrow[r]& 				\Omega^{n+1-k} \\
				& \sigma_1\wedge\dots\wedge\sigma_k 	\ar[r, mapsto]& 	\epsilon(k)\iota_{v_{\sigma_k}}\dots\iota_{v_{\sigma_1}}\omega 
		\end{tikzcd}		
	\end{equation}
	where $v_{\sigma_k}$ is any Hamiltonian vector field associated to $\sigma_k\in \Omega^{n-1}_{\textrm{Ham}}$ and $\epsilon(k) := - (-1)^{\frac{k(k+1)}{2}}$ is the Koszul sign.
\end{defn}

\vspace{1em}
When one fixes a form $\omega$ on a manifold $M$ it is natural to highlight the group actions preserving this extra structure, also known as ``symmetries''.

\section{Lie \Twoalgebras}

We now focus on Lie \(2\)-algebras, which are \(L_\infty\)-algebras concentrated in degrees 0 and 1. One of the goals of this thesis is to construct a Lie \(2\)-algebra associated to a quasi-Hamiltonian \(G\)-space. To lay the groundwork, we survey two equivalent formulations of Lie \(2\)-algebras that appear in the literature and briefly present some canonical examples, including the Atiyah and Courant Lie \(2\)-algebras.

\subsection{Revisiting Two Equivalent Definitions of Lie \Twoalgebras}

Lie \(2\)-algebras can be understood in at least two equivalent ways. The first interpretation arises from categorification and defines a Lie \(2\)-algebra as a semistrict 2-vector space equipped with a skew-symmetric bracket satisfying a Jacobi identity up to coherent isomorphism, as introduced by Baez and Crans in \cite{baez2004higher}.

\begin{defn}\cite{baez2004higher} \label{def-Lie-alegra1}
A \emph{Lie 2-algebra} is defined by the following data:

\begin{enumerate}
    \item A $2$-term chain complex of vector spaces
    \[
    L_{\bullet} = (L_1 \stackrel{d}{\longrightarrow} L_0)
    \]
    where $d: L_1 \to L_0$ is the differential.
    
    \item A skew-symmetric chain map, denoted by
    \[
    [\cdot,\cdot]: L_{\bullet} \otimes L_{\bullet} \to L_{\bullet},
    \]
    and referred to as the \emph{bracket}.
    
    \item A skew-symmetric chain map
    \[
    J: L_{\bullet} \otimes L_{\bullet} \otimes L_{\bullet} \to L_{\bullet},
    \]
    which serves as a transformation between two chain maps. Specifically, it maps:
    \[
    \begin{array}{ccl}  
        L_{\bullet} \otimes L_{\bullet} \otimes L_{\bullet} & \to & L_{\bullet}   \\
        x \otimes y \otimes z & \longmapsto & [x,[y,z]],  
    \end{array}
    \]
    to 
    \[
    \begin{array}{ccl}  
        L_{\bullet} \otimes L_{\bullet} \otimes L_{\bullet}& \to & L_{\bullet}   \\
        x \otimes y \otimes z & \longmapsto & [[x,y],z] + [y,[x,z]]. 
    \end{array}
    \]
    This homotopy is termed the \emph{Jacobiator}.
    
    \item The Jacobiator must also fulfill a compatibility condition given by:
    \begin{align}\label{big_J}
    &[x,J(y,z,w)] + J(x,[y,z],w) + J(x,z,[y,w]) + [J(x,y,z),w] + [z,J(x,y,w)] \nonumber \\
    &= J(x,y,[z,w]) + J([x,y],z,w)  + [y,J(x,z,w)] + J(y,[x,z],w) + J(y,z,[x,w]).
    \end{align}
\end{enumerate}

\end{defn}

The second formulation describes Lie \(2\)-algebras more algebraically, in terms of 2-term \(L_\infty\)-algebras. These consist of a chain complex of vector spaces \( L_1 \xrightarrow{d} L_0 \), together with skew-symmetric multibrackets satisfying higher Jacobi identities. This formulation is particularly useful in computations and constructions. A thorough exposition of this approach is provided by Noohi in \cite{noohi2013integrating}, and a concise summary can also be found in \cite[Section 4.1]{krepski2022multiplicative}.

\begin{defn}\cite{noohi2013integrating}\label{def-Lie-alegra2}
A \emph{$2$-term $L_\infty$-algebra} $\mathbb{L}$ is defined as a $2$-term chain complex of vector spaces $L_{1}\xrightarrow{\mathrm{d}}L_0$ along with:
\begin{itemize}
\item three skew-symmetric bilinear maps $[\cdot, \cdot]: L_0\times L_0\to L_0$, $[\cdot, \cdot]: L_0\times L_{1}\to L_{1}$, and $[\cdot, \cdot]: L_1\times L_{0}\to L_{1}$;
\item a skew-symmetric trilinear map $\left\langle \cdot, \cdot, \cdot\right\rangle: L_0\times L_0\times L_0\to L_{1}$,
\end{itemize}
satisfying the following conditions for all $w,x,y,z\in L_0$ and $a,b\in L_{1}$:
\begin{enumerate} 
\item  $\mathrm{d} [x,a]=[x,\mathrm{d} a]$
;
\item $\mathrm{d} [a,b]=[a,\mathrm{d} b]$;
\item  $[[x,y],z]+[[y,z],x]+[[z,x],y]=-\mathrm{d} \left\langle x,y,z\right\rangle$;
\item  $[[x,y],a]+[[y,a],x]+[[a,x],y]=-\left\langle x,y,da\right\rangle$;
\item  $\left\langle [w,x],y,z\right\rangle-\left\langle [w,y],x,z\right\rangle +\left\langle [w,z],x,y\right\rangle+\left\langle [x,y],w,z\right\rangle+\left\langle [y,z],w,x\right\rangle-\left\langle [x,z],w,y\right\rangle-
[\left\langle w,x,y\right\rangle,z]-[\left\langle w,y,z\right\rangle,x]+[\left\langle w,x,z\right\rangle,y]+[\left\langle x,y,z\right\rangle,w]=0$.
\end{enumerate}
 
\end{defn}

As a final equivalent viewpoint, we recall a third definition inspired by chain complexes equipped with homotopies encoding the failure of strict Lie brackets.

\begin{defn}\label{def-Lie-alegra3}\cite{rogers20132plectic}
A  \emph{Lie 2-algebra} is a 2-term chain complex of vector spaces
$L = (L_1\stackrel{d}\rightarrow L_0)$ equipped with the following structure:
\begin{itemize}
\item a chain map $[\cdot,\cdot]: L \otimes L\to L$ called the 
\emph{bracket};
\item a chain homotopy $S : L\otimes L \to L$
from the chain map
\[     \begin{array}{ccl}  
     L \otimes L &\to& L   \\
     x \otimes y &\longmapsto& [x,y]  
  \end{array}
\]
to the chain map
\[     \begin{array}{ccl}  
     L \otimes L & \to & L   \\
     x \otimes y & \longmapsto & -[y,x]  
  \end{array}
\]
called the \emph{alternator};
\item an antisymmetric chain homotopy $J : L \otimes L \otimes L
  \to L$ 
from the chain map
\[     \begin{array}{ccl}  
     L \otimes L \otimes L & \to & L   \\
     x \otimes y \otimes z & \longmapsto & [x,[y,z]]  
  \end{array}
\]
to the chain map
\[     \begin{array}{ccl}  
     L \otimes L \otimes L& \to & L   \\
     x \otimes y \otimes z & \longmapsto & [[x,y],z] + [y,[x,z]]  
  \end{array}
\]
called the  \emph{Jacobiator}.
\end{itemize}
In addition, the following equations are required to hold:
\begin{equation*}
\begin{array}{c}
  [x,J(y,z,w)] + J(x,[y,z],w) +
  J(x,z,[y,w]) + [J(x,y,z),w] \\ + [z,J(x,y,w)] 
  = J(x,y,[z,w]) + J([x,y],z,w) \\ + [y,J(x,z,w)] + J(y,[x,z],w) + J(y,z,[x,w]),
\end{array}
\end{equation*}
\begin{equation*}
    J(x,y,z)+J(y,x,z)=-[S(x,y),z],
\end{equation*}
\begin{equation*}
    J(x,y,z)+J(x,z,y)=[x,S(y,z)]-S([x,y],z)-S(y,[x,z]),
\end{equation*}
\begin{equation*}
    {S(x,[y,z])} = S([y,z],x).
\end{equation*}
\end{defn}

\vspace{1cm}

These three definitions---categorified Lie algebras, 2-term $L_\infty$-algebras, and homotopy Lie brackets---are all equivalent up to isomorphism. This equivalence is established in \cite[Proposition 8]{baez2004higher}, providing a flexible toolkit for working with Lie $2$-algebras depending on the geometric or algebraic context.

We now explore the classification of Lie $2$-algebras based on the presence of nontrivial Jacobiators or alternators.

\begin{defn}\cite{baez2004higher}\label{definition:hemistrict-al}
A Lie 2-algebra is called \emph{ hemistrict} if its Jacobiator is the identity
chain homotopy. Similarly, a Lie 2-algebra is called   \emph{semistrict} if its alternator is the identity chain homotopy.
\end{defn}

Having reviewed the abstract definitions, we now turn to Lie 2-algebras arising from 2-plectic manifolds. 

\subsection{Lie 2-algebras associated with a 2-plectic manifold} \label{Lie 2-lagebra of a 2-plecticmanifold}

We know from Theorem~\ref{def:RogersAlgebra1} that any \(2\)-plectic manifold naturally gives rise to a Lie \(2\)-algebra. Notably, the higher bracket structure appearing in Theorem~\ref{def:RogersAlgebra1} corresponds to the \emph{semi-bracket}, as defined in Definition~\ref{definition: semi-bracket1}. 

In the specific case of \(2\)-plectic manifolds, there exist (at least) two distinct—but canonically associated—Lie \(2\)-algebra structures: the \emph{hemi-strict} and the \emph{semi-strict} Lie \(2\)-algebras. Both structures share the same underlying 2-term chain complex of forms and vector fields, but they differ in the choice of bracket and Jacobiator. The hemi-strict Lie \(2\)-algebra uses the \emph{hemi-bracket}, which is strictly antisymmetric but fails to satisfy the Jacobi identity strictly, whereas the semi-strict version employs the \emph{semi-bracket}, which fails to be strictly antisymmetric but satisfies the Jacobi identity up to coherent homotopy. 
This provides equivalent but non-isomorphic ways of encoding observables on a 2-plectic manifold via Lie \(2\)-algebras.

\begin{defn}
\label{definition: semi-bracket1} \cite{rogers2011higher}
Let \((M, \omega)\) be a 2-plectic manifold, and let \(\Omega_{\mathrm{Ham}}^1(M)\) denote the space of Hamiltonian 1-forms on \(M\).
The \emph{semi-bracket}, denoted \(\left\{\cdot, \cdot\right\}_s\), is defined by 
\begin{align*}
\left\{\cdot, \cdot\right\}_s : \Omega_{\mathrm{Ham}}^1(M)\times \Omega_{\mathrm{Ham}}^1(M) &\to \Omega_{\mathrm{Ham}}^1(M)\\
(\alpha, \beta) &\mapsto \iota_{v_\beta}\iota_{v_\alpha}\omega
\end{align*}
\end{defn}

\noindent 
This is indeed a bracket since one can check that the following properties are satisfied. 

\begin{itemize}
  \item \textit{Hamiltonian Property:} The semi-bracket of Hamiltonian forms should itself be a Hamiltonian form. Formally, this means 
  \[
  d\left\{\alpha, \beta\right\}_s = -\iota_{[v_\alpha,v_\beta]} \omega,
  \]
  which implies that the Hamiltonian vector field of the semi-bracket is the Lie bracket of the original Hamiltonian vector fields, i.e., \(v_{\left\{\alpha, \beta\right\}_s} = [v_\alpha,v_\beta]\).

  \item \textit{Antisymmetry:} The semi-bracket must be antisymmetric, meaning 
  \[
  \left\{\alpha, \beta\right\}_s = -\left\{\beta, \alpha\right\}_s.
  \]

  \item \textit{Jacobi Identity up to an Exact \(1\)-form:} The semi-bracket should satisfy a generalized Jacobi identity:
  \[
  \left\{\alpha, \left\{\beta, \gamma\right\}_s\right\}_s + \left\{\gamma, \left\{\alpha, \beta\right\}_s\right\}_s + \left\{\beta, \left\{ \gamma, \alpha\right\}_s\right\}_s=-dJ(\alpha,\beta,\gamma)
  ,
  \]
  where 
the Jacobiator is represented by the linear map
\begin{align*}
J : \Omega^1_{\mathrm{Ham}}(M) \otimes \Omega^1_{\mathrm{Ham}}(M) \otimes \Omega^1_{\mathrm{Ham}}(M) &\to   C^\infty(M) \\
\left(\alpha, \beta, \gamma)\right) &\mapsto \left(0, -\iota_{v_\alpha}\iota_{v_\beta}\iota_{v_\gamma}\omega\right).
\end{align*}
\end{itemize}

For a \(2\)-plectic manifold \( (M, \omega) \), one can construct a semi-strict Lie \(2\)-algebra \( L(M, \omega)_{s} \) as proven in a theorem by Rogers and Baez \cite{Baez_2009}.

\begin{thm}\cite[Theorem 4.4]{Baez_2009}
\label{semistrict}
Let \((M, \omega)\) be a 2-plectic manifold. Then there exists a semi-strict Lie \(2\)-algebra \(L(M, \omega)_{s}\) with the following characteristics:
\begin{itemize}
\item $L_0=\Omega^1_{\mathrm{Ham}}(M)$;
\item \(L_1=C^\infty(M)\);
\item the differential  \(d:C^\infty(M)\to \Omega^1_{\mathrm{Ham}}\) is the usual exterior differential;
\item the bracket operation is the semi-bracket \(\left\{\cdot, \cdot\right\}_s\) as defined in Definition \ref{definition: semi-bracket1};
\item the alternator is the identity chain homotopy map, hence given by the bilinear map
\begin{align*}
S:\Omega^1_{\mathrm{Ham}}(M)\times \Omega^1_{\mathrm{Ham}}(M) & \to C^\infty(M)\\
(\alpha, \beta) & \mapsto 0;
\end{align*}
\item the Jacobiator is represented by the trilinear map
\begin{align*}
J : \Omega^1_{\mathrm{Ham}}(M) \otimes \Omega^1_{\mathrm{Ham}}(M) \otimes \Omega^1_{\mathrm{Ham}}(M) &\to   C^\infty(M) \\
\left(\alpha, \beta, \gamma)\right) &\mapsto -\iota_{v_\alpha}\iota_{v_\beta}\iota_{v_\gamma}\omega.
\end{align*}

\end{itemize}
\end{thm}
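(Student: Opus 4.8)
The plan is to verify the axioms of a semistrict Lie $2$-algebra in the sense of Definition~\ref{def-Lie-alegra3}, noting throughout that this statement is precisely the $n=2$ instance of Rogers' Theorem~\ref{def:RogersAlgebra1}; indeed one clean route is to invoke that theorem with $n=2$ to produce the $2$-term $L_\infty$-algebra $(L,\{l_k\})$ and then transport it to the bracket/alternator/Jacobiator data via the Baez--Crans equivalence of \cite{baez2004higher}. I would begin with well-definedness of the structure maps. The differential $d\colon C^\infty(M)\to\Omega^1_{\mathrm{Ham}}(M)$ lands in Hamiltonian forms because $d(df)=0=-\iota_0\omega$, so $df$ is Hamiltonian with associated (by nondegeneracy, unique) vector field $v_{df}=0$. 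For the bracket, the pivotal computation is the Hamiltonian property: using Cartan's magic formula together with $d\omega=0$ and $\mathcal{L}_{v_\alpha}\omega=d\iota_{v_\alpha}\omega=-d(d\alpha)=0$, one obtains
\[
d\{\alpha,\beta\}_s = d\,\iota_{v_\beta}\iota_{v_\alpha}\omega = \mathcal{L}_{v_\beta}\iota_{v_\alpha}\omega = \iota_{[v_\beta,v_\alpha]}\omega = -\iota_{[v_\alpha,v_\beta]}\omega,
\]
so $\{\alpha,\beta\}_s\in\Omega^1_{\mathrm{Ham}}(M)$ with associated Hamiltonian vector field $v_{\{\alpha,\beta\}_s}=[v_\alpha,v_\beta]$.

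Next I would record that the semi-bracket is strictly skew-symmetric, since $\iota_{v_\beta}\iota_{v_\alpha}\omega=-\iota_{v_\alpha}\iota_{v_\beta}\omega$; this is exactly what permits the alternator $S$ to be taken as the zero map, placing us in the semistrict case. Setting the mixed-degree bracket $[\alpha,f]:=0$, the chain-map condition requires $d[\alpha,f]=\{\alpha,df\}_s$, which holds because both sides vanish: the right side is $\iota_{v_{df}}\iota_{v_\alpha}\omega=0$ as $v_{df}=0$. The Jacobiator $J(\alpha,\beta,\gamma)=-\iota_{v_\alpha}\iota_{v_\beta}\iota_{v_\gamma}\omega$ is a smooth function (a $3$-form fully contracted by three vector fields) and is totally antisymmetric, because interior products anticommute and $\alpha\mapsto v_\alpha$ is linear; this total antisymmetry is precisely conditions~2 and~3 of Definition~\ref{def-Lie-alegra3} once $S=0$, while condition~4 holds trivially.

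The computational core is the Jacobiator identity, i.e.\ that $J$ is a genuine chain homotopy witnessing Jacobi-up-to-exactness,
\[
\{\alpha,\{\beta,\gamma\}_s\}_s - \{\{\alpha,\beta\}_s,\gamma\}_s - \{\beta,\{\alpha,\gamma\}_s\}_s = dJ(\alpha,\beta,\gamma).
\]
Here I would substitute $v_{\{\alpha,\beta\}_s}=[v_\alpha,v_\beta]$ to rewrite the left side entirely in terms of iterated contractions of $\omega$ by $[v_i,v_j]$ and $v_k$, then expand the right side $dJ=-d\,\iota_{v_\alpha}\iota_{v_\beta}\iota_{v_\gamma}\omega$ by repeated use of Cartan's formula, $\mathcal{L}_{v_i}\omega=0$, and $d\omega=0$. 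Closedness of $\omega$ is exactly what forces the two sides to agree, and the overall sign of $J$ is fixed by this requirement.

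I expect the main obstacle to be the coherence (pentagon) law, equation~\eqref{big_J}. In the $L_\infty$ language this is the generalized Jacobi identity~\eqref{generalized jacobi identity} at $m=4$; since $n=2$ forces the tetra-bracket $l_4$ to vanish, the identity becomes a nontrivial constraint relating only $l_1=d$, $l_2=\{\cdot,\cdot\}_s$, and $l_3=J$. Rather than expanding all ten terms of~\eqref{big_J} directly, the efficient strategy is to establish this single $m=4$ instance of~\eqref{generalized jacobi identity} and transport it through the Baez--Crans dictionary of \cite{baez2004higher}: every term reduces to a fully contracted expression in the closed form $\omega$, so the identity follows once more from $d\omega=0$ together with the anticommutativity of the interior products.
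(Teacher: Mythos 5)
The paper does not actually prove this theorem: it is quoted directly from Baez--Hoffnung--Rogers \cite[Theorem 4.4]{Baez_2009}, so there is no in-text proof to compare against. Your verification is correct and is essentially the canonical argument: the statement is the $n=2$ specialization of Theorem~\ref{def:RogersAlgebra1}, and each of your key steps --- the Hamiltonianity computation $d\{\alpha,\beta\}_s=-\iota_{[v_\alpha,v_\beta]}\omega$, the Jacobi identity up to $dJ$, and the reduction of the coherence law \eqref{big_J} to the $m=4$ instance of \eqref{generalized jacobi identity} with $l_4=0$ --- is precisely the computation the paper carries out in the relative setting in Proposition~\ref{proposition:bracket}, Proposition~\ref{Jacobi-identity-up-toexact-form}, and Lemma~\ref{lemma derivative}, respectively (for the last step, the cleanest mechanism is that lemma applied to the vanishing $(-1)$-form $d\iota_{v_4}\iota_{v_3}\iota_{v_2}\iota_{v_1}\omega$, rather than $d\omega=0$ alone). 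The only caveat is the sign in the Jacobi-up-to-homotopy identity, on which the paper is internally inconsistent (compare the bullet list following Definition~\ref{definition: semi-bracket1} with Proposition~\ref{Jacobi-identity-up-toexact-form}); your convention agrees with the latter, and your remark that the sign of $J$ is fixed by the chain-homotopy requirement is the right way to resolve it.
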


\begin{exa}[Poisson-Lie 2-Algebra in the Context of (Pre)-2-Plectic Geometry] \cite{djounvouna2023infinitesimalsymmetriesbundlegerbes}
Consider a manifold \( M \) equipped with a closed 3-form, denoted by \( \chi \in \Omega^3(M) \). The \emph{Poisson-Lie 2-algebra of observables}, \( \mathbb{L}(M,\chi) \), is constructed as follows:
\begin{itemize}
\item $L_1=C^\infty(M)$, the set smooth functions on $M$;
\item $L_0= \{(x,\beta) \in \mathfrak{X}(M) \times \Omega^1(M) \, | \, \iota_x \chi = -d\beta \}$;
\item $\mathrm{d}: C^\infty(M) \to \{(x,\beta) \in \mathfrak{X}(M) \times \Omega^1(M) \, | \, \iota_x \chi = -\mathrm{d}\beta \}$ such that $\mathrm{d}(f)=(0, \mathrm{d}f)$, where $v_f$ is the the Hamiltonian vector field corresponding to $f$;
\item the bracket $[\cdot, \cdot ]:L_i\otimes L_j\to L_{i+j}$ with $i+j=0,1$ is given by 
\[
[({u}_1,\beta_1),({u}_2,\beta_2)]=([{u}_1,{u}_2],\iota_{{u}_2}\iota_{{u}_1}\chi),
\]
in degree $0$, and $0$ otherwise;
\item the Jacobiator $J:L_0\otimes L_0 \otimes L_0 \to L_{1}$ is given by
\[
J\left(({u}_1,\beta_1), ({u}_2,\beta_2), ({u}_3,\beta_3)\right) = - \iota_{{u}_3}\iota_{{u}_2}\iota_{{u}_1} \chi.
\]
\end{itemize}
\end{exa}

In parallel to the semi-strict construction, another bracket---the hemi-bracket---gives rise to a different Lie $2$-algebra structure, as outlined below.

\begin{defn}\cite{rogers20132plectic}
\label{definition:hemi-bracket1}
Let \((M, \omega)\) be a 2-plectic manifold, and let \(\Omega_{\mathrm{Ham}}^1(M)\) denote the space of Hamiltonian 1-forms on \(M\). For each \(\alpha \in \Omega_{\mathrm{Ham}}^1(M)\), let \(v_\alpha\) be the unique Hamiltonian vector field associated to \(\alpha\). The \textbf{hemi-bracket} is the bilinear map
\[
\{\cdot, \cdot\}_h : \Omega_{\mathrm{Ham}}^1(M) \times \Omega_{\mathrm{Ham}}^1(M) \to \Omega_{\mathrm{Ham}}^1(M)
\]
defined by
\[
\{\alpha, \beta\}_h := \mathcal{L}_{v_\alpha} \beta,
\]
where \(\mathcal{L}_{v_\alpha}\) denotes the Lie derivative along \(v_\alpha\).
\end{defn}

The hemi-bracket is closely related to the semi-bracket defined earlier. In fact, they differ by an exact 1-form:
\[
\{\alpha, \beta\}_h = \{\alpha, \beta\}_s + d\iota_{v_\alpha} \beta.
\]
This identity follows from Cartan’s magic formula for the Lie derivative, which states that \(\mathcal{L}_v = \iota_v d + d\iota_v\). Applying this to \(\beta\), we compute:
\begin{align*}
\{\alpha, \beta\}_h &= \mathcal{L}_{v_\alpha} \beta \\
&= \iota_{v_\alpha} d\beta + d\iota_{v_\alpha} \beta \\
&= \iota_{v_\alpha}(-\iota_{v_\beta} \omega) + d\iota_{v_\alpha} \beta \quad \text{(since } d\beta = -\iota_{v_\beta} \omega\text{)} \\
&= \{\alpha, \beta\}_s + d\iota_{v_\alpha} \beta.
\end{align*}

Next, we verify that the hemi-bracket defines a valid bracket on \(\Omega^1_{\mathrm{Ham}}(M)\). First, it satisfies the Hamiltonian condition:
\[
d\{\alpha, \beta\}_h = -\iota_{[v_\alpha, v_\beta]} \omega.
\]
This shows that the bracket of two Hamiltonian 1-forms is again a Hamiltonian 1-form, whose associated vector field is the Lie bracket \([v_\alpha, v_\beta]\).

Second, the hemi-bracket is antisymmetric up to an exact term. Indeed, define the bilinear map
\[
S : \Omega^1_{\mathrm{Ham}}(M) \times \Omega^1_{\mathrm{Ham}}(M) \to C^\infty(M)
\]
by
\[
S(\alpha, \beta) := -(\iota_{v_\alpha} \beta + \iota_{v_\beta} \alpha).
\]
Then we have the relation
\[
\{\alpha, \beta\}_h + dS(\alpha, \beta) = -\{\beta, \alpha\}_h.
\]

Finally, the hemi-bracket satisfies the Jacobi identity strictly:
\[
\{\alpha, \{\beta, \gamma\}_h\}_h = \{\{\alpha, \beta\}_h, \gamma\}_h + \{\beta, \{\alpha, \gamma\}_h\}_h.
\]
This allows us to construct a hemistrict Lie 2-algebra as follows.

\begin{thm}\cite[Theorem 4.3]{Baez_2009}
\label{thm:hemistrict_lie2}
Let \((M, \omega)\) be a 2-plectic manifold. Then there exists a hemistrict Lie \(2\)-algebra \(L(M, \omega)_h\) with the following structure:
\begin{itemize}
    \item \(L_0 = \Omega^1_{\mathrm{Ham}}(M)\),
    \item \(L_1 = C^\infty(M)\),
    \item The differential \(d : C^\infty(M) \to \Omega^1_{\mathrm{Ham}}(M)\) is the exterior derivative,
    \item The bracket is the hemi-bracket \(\{\cdot, \cdot\}_h\),
    \item The alternator is the bilinear map \(S(\alpha, \beta) = -(\iota_{v_\alpha} \beta + \iota_{v_\beta} \alpha)\),
    \item The Jacobiator is the identity chain homotopy, i.e., \(J(\alpha, \beta, \gamma) = 0\).
\end{itemize}
\end{thm}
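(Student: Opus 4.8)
The plan is to verify directly that the data $(L,[\cdot,\cdot]_h,S,J\equiv 0)$ satisfies every axiom of a Lie $2$-algebra in the sense of Definition~\ref{def-Lie-alegra3}. First I would pin down the bracket as a genuine chain map $[\cdot,\cdot]\colon L\otimes L\to L$ on the two-term complex $C^\infty(M)\xrightarrow{d}\Omega^1_{\mathrm{Ham}}(M)$: in degree $0$ it is the hemi-bracket $\{\alpha,\beta\}_h=\mathcal{L}_{v_\alpha}\beta$; on $L_0\otimes L_1$ I set $[\alpha,f]_h:=\mathcal{L}_{v_\alpha}f=\iota_{v_\alpha}df$; and on $L_1\otimes L_0$ I take $[f,\alpha]_h:=0$. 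The well-definedness checks are that every exact form $df$ is Hamiltonian with vanishing Hamiltonian vector field (by nondegeneracy of $\omega$), so $d$ lands in $\Omega^1_{\mathrm{Ham}}(M)$, and that Cartan calculus gives $d\{\alpha,\beta\}_h=\mathcal{L}_{v_\alpha}d\beta=-\iota_{[v_\alpha,v_\beta]}\omega$ using $\mathcal{L}_{v_\alpha}\omega=0$, so $\{\alpha,\beta\}_h$ is again Hamiltonian with Hamiltonian vector field $[v_\alpha,v_\beta]$. The chain-map identity then reduces to $d[\alpha,f]_h=\{\alpha,df\}_h$ and $d[f,\alpha]_h=\{df,\alpha\}_h=0$, both of which hold for the chosen components.

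Next I would treat the alternator and justify the vanishing of the Jacobiator. Since $J$ is required to be trivial, the bracket must satisfy the Jacobi identity \emph{strictly}, with all failure of skew-symmetry absorbed by $S$. The structural input, already recorded in the excerpt, is that $\alpha\mapsto v_\alpha$ carries the hemi-bracket to the commutator of vector fields; combined with $\mathcal{L}_{[v_\alpha,v_\beta]}=\mathcal{L}_{v_\alpha}\mathcal{L}_{v_\beta}-\mathcal{L}_{v_\beta}\mathcal{L}_{v_\alpha}$ this yields the strict Jacobi identity $\{\alpha,\{\beta,\gamma\}_h\}_h=\{\{\alpha,\beta\}_h,\gamma\}_h+\{\beta,\{\alpha,\gamma\}_h\}_h$ on $L_0^{\otimes 3}$, and the analogous identities on the mixed-degree pieces follow from the same morphism property together with $[f,\alpha]_h=0$. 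For $S(\alpha,\beta)=-(\iota_{v_\alpha}\beta+\iota_{v_\beta}\alpha)$ I would verify the chain-homotopy relation $dS(\alpha,\beta)=-\{\alpha,\beta\}_h-\{\beta,\alpha\}_h$: expanding by Cartan's magic formula and substituting $d\alpha=-\iota_{v_\alpha}\omega$, $d\beta=-\iota_{v_\beta}\omega$, the two interior-product terms cancel by antisymmetry of $\iota_{v_\alpha}\iota_{v_\beta}$, leaving exactly the required expression; I would also note that $S$ is manifestly symmetric.

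Finally I would check the four coherence equations of Definition~\ref{def-Lie-alegra3}. With $J\equiv 0$ the large pentagon identity collapses to $0=0$. The equation $J(x,y,z)+J(y,x,z)=-[S(x,y),z]$ reduces to $[S(x,y),z]=0$, which holds precisely because $S(x,y)\in L_1$ and the mixed bracket on $L_1\otimes L_0$ was taken to be zero. The symmetry relation $S(x,[y,z])=S([y,z],x)$ is immediate from the symmetry of $S$. The substantive identity is $[x,S(y,z)]=S([x,y],z)+S(y,[x,z])$, that is $\mathcal{L}_{v_x}S(y,z)=S(\mathcal{L}_{v_x}y,z)+S(y,\mathcal{L}_{v_x}z)$; I would establish it by expanding both sides and applying $\mathcal{L}_{v_x}\iota_{v_y}=\iota_{v_y}\mathcal{L}_{v_x}+\iota_{[v_x,v_y]}$ together with $v_{\{x,y\}_h}=[v_x,v_y]$, after which both sides reduce to the same four-term interior-product expression. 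I expect this last equation — the derivation property of $S$ — to be the main obstacle, since it is where the Cartan identity $[\mathcal{L}_u,\iota_w]=\iota_{[u,w]}$, the morphism property of $\alpha\mapsto v_\alpha$, and the sign bookkeeping must all be coordinated; the remaining axioms are essentially formal once the bracket components and the Hamiltonian property are in place.
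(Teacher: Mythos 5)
Your verification is correct and follows essentially the same route as the paper, which establishes the same preparatory facts by Cartan calculus (exact forms are Hamiltonian with zero Hamiltonian vector field, $d\{\alpha,\beta\}_h=-\iota_{[v_\alpha,v_\beta]}\omega$, the alternator relation $dS(\alpha,\beta)=-\{\alpha,\beta\}_h-\{\beta,\alpha\}_h$, and the strict Jacobi identity via $v_{\{\alpha,\beta\}_h}=[v_\alpha,v_\beta]$) and then defers the remaining coherence laws to Baez--Hoffnung--Rogers. Your explicit check of those coherence equations, in particular the derivation property $[x,S(y,z)]=S([x,y],z)+S(y,[x,z])$ via $[\mathcal{L}_{v_x},\iota_{v_y}]=\iota_{[v_x,v_y]}$, is exactly the argument in the cited source, so nothing is missing.
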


\subsection{Morphisms of Semistrict Lie \Twoalgebras}

Since Lie \(2\)-algebras are precisely \(L_\infty\)-algebras concentrated in degrees \(0\) and \(1\), their morphisms admit a simplified structure compared to general \(L_\infty\)-morphisms. In this section, we describe morphisms between Lie \(2\)-algebras in the 2-term \(L_\infty\) language.

\begin{defn}\cite[Definition 23]{baez2004higher}
\label{homo of Lie 2-algebras}
Let \(L = (L_{\bullet}, [\cdot,\cdot], J)\) and \(L' = (L_{\bullet}', [\cdot,\cdot]^{\prime}, J')\) be two Lie \(2\)-algebras. A \emph{morphism} from \(L\) to \(L'\) consists of the following data:

\begin{enumerate}
    \item A chain map 
    \[
    \phi_{\bullet}: L_{\bullet} \to L_{\bullet}',
    \]
    
    \item A chain homotopy
    \[
    \varphi: L_{\bullet} \otimes L_{\bullet} \to L_{\bullet}',
    \]
    which serves as a homotopy between the two bracket structures. That is, it satisfies
    \[
    \phi_{\bullet} \left( [x,y] \right) - \left [ \phi_{\bullet}(x), \phi_{\bullet}(y) \right]^{\prime} = d' \varphi(x,y) + \varphi(d x, y) + (-1)^{|x|} \varphi(x, d y).
    \]
    
    \item A coherence condition that controls the compatibility with the Jacobiators:
    \begin{align}
    \label{coherence}
    \begin{split}
    \phi_1(J(x,y,z)) &- J^{\prime}(\phi_0(x), \phi_0(y), \phi_0(z)) = \varphi(x, [y,z]) - \varphi([x,y], z) - \varphi(y, [x,z]) \\
    &\quad - [\varphi(x,y), \phi_0(z)]^{\prime} + [\phi_0(x), \varphi(y,z)]^{\prime} - [\phi_0(y), \varphi(x,z)]^{\prime}.
    \end{split}
    \end{align}
\end{enumerate}

\end{defn}

When the chain homotopy \(\varphi\) vanishes identically, we say that the morphism is \textbf{strict}. In this case, the brackets and Jacobiators are preserved exactly, not merely up to homotopy.

\subsection*{Atiyah and Courant Lie \(2\)-Algebras Associated to Pre-2-Plectic Manifolds}

We now turn to important geometric examples of Lie \(2\)-algebras that arise naturally from pre-2-plectic manifolds. Two particularly noteworthy constructions are the Atiyah and Courant Lie \(2\)-algebras, introduced in \cite{Fiorenza2014}, and connected via a natural sequence of \(L_\infty\)-morphisms. 

\begin{defn}\cite[Definition/Proposition 5.2.1]{Fiorenza2014}
Let \((M,\omega)\) be a pre-2-plectic manifold. The \emph{Atiyah Lie \(2\)-algebra}, denoted \(\mathfrak{atiyah}(M,\omega)\), is the graded vector space
\[
\mathfrak{atiyah}(M,\omega)_0 = \mathfrak{X}(M), \quad \mathfrak{atiyah}(M,\omega)_1 = \Omega^0(M),
\]
equipped with the following nontrivial brackets for all \(f \in \Omega^0(M)\), and \(v, v_i \in \mathfrak{X}(M)\), \(i = 1,2,3\):
\[
\begin{aligned}
    & \bra f \ket^\mathfrak{a}_1 = 0, \\
    & \bra v_1, v_2 \ket^\mathfrak{a}_2 = [v_1, v_2], \\
    & \bra v, f \ket^\mathfrak{a}_2 = \mathcal{L}_v f, \\
    & \bra v_1, v_2, v_3\ket^\mathfrak{a}_3 = -\iota_{v_1} \iota_{ v_2 } \iota_{v_3} \omega.
\end{aligned}
\]
All other brackets vanish for degree reasons.
\end{defn}

\begin{defn}\cite[ Definition/Proposition 5.2.1]{Fiorenza2014}
Let \((M,\omega)\) be a pre-2-plectic manifold. The \emph{Courant Lie \(2\)-algebra}, denoted \(\mathfrak{courant}(M,\omega)\), is defined on the graded vector space
\[
\mathfrak{courant}(M, \omega)_0 = \mathfrak{X}(M) \oplus \Omega^1(M), \quad \mathfrak{courant}(M, \omega)_1 = \Omega^0(M),
\]
with the following nontrivial brackets:
\[
\begin{aligned}
    & \bra f \ket^\mathfrak{c}_1 = d f, \\
    & \bra (v , \theta), f \ket^\mathfrak{c}_2 = \frac{1}{2} \iota_v d f, \\
    & \bra (v_1 , \theta_1), (v_2 , \theta_2)\ket^\mathfrak{c}_2 = \left( [v_1, v_2], \mathcal{L}_{v_1}\theta_2 - \mathcal{L}_{v_2}\theta_1 - \frac{1}{2} d(\iota_{v_1}\theta_2 - \iota_{v_2}\theta_1) - \iota_{v_1}\iota_{v_2} \omega \right), 
\end{aligned}
\]

\[
\begin{aligned}
          &\bra (v_1 , \theta_1), (v_2 , \theta_2), (v_3 , \theta_3) \ket^\mathfrak{c}_3 
= -\frac{1}{6} \left(
\langle \bra (v_1 , \theta_1), (v_2 , \theta_2) \ket^\mathfrak{c}_2 , (v_3 , \theta_3) \rangle
+ \langle \bra (v_2 , \theta_2), (v_3 , \theta_3) \ket^\mathfrak{c}_2 , (v_1 , \theta_1) \rangle \right. \\
& \hspace{6.5cm} + \left. \langle \bra (v_3 , \theta_3), (v_1 , \theta_1) \ket^\mathfrak{c}_2 , (v_2 , \theta_2) \rangle
\right)
\end{aligned}
\]

Here, \(\langle \cdot, \cdot \rangle\) denotes the natural symmetric pairing on \(TM \oplus T^*M\), given by
\[
\langle (v_1 , \theta_1), (v_2 , \theta_2) \rangle := \iota_{v_1} \theta_2 + \iota_{v_2} \theta_1.
\]
\end{defn}

These two Lie \(2\)-algebras are related via a sequence of \(L_\infty\)-morphisms.

\begin{pro}\cite[Proposition 5.2.3]{Fiorenza2014}
\label{Atiyah-Courant}
Let \((M,\omega)\) be a pre-2-plectic manifold. Then there exists a natural sequence of \(L_\infty\)-morphisms:
\[
L_\infty(M, \omega) \xrightarrow{\phi} \mathfrak{courant}(M, \omega) \xrightarrow{\psi} \mathfrak{atiyah}(M, \omega),
\]
with the nontrivial components of \(\phi\) given by:
\[
\phi_1(v , \theta) = (v , \theta), \quad \phi_1(f) = f, \quad \phi_2((v_1 , \theta_1), (v_2 , \theta_2)) = -\frac{1}{2} (\iota_{v_1}\theta_2 - \iota_{v_2}\theta_1),
\]
and the components of \(\psi\) defined as:
\[
\psi_1(v , \theta) = v, \quad \psi_1(f) = f, \quad \psi_2((v_1 , \theta_1), (v_2 , \theta_2)) = -\frac{1}{2} (\iota_{v_1}\theta_2 - \iota_{v_2} \theta_1).
\]
\end{pro}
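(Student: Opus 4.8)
The plan is to verify directly that each of $\phi=(\phi_1,\phi_2)$ and $\psi=(\psi_1,\psi_2)$ satisfies the three defining conditions of a morphism of Lie $2$-algebras in the sense of Definition~\ref{homo of Lie 2-algebras}: first, that the pair is a chain map; second, that it intertwines the binary brackets up to the homotopy supplied by the second component; and third, that it obeys the Jacobiator coherence condition~\eqref{coherence}. Since all three objects are concentrated in degrees $0$ and $1$, each condition splits into only finitely many cases according to the degrees of the inputs, so the whole proof reduces to a finite list of Cartan-calculus identities. Throughout I would use Cartan's magic formula $\mathcal{L}_v=\iota_v d+d\iota_v$, together with the defining relation $\iota_v\omega=-d\theta$ satisfied by elements $(v,\theta)$ of $L_\infty(M,\omega)$ and the closedness $d\omega=0$; fixing all sign conventions at the outset is essential so that coefficients match rather than merely match up to sign.

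I would treat $\psi\colon\mathfrak{courant}(M,\omega)\to\mathfrak{atiyah}(M,\omega)$ first, as it is the lighter computation. The chain-map condition is immediate: on $\Omega^0(M)$ one has $\psi_1(\,\bra f\ket^{\mathfrak c}_1\,)=\psi_1(0,df)=0$, matching $\bra f\ket^{\mathfrak a}_1=0$. For the bracket-homotopy condition, the degree-$(0,0)$ case follows because $\psi_1$ projects the Courant bracket onto its vector-field part $[v_1,v_2]$, which is exactly $\bra v_1,v_2\ket^{\mathfrak a}_2$, while the correction $d^{\mathfrak a}\psi_2$ vanishes since the Atiyah differential is zero; the degree-$(0,1)$ case reduces to the identity $\mathcal{L}_v f=\iota_v df$ for functions, the factor $\tfrac12$ in $\bra (v,\theta),f\ket^{\mathfrak c}_2$ being absorbed by the $\psi_2$-term applied to $(v,\theta)$ and $df$. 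The only substantial step is the Jacobiator coherence: here I would expand the degree-$3$ Courant bracket through its definition via the symmetric pairing $\langle\cdot,\cdot\rangle$, substitute the explicit binary Courant bracket, and cancel all Lie-derivative and exact terms against the symmetrized $\psi_2$-contributions, leaving precisely $-\iota_{v_1}\iota_{v_2}\iota_{v_3}\omega=J^{\mathfrak a}$.

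For $\phi\colon L_\infty(M,\omega)\to\mathfrak{courant}(M,\omega)$ the chain-map condition again holds on the nose, since both differentials send $f$ to $(0,df)$. The heart of this case is the degree-$(0,0)$ bracket-homotopy identity. Writing out $\bra \phi_0 x,\phi_0 y\ket^{\mathfrak c}_2$ and applying Cartan's formula to its Lie-derivative terms, $\mathcal{L}_{v_1}\theta_2=d\iota_{v_1}\theta_2+\iota_{v_1}d\theta_2$ and likewise for the other, I would invoke $d\theta_i=-\iota_{v_i}\omega$ to convert these into interior products of $\omega$. The exact pieces $d\iota_{v_i}\theta_j$ then recombine with $d^{\mathfrak c}\phi_2=-\tfrac12\,d(\iota_{v_1}\theta_2-\iota_{v_2}\theta_1)$, and the surviving $\iota\iota\omega$-terms are to reproduce the $1$-form component $\iota_{v_2}\iota_{v_1}\omega$ of $\phi_0\bra x,y\ket$; the vector-field components match trivially since both brackets use $[v_1,v_2]$. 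This is exactly the point where the precise sign in the binary Courant bracket and in the Hamiltonian constraint must be pinned down, as the coefficient of the residual $\iota_{v_1}\iota_{v_2}\omega$ is sensitive to them.

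The main obstacle I anticipate is the Jacobiator coherence condition~\eqref{coherence} for $\phi$. Its right-hand side pairs three $\phi_2$-evaluations against binary $L_\infty$-brackets with three $\mathfrak{courant}$-brackets taken between $\phi_0$-images and $\phi_2$-values, while its left-hand side compares the $L_\infty$ Jacobiator $-\iota_{v_3}\iota_{v_2}\iota_{v_1}\omega$ with the degree-$3$ Courant bracket built from $\langle\cdot,\cdot\rangle$. Carrying this out cleanly requires repeated use of $\iota_{[u,w]}=\mathcal{L}_u\iota_w-\iota_w\mathcal{L}_u$, the graded antisymmetry $\iota_u\iota_w=-\iota_w\iota_u$, and $d\omega=0$, all while scrupulously tracking Koszul signs and the cyclic $(2,1)$-unshuffle symmetrization. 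The difficulty is bookkeeping rather than any single conceptual point, so I would organize the computation by first reducing every term to a normal form of the shape $\iota\iota\iota\omega$ or $d(\iota\iota\theta)$ and only then collecting coefficients; the exact and Lie-derivative contributions should cancel in pairs, isolating the triple-contraction term that matches $\phi_1 J^{L_\infty}$.
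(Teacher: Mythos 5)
Your approach is sound but genuinely different from what the paper does: the paper's entire proof of Proposition~\ref{Atiyah-Courant} is a citation, deferring the component $\phi$ to Rogers' earlier work and the component $\psi$ to Fiorenza--Rogers--Schreiber, whereas you propose a self-contained verification of the three morphism axioms of Definition~\ref{homo of Lie 2-algebras} (chain map, bracket intertwining up to the homotopy $\phi_2$ resp.\ $\psi_2$, and the coherence condition~\eqref{coherence}), organized by the degrees of the inputs. Your route buys independence from the references and forces the sign conventions into the open --- which is genuinely valuable here, since the paper mixes conventions from several sources (note in particular that the source of $\phi$ is the Poisson--Lie $2$-algebra whose degree-$0$ part already carries the vector field, so $\phi_1(v,\theta)=(v,\theta)$ is the inclusion into $\mathfrak{X}(M)\oplus\Omega^1(M)$ rather than a section of it); the paper's route buys brevity. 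Your individual checks that are spelled out are correct: the degree-$(0,1)$ case for $\psi$ does close up exactly as you say, with $\psi_2\bigl((v,\theta),(0,df)\bigr)=-\tfrac12\iota_v df$ accounting for the discrepancy between $\tfrac12\iota_v df$ and $\mathcal{L}_v f$, and the degree-$(0,0)$ case for $\phi$ does reduce, via $\mathcal{L}_{v_i}\theta_j=d\iota_{v_i}\theta_j+\iota_{v_i}d\theta_j$ and $d\theta_j=-\iota_{v_j}\omega$, to matching the residual $\iota\iota\omega$ term.

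The one thing to be honest about is that your proposal stops short of the only step that carries real content: the Jacobiator coherence for $\phi$ (and the analogous identity for $\psi$) is described as ``bookkeeping'' and a normal-form strategy is outlined, but the coefficients are never actually collected. That computation is precisely what the cited theorems of Rogers and Fiorenza et al.\ supply, and it is also where the factor $-\tfrac16$ in the ternary Courant bracket and the $(2,1)$-unshuffle signs must conspire to reproduce $-\iota_{v_3}\iota_{v_2}\iota_{v_1}\omega$; experience with these verifications shows the cancellation is delicate enough that it should be exhibited rather than asserted. So as written your proposal is a correct and well-structured plan for an alternative, computation-based proof, but to stand on its own it would need the degree-$3$ coherence identity carried through explicitly.
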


\begin{proof}
The result follows from two key sources. The morphism \(\phi\) is established in \cite[Theorem 7.1]{rogers20132plectic}, while the morphism \(\psi\) is constructed in \cite[Proposition 5.2.3]{Fiorenza2014}.
\end{proof}

\chapter{Observables of Relative \nplectic Structures}

Recall that a quasi-Hamiltonian \( G \)-space is defined as a manifold equipped with a \( G \)-action, a 2-form \( \omega \), and a moment map \( \Phi \), which satisfy certain compatibility conditions as described in Definition \ref{def:quasiHamiltonianGSpace} (see \cite{10.4310/jdg/1214460860} and \cite{meinrenken2012twisted} for more details about quasi-Hamiltonian \(G\)-spaces). The 2-form \(\omega\) is not closed (\(\mathrm{d}\omega=-\Phi^* \eta\)) nor nondegenerate; however taking into account the Cartan bi-invariant 3-form \(\eta\) that arises naturally on any compact Lie group, we get a closed (Proposition \ref{closed}) and non-degenerate (Theorem \ref{Non-deg}) relative 3-form \((\omega, \eta)\). On the other hand, 2-plectic manifolds are smooth manifolds endowed with a closed and non-degenerate 3-form (see \cite{rogers2011higher}, \cite{rogers20132plectic}, \cite{callies2016homotopy}, \cite{Baez_2009}, \cite{Cantrijn1998}). So quasi-Hamiltonian \(G\)-spaces and 2-plectic manifolds share some similarities, mainly because they are endowed with closed and non-degenerate 3-forms. Following the work of Rogers, a 2-plectic manifold gives rise to a Lie 2-algebra of observables (see \cite{rogers2011higher} and \cite{rogers20132plectic}). So it is natural to ask the following question: Is there a Lie 2-algebra of observables corresponding to a quasi-Hamiltonian \(G\)-space?

In general, it is known from \cite[Theorem 3.14]{rogers2011higher} that an \(n\)-plectic manifold gives rise to an \(L_\infty\)-algebra of observables. This result is even true when we have only a pre-\(n\)-plectic structure (see \cite[Theorem 4.7]{CohomologicalFramework2015}). Now assume that \((N, \, \omega_N)\) is an \(n\)-plectic manifold and \((M,\, \omega_M)\) is a smooth manifold along with an \(n\)-form \(\omega_M\) (not necessarily closed or nondegenerate).  Let \(F: M\to N\) be a smooth map such that 
\(F^*\omega_N=-\mathrm{d}\omega_M\) and 
\[\ker(\omega_M)_m\cap \ker(\mathrm{d}_mF)=\{0\}\quad \text{for all}\quad m\in M,\]

\noindent Using the mapping cone, we get a relative closed and nondegenerate $(n+1)$-form \((\omega_M, \, \omega_N) \).  In this chapter, we prove that the relative closed and non-degenerate \(n\)-form \((\omega_M, \, \omega_N) \) leads to an \( L_\infty \)-algebra of observables (Theorem \ref{Relative L-infinity algebra main thm} and Theorem \ref{theorem:Lie algebra of obs for pre-n-plectic}), and to a Leibniz differential graded 
 algebra (Proposition \ref{proposition: Leibniz dg algebra}), similar to the way \( n \)-plectic manifolds do, as shown in \cite[Theorem 3.14]{rogers2011higher}. As a consequence of this result, a quasi-Hamiltonian \( G \)-space naturally gives rise to a Lie 2-algebra, Atiyah (Theorem \ref{theorem: atiyah lie 2-algebra}) and Courant (Theorem \ref{theorem: courant lie 2-algebra}) Lie 2-algebras.

The construction of \(L_\infty\)-algebras in \(n\)-plectic geometry is essentially based on Cartan's Calculus and Poisson structures. To establish the connection between relative structures and \(L_\infty\)-algebras, we introduce essential concepts in Section  \ref{section:relative cartan magic formulas}, including Relative Cartan's Calculus and Relative Structures. We define key operations such as the Lie derivative, contraction, and differential within the context of relative differential forms and verify that they satisfy Cartan's formulae, similar to those in classical differential calculus. This enables us to introduce relative Hamiltonian forms and vector fields in the relative setting, which is key to explicitly constructing \(L_\infty\)-algebras on a complex consisting of these forms and smooth functions.

\section{Relative Cartan's Calculus and Relative \nplectic Structures} \label{section:relative cartan magic formulas}
We note that some constructions in this section are standard in the literature on relative de Rham cohomology, particularly the cone complex \( \Omega^\bullet(F) = \mathrm{Cone}(F^*) \) and the associated differential. However, the formulation and verification of Cartan calculus identities in this relative setting, as well as the systematic development of relative \( n \)-plectic structures and their associated \( L_\infty \)-algebras, represent a new contribution. To our knowledge, a fully developed theory of relative symplectic or multisymplectic geometry has not previously appeared in this precise form.

  Let \(F: M\to N\) be a smooth map between smooth manifolds. Taking the pullback of the map $F: M\to N$, it follows from Definition \ref{defn map cone} that we can define the algebraic mapping cone of the cochain map $F^*:\Omega^\bullet(N)\to \Omega^\bullet(M)$, denoted $\Omega^\bullet( F):=\mathrm{Cone}^\bullet(F^*)$ so that
 $\displaystyle\Omega^\bullet(F)=\bigoplus_{n\geq 0}\Omega^n(F)$ and 
\begin{eqnarray*}
\Omega^{n}(F)&=&\Omega^{n-1}(M)\oplus\Omega^{n}(N)
\end{eqnarray*}
with differential $\mathrm{d}$, defined by
\begin{eqnarray*}
\mathrm{d}(\alpha,\, \beta)&=&\left(F^*\beta+ \mathrm{d}\alpha, \, -\mathrm{d}\beta\right).
\end{eqnarray*}
We will refer to this differential as the \emph{relative differential} or the \emph{de Rham relative differential}.  
An element of $\Omega^{n}(F)$ will be called a relative $n$-form, and $\Omega^{n}(F)$ is the space of relative $n$-forms.

\begin{defn}
\label{defintion: closed relative n-form}
A relative $n$-form $(\alpha,\beta)\in \Omega^{n}(F)$  is  \emph{closed} if
\[
    \mathrm{d}(\alpha,\beta)=(0,0),
\]
or equivalently:
\begin{equation*}
    (F^*\beta + \mathrm{d}\alpha, -\mathrm{d}\beta) = (0,0).
\end{equation*}

\end{defn}

The space of relative closed \(n\)-forms will be denoted by \(\Omega_{\mathrm{cl}}^n(F)\).   As an example of a relative closed 3-form, we have \( (\omega,\,  \eta)\) from a quasi-Hamiltonian \(G\)-space \( (M,  \omega, \Phi) \) as proved in the next proposition.

\begin{pro}\label{closed}
If \( (M,  \omega, \Phi) \) is a quasi-Hamiltonian \( G \)-space, then the relative 3-form \( (\omega,\,  \eta) \in \Omega^2(M) \oplus \Omega^3(G) = \Omega^3(\Phi) \) is closed.
\end{pro}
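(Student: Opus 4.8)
The plan is to reduce the statement to a direct unwinding of the definition of the relative differential on the mapping cone $\Omega^\bullet(\Phi)$, since closedness of a relative form is a componentwise condition. Recall that for the smooth map $F = \Phi \colon M \to G$, the relative differential acts by
\[
\mathrm{d}(\alpha, \beta) = \left( \Phi^*\beta + \mathrm{d}\alpha,\ -\mathrm{d}\beta \right),
\]
so applied to the candidate relative $3$-form $(\omega, \eta) \in \Omega^2(M) \oplus \Omega^3(G)$ it yields
\[
\mathrm{d}(\omega, \eta) = \left( \Phi^*\eta + \mathrm{d}\omega,\ -\mathrm{d}\eta \right).
\]
By Definition~\ref{defintion: closed relative n-form}, the form $(\omega,\eta)$ is closed precisely when both components of this expression vanish, so the proof splits into two independent checks, one in $\Omega^3(M)$ and one in $\Omega^4(G)$.

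First I would treat the $M$-component $\Phi^*\eta + \mathrm{d}\omega$. This is exactly where the defining data of the quasi-Hamiltonian $G$-space enters: condition~(1) of Definition~\ref{def:quasiHamiltonianGSpace} states $\mathrm{d}\omega = -\Phi^*\eta$, equivalently $\Phi^*\eta + \mathrm{d}\omega = 0$. Thus the first slot vanishes immediately. Next I would treat the $G$-component $-\mathrm{d}\eta$, which vanishes because $\eta$ is the bi-invariant Cartan $3$-form and is therefore closed, as established in Theorem~\ref{eta is closed}. Combining the two, $\mathrm{d}(\omega,\eta) = (0,0)$, which is the claim.

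There is no genuine obstacle here; the content of the proposition is entirely bookkeeping, and its interest lies in how cleanly it matches the cone formalism. The only point deserving care is the sign convention: the mapping cone differential was fixed (see the remark following Definition~\ref{defn map cone}) specifically so that the $M$-component reads $\Phi^*\eta + \mathrm{d}\omega$ rather than $\mathrm{d}\omega - \Phi^*\eta$, and it is precisely this choice that makes the quasi-Hamiltonian relation $\mathrm{d}\omega = -\Phi^*\eta$ produce exact cancellation. I would therefore make the sign tracking explicit in the write-up, noting that any other convention would force a compensating sign in the definition of the group-valued moment map. With the conventions as stated, the computation closes in a single line for each component, so the proof is short and requires no estimates or auxiliary constructions beyond Theorem~\ref{eta is closed} and the axioms of Definition~\ref{def:quasiHamiltonianGSpace}.
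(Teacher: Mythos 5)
Your proposal is correct and is essentially identical to the paper's proof: both unwind the cone differential to get $\mathrm{d}(\omega,\eta)=(\Phi^*\eta+\mathrm{d}\omega,\,-\mathrm{d}\eta)$, kill the first component via axiom (1) of Definition~\ref{def:quasiHamiltonianGSpace}, and the second via Theorem~\ref{eta is closed}. Your added remark about the sign convention in the cone differential is a nice touch but does not change the argument.
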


\begin{proof}
Indeed, 
\[\mathrm{d}(\omega, \eta)=\left(\Phi^*\eta + \mathrm{d}\omega,\, -\mathrm{d}\eta\right).\]
By the properties of a quasi-Hamiltonian \( G \)-space, we have 
\begin{equation*}
\mathrm{d}\omega = - \Phi^*\eta,
\end{equation*}
which implies 
\begin{equation*}
\Phi^*\eta + \mathrm{d}\omega = 0.
\end{equation*}
By Theorem \ref{eta is closed}, we have \( \mathrm{d}\eta = 0 \). It follows that
\begin{align*}
\mathrm{d}(\omega,\eta) &= \left(\Phi^*\eta + \mathrm{d}\omega, -\mathrm{d}\eta\right) = (0,0).
\end{align*}
Hence, \( (\omega,\, \eta) \) is closed as desired. 
\end{proof}

\begin{defn}\label{def:relative interior and Lie}
  Let $(\alpha,\beta)\in \Omega^{n}(F)$ be a relative  $n$-form, and let $(u, v)\in \mathfrak{X}(M) \times \mathfrak{X}(N)$ be a pair of vector fields. The \emph{ interior multiplication or contraction} of $(\alpha,\beta)$ with $(u, v)$ is the relative $(n-1)$-form $\iota_{(u,v)}(\alpha,\beta)$ defined by
\begin{eqnarray*}
\iota_{(u,v)}(\alpha,\beta)&=&\left(\iota_u\alpha, -\iota_v\beta\right),
\end{eqnarray*}
where \(\iota_u \alpha\) and \(\iota_v \beta\) denote the standard contractions of \(\alpha\) with \(u\) and \(\beta\) with \(v\) respectively.

The \emph{Lie derivative} of $(\alpha,\beta)$ at $(u,v)$ is  the relative $n$-form $\mathcal{L}_{(u,v)}(\alpha,\beta)$ defined by 
\begin{eqnarray*}
\mathcal{L}_{(u,v)}(\alpha,\beta)&=&\left(\mathcal{L}_{u}\alpha,\mathcal{L}_{v}\beta\right),
\end{eqnarray*}
where \(\mathcal{L}_u \alpha\) and \(\mathcal{L}_v \beta\) denote the standard Lie derivatives of \(\alpha\) with respect to \(u\) and \(\beta\) with respect to \(v\), respectively.
\end{defn}

\begin{defn}\label{symplectic vector}
    A pair of vector fields \((u, v)\) is called \emph{multisymplectic} if it preserves \((\omega_M, \omega_N)\): \[\mathcal{L}_{(u, v)}(\omega_M, \omega_N)=0.\]
\end{defn}

\begin{defn}
Let \((u, v)\) and \((a, b)\) be pairs of vector fields in \(\mathfrak{X}(M) \times \mathfrak{X}(N)\). The \emph{Lie bracket} of these pairs, denoted by \( [(u, v), (a, b)] \), is defined as:  
\[
[(u, v), (a, b)] = \left([u, a], [v, b]\right),
\]
where \([u, a]\) and \([v, b]\) are the standard Lie brackets of vector fields on \(M\) and \(N\), respectively.
\end{defn}

This bracket satisfies the usual properties of a Lie bracket: bilinearity, antisymmetry, and the Jacobi identity. Consequently, \(\mathfrak{X}(M) \times \mathfrak{X}(N)\) equipped with this bracket forms a Lie algebra. For simplicity, we denote \(\mathfrak{X}(M) \times \mathfrak{X}(N)\) by \(\mathfrak{X}(F)\).

\begin{defn}\label{relative nondegenerate}
 
 Let \( F: M \to N \) be a smooth map between manifolds. A relative \( k \)-form \( (\alpha, \beta) \in \Omega^k(F) := \Omega^{k-1}(M) \oplus \Omega^k(N) \) is said to be \emph{nondegenerate} if for all \( x \in M \), and for all pairs \( (u, v) \in T_x M \times T_{F(x)} N \) satisfying \( v = dF_x(u) \), the condition
\[
\iota_{(u, v)}(\alpha, \beta) = \left( \iota_u \alpha(x),\, \iota_v \beta(F(x)) \right) = (0, 0)
\]
implies \( (u, v) = (0, 0) \).
	
\end{defn}

\bigskip 

\begin{defn}\label{def:relativeMultisymplecticManifold}
	Let \(F: M\to N\) be a smooth map between smooth manifolds. A relative \emph{$n$-plectic (or multisymplectic) structure}  is a closed nondegenerate  $(n+1)$-form $(\alpha, \beta)\in\Omega^{n+1}(F)$. A relative $n$-plectic manifold is the data of   
 \begin{itemize}
     \item a \(F: M\to N\) be a smooth map between smooth manifolds, and
     \item  a  relative $n$-plectic structure $(\alpha, \beta)\in\Omega^{n+1}(F)$.
 \end{itemize}

 \end{defn}

\bigskip 

\begin{center}
\fbox{
\parbox[c]{12.6cm}{\begin{center}
 From now on,  \(F: M\to N\) will be a smooth map between smooth manifolds along with a closed non-degenerate relative  $(n+1)$-form $(\omega_M, \omega_N)\in\Omega^{n+1}(F)$.
\end{center}
}}
 \end{center}

 \bigskip 

\begin{thm}[Nondegeneracy of the Relative Structure]\label{Non-deg}
Let \( F: M \to N \) be a smooth map, and let \( (\omega_M, \omega_N) \in \Omega^{n+1}(F) \) be a relative \( (n+1) \)-form such that:
\[
\ker(\omega_M)_m \cap \ker(dF_m) = \{0\} \quad \text{for all } m \in M,
\]
and \( \omega_N \in \Omega^{n+1}(N) \) is nondegenerate in the usual \( n \)-plectic sense.

Then the relative form \( (\omega_M, \omega_N) \) is nondegenerate in the sense of Definition~\ref{relative nondegenerate}.
\end{thm}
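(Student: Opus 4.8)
The plan is to unwind the definition of relative nondegeneracy (Definition~\ref{relative nondegenerate}) and reduce it directly to the two stated hypotheses. Fix a point $x \in M$ and suppose $(u,v) \in T_x M \times T_{F(x)} N$ is an $F$-related pair, i.e.\ $v = dF_x(u)$, satisfying $\iota_{(u,v)}(\omega_M, \omega_N) = (0,0)$. By the definition of the relative contraction, this single vanishing condition unpacks into the two separate equations $\iota_u \omega_M(x) = 0$ and $\iota_v \omega_N(F(x)) = 0$. The goal is then to show that these two equations, together with the relation $v = dF_x(u)$, force $u = 0$ and $v = 0$.

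The key is to exploit the two hypotheses in the correct order. First I would apply the nondegeneracy of $\omega_N$ in the usual $n$-plectic sense to the equation $\iota_v \omega_N(F(x)) = 0$; since $\omega_N$ is nondegenerate, this immediately yields $v = 0$. Because the pair is $F$-related, we have $v = dF_x(u)$, so the conclusion $v = 0$ is equivalent to the statement $u \in \ker(dF_x)$. On the other hand, the equation $\iota_u \omega_M(x) = 0$ says precisely that $u \in \ker(\omega_M)_x$. Combining the two, $u$ lies in the intersection $\ker(\omega_M)_x \cap \ker(dF_x)$.

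Finally I would invoke the transversality hypothesis $\ker(\omega_M)_m \cap \ker(dF_m) = \{0\}$, which holds at every $m \in M$ and in particular at $m = x$, to conclude that $u = 0$. Together with $v = dF_x(u) = 0$, this gives $(u,v) = (0,0)$, which is exactly the condition required by Definition~\ref{relative nondegenerate}. There is no serious obstacle in this argument: it is a direct two-step deduction. The only point requiring any care is the use of the $F$-relatedness $v = dF_x(u)$ to transfer the vanishing of $v$—obtained from nondegeneracy of $\omega_N$—into the membership $u \in \ker(dF_x)$, so that the transversality hypothesis can be brought to bear on $u$.
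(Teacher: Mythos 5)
Your proposal is correct and follows essentially the same argument as the paper's proof: first use nondegeneracy of $\omega_N$ to get $v=0$, then use $F$-relatedness to place $u$ in $\ker(dF_x)$, and finally apply the transversality hypothesis to conclude $u=0$. No differences worth noting.
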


\begin{proof}
Let \( m \in M \), and suppose \( (u, v) \in T_m M \times T_{F(m)} N \) with \( v = dF_m(u) \), and
\[
\iota_{(u, v)}(\omega_M, \omega_N) = ( \iota_u \omega_M,\, \iota_v \omega_N ) = (0, 0).
\]
Since \( \iota_v \omega_N = 0 \) and \( \omega_N \) is nondegenerate, it follows that \( v = 0 \).

From \( v = dF_m(u) = 0 \), we deduce \( u \in \ker(dF_m) \). Also, \( \iota_u \omega_M = 0 \), so \( u \in \ker(\omega_M)_m \). Hence, \( u \in \ker(\omega_M)_m \cap \ker(dF_m) = \{0\} \), which implies \( u = 0 \).

Therefore, \( (u, v) = (0, 0) \), and the relative form is nondegenerate as claimed.
\end{proof}

The following theorem states that the relative 3-form $(\omega, \eta)$ arising from a quasi-Hamiltonian \(G\)-space is nondegenerate.
\begin{thm}\label{Non-degeneracy general}
Let \((M, \omega, \Phi)\) be a quasi-Hamiltonian \(G\)-space. Then the relative 3-form \((\omega, \eta)\) is non-degenerate in the sense of Definition~\ref{relative nondegenerate}.
\end{thm}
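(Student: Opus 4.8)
The plan is to reduce the statement to the three defining conditions of a quasi-Hamiltonian $G$-space, the crucial observation being that one cannot simply invoke Theorem~\ref{Non-deg}: the Cartan $3$-form $\eta$ is \emph{not} nondegenerate in the usual $2$-plectic sense whenever the center $\mathfrak{z}(\mathfrak{g})$ is nontrivial (Proposition~\ref{prop:2plectic-iff-center0}), so the hypothesis "$\omega_N$ nondegenerate" of that theorem fails. Instead I would fix $m \in M$, set $g = \Phi(m)$, and take a pair $(u, v) \in T_m M \times T_g G$ with $v = d\Phi_m(u)$ satisfying $\iota_u \omega = 0$ and $\iota_v \eta = 0$; the goal is then to force $(u, v) = (0, 0)$, invoking Definition~\ref{relative nondegenerate}.

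First I would extract a constraint on $v$ from condition (2) of Definition~\ref{def:quasiHamiltonianGSpace}. Since $u \in \ker \omega_m$, contracting the identity $\iota_{v_x}\omega = \tfrac{1}{2}\Phi^*\big((\theta^L+\theta^R)\cdot x\big)$ with $u$ gives $0 = \omega(v_x, u) = \tfrac{1}{2}\langle \theta^L(v) + \theta^R(v),\, x\rangle$ for every $x \in \mathfrak{g}$, where I use $v = d\Phi_m(u)$ to push $u$ through the pullback. Nondegeneracy of the $\operatorname{Ad}$-invariant inner product then yields $\theta^L(v) + \theta^R(v) = 0$; setting $\xi := \theta^L_g(v) \in \mathfrak{g}$ and using $\theta^R = \operatorname{Ad}_g \theta^L$ from \eqref{equation 2.5}, this becomes $(\operatorname{Ad}_g + \mathrm{id})\,\xi = 0$.

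Next I would use the hypothesis $\iota_v \eta = 0$. Writing $v = \xi^L(g)$ (so that $\theta^L_g(v) = \xi$), left-invariance of both $\xi^L$ and $\eta$ shows that $\iota_{\xi^L}\eta$ is a left-invariant $2$-form vanishing at $g$, hence vanishing everywhere, and Proposition~\ref{prop:2plectic-iff-center0} gives $\xi \in \mathfrak{z}(\mathfrak{g})$. Because $G$ is connected and $\xi$ is central, $\operatorname{Ad}_g\xi = \xi$ (indeed $\operatorname{Ad}_{\exp y}\xi = e^{\operatorname{ad}_y}\xi = \xi$ for all $y$, and such elements generate $G$); combining this with the previous step gives $2\xi = 0$, so $\xi = 0$ and therefore $v = 0$. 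Finally, $v = d\Phi_m(u) = 0$ places $u$ in $\ker d\Phi_m$, while $\iota_u\omega = 0$ places $u$ in $\ker \omega_m$, so condition (3), namely $\ker\omega_m \cap \ker d\Phi_m = \{0\}$, forces $u = 0$. Hence $(u,v) = (0,0)$ and the relative $3$-form is nondegenerate.

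The main obstacle is exactly the degeneracy of $\eta$: the direct route through Theorem~\ref{Non-deg} is unavailable, so the argument must genuinely interlock all three quasi-Hamiltonian axioms rather than treating $M$ and $G$ separately. The delicate point is the bookkeeping in the third paragraph — converting $\iota_v\eta = 0$ into centrality of $\xi$ via left-invariance — together with the mild but essential use of connectedness of $G$ to ensure $\operatorname{Ad}_g$ acts trivially on the center and thereby kill the $\operatorname{Ad}_g + \mathrm{id}$ term. If $G$ were allowed to be disconnected, $\operatorname{Ad}_g$ could have eigenvalue $-1$ on $\mathfrak{z}(\mathfrak{g})$ and the conclusion $\xi = 0$ would require an additional hypothesis.
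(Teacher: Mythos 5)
Your proof is correct, but it follows a genuinely different route from the paper's. The paper's own proof is a two-line reduction to Theorem~\ref{Non-deg}: it cites Proposition~\ref{nondegenerate eta} to assert that $\eta$ is nondegenerate and then applies the general criterion "$\omega_N$ nondegenerate plus $\ker\omega_m\cap\ker d\Phi_m=\{0\}$." You correctly identify that this shortcut is only available when $\mathfrak z(\mathfrak g)=0$ (Proposition~\ref{prop:2plectic-iff-center0}); Proposition~\ref{nondegenerate eta} is stated for $G$ compact \emph{simple}, whereas Definition~\ref{def:quasiHamiltonianGSpace} only assumes $G$ compact, so for, say, a torus the hypothesis of Theorem~\ref{Non-deg} fails outright even though the conclusion still holds. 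Your argument repairs this by bringing in axiom (2): contracting $\iota_{v_x}\omega=\tfrac12\Phi^*\big((\theta^L+\theta^R)\cdot x\big)$ with $u\in\ker\omega_m$ and using nondegeneracy of the invariant pairing gives $(\operatorname{Ad}_g+\mathrm{id})\xi=0$ for $\xi=\theta^L_g(v)$, while $\iota_v\eta=0$ together with left-invariance and Proposition~\ref{prop:2plectic-iff-center0} forces $\xi\in\mathfrak z(\mathfrak g)$, whence $\operatorname{Ad}_g\xi=\xi$ (for connected $G$) and $2\xi=0$; axiom (3) then kills $u$. Each step checks out, including the bookkeeping $\omega(v_x,u)=-\iota_u\omega(v_x)=0$ and the observation that a left-invariant $2$-form vanishing at one point vanishes identically. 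What the paper's approach buys is brevity under the stronger implicit hypothesis that $\eta$ is $2$-plectic; what yours buys is validity for arbitrary compact connected $G$, at the cost of genuinely interlocking all three quasi-Hamiltonian axioms. Your closing caveat about disconnected $G$ (where $\operatorname{Ad}_g$ could act by $-1$ on the center) is a fair and worthwhile observation that the paper does not address.
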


\begin{proof}
By the definition of a quasi-Hamiltonian \(G\)-space, we have the condition \( \ker (\omega_m) \cap \ker(\mathrm{d}_m \Phi) = \{0\} \) for all \(m\in M\). Moreover, the bi-invariant 3-form \(\eta\) is nondegenerate, as shown in Proposition \ref{nondegenerate eta}. 

Thus, by Theorem \ref{Non-deg}, it follows that the relative 3-form \((\omega, \eta)\) is nondegenerate.
\end{proof}

When the relative \((n-1)\)-form \((\alpha,\, \beta)\) is Hamiltonian, we expect the corresponding Hamiltonian vector field to be unique, similar to the case of \(n\)-plectic geometry. This is indeed the case.

\begin{lem}\label{Uniqueness of hamiltonian vector}
   The pair of Hamiltonian vectors associated with a relative Hamiltonian \(n\)-form \((\alpha,\, \beta)\) is unique. 
\end{lem}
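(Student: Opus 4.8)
The plan is to reduce uniqueness to the nondegeneracy of the relative form, exactly as one does in the absolute ($n$-plectic) case. Recall that a relative Hamiltonian $(n-1)$-form $(\alpha,\beta)$ comes equipped with a pair $(u,v)\in\mathfrak{X}(F)$ that is $F$-related, i.e.\ $v=dF(u)$, and that satisfies the relative Hamilton--De Donder--Weyl equation $d(\alpha,\beta)=-\iota_{(u,v)}(\omega_M,\omega_N)$. So I would begin by assuming that $(u_1,v_1)$ and $(u_2,v_2)$ are two such $F$-related pairs, both solving this equation for the same form $(\alpha,\beta)$, and aim to show they coincide.

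Next I would subtract the two equations. Since the left-hand side $d(\alpha,\beta)$ is identical in both cases, bilinearity of the relative contraction gives $\iota_{(u_1-u_2,\,v_1-v_2)}(\omega_M,\omega_N)=(0,0)$. Unwinding the definition $\iota_{(u,v)}(\omega_M,\omega_N)=(\iota_u\omega_M,-\iota_v\omega_N)$, this amounts to the two componentwise identities $\iota_{u_1-u_2}\omega_M=0$ and $\iota_{v_1-v_2}\omega_N=0$. The key structural observation is that the difference $(u_1-u_2,\,v_1-v_2)$ is again $F$-related: by linearity of $dF_x$ we have $v_1-v_2=dF(u_1)-dF(u_2)=dF(u_1-u_2)$.

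Having verified $F$-relatedness, I would invoke the nondegeneracy of $(\omega_M,\omega_N)$ (Definition~\ref{relative nondegenerate}) pointwise. Fixing $x\in M$ and applying the definition to the tangent pair $\bigl((u_1-u_2)(x),\,(v_1-v_2)(F(x))\bigr)$, the vanishing of the contraction forces $(u_1-u_2)(x)=0$ and $(v_1-v_2)(F(x))=0$. As $x$ is arbitrary this yields $u_1=u_2$ on $M$ and $v_1=v_2$ on $F(M)$, which together with $F$-relatedness identifies the two pairs; hence the Hamiltonian pair is unique.

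The only point that requires care---and the reason the argument is genuinely \emph{relative}---is that Definition~\ref{relative nondegenerate} asserts nondegeneracy only for $F$-related tangent pairs, so one truly must check that the difference stays $F$-related before applying it. Equivalently, one can run the argument through the two hypotheses of Theorem~\ref{Non-deg}: the usual nondegeneracy of $\omega_N$ first forces $v_1=v_2$, after which $F$-relatedness places $u_1-u_2$ in $\ker(dF_m)$, and the condition $\ker(\omega_M)_m\cap\ker(dF_m)=\{0\}$ together with $\iota_{u_1-u_2}\omega_M=0$ forces $u_1=u_2$. This reformulation makes transparent why \emph{both} ingredients of relative nondegeneracy are indispensable, and I expect no serious obstacle beyond this bookkeeping.
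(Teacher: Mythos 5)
Your proposal is correct and takes essentially the same route as the paper's proof: subtract the two Hamiltonian equations, note that the difference pair is still \(F\)-related, and conclude from nondegeneracy. In fact, the reformulation you give at the end---first using nondegeneracy of \(\omega_N\) to force \(v_1=v_2\), then using \(F\)-relatedness to place \(u_1-u_2\) in \(\ker(\omega_M)_m\cap\ker(dF_m)=\{0\}\)---is precisely the argument the paper writes down.
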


\begin{proof}
Suppose \((u, v)\) and \((x, y)\) are two Hamiltonian vector fields corresponding to \((\alpha,\, \beta)\). Then, we have:
\[
\mathrm{d}(\alpha,\, \beta) = -\iota_{(u,\, v)}(\omega_M,\, \omega_N) = -\iota_{(x,\, y)}(\omega_M,\, \omega_N),
\]
which implies:
\[
\iota_{(u,\, v)}(\omega_M,\, \omega_N) = \iota_{(x,\, y)}(\omega_M,\, \omega_N),
\]
or equivalently:
\[
\iota_u \omega_M = \iota_x \omega_M \quad \text{and} \quad \iota_v \omega_N = \iota_y \omega_N.
\]

By the nondegeneracy of \(\omega_N\), it follows that \(v = y\).  Also,  \( v = \mathrm{d}F(u) \)  and \( y = \mathrm{d}F(x) \) since \(u \sim_F v\) and \(x \sim_F y\). The equality becomes 
\(\mathrm{d}F(u)=\mathrm{d}F(x)\) so that \(u-x\in \ker(\omega_M)_m\cap \ker(\mathrm{d}_mF)=\{0\}\). Therefore, \(u-x=0\), so that \(u=x\). Hence, \((u, v) = (x, y)\), showing that the Hamiltonian vector field is unique as desired.
\end{proof}

\begin{defn} \label{hamiltonian}
 A relative $(n-1)$-form $(\alpha,\,\beta)\in \Omega^{n-1}(F)$ is \emph{ Hamiltonian} if there exists a pair \((u,v) \in \mathfrak{X}(F)\) of \(F\)-related vectors $u \in \mathfrak{X}(M)$ and $v \in \mathfrak{X}(N)$ such that
 \[
\mathrm{d}(\alpha,\, \beta)=-\iota_{(u,\, v)}(\omega_M,\, \omega_N),
\]
or equivalently,
\[
\left(F^*\beta+\mathrm{d}\alpha, \, -\mathrm{d}\beta\right)=(-\iota_{u} \omega_M, \, \iota_{v} \omega_N).
\]

\noindent In this case, we say that $(u, v)$ is a pair of \emph{ Hamiltonian vector fields} corresponding to the relative $(n-1)$-form $(\alpha,\beta)$. We denote the set of such Hamiltonian relative $(n-1)$-forms by $\Omega_{\mathrm{Ham}}^{n-1}(F)$, and the set of Hamiltonian pairs of vector fields by $\mathfrak{X}_{\mathrm{Ham}}(F)$. Both of these sets are vector spaces.
\end{defn}

The brackets, Lie derivative, and interior product, as defined above, satisfy Cartan's formulae in differential geometry as expected.

\begin{pro}[Relative Cartan's formulas]\label{relative cartan magic formula}
Let \(F: M\to N\) be a smooth map between smooth manifolds. Let  $(u, v), \, (a,b)\in \mathfrak{X}(F)$. Then 
\begin{enumerate}
\item $\mathcal{L}_{(u,v)}\circ\mathcal{L}_{(a,b)}-\mathcal{L}_{(a,b)}\circ\mathcal{L}_{(u,v)}=\mathcal{L}_{[(u,v),(a,b)]}$.
\item $\iota_{(u,v)}\circ\iota_{(a,b)}+\iota_{(a,b)}\circ\iota_{(u,v)}=0$.
\item If $u$ is $F$-related to $v$, then $$\mathrm{d}\circ \mathcal{L}_{(u,v)}- \mathcal{L}_{(u,v)}\circ \mathrm{d}=0.$$
\item $\mathcal{L}_{(u,v)}\circ \iota_{(a,b)}-\iota_{(a,b)}\circ \mathcal{L}_{(u,v)}=\iota_{[(u,v),(a,b)]}$

\item If $u$ is $F$-related to $v$, then $$\iota_{(u,v)}\circ \mathrm{d}+ \mathrm{d}\circ\iota_{(u,v)}= \mathcal{L}_{(u,v)}.$$
\end{enumerate}

\end{pro}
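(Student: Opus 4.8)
The plan is to exploit the fact that every relative operation in Definition~\ref{def:relative interior and Lie} acts componentwise as a pair, so that each of the five identities decouples into the two classical Cartan identities on $M$ and on $N$ separately, up to a careful accounting of the signs introduced by the conventions $\iota_{(u,v)}(\alpha,\beta)=(\iota_u\alpha,-\iota_v\beta)$ and $\mathrm{d}(\alpha,\beta)=(F^*\beta+\mathrm{d}\alpha,-\mathrm{d}\beta)$. The only genuinely new ingredients beyond the standard Cartan calculus are two naturality facts for $F$-related vector fields, namely that if $u\sim_F v$ then
\[
\iota_u F^*\beta = F^*\iota_v\beta
\qquad\text{and}\qquad
\mathcal{L}_u F^*\beta = F^*\mathcal{L}_v\beta,
\]
and these are exactly what the hypothesis ``$u$ is $F$-related to $v$'' in items (3) and (5) is there to supply.

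First I would dispose of items (1), (2), and (4), which require no $F$-relatedness. For (1), expanding $\mathcal{L}_{(u,v)}\circ\mathcal{L}_{(a,b)}(\alpha,\beta)=(\mathcal{L}_u\mathcal{L}_a\alpha,\mathcal{L}_v\mathcal{L}_b\beta)$ and subtracting the reversed composition reduces the claim to the classical $[\mathcal{L}_X,\mathcal{L}_Y]=\mathcal{L}_{[X,Y]}$ applied on each factor, giving $(\mathcal{L}_{[u,a]}\alpha,\mathcal{L}_{[v,b]}\beta)=\mathcal{L}_{[(u,v),(a,b)]}(\alpha,\beta)$. For (2), the two minus signs from the second slot multiply to $+1$, so $\iota_{(u,v)}\iota_{(a,b)}(\alpha,\beta)=(\iota_u\iota_a\alpha,\iota_v\iota_b\beta)$, and adding the transposed term yields $(0,0)$ by the classical $\iota_X\iota_Y+\iota_Y\iota_X=0$. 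Item (4) is the same bookkeeping: the first component produces $\mathcal{L}_u\iota_a\alpha-\iota_a\mathcal{L}_u\alpha=\iota_{[u,a]}\alpha$ and the second produces $-(\mathcal{L}_v\iota_b\beta-\iota_b\mathcal{L}_v\beta)=-\iota_{[v,b]}\beta$, which together assemble to $\iota_{[(u,v),(a,b)]}(\alpha,\beta)$ once one checks the sign on the second slot is consistent with the definition of $\iota_{[(u,v),(a,b)]}$.

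The remaining items (3) and (5) are where the $F$-relatedness hypothesis enters and are the main point of the argument. For (5), I would compute $\iota_{(u,v)}\mathrm{d}(\alpha,\beta)$ and $\mathrm{d}\,\iota_{(u,v)}(\alpha,\beta)$ separately; their sum in the second component is $\iota_v\mathrm{d}\beta+\mathrm{d}\iota_v\beta=\mathcal{L}_v\beta$ by the ordinary magic formula on $N$, while in the first component one collects $(\iota_u\mathrm{d}\alpha+\mathrm{d}\iota_u\alpha)+(\iota_u F^*\beta-F^*\iota_v\beta)$. The first parenthesis is $\mathcal{L}_u\alpha$ by the magic formula on $M$, and the second parenthesis vanishes precisely by the naturality identity $\iota_u F^*\beta=F^*\iota_v\beta$ for $u\sim_F v$; thus the total is $(\mathcal{L}_u\alpha,\mathcal{L}_v\beta)=\mathcal{L}_{(u,v)}(\alpha,\beta)$. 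Item (3) is handled identically: the second component reduces to $[\mathrm{d},\mathcal{L}_v]\beta=0$, while the first component reduces to $[\mathrm{d},\mathcal{L}_u]\alpha=0$ together with the discrepancy $F^*\mathcal{L}_v\beta-\mathcal{L}_u F^*\beta$, which vanishes by the second naturality identity. I expect the sign tracking in the $\iota$-heavy terms to be the most error-prone part of the write-up, and the naturality of pullback along $F$-related fields to be the conceptual crux; both are routine once the componentwise reduction is in place, but the correct placement of the minus signs must be verified explicitly rather than asserted.
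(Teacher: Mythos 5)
Your proposal is correct and follows essentially the same route as the paper's proof: a componentwise expansion of each identity that reduces everything to the classical Cartan calculus on $M$ and $N$ separately, with the $F$-relatedness hypothesis entering only through the naturality identities $\iota_u F^*\beta=F^*\iota_v\beta$ and $\mathcal{L}_u F^*\beta=F^*\mathcal{L}_v\beta$ in items (3) and (5). If anything, you make the role of these two naturality facts more explicit than the paper does, which silently cancels the corresponding terms.
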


\begin{proof}
\begin{enumerate}

\item We have:
    \begin{align*}
        \left(\mathcal{L}_{(u,v)} \circ \mathcal{L}_{(a,b)} - \mathcal{L}_{(a,b)} \circ \mathcal{L}_{(u,v)}\right)(\alpha,\beta) &= \left(\mathcal{L}_u\mathcal{L}_a \alpha, \, \mathcal{L}_v\mathcal{L}_b\beta\right)-\left(\mathcal{L}_a\mathcal{L}_u \alpha, \, \mathcal{L}_b\mathcal{L}_v\beta\right)\\
&=\left(\left(\mathcal{L}_u\mathcal{L}_a-\mathcal{L}_a\mathcal{L}_u \right) \alpha,\, \left(\mathcal{L}_v\mathcal{L}_b-\mathcal{L}_b\mathcal{L}_v\right)\beta\right)\\
&=\left(\mathcal{L}_{[u,a]}\alpha, \mathcal{L}_{[v,b]}\beta\right)\\
&=\mathcal{L}_{\left([u,a], [v,b]\right)}\left(\alpha, \beta\right)\\
&=\mathcal{L}_{[(u,v),(a,b)]}(\alpha,\beta).
    \end{align*}

Hence, $\mathcal{L}_{(u,v)}\circ\mathcal{L}_{(a,b)}-\mathcal{L}_{(a,b)}\circ\mathcal{L}_{(u,v)}=\mathcal{L}_{[(u,v),(a,b)]}$ as desired.

\item We have:
\begin{eqnarray*}
\left(\iota_{(u,v)}\circ\iota_{(a,b)}+\iota_{(a,b)}\circ\iota_{(u,v)}\right)(\alpha,\beta)&=&\iota_{(u,v)}\left(\iota_a\alpha,-\iota_b\beta\right)+\iota_{(a,b)}\left(\iota_u\alpha,-\iota_v\beta\right)\\
&=&\left(\iota_u\iota_a\alpha,-\iota_v(-\iota_b\beta)\right)+\left(\iota_a\iota_u\alpha,-\iota_b(-\iota_v\beta)\right)\\
&=&\left(\iota_u\iota_a\alpha,\iota_v\iota_b\beta\right)+\left(\iota_a\iota_u\alpha,\iota_b\iota_v\beta\right)\\
&=&\left(\left(\iota_u\iota_a+\iota_a\iota_u\right)\alpha,\left(\iota_v\iota_b+\iota_b\iota_v\right)\beta\right)\\
&=&(0,0).
\end{eqnarray*}

\item We have:
\begingroup
\small 
\begin{align*}
\left(\mathrm{d}\circ \mathcal{L}_{(u,v)} - \mathcal{L}_{(u,v)}\circ \mathrm{d}\right)(\alpha,\beta) 
&= \mathrm{d}\left(\mathcal{L}_u\alpha, \mathcal{L}_v\beta\right) - \mathcal{L}_{(u,v)}\left(F^*\beta+\mathrm{d}\alpha, -\mathrm{d}\beta\right) \\
&= \left(F^*\mathcal{L}_v\beta + \mathrm{d}\mathcal{L}_u\alpha, -\mathrm{d}\mathcal{L}_v\beta\right) 
  - \left(\mathcal{L}_u\left(F^*\beta + \mathrm{d}\alpha\right), \mathcal{L}_v(-\mathrm{d}\beta)\right) \\
&= \left(F^*\mathcal{L}_v\beta + \mathrm{d}\mathcal{L}_u\alpha, -\mathrm{d}\mathcal{L}_v\beta\right) 
  - \left(\mathcal{L}_u F^*\beta + \mathcal{L}_u\mathrm{d}\alpha, -\mathcal{L}_v\mathrm{d}\beta\right) \\
&= \left(F^*\mathcal{L}_v\beta + \mathrm{d}\mathcal{L}_u\alpha - \mathcal{L}_u F^*\beta - \mathcal{L}_u\mathrm{d}\alpha, 
     -\mathrm{d}\mathcal{L}_v\beta + \mathcal{L}_v\mathrm{d}\beta\right) \\
&= \left(0, 0\right).
\end{align*}
\endgroup

Hence, $\mathrm{d}\circ \mathcal{L}_{(u,v)}- \mathcal{L}_{(u,v)}\circ \mathrm{d}=0$ as desired. 


\item 
\begin{eqnarray*}
\left(\mathcal{L}_{(u,v)}\circ \iota_{(a,b)}-\iota_{(a,b)}\circ \mathcal{L}_{(u,v)}\right)(\alpha,\beta)&=&\mathcal{L}_{(u,v)}\left(\iota_a\alpha,-\iota_b\beta\right)-\iota_{(a,b)}\left(\mathcal{L}_u\alpha,\mathcal{L}_v\beta\right)\\
&=&\left(\mathcal{L}_u\iota_a\alpha,-\mathcal{L}_v\iota_b\beta\right)-\left(\iota_a\mathcal{L}_u\alpha,-\iota_b\mathcal{L}_v\beta\right)\\
&=&\left(\mathcal{L}_u\iota_a\alpha-\iota_a\mathcal{L}_u\alpha,-\left(\mathcal{L}_v\iota_b\beta-\iota_b\mathcal{L}_v\beta\right)\right)\\
&=&\left(\iota_{[u,a]}\alpha,-\iota_{[v,b]}\beta\right)\\
&=&\iota_{([u,a],[v,b])}(\alpha,\beta)\\
&=&\iota_{[(u,v),(a,b)]}(\alpha,\beta).
\end{eqnarray*}


\item We have:
\begin{eqnarray*}
\left(\iota_{(u,v)}\circ \mathrm{d}+ \mathrm{d}\circ\iota_{(u,v)}\right)(\alpha,\beta)&=&\iota_{(u,v)}\left( F^*\beta+\mathrm{d}\alpha,-\mathrm{d}\beta\right)+ \mathrm{d}\left(\iota_u\alpha,-\iota_v\beta\right)\\
&=&\left( \iota_u\left(F^*\beta+\mathrm{d}\alpha\right),-\iota_v(-\mathrm{d}\beta)\right)\\
&& + \left(F^*\left(-\iota_v\beta\right)+ \mathrm{d}\iota_u\alpha,-\mathrm{d}(-\iota_v\beta)\right)\\
&=&\left( \iota_u F^*\beta+\iota_u\mathrm{d}\alpha,\iota_v\mathrm{d}\beta\right)+ \left(-F^*\iota_v\beta+ \mathrm{d}\iota_u\alpha,\mathrm{d}\iota_v\beta\right)\\
&=&\left( \iota_u F^*\beta+\iota_u\mathrm{d}\alpha-F^*\iota_v\beta+ \mathrm{d}\iota_u\alpha,\iota_v\mathrm{d}\beta+\mathrm{d}\iota_v\beta\right)\\
&=&\left( \iota_u\mathrm{d}\alpha+ \mathrm{d}\iota_u\alpha,\iota_v\mathrm{d}\beta+\mathrm{d}\iota_v\beta\right)\\
&=&\left(\mathcal{L}_u\alpha,\mathcal{L}_v\beta\right)\\
 &=& \mathcal{L}_{(u,v)}(\alpha,\beta).
\end{eqnarray*}
\end{enumerate}
This completes the proof of the proposition.
\end{proof}

Similar to the multisymplectic case, one can define bracket structures on observables in the \emph{relative} setting. Given a relative \(n\)-plectic structure, one can construct analogues of the semi-bracket and hemi-bracket.

\begin{defn}
\label{bracket_def}
  Let \((f,\alpha), (g,\beta) \in \Omega_{\mathrm{Ham}}^{n-1}(F)\). The \emph{relative hemi-bracket} \(\{(f,\alpha), (g,\beta)\}_\mathrm{h}\) is the relative \((n-1)\)-form given by
\[  
\{(f,\alpha), (g,\beta)\}_{\mathrm{h}} = \mathcal{L}_{(u, \, v)}(g,\beta)
\]
where \((u,\, v)\) is any pair of Hamiltonian vector fields corresponding to \((f, \, \alpha)\).

The \emph{relative semi-bracket} \(\{(f,\alpha), (g,\beta)\}_\mathrm{s}\) is the relative \((n-1)\)-form given by
\[  
\{(f,\alpha), (g,\beta)\}_\mathrm{s} =\iota_{(u_1,\, v_1)}\iota_{(u_2,\, v_2)}\left(\omega_M, \omega_N\right)= \left( \iota_{u_{2}}\iota_{u_{1}}\omega_M,\, \iota_{v_{2}}\iota_{v_{1}}\omega_N \right)
\]
where \((u_1,\, v_1)\), and \((u_2,\, v_2)\) are any pairs of Hamiltonian vector fields corresponding to \((f, \, \alpha)\), and  \((g, \, \beta)\) respectively.
\end{defn}

The following proposition shows that the \emph{relative semi-bracket} satisfies properties analogous to those of the semi-bracket in the absolute multisymplectic setting: it is skew-symmetric and defines a Hamiltonian form. In particular, this bracket equips the space of relative Hamiltonian \((n{-}1)\)-forms with a structure compatible with the underlying relative \(n\)-plectic geometry.

\begin{pro}\label{proposition:bracket}
Let \((f,\alpha), (g,\beta) \in \Omega_{\mathrm{Ham}}^{n-1}(F)\) be relative Hamiltonian \((n{-}1)\)-forms, and let \((u_1, v_1)\) and \((u_2, v_2)\) be corresponding Hamiltonian vector fields. Then the semi-bracket \(\{\,\cdot\,,\,\cdot\,\}_{\mathrm{s}}\) satisfies the following properties:
\begin{enumerate}
    \item \textbf{Skew-symmetry:}
    \[
    \{(f, \alpha), (g, \beta)\}_{\mathrm{s}} = -\{(g, \beta), (f, \alpha)\}_{\mathrm{s}}.
    \]

    \item \textbf{Hamiltonianity:} The semi-bracket of Hamiltonian forms is again Hamiltonian, with associated Hamiltonian vector field given by the Lie bracket of the corresponding vector fields:
    \[
    \mathrm{d} \{(f, \alpha), (g, \beta)\}_{\mathrm{s}} = -\iota_{[(u_1, v_1), (u_2, v_2)]}(\omega_M, \omega_N),
    \]
    and hence
    \[
    v_{\{(f, \alpha), (g, \beta)\}_{\mathrm{s}}} = [(u_1, v_1), (u_2, v_2)] = \left([u_1, u_2], [v_1, v_2]\right).
    \]
\end{enumerate}
\end{pro}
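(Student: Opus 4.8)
The plan is to derive both properties directly from the relative Cartan calculus of Proposition~\ref{relative cartan magic formula}, mirroring Rogers' computation for the ordinary multisymplectic semi-bracket. Throughout I abbreviate $\Omega := (\omega_M, \omega_N)$ and record two standing facts: first, $\Omega$ is closed, $\mathrm{d}\Omega = (0,0)$, by the blanket hypothesis on $(F,\omega_M,\omega_N)$; and second, each Hamiltonian pair $(u_i,v_i)$ has $u_i$ $F$-related to $v_i$ (Definition~\ref{hamiltonian}), so the identities of Proposition~\ref{relative cartan magic formula} that require $F$-relatedness are available.

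For skew-symmetry I would simply invoke part~(2) of Proposition~\ref{relative cartan magic formula}, the anti-symmetry of relative contractions. Applied to the definition $\{(f,\alpha),(g,\beta)\}_{\mathrm{s}} = \iota_{(u_1,v_1)}\iota_{(u_2,v_2)}\Omega$, the identity $\iota_{(u_1,v_1)}\iota_{(u_2,v_2)} = -\,\iota_{(u_2,v_2)}\iota_{(u_1,v_1)}$ shows that interchanging the two arguments (and hence their associated Hamiltonian vector fields) reverses the sign, which is exactly the asserted relation. This step is immediate and needs no further input.

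For Hamiltonianity the first step is to note that Hamiltonian pairs are multisymplectic, i.e.\ $\mathcal{L}_{(u_i,v_i)}\Omega = (0,0)$; this follows by applying $\mathrm{d}$ to the defining equation $\mathrm{d}(f_i,\alpha_i) = -\iota_{(u_i,v_i)}\Omega$ and using the relative magic formula (part~(5)) together with $\mathrm{d}^2 = 0$ and $\mathrm{d}\Omega = 0$. With this in hand I would compute $\mathrm{d}\{(f,\alpha),(g,\beta)\}_{\mathrm{s}}$ by applying the magic formula to the outer contraction, observing that $\mathrm{d}\bigl(\iota_{(u_2,v_2)}\Omega\bigr) = \mathcal{L}_{(u_2,v_2)}\Omega - \iota_{(u_2,v_2)}\mathrm{d}\Omega = (0,0)$ eliminates one term, and then rewriting the surviving term $\mathcal{L}_{(u_1,v_1)}\iota_{(u_2,v_2)}\Omega$ via part~(4), again using $\mathcal{L}_{(u_1,v_1)}\Omega = (0,0)$. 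This produces $\mathrm{d}\{(f,\alpha),(g,\beta)\}_{\mathrm{s}} = -\,\iota_{[(u_1,v_1),(u_2,v_2)]}\Omega$, exhibiting the semi-bracket as a relative Hamiltonian $(n-1)$-form. To identify its Hamiltonian vector field, I would use that $F$-relatedness is preserved under Lie brackets, so that $[(u_1,v_1),(u_2,v_2)] = ([u_1,u_2],[v_1,v_2])$ is again an $F$-related pair, and then appeal to the uniqueness statement of Lemma~\ref{Uniqueness of hamiltonian vector} to conclude $v_{\{(f,\alpha),(g,\beta)\}_{\mathrm{s}}} = [(u_1,v_1),(u_2,v_2)]$.

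The only genuine difficulty is sign bookkeeping. The relative interior product carries an extra sign on its $N$-component (Definition~\ref{def:relative interior and Lie}), and the ordering in the definition of the semi-bracket places the vector field of the \emph{second} argument in the inner contraction slot; consequently the signs generated by the magic formula and by the commutator identity~(4) must be tracked carefully to land on the stated Hamilton--De Donder--Weyl convention. No conceptual obstacle arises beyond this: the closedness of $\Omega$ and the multisymplectic property of Hamiltonian pairs carry all the structural weight, precisely as in the $n$-plectic case.
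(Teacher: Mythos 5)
Your proposal is correct in structure but takes a genuinely different route from the paper. For part (2), the paper works component-wise: it separately computes $\mathrm{d}(\iota_{u_2}\iota_{u_1}\omega_M)$ and $\mathrm{d}(\iota_{v_2}\iota_{v_1}\omega_N)$ using ordinary Cartan calculus on $M$ and $N$, explicitly tracking all the $F^*$ pullback terms generated by the relative differential (a computation of some thirty lines). You instead stay at the level of relative forms and apply the packaged identities of Proposition~\ref{relative cartan magic formula} directly: the relative magic formula kills one term via $\mathrm{d}\iota_{(u_2,v_2)}(\omega_M,\omega_N)=\mathcal{L}_{(u_2,v_2)}(\omega_M,\omega_N)-\iota_{(u_2,v_2)}\mathrm{d}(\omega_M,\omega_N)=0$, and the Lie-derivative/contraction commutator converts the survivor into a single bracket contraction. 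This is shorter and makes the structural parallel with Rogers' $n$-plectic argument transparent; the paper's version buys an explicit verification that the $F^*$ terms really do cancel, which your route hides inside the already-proven relative identities. Your use of $\mathcal{L}_{(u_i,v_i)}(\omega_M,\omega_N)=0$ is legitimate: the paper proves this as Lemma~\ref{LocHam} (stated after the proposition, but its proof uses only Proposition~\ref{relative cartan magic formula} and closedness, so there is no circularity). Your appeal to Proposition~\ref{brackets F-related} and Lemma~\ref{Uniqueness of hamiltonian vector} to identify the Hamiltonian vector field is also sound and slightly more careful than the paper, which leaves that step implicit.

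One point you flag but do not resolve deserves to be pinned down. Definition~\ref{bracket_def} gives two expressions for the semi-bracket, $\iota_{(u_1,v_1)}\iota_{(u_2,v_2)}(\omega_M,\omega_N)$ and $(\iota_{u_2}\iota_{u_1}\omega_M,\,\iota_{v_2}\iota_{v_1}\omega_N)$, and these differ by a sign, since unwinding Definition~\ref{def:relative interior and Lie} gives $\iota_{(u_1,v_1)}\iota_{(u_2,v_2)}(\omega_M,\omega_N)=(\iota_{u_1}\iota_{u_2}\omega_M,\,\iota_{v_1}\iota_{v_2}\omega_N)$. Your computation, as described, takes $(u_1,v_1)$ as the outer contraction and therefore yields $+\,\iota_{[(u_1,v_1),(u_2,v_2)]}(\omega_M,\omega_N)$, not the stated minus sign; the minus sign requires the opposite ordering $\iota_{(u_2,v_2)}\iota_{(u_1,v_1)}(\omega_M,\omega_N)$, which is the convention the explicit component formula and the paper's own proof actually use. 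So the discrepancy is not a flaw in your method but an ambiguity in the definition you are quoting; to make your argument airtight you should state explicitly that you take the semi-bracket to be $\iota_{(u_2,v_2)}\iota_{(u_1,v_1)}(\omega_M,\omega_N)$ and run the magic formula with $(u_2,v_2)$ outermost.
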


\begin{proof}
\begin{enumerate}
\item Let \((u_1, v_1)\) and \((u_2, v_2)\) be Hamiltonian vectors corresponding to  $(f, \alpha)$ and $(g, \beta)$ respectively. We have:
\begin{align*}
\{(f,\alpha), (g,\beta)\}_\mathrm{s} &= \left( \iota_{u_2}\iota_{u_1}\omega_M, \, \iota_{v_2}\iota_{v_1}\omega_N \right)\\
&= \left( -\iota_{u_1} \iota_{u_2}\omega_M,\, -\iota_{v_1}\iota_{v_2}\omega_N \right)\\
&= -\left( \iota_{u_1} \iota_{u_2}\omega_M, \, \iota_{v_1}\iota_{v_2}\omega_N \right)\\
&= -\{(g,\beta), (f,\alpha)\}_\mathrm{s}.
\end{align*}

\item 
\begin{align*}
\mathrm{d}\{(f,\alpha), (g,\beta)\}_\mathrm{s} &= \mathrm{d} \left( \iota_{u_2}\iota_{u_1}\omega_M, \, \iota_{v_2}\iota_{v_1}\omega_N \right)\\
&= \left( F^*\left(\iota_{v_2}\iota_{v_1}\omega_N\right) + \mathrm{d}\left(\iota_{u_2}\iota_{u_1}\omega_M\right), \, -\mathrm{d}\left(\iota_{v_2}\iota_{v_1}\omega_N\right) \right)
\end{align*}

But
\begin{align*}
\mathrm{d}\left(\iota_{v_{2}}\iota_{v_{1}}\omega_N\right) &= \left(\mathcal{L}_{v_{2}}-\iota_{v_2}\mathrm{d}\right) \left(\iota_{v_{1}}\omega_N\right)\\
&= \mathcal{L}_{v_{2}}\left(\iota_{v_{1}}\omega_N\right) - \iota_{v_2}\mathrm{d} \left(\iota_{v_{1}}\omega_N\right)\\
&= \mathcal{L}_{v_{2}}\left(\iota_{v_{1}}\omega_N\right) - \iota_{v_2}\mathrm{d} \left(-\mathrm{d} \beta\right) \quad\quad \text{since \(\iota_{v_1}\omega_N=-\mathrm{d} \beta\)}\\
&= \mathcal{L}_{v_{2}}\iota_{v_{1}}\omega_N \quad\quad \text{since \(\mathrm{d}^2=0\)}\\
&= \iota_{[v_2,v_1]}\omega_N + \iota_{v_1}\mathcal{L}_{v_2}\omega_N \quad\quad \text{by Cartan's magic formula}\\
&= \iota_{[v_2,v_1]}\omega_N + \iota_{v_1}\left(\mathrm{d}\iota_{v_2}\omega_N + \iota_{v_2}\mathrm{d}\omega_N\right)\\
&= \iota_{[v_2,v_1]}\omega_N + \iota_{v_1}\left(\mathrm{d}\iota_{v_2}\omega_N + 0\right) \quad\quad \text{since \(\mathrm{d}\omega_N=0\)}\\
&= \iota_{[v_2,v_1]}\omega_N - \iota_{v_1}\left(\mathrm{d}^2\beta\right) \quad\quad \text{since \(\iota_{v_2}\omega_N=-\mathrm{d} \beta\)}\\
&= \iota_{[v_2,v_1]}\omega_N \quad\quad \text{since \(\mathrm{d}^2=0\)}\\
&= -\iota_{[v_1,v_2]}\omega_N
\end{align*}

and

\begin{align*}
\mathrm{d}\left(\iota_{u_{2}}\iota_{u_{1}}\omega_M\right) &= \left(\mathcal{L}_{u_{2}} - \iota_{u_2}\mathrm{d}\right) \left(\iota_{u_{1}}\omega_M\right)\\
&= \mathcal{L}_{u_{2}}\left(\iota_{u_{1}}\omega_M\right) - \iota_{u_2}\mathrm{d}\left(\iota_{u_{1}}\omega_M\right)\\
&= \mathcal{L}_{u_{2}}\left(\iota_{u_{1}}\omega_M\right) + \iota_{u_2}\mathrm{d}\left(F^*\omega_N + \mathrm{d} \alpha\right) \quad \text{since \(\iota_{u_1}\omega_M = -F^*\beta - \mathrm{d} \alpha\)}\\
&= \mathcal{L}_{u_{2}}\left(\iota_{u_{1}}\omega_M\right) + \iota_{u_2}\mathrm{d}F^*\beta \quad \text{since \(\mathrm{d}^2 = 0\)}\\
&= \left(\iota_{[u_2, u_1]} + \iota_{u_1}\mathcal{L}_{u_2}\right)\omega_M + \iota_{u_2}\mathrm{d} F^*\beta \quad \text{by Cartan's magic formula}\\
&= \iota_{[u_2, u_1]}\omega_M + \iota_{u_1}\mathcal{L}_{u_2}\omega_M + \iota_{u_2}\mathrm{d} F^*\beta\\
&= \iota_{[u_2, u_1]}\omega_M + \iota_{u_1}\left(\mathrm{d}\iota_{u_2}\omega_M + \iota_{u_2}\mathrm{d}\omega_M\right) + \iota_{u_2}\mathrm{d} F^*\beta\\
&= \iota_{[u_2, u_1]}\omega_M + \iota_{u_1}\mathrm{d}\iota_{u_2}\omega_M + \iota_{u_1}\iota_{u_2}\mathrm{d}\omega_M + \iota_{u_2}\mathrm{d} F^*\beta\\
&= \iota_{[u_2, u_1]}\omega_M + \iota_{u_1}\mathrm{d}\left(-F^*\delta - \mathrm{d} g\right) - \iota_{u_1}\iota_{u_2}\mathrm{d}F^*\omega_N + \iota_{u_2}\mathrm{d} F^*\beta\\
&= \iota_{[u_2, u_1]}\omega_M - \iota_{u_1}\mathrm{d} F^*\delta - \iota_{u_1}\mathrm{d}^2 g - \iota_{u_1}\iota_{u_2}\mathrm{d} F^*\omega_N + \iota_{u_2}\mathrm{d} F^*\beta\\
&= \iota_{[u_2, u_1]}\omega_M - \iota_{u_1} F^*\mathrm{d}\delta - \iota_{u_1}\iota_{u_2}\mathrm{d} F^*\omega_N + \iota_{u_2} F^*\mathrm{d}\beta\\
&= \iota_{[u_2, u_1]}\omega_M - \iota_{u_1} F^*\left(-\iota_{v_2}\omega_N\right) + \iota_{u_1}\iota_{u_2}\mathrm{d} F^*\omega_N - \iota_{u_2} F^*\left(-\iota_{v_1}\omega_N\right)\\
&= \iota_{[u_2, u_1]}\omega_M + \iota_{u_1} F^*\iota_{v_2}\omega_N - \iota_{u_1}\iota_{u_2} F^*\omega_N - \iota_{u_2} F^*\iota_{v_1}\omega_N\\
&= \iota_{[u_2, u_1]}\omega_N + \iota_{u_1}\iota_{u_2} F^*\omega_N - \iota_{u_1}\iota_{u_2} F^*\omega_N - \iota_{u_2} F^*\iota_{v_1}\omega_N\\
&= \iota_{[u_2, u_1]}\omega_M - \iota_{u_2}\iota_{u_1} F^*\omega_N\\
&= -\iota_{[u_1, u_2]}\omega_M - \iota_{u_2}\iota_{u_1} F^*\omega_N.
\end{align*}
Hence,
\[
\mathrm{d}\left(\iota_{u_{2}}\iota_{u_{1}}\omega_M\right) + \iota_{u_2}\iota_{u_1} F^*\omega_N = -\iota_{[u_1, u_2]}\omega_M.
\]

Therefore,
\begin{align*}
\mathrm{d}\{(f,\alpha), (g,\beta)\}_\mathrm{s} &= \mathrm{d} \left( \iota_{u_{2}}\iota_{u_{1}}\omega_M, \iota_{v_{2}}\iota_{v_{1}}\omega_N\right)\\
&= \left(F^*\left(\iota_{v_{2}}\iota_{v_{1}}\omega_N\right) + \mathrm{d}\left(\iota_{u_{2}}\iota_{u_{1}}\omega_M\right), -\mathrm{d}\left(\iota_{v_{2}}\iota_{v_{1}}\omega_N\right)\right)\\
&= \left(- \iota_{[u_1, u_2]}\omega_M, \iota_{[v_1, v_2]}\omega_N\right)\\
&= \left( -\iota_{[u_1, u_2]}\omega_M, \iota_{[v_1, v_2]}\omega_N\right)\\
&= -\left( \iota_{[u_1, u_2]}\omega_M, -\iota_{[v_1, v_2]}\omega_N\right)\\
&= -\iota_{[(u_1, v_1), (u_2, v_2)]}(\omega_M, \omega_N).
\end{align*}
\end{enumerate}

\end{proof}

\begin{lem}\label{LocHam}
Let \((f, \beta) \in \Omega_{\mathrm{Ham}}^{n-1}(F)\) be a relative Hamiltonian form with corresponding pair of vector fields \((u, v)\), where \(u \in \mathfrak{X}(M)\), \(v \in \mathfrak{X}(N)\). Then the pair \((u, v)\) preserves the relative multisymplectic structure \((\omega_M, \omega_N)\); that is,
\[
\mathcal{L}_{(u, v)}(\omega_M, \omega_N) := \left( \mathcal{L}_u \omega_M,\, \mathcal{L}_v \omega_N \right) = (0, 0).
\]
\end{lem}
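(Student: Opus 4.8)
The plan is to derive the claim directly from the relative version of Cartan's magic formula established in Proposition~\ref{relative cartan magic formula}(5), exactly mirroring the classical argument that Hamiltonian vector fields are infinitesimal symmetries of the multisymplectic form. Since \((f,\beta)\) is Hamiltonian, the associated pair \((u,v)\) consists of \(F\)-related vector fields by Definition~\ref{hamiltonian}, so the hypothesis needed to invoke the relative magic formula is automatically satisfied.

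First I would apply the identity \(\iota_{(u,v)}\circ \mathrm{d} + \mathrm{d}\circ \iota_{(u,v)} = \mathcal{L}_{(u,v)}\) to the relative form \((\omega_M,\omega_N)\), obtaining
\[
\mathcal{L}_{(u,v)}(\omega_M,\omega_N) = \iota_{(u,v)}\,\mathrm{d}(\omega_M,\omega_N) + \mathrm{d}\,\iota_{(u,v)}(\omega_M,\omega_N).
\]
The first term vanishes because \((\omega_M,\omega_N)\) is closed by our standing hypothesis, i.e.\ \(\mathrm{d}(\omega_M,\omega_N)=(0,0)\). For the second term, the defining Hamiltonian equation \(\mathrm{d}(f,\beta) = -\iota_{(u,v)}(\omega_M,\omega_N)\) lets me substitute \(\iota_{(u,v)}(\omega_M,\omega_N) = -\mathrm{d}(f,\beta)\), so that
\[
\mathrm{d}\,\iota_{(u,v)}(\omega_M,\omega_N) = -\mathrm{d}^2(f,\beta) = (0,0),
\]
using \(\mathrm{d}^2=0\) on the cone complex (Lemma~\ref{lemma:d^2=0}). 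Combining the two vanishing contributions gives \(\mathcal{L}_{(u,v)}(\omega_M,\omega_N)=(0,0)\), which by the componentwise definition of the relative Lie derivative (Definition~\ref{def:relative interior and Lie}) is precisely \(\left(\mathcal{L}_u\omega_M,\,\mathcal{L}_v\omega_N\right)=(0,0)\).

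There is no substantial obstacle here: the entire argument is a formal consequence of the relative Cartan calculus once the magic formula is available, and the only point requiring attention is confirming that the \(F\)-relatedness hypothesis of Proposition~\ref{relative cartan magic formula}(5) holds—which it does by the very definition of a relative Hamiltonian form. In writing it up I would simply remark that this lemma is the relative analogue of the standard fact that Hamiltonian (and more generally multisymplectic) vector fields preserve the structure form, and that it guarantees every relative Hamiltonian pair is multisymplectic in the sense of Definition~\ref{symplectic vector}.
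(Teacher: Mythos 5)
Your proof is correct and takes essentially the same route as the paper's: both apply the relative Cartan magic formula from Proposition~\ref{relative cartan magic formula}(5), annihilate one term via closedness of \((\omega_M,\omega_N)\) and the other via the Hamiltonian equation together with \(\mathrm{d}^2=0\). Your added remark that \(F\)-relatedness is guaranteed by Definition~\ref{hamiltonian} is a correct observation that the paper leaves implicit.
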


\begin{proof}
By Proposition \ref{relative cartan magic formula}, we have
\[
\mathcal{L}_{(u, v)}(\omega_M, \omega_N)  = \mathrm{d} \iota_{(u, v)} (\omega_M,\omega_N) + \iota_{(u, v)} \mathrm{d} (\omega_M,\omega_N).
\]
\noindent
Since $(u, v)$ is a Hamiltonian vector field corresponding to  $(f,\beta)$, it satisfies:
\[
\iota_{(u, v)} (\omega_M,\omega_N) =- \mathrm{d}(f, \beta).
\]
\noindent
Since $(\omega_M, \omega_N)$ is closed, we have
\[
\mathrm{d} (\omega_M,\omega_N) = 0
\]
\noindent
so that
\[
\mathcal{L}_{u, v}(\omega_M, \omega_N)  = \mathrm{d}(-\mathrm{d}(f, \beta) )=-\mathrm{d}^2 (f, \beta) = (0,0)
\]
 as desired.
\end{proof}

The (semi)-bracket and the hemi-bracket are related by the following proposition.

\begin{pro}\label{hemi bracket-semi bracket}
Let \((f,\alpha), (g,\beta) \in \Omega_{\mathrm{Ham}}^{n-1}(F)\) be relative Hamiltonian \((n{-}1)\)-forms, with corresponding Hamiltonian vector fields \((u_1, v_1)\) and \((u_2, v_2)\), respectively. Then the hemi-bracket and semi-bracket are related by
\[
\{(f,\alpha), (g,\beta)\}_{\mathrm{h}} = \{(f,\alpha), (g,\beta)\}_{\mathrm{s}} + \mathrm{d} \iota_{(u_1, v_1)}(g, \beta),
\]
where \(\iota_{(u_1, v_1)}(g, \beta) := \left( \iota_{u_1} g,\, \iota_{v_1} \beta \right)\) and \(\mathrm{d}\) denotes the relative de Rham differential.
\end{pro}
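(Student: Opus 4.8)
The plan is to read the identity off directly from the relative Cartan magic formula of Proposition~\ref{relative cartan magic formula}(5), so that closedness and nondegeneracy of $(\omega_M,\omega_N)$ play no role; only the Hamiltonian equations and the sign conventions enter. Since $(f,\alpha)$ is Hamiltonian, its associated pair $(u_1,v_1)$ consists of $F$-related vector fields (Definition~\ref{hamiltonian}), so the hypothesis of part~(5) is satisfied and I may expand the hemi-bracket as
\[
\{(f,\alpha),(g,\beta)\}_{\mathrm h}=\mathcal{L}_{(u_1,v_1)}(g,\beta)
=\iota_{(u_1,v_1)}\,\mathrm d(g,\beta)+\mathrm d\,\iota_{(u_1,v_1)}(g,\beta).
\]
The second summand is already the exact term $\mathrm d\,\iota_{(u_1,v_1)}(g,\beta)$ recorded in the statement, so the entire problem reduces to identifying the first summand with the relative semi-bracket.

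Next I would substitute the Hamiltonian condition for the second argument. By Definition~\ref{hamiltonian}, the pair $(u_2,v_2)$ associated to $(g,\beta)$ satisfies $\mathrm d(g,\beta)=-\iota_{(u_2,v_2)}(\omega_M,\omega_N)$, and this pair is unique by Lemma~\ref{Uniqueness of hamiltonian vector}. Feeding this into the previous line gives
\[
\iota_{(u_1,v_1)}\,\mathrm d(g,\beta)=-\,\iota_{(u_1,v_1)}\iota_{(u_2,v_2)}(\omega_M,\omega_N).
\]

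The final step is pure sign bookkeeping, and it is the one point I would check with care. Using the contraction convention $\iota_{(u,v)}(\alpha,\beta)=(\iota_u\alpha,-\iota_v\beta)$ of Definition~\ref{def:relative interior and Lie}, a direct computation yields $\iota_{(u_1,v_1)}\iota_{(u_2,v_2)}(\omega_M,\omega_N)=(\iota_{u_1}\iota_{u_2}\omega_M,\ \iota_{v_1}\iota_{v_2}\omega_N)$, and then the antisymmetry of ordinary contractions (equivalently Proposition~\ref{relative cartan magic formula}(2)) rewrites this as $-(\iota_{u_2}\iota_{u_1}\omega_M,\ \iota_{v_2}\iota_{v_1}\omega_N)$, which is precisely $-\{(f,\alpha),(g,\beta)\}_{\mathrm s}$ in the component form of Definition~\ref{bracket_def}. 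The two minus signs cancel, giving $\iota_{(u_1,v_1)}\mathrm d(g,\beta)=\{(f,\alpha),(g,\beta)\}_{\mathrm s}$, and hence the claimed relation.

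The only genuinely delicate aspect is this sign tracking: the relative interior product carries an extra minus sign on the $N$-component, while the semi-bracket is defined with the \emph{outer} contraction index corresponding to the \emph{second} argument, so the abstract expression $\iota_{(u_1,v_1)}\iota_{(u_2,v_2)}(\omega_M,\omega_N)$ and the semi-bracket differ by an overall sign. Pinning the ordering convention down consistently with the skew-symmetry computation in Proposition~\ref{proposition:bracket} is what makes the plus sign appear correctly; once this is fixed, the identity is an immediate consequence of Cartan's magic formula and requires no further geometric input.
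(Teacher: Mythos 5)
Your proof is correct and follows essentially the same route as the paper: apply the relative Cartan magic formula to $\mathcal{L}_{(u_1,v_1)}(g,\beta)$, substitute $\mathrm d(g,\beta)=-\iota_{(u_2,v_2)}(\omega_M,\omega_N)$, and identify the remaining term with the semi-bracket. Your explicit sign bookkeeping in the last step is warranted, since the component form $(\iota_{u_2}\iota_{u_1}\omega_M,\iota_{v_2}\iota_{v_1}\omega_N)$ in Definition~\ref{bracket_def} is the reading that makes the identity come out with the stated sign, exactly as in the paper's own computation.
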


\begin{proof}
We have:
\begin{align*}
\{(f,\alpha), (g,\beta)\}_\mathrm{h} &=  \mathcal{L}_{(u_1, \, v_1)}(g,\beta)\\
&=\left( \iota_{(u_1,v_1)}\mathrm{d} + \mathrm{d}\iota_{(u_1,v_1)}\right)(g,\beta)\\
&= \iota_{(u_1,v_1)}\mathrm{d}(g,\beta) + \mathrm{d}\iota_{(u_1,v_1)}(g,\beta)\\
&= -\iota_{(u_1,v_1)}\iota_{(u_2,v_2)}(\omega_M,\omega_N) + \mathrm{d}\iota_{(u_1,v_1)}(g,\beta)\\
&= \{(f,\alpha), (g,\beta)\}_\mathrm{s} + \mathrm{d}\iota_{(u_1,v_1)}(g,\beta).
\end{align*}
\end{proof}

We observe that the hemi-bracket and the semi-bracket differ by the term \( \mathrm{d}\iota_{(v_\alpha, v_\beta)}(\gamma, \delta) \), where \((v_\alpha, v_\beta)\) are the associated Hamiltonian vector fields and \((\gamma, \delta)\) the second Hamiltonian form. Consequently, the two brackets differ by an exact form.

\begin{cor}\label{derivative of hemi bracket}
Let \((f,\alpha), (g,\beta) \in \Omega_{\mathrm{Ham}}^{n-1}(F)\) be relative Hamiltonian \((n{-}1)\)-forms, with corresponding Hamiltonian vector fields \((u_1, v_1)\) and \((u_2, v_2)\), respectively. Then the exterior differential of the hemi-bracket satisfies
\[
\mathrm{d} \{(f,\alpha), (g,\beta)\}_{\mathrm{h}} = -\iota_{[(u_1, v_1),\, (u_2, v_2)]} (\omega_M, \omega_N).
\]
In particular, the hemi-bracket of relative Hamiltonian forms is itself a Hamiltonian \((n{-}1)\)-form, with associated Hamiltonian vector field given by the Lie bracket \([ (u_1, v_1), (u_2, v_2) ]\).
\end{cor}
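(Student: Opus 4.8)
The plan is to derive this corollary directly from Proposition~\ref{hemi bracket-semi bracket}, which expresses the hemi-bracket as the semi-bracket plus an exact relative form, together with Proposition~\ref{proposition:bracket}(2), which already computes the relative differential of the semi-bracket. Since the hemi-bracket and semi-bracket differ only by the relative exact term $\mathrm{d}\,\iota_{(u_1,v_1)}(g,\beta)$, applying the relative de Rham differential to both sides and using $\mathrm{d}^2=0$ should immediately eliminate that term and reduce the computation to the already-known differential of the semi-bracket.

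Concretely, I would begin with the identity
\[
\{(f,\alpha), (g,\beta)\}_{\mathrm{h}} = \{(f,\alpha), (g,\beta)\}_{\mathrm{s}} + \mathrm{d}\,\iota_{(u_1,v_1)}(g,\beta)
\]
from Proposition~\ref{hemi bracket-semi bracket}, apply $\mathrm{d}$ throughout, and invoke Lemma~\ref{lemma:d^2=0} (the relation $\mathrm{d}^2=0$ on the mapping cone) to annihilate the term $\mathrm{d}^2\iota_{(u_1,v_1)}(g,\beta)$. What remains is the equality $\mathrm{d}\{(f,\alpha),(g,\beta)\}_{\mathrm{h}} = \mathrm{d}\{(f,\alpha),(g,\beta)\}_{\mathrm{s}}$, and substituting the value $-\iota_{[(u_1,v_1),(u_2,v_2)]}(\omega_M,\omega_N)$ from Proposition~\ref{proposition:bracket}(2) yields the stated formula.

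For the second assertion, I would compare the resulting expression with the definition of a relative Hamiltonian form (Definition~\ref{hamiltonian}): having shown that $\mathrm{d}\{(f,\alpha),(g,\beta)\}_{\mathrm{h}} = -\iota_{[(u_1,v_1),(u_2,v_2)]}(\omega_M,\omega_N)$, the hemi-bracket is Hamiltonian precisely with Hamiltonian pair $[(u_1,v_1),(u_2,v_2)] = ([u_1,u_2],[v_1,v_2])$. The only point requiring care is to confirm that this pair lies in $\mathfrak{X}(F)$, i.e.\ that $[u_1,u_2]$ is $F$-related to $[v_1,v_2]$; this follows from the standard fact that the Lie bracket of $F$-related vector fields is again $F$-related, given that $u_i \sim_F v_i$ for $i=1,2$ by hypothesis. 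Since every step is an appeal to a previously established result, I do not anticipate any genuine obstacle — the only subtlety is the bookkeeping check of $F$-relatedness needed to place the Hamiltonian vector field pair in the correct space.
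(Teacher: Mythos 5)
Your proposal is correct and follows exactly the paper's own proof: apply $\mathrm{d}$ to the identity of Proposition~\ref{hemi bracket-semi bracket}, kill the exact term via $\mathrm{d}^2=0$, and substitute the formula from Proposition~\ref{proposition:bracket}. Your additional remark on verifying that $[u_1,u_2]\sim_F[v_1,v_2]$ (via Proposition~\ref{brackets F-related}) is a sensible bookkeeping step that the paper leaves implicit.
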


\begin{proof}
    By Proposition \ref{hemi bracket-semi bracket}, we have 
    \[
\{(f,\alpha), (g,\beta)\}_\mathrm{h} = \{(f,\alpha), (g,\beta)\}_\mathrm{s} + \mathrm{d}\iota_{(u_1,v_1)}(g,\beta)
\]
 so that
 \begin{align*}
     \mathrm{d}\{(f,\alpha), (g,\beta)\}_\mathrm{h}& = \mathrm{d}\{(f,\alpha), (g,\beta)\}_\mathrm{s} + \mathrm{d}^2\iota_{(u_1,v_1)}(g,\beta)\\
     &=\mathrm{d}\{(f,\alpha), (g,\beta)\}_\mathrm{s}\qquad \text{since \(\mathrm{d}^2=0\)}\\
     &=-\iota_{[(u_1, v_1), (u_2, v_2)]}(\omega_M, \omega_N) \quad \text{by Proposition \ref{proposition:bracket}}.
 \end{align*}
\end{proof}

Similar to the semi-bracket in the absolute multisymplectic setting, the \emph{relative semi-bracket} satisfies the Jacobi identity only up to an exact form. The following proposition makes this precise.

\begin{pro}\label{Jacobi-identity-up-toexact-form}
Let \((f, \alpha),\, (g, \beta),\, (k, \gamma) \in \Omega^{n-1}_{\mathrm{Ham}}(F)\) be relative Hamiltonian \((n{-}1)\)-forms, with corresponding Hamiltonian vector fields \((u_1, v_1),\, (u_2, v_2),\, (u_3, v_3)\), respectively. 

Then the relative semi-bracket \( \{\cdot, \cdot\}_{\mathrm{s}} \) satisfies the Jacobi identity up to an exact form:
\begin{align*}
&\{(f, \alpha), \{(g, \beta), (k, \gamma)\}_{\mathrm{s}}\}_{\mathrm{s}} 
- \{\{(f, \alpha), (g, \beta)\}_{\mathrm{s}}, (k, \gamma)\}_{\mathrm{s}}  - \{(g, \beta), \{(f, \alpha), (k, \gamma)\}_{\mathrm{s}}\}_{\mathrm{s}} \\
&= \mathrm{d} J\left((f, \alpha), (g, \beta), (k, \gamma)\right),
\end{align*}
where the \emph{Jacobiator} is given by
\[
J\left((f, \alpha), (g, \beta), (k, \gamma)\right) := 
-\iota_{(u_1, v_1)} \iota_{(u_2, v_2)} \iota_{(u_3, v_3)} (\omega_M, \omega_N).
\]
\end{pro}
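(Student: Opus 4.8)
The plan is to reduce the left-hand side to a sum of double contractions of $(\omega_M,\omega_N)$ against Lie brackets of Hamiltonian vector fields, and then to recognize that sum as $-\mathrm{d}$ applied to the triple contraction defining the Jacobiator, using the relative Cartan calculus of Proposition \ref{relative cartan magic formula}. Throughout, write $\Omega := (\omega_M,\omega_N)$ and let $X_i$ denote the Hamiltonian vector field associated to the $i$-th form, so that $X_1 = (u_1,v_1)$, $X_2 = (u_2,v_2)$, $X_3 = (u_3,v_3)$.

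First I would rewrite each of the three nested semi-brackets. By Definition \ref{bracket_def} one has $\{(g,\beta),(k,\gamma)\}_{\mathrm{s}} = \iota_{X_2}\iota_{X_3}\Omega$, and by Proposition \ref{proposition:bracket}(2) this relative form is again Hamiltonian, with Hamiltonian vector field $[X_2,X_3]$. Hence $\{(f,\alpha),\{(g,\beta),(k,\gamma)\}_{\mathrm{s}}\}_{\mathrm{s}} = \iota_{X_1}\iota_{[X_2,X_3]}\Omega$, and the other two terms are treated identically. In this way the left-hand side collapses to
\[
\iota_{X_1}\iota_{[X_2,X_3]}\Omega - \iota_{[X_1,X_2]}\iota_{X_3}\Omega - \iota_{X_2}\iota_{[X_1,X_3]}\Omega.
\]
The essential input at this stage is Proposition \ref{proposition:bracket}(2), which simultaneously guarantees that the inner semi-brackets are Hamiltonian (so that the outer semi-brackets are even defined) and that their Hamiltonian vector fields are precisely the Lie brackets $[X_i,X_j]$.

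Next I would expand $\mathrm{d} J = -\mathrm{d}\,\iota_{X_1}\iota_{X_2}\iota_{X_3}\Omega$ using the relative Cartan identities. Applying the relative magic formula (Proposition \ref{relative cartan magic formula}(5)) in the form $\mathrm{d}\iota_{X_1} = \mathcal{L}_{X_1} - \iota_{X_1}\mathrm{d}$, together with the commutation rule $\mathcal{L}_{X}\iota_{Y} - \iota_{Y}\mathcal{L}_{X} = \iota_{[X,Y]}$ (Proposition \ref{relative cartan magic formula}(4)), the closedness $\mathrm{d}\Omega = 0$, and the invariance $\mathcal{L}_{X_i}\Omega = (0,0)$ from Lemma \ref{LocHam}, one peels off contractions one at a time. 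Every term in which a Lie derivative lands on $\Omega$ is killed by $\mathcal{L}_{X_i}\Omega = 0$, and a short induction leaves exactly
\[
\mathrm{d}\,\iota_{X_1}\iota_{X_2}\iota_{X_3}\Omega = \iota_{X_2}\iota_{[X_1,X_3]}\Omega + \iota_{[X_1,X_2]}\iota_{X_3}\Omega - \iota_{X_1}\iota_{[X_2,X_3]}\Omega,
\]
so that $-\mathrm{d}\,\iota_{X_1}\iota_{X_2}\iota_{X_3}\Omega$ matches the reduced left-hand side above term by term, completing the argument.

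The bookkeeping of signs and, above all, the ordering of contractions is where I expect the real care to be required: the relative interior product carries a sign in its $N$-component (Definition \ref{def:relative interior and Lie}), and the graded antisymmetry $\iota_{X}\iota_{Y} = -\iota_{Y}\iota_{X}$ (Proposition \ref{relative cartan magic formula}(2)) must be tracked consistently with the convention of Definition \ref{bracket_def}, in which the first argument's vector field is contracted outermost. A secondary point to verify is that each $X_i$ is genuinely $F$-related, so that the relative magic formula (which requires $F$-relatedness) applies; this holds because Hamiltonian pairs are $F$-related by Definition \ref{hamiltonian}, and the brackets $[X_i,X_j]$ inherit $F$-relatedness as the Hamiltonian vector fields furnished by Proposition \ref{proposition:bracket}(2). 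Once the ordering convention is fixed, the computation is the relative transcription of Rogers' argument in the absolute $n$-plectic case (cf. Theorem \ref{def:RogersAlgebra1}), and no genuinely new difficulty arises.
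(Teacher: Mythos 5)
Your proposal is correct in outline but reaches the identity by a genuinely different route than the paper. The paper's pivot is the rewriting $\{(f,\alpha),(g,\beta)\}_{\mathrm{s}} = \iota_{(u_1,v_1)}\mathrm{d}(g,\beta)$, which lets it express all three nested brackets as operators acting on the single form $\mathrm{d}(k,\gamma)$; the left-hand side then becomes $\bigl(\iota_{X_1}\mathrm{d}\iota_{X_2} - \iota_{[X_1,X_2]} - \iota_{X_2}\mathrm{d}\iota_{X_1}\bigr)\mathrm{d}(k,\gamma)$, which is collapsed to $-\mathrm{d}\,\iota_{X_1}\iota_{X_2}\,\mathrm{d}(k,\gamma)$ using parts (2), (4), (5) of Proposition \ref{relative cartan magic formula} together with $\mathrm{d}^2=0$, and finally $\mathrm{d}(k,\gamma)=-\iota_{X_3}(\omega_M,\omega_N)$ is substituted at the very end. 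You instead keep the three arguments on an equal footing: you convert each nested bracket into a double contraction against a Lie-bracket vector field via Proposition \ref{proposition:bracket}(2), and then match the resulting sum against the expansion of $\mathrm{d}\,\iota_{X_1}\iota_{X_2}\iota_{X_3}(\omega_M,\omega_N)$, which is precisely the $m=3$ case of Lemma \ref{lemma derivative}. Your version buys symmetry in the three inputs and reuses machinery the paper proves anyway for the generalized Jacobi identity; the paper's version is asymmetric in $(k,\gamma)$ but needs only the elementary Cartan identities and never invokes Lemma \ref{lemma derivative}. Both also require your observation that the bracket vector fields $[X_i,X_j]$ remain $F$-related, which Proposition \ref{brackets F-related} supplies.

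One caution you already half-identify: Definition \ref{bracket_def} is internally inconsistent, since $\iota_{(u_1,v_1)}\iota_{(u_2,v_2)}(\omega_M,\omega_N)$ and $\bigl(\iota_{u_2}\iota_{u_1}\omega_M,\ \iota_{v_2}\iota_{v_1}\omega_N\bigr)$ differ by an overall sign, and the paper's subsequent proofs (including that of Proposition \ref{proposition:bracket}, which you cite) use the component form, in which the \emph{second} argument's vector field sits outermost, not the first as you state. Your reduction of the left-hand side and your formula for $\mathrm{d}\,\iota_{X_1}\iota_{X_2}\iota_{X_3}(\omega_M,\omega_N)$ are mutually consistent and do yield $\mathrm{d}J$ with $J=-\iota_{X_1}\iota_{X_2}\iota_{X_3}(\omega_M,\omega_N)$, but if you import Proposition \ref{proposition:bracket}(2) verbatim you must also import the component convention it was proved under, and then the sign of the first term in your reduced left-hand side flips. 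This is a bookkeeping issue inherited from the source rather than a flaw in your argument, but it is exactly the spot where the computation can silently go wrong, so it deserves an explicit fixed convention at the outset.
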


\begin{proof}
Let \((u_1, v_1)\) and \((u_2, v_2)\) be Hamiltonian vector fields corresponding to  $(f, \alpha)$ and $(g, \beta)$ respectively.
We have:
\begin{align*}
\{(f, \alpha), (g, \beta)\}_\mathrm{s} &=\left(\iota_{u_2} \iota_{u_1} \omega_M, \, \iota_{v_2} \iota_{v_1} \omega_N\right)\\
&=-\left(\iota_{u_1} \iota_{u_2}  \omega, \, \iota_{v_1} \iota_{v_2} \eta\right)\\
&=-\left(\iota_{u_1} (-F^*\beta-dg), \, \iota_{v_1} (-d \beta)\right)\\
&=\left(\iota_{u_1} F^*\beta+\iota_{u_1}dg, \, \iota_{v_1} d \beta\right)\\
&=\iota_{(u_1, v_1)}\left( F^*\beta+ dg, \,  -d \beta\right)\\
&=\iota_{(u_1, v_1)} \mathrm{d}\left( g, \,  \beta \right)\\
\end{align*}

Now, we can calculate the three terms involved in the Jacobi identity as follows:
\begin{align*}
\{\{(f, \alpha), (g, \beta)\}, (k, \gamma)\}_\mathrm{s} &=\iota_{v_{\{(f, \alpha), (g, \beta)\}}} d (k, \gamma)=\iota_{[(u_1, v_1), (u_2, v_2)]} d (k, \gamma),
\end{align*}
\begin{align*}
\{(g, \beta), \{(f, \alpha), (k, \gamma)\}_\mathrm{s}\}_\mathrm{s} &=\iota_{(u_2, v_2)}\mathrm{d}\{(f, \alpha), (k, \gamma)\}_\mathrm{s}=\iota_{(u_2, v_2)}\mathrm{d}\iota_{(u_1, v_1)} \mathrm{d}(k, \gamma)
\end{align*}
and 
\begin{align*}
\{(f, \alpha), \{(g, \beta), (k, \gamma)\}_\mathrm{s}\}_\mathrm{s} &=\iota_{(v_f, v_\alpha)} \mathrm{d} \{(g, \beta), (k, \gamma)\}_\mathrm{s}=\iota_{(u_1, v_1)} \mathrm{d} \iota_{(u_2, v_2)} \mathrm{d} (k, \gamma).\\
\end{align*}

It follows that
\begingroup
\small
\begin{align*}
&\{(f, \alpha), \{(g, \beta), (k, \gamma)\}_\mathrm{s}\}_\mathrm{s} 
- \{\{(f, \alpha), (g, \beta)\}_\mathrm{s}, (k, \gamma)\}_\mathrm{s} 
- \{(g, \beta), \{(f, \alpha), (k, \gamma)\}_\mathrm{s}\}_\mathrm{s} \\
&= \iota_{(u_1, v_1)} \mathrm{d} \iota_{(u_2, v_2)} \mathrm{d} (k, \gamma)
 - \iota_{[(u_1, v_1), (u_2, v_2)]} \mathrm{d} (k, \gamma)
 - \iota_{(u_2, v_2)} \mathrm{d} \iota_{(u_1, v_1)} \mathrm{d} (k, \gamma) \\
&= \left( \iota_{(u_1, v_1)} \mathrm{d} \iota_{(u_2, v_2)} 
    - \iota_{[(u_1, v_1), (u_2, v_2)]}
    - \iota_{(u_2, v_2)} \mathrm{d} \iota_{(u_1, v_1)} \right) \mathrm{d}(k, \gamma) \\
&= \left( \iota_{(u_1, v_1)} \mathrm{d} \iota_{(u_2, v_2)} 
   - \mathcal{L}_{(u_1, v_1)} \iota_{(u_2, v_2)} 
   + \iota_{(u_2, v_2)} \mathcal{L}_{(u_1, v_1)} 
   - \iota_{(u_2, v_2)} \mathrm{d} \iota_{(u_1, v_1)} \right) \mathrm{d}(k, \gamma) \\
&\qquad \text{(since } \iota_{[(u_1, v_1), (u_2, v_2)]} 
= \mathcal{L}_{(u_1, v_1)} \iota_{(u_2, v_2)} 
- \iota_{(u_2, v_2)} \mathcal{L}_{(u_1, v_1)}\text{)} \\
&= \left( -\mathrm{d} \iota_{(u_1, v_1)} \iota_{(u_2, v_2)} 
         + \iota_{(u_2, v_2)} \iota_{(u_1, v_1)} \mathrm{d} \right) \mathrm{d}(k, \gamma) \\
&= -\mathrm{d} \iota_{(u_1, v_1)} \iota_{(u_2, v_2)} \mathrm{d}(k, \gamma) 
  + \iota_{(u_2, v_2)} \iota_{(u_1, v_1)} \mathrm{d}^2(k, \gamma) \\
&= -\mathrm{d} \iota_{(u_1, v_1)} \iota_{(u_2, v_2)} \mathrm{d}(k, \gamma)
\qquad \text{(since } \mathrm{d}^2 = 0\text{)} \\
&= \mathrm{d} \iota_{(u_1, v_1)} \iota_{(u_2, v_2)} \iota_{(u_3, v_3)} (\omega_M, \omega_N)
\qquad \text{(since } \mathrm{d}(k, \gamma) = -\iota_{(u_3, v_3)} (\omega_M, \omega_N)\text{)} \\
&= \mathrm{d} J\left((f, \alpha), (g, \beta), (k, \gamma)\right).
\end{align*}
\endgroup
where \(J\) is the map 
\begin{align*}
J : \Omega^{n-1}_{\mathrm{Ham}}(F) \otimes \Omega^{n-1}_{\mathrm{Ham}}(F) \otimes \Omega^{n-1}_{\mathrm{Ham}}(F) &\to \Omega^{n-3}_{\mathrm{Ham}}(F)  \\
\left((f, \alpha), ( g,  \beta), (k,\gamma)\right) &\mapsto -\iota_{(u_1, v_1)}\iota_{ (u_2, v_2)}\iota_{(u_3, v_3)}  (\omega_M,\omega_N)
\end{align*}
Therefore, the expression satisfies the Jacobi identity up to an exact form, as desired.
\end{proof}

We now state a result that extends a well-known identity for the exterior derivative of iterated contractions with Hamiltonian vector fields to the setting of relative multisymplectic geometry. This is the relative analogue of Lemma \cite[Lemma 3.6]{rogers2011higher}.

\begin{lem}\label{lemma derivative}
Let \((F, \omega_M, \omega_N)\) be a relative \(n\)-plectic structure, and let \((u_1, v_1), \ldots, (u_m, v_m) \in \mathfrak{X}_{\mathrm{Ham}}(F)\) be Hamiltonian vector fields for \(m \geq 2\). Then the exterior derivative of the iterated contraction satisfies:
\begin{align*}
   & \mathrm{d}\iota_{(u_m,v_m)} \cdots \iota_{(u_1,v_1)}(\omega_M, \omega_N) \\
   &\qquad =
(-1)^m \sum_{1 \leq i < j \leq m} (-1)^{i+j} \iota_{(u_m,v_m)} \cdots \widehat{\iota_{(u_i,v_i)}} \cdots \widehat{\iota_{(u_j,v_j)}} \cdots \iota_{(u_1,v_1)}\iota_{([u_i, u_j], [v_i, v_j])}(\omega_M, \omega_N).
\end{align*}
where the hats indicate omission of the corresponding contraction.
\end{lem}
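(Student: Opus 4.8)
The plan is to argue by induction on $m$, transplanting the proof of \cite[Lemma 3.6]{rogers2011higher} to the relative setting; this is legitimate precisely because Proposition \ref{relative cartan magic formula} shows that the relative contraction, Lie derivative, and differential obey the very same Cartan identities as their classical counterparts, so the argument becomes purely formal manipulation of operators. Abbreviate $\Omega := (\omega_M,\omega_N)$, $\iota_k := \iota_{(u_k,v_k)}$, and $\mathcal{L}_k := \mathcal{L}_{(u_k,v_k)}$. The three structural inputs are: $\mathrm{d}\Omega = 0$ (closedness of the relative $n$-plectic structure); $\mathcal{L}_k\Omega = 0$ for every $k$, which holds because each $(u_k,v_k)$ is Hamiltonian (Lemma \ref{LocHam}); and the operator identities $\mathcal{L}_k = \iota_k\mathrm{d} + \mathrm{d}\iota_k$, $\ \mathcal{L}_k\iota_\ell - \iota_\ell\mathcal{L}_k = \iota_{[(u_k,v_k),(u_\ell,v_\ell)]}$, and $\iota_k\iota_\ell = -\iota_\ell\iota_k$, coming respectively from parts (5), (4), and (2) of Proposition \ref{relative cartan magic formula}.

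For the base case $m=2$, Cartan's magic formula gives $\mathrm{d}\iota_2\iota_1\Omega = \mathcal{L}_2\iota_1\Omega - \iota_2\,\mathrm{d}\iota_1\Omega$. The second term vanishes because $\mathrm{d}\iota_1\Omega = \mathcal{L}_1\Omega - \iota_1\mathrm{d}\Omega = 0$, while the first equals $\iota_{[(u_2,v_2),(u_1,v_1)]}\Omega = -\iota_{([u_1,u_2],[v_1,v_2])}\Omega$ by antisymmetry of the bracket of pairs. Since the right-hand side for $m=2$ is $(-1)^2(-1)^{1+2}\iota_{([u_1,u_2],[v_1,v_2])}\Omega = -\iota_{([u_1,u_2],[v_1,v_2])}\Omega$, the two agree.

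For the inductive step I would apply Cartan's magic formula to the outermost contraction, writing $\mathrm{d}\iota_m\cdots\iota_1\Omega = \mathcal{L}_m\bigl(\iota_{m-1}\cdots\iota_1\Omega\bigr) - \iota_m\,\mathrm{d}\bigl(\iota_{m-1}\cdots\iota_1\Omega\bigr)$, and handle the two terms separately. In the first term I commute $\mathcal{L}_m$ rightward through $\iota_{m-1}\cdots\iota_1$ using the derivation property $[\mathcal{L}_m,AB]=[\mathcal{L}_m,A]B+A[\mathcal{L}_m,B]$ together with part (4); since $\mathcal{L}_m\Omega = 0$ the trailing term drops out, leaving $\sum_{k=1}^{m-1}\iota_{m-1}\cdots\iota_{k+1}\,\iota_{[(u_m,v_m),(u_k,v_k)]}\,\iota_{k-1}\cdots\iota_1\Omega$. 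In the second term, $\mathrm{d}\bigl(\iota_{m-1}\cdots\iota_1\Omega\bigr)$ is exactly the left-hand side of the lemma with $m$ replaced by $m-1$, so the inductive hypothesis applies and, after prefixing $-\iota_m$, yields precisely the summands of the target formula indexed by $1\le i<j\le m-1$ (the overall factor $-(-1)^{m-1}=(-1)^m$ matches).

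The delicate point — and the step I expect to be the main obstacle — is the sign bookkeeping that identifies the first term with the $j=m$ summands. In the $k$-th summand I must transport the newly created contraction $\iota_{[(u_m,v_m),(u_k,v_k)]}$ from slot $k$ to the innermost position, sweeping it past $\iota_{k-1},\dots,\iota_1$; by the anti-commutativity of part (2) this costs $(-1)^{k-1}$, and the antisymmetry $[(u_m,v_m),(u_k,v_k)] = -[(u_k,v_k),(u_m,v_m)]$ supplies one further sign, so the summand becomes $(-1)^{k}\,\iota_{m-1}\cdots\widehat{\iota_k}\cdots\iota_1\,\iota_{([u_k,u_m],[v_k,v_m])}\Omega$. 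Since $(-1)^m(-1)^{k+m} = (-1)^{k}$, this matches the $(i,j)=(k,m)$ term on the right-hand side. Combining the two contributions assembles the complete sum over $1\le i<j\le m$ and closes the induction; once every sign is verified to line up, the argument is entirely mechanical.
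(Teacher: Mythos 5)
Your proposal is correct and follows essentially the same route as the paper's proof: induction on $m$, with Cartan's magic formula applied to the outermost contraction, the Lie-derivative term commuted inward via part~(4) of Proposition~\ref{relative cartan magic formula} (using $\mathcal{L}_{(u_k,v_k)}(\omega_M,\omega_N)=0$ from Lemma~\ref{LocHam}), and the remaining term handled by the induction hypothesis; your sign bookkeeping for the $(i,j)=(k,m)$ summands checks out. The only cosmetic differences are that the paper's base case invokes Proposition~\ref{proposition:bracket} rather than redoing the Cartan computation, and the paper indexes the induction from $m$ to $m+1$.
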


\begin{proof} 
We prove this by induction. For all \(m\geq 2\), let \(P(m)\) be the statement 
\begin{align*}
   & \mathrm{d}\iota_{(u_m,v_m)} \cdots \iota_{(u_1,v_1)}(\omega_M, \omega_N) \\
   &\qquad =
(-1)^m \sum_{1 \leq i < j \leq m} (-1)^{i+j} \iota_{(u_m,v_m)} \cdots \widehat{\iota_{(u_i,v_i)}} \cdots \widehat{\iota_{(u_j,v_j)}} \cdots \iota_{(u_1,v_1)}\iota_{([u_i, u_j], [v_i, v_j])}(\omega_M, \omega_N).
\end{align*} 

\paragraph{Base case:} For $m=2$, we have by definition of the bracket that
\[
\iota_{ (u_{2}, v_{2})} \iota_{(u_{1}, v_{1})}(\omega_M, \omega_N)=\left\{(f_{1}, \alpha_{1}), (f_{2}, \alpha_{2})\right\},
\]
where $(f_{1},\alpha_{1}),(f_{2},\alpha_{2})$
are any Hamiltonian $(n-1)$-forms whose Hamiltonian vector
fields are $(u_{1}, v_{1}),(u_{2}, v_{2})$, respectively.

It follows that
\[\mathrm{d}\iota_{ (u_{2}, v_{2})} \iota_{(u_{1}, v_{1})}(\omega_M, \omega_N)=\mathrm{d}\left\{(f_{1}, \alpha_{1}), (f_{2}, \alpha_{2})\right\},
\]
By Proposition \ref{proposition:bracket}, 
\[
\mathrm{d}\left\{(f_{1}, \alpha_{1}), (f_{2}, \alpha_{2})\right\} = \iota_{[(u_2,v_2), (u_1,v_1)]}(\omega_M,\omega_N).
\]

Hence,
\[\mathrm{d}\iota_{ (u_{2}, v_{2})} \iota_{(u_{1}, v_{1})}(\omega_M, \omega_N)=\mathrm{d}\left\{(f_{1}, \alpha_{1}), (f_{2}, \alpha_{2})\right\}=(-1)^2\iota_{[(u_2,v_2), (u_1,v_1)]}(\omega_M,\omega_N).
\]

\paragraph{Inductive step:} Assume that \(P(m)\) holds for some \(m \geq 2\). That is, 
\begin{align*}
   & \mathrm{d}\iota_{(u_m,v_m)} \cdots \iota_{(u_1,v_1)}(\omega_M, \omega_N) \\
   &\qquad =
(-1)^m \sum_{1 \leq i < j \leq m} (-1)^{i+j} \iota_{(u_m,v_m)} \cdots \widehat{\iota_{(u_i,v_i)}} \cdots \widehat{\iota_{(u_j,v_j)}} \cdots \iota_{(u_1,v_1)}\iota_{([u_i, u_j], [v_i, v_j])}(\omega_M, \omega_N).
\end{align*}

Let us prove that  \(P(m+1)\) holds. That is, we want to prove that
\[
\resizebox{\textwidth}{!}{$
\begin{aligned}
   & \mathrm{d}\iota_{(u_{m+1},v_{m+1})} \cdots \iota_{(u_1,v_1)}(\omega_M, \omega_N) \\
   &\qquad =
   (-1)^{m+1} \sum_{1 \leq i < j \leq m+1} (-1)^{i+j} \iota_{(u_{m+1},v_{m+1})} \cdots 
   \widehat{\iota_{(u_i,v_i)}} \cdots \widehat{\iota_{(u_j,v_j)}} \cdots \iota_{(u_1,v_1)}
   \iota_{([u_i, u_j], [v_i, v_j])}(\omega_M, \omega_N).
\end{aligned}
$}
\]

 We have
 \begin{align*}
    \iota_{(u_{m+1},v_{m+1})} \cdots \iota_{(u_1,v_1)}(\omega_M, \omega_N)&=\iota_{(u_{m+1}, v_{m+1})} \iota_{(u_{m},v_{m})} \cdots \iota_{(u_1,v_1)}(\omega_M, \omega_N)\\
    &=\iota_{(u_{m+1}, v_{m+1})} (\alpha, \beta),
 \end{align*}
 with \((\alpha, \beta )=\iota_{(u_{m},v_{m})} \cdots \iota_{(u_1,v_1)}(\omega_M, \omega_N)\).

It follows by Cartan's formula (Proposition \ref{relative cartan magic formula}, part (5)),
\begin{align}\label{step1}
    \mathrm{d}\iota_{(u_{m+1},v_{m+1})} \cdots \iota_{(u_1,v_1)}(\omega_M, \omega_N)&=\mathrm{d}\iota_{(u_{m+1}, v_{m+1})} (\alpha, \beta)\nonumber\\
    &=\mathcal{L}_{(u_{m+1}, v_{m+1})}(\alpha, \beta)- \iota_{(u_{m+1}, v_{m+1})} \mathrm{d} (\alpha, \beta) 
 \end{align}
But, by part (4) of Proposition \ref{relative cartan magic formula},  we have

\begin{align*}
\mathcal{L}_{(u_{m+1}, v_{m+1})}(\alpha, \beta)
&= \mathcal{L}_{(u_{m+1}, v_{m+1})} \iota\left( (u_1, v_1) \wedge \cdots \wedge (u_{m+1}, v_{m+1}) \right)(\omega_M, \omega_N) \\
&= \iota\left( [(u_{m+1}, v_{m+1}), (u_1, v_1)] \wedge \cdots \wedge (u_m, v_m) \right)(\omega_M, \omega_N) \\
&\quad + \iota\left( (u_1, v_1) \wedge \cdots \wedge (u_m, v_m) \right) \mathcal{L}_{(u_{m+1}, v_{m+1})}(\omega_M, \omega_N) \\
&= \iota\left( [(u_{m+1}, v_{m+1}), (u_1, v_1)] \wedge \cdots \wedge (u_m, v_m) \right)(\omega_M, \omega_N) 
\end{align*}

Moreover, 
\begin{align*}
 &[(u_{m+1}, v_{m+1}),(u_{1}, v_{1}) \wedge \cdots \wedge (u_{m}, v_{m})] & \\
 &=\sum_{i=1}^{m}
(-1)^{i+1} [(u_{m+1}, v_{m+1}),(u_{i}, v_{i})] \wedge (u_{1}, v_{1}) \wedge \cdots \wedge \widehat{(u_{i}, v_{i})}
\wedge \cdots \wedge (u_{m}, v_{m}).   &
\end{align*}

It follows that
\begingroup
\small
\begin{align*}
&\mathcal{L}_{(u_{m+1}, v_{m+1})} \, 
\iota\big((u_{1}, v_{1}) \wedge \cdots \wedge (u_{m}, v_{m})\big)(\omega_M, \omega_N) \\
&\quad = \iota\left( \left[(u_{m+1}, v_{m+1}), 
(u_{1}, v_{1}) \wedge \cdots \wedge (u_{m}, v_{m}) \right] \right)(\omega_M, \omega_N) \\
&\quad = \sum_{i=1}^{m} (-1)^{i} 
\, \iota\Big( [(u_{i}, v_{i}), (u_{m+1}, v_{m+1})] \wedge 
(u_{1}, v_{1}) \wedge \cdots \wedge \widehat{(u_{i}, v_{i})} \wedge \cdots \wedge (u_{m}, v_{m}) \Big)(\omega_M, \omega_N).
\end{align*}
\endgroup

By Equation \ref{step1} and using the hypothesis of induction, we get
\[
\resizebox{\textwidth}{!}{$
\begin{aligned}
&
\mathrm{d} \iota\big((u_{1}, v_{1}) \wedge \cdots \wedge (u_{m+1}, v_{m+1})\big) (\omega_M, \omega_N) \\
&
= \sum_{i=1}^{m}
(-1)^{i} \iota\big([(u_{i}, v_{i}), (u_{m+1}, v_{m+1})] \wedge (u_{1}, v_{1}) \wedge \cdots \wedge \widehat{(u_{i}, v_{i})}
\wedge \cdots \wedge (u_{m}, v_{m})\big) (\omega_M, \omega_N) \\
& \quad -(-1)^{m} \sum \limits_{1 \leq i < j \leq m} (-1)^{i+j} \iota_{(u_{m+1}, v_{m+1})} \iota\big([(u_{i}, v_{i}), (u_{j}, v_{j})] \wedge (u_{1}, v_{1}) \wedge \cdots \\
& \quad \wedge \widehat{(u_{i}, v_{i})} \wedge \cdots \wedge \widehat{(u_{j}, v_{j})} \wedge \cdots \wedge (u_{m}, v_{m})\big)
(\omega_M, \omega_N) \\
& = (-1)^{m+1} \left( \sum_{i=1}^{m}
(-1)^{i+m+1} \iota\big([(u_{i}, v_{i}), (u_{m+1}, v_{m+1})] \wedge (u_{1}, v_{1}) \wedge \cdots \wedge \widehat{(u_{i}, v_{i})}
\wedge \cdots \wedge (u_{m}, v_{m})\big) (\omega_M, \omega_N) \right. \\
& \quad \left. + \sum \limits_{1 \leq i < j \leq m} (-1)^{i+j} \iota\big([(u_{i}, v_{i}), (u_{j}, v_{j})] \wedge (u_{1}, v_{1}) \wedge \cdots \wedge
\widehat{(u_{i}, v_{i})} \wedge \cdots \wedge \widehat{(u_{j}, v_{j})} \wedge \cdots \wedge (u_{m+1}, v_{m+1})\big)
\omega \right) \\
& = (-1)^{m+1} \sum_{1 \leq i < j \leq m+1} (-1)^{i+j} \iota\big([(u_{i}, v_{i}), (u_{j}, v_{j})] \wedge (u_{1}, v_{1}) \wedge \cdots \wedge
\widehat{(u_{i}, v_{i})} \wedge \cdots \wedge \widehat{(u_{j}, v_{j})} \wedge \cdots \\
& \hspace{10cm} \wedge (u_{m+1}, v_{m+1})\big)\, \omega.
\end{aligned}
$}
\]

\end{proof}

\begin{rem}\label{simplicity of notation}
    
For simplicity of notation, we will adopt the following notation throughout the remainder of this thesis:

\begin{align*}\label{space}
\widetilde{\Omega^{n-1}_{\mathrm{Ham}}(F)} &= \left\{ (u, v)\oplus (f,\beta)  \in  \mathfrak{X}_{\mathrm{rel}}(F) \oplus \Omega^{n-1}_{\mathrm{Ham}}(F) \, \Bigg| \, \begin{pmatrix} F^*\beta + df \\ -d\beta \end{pmatrix} = -\begin{pmatrix} \iota_u\omega_M \\ -\iota_v\omega_N \end{pmatrix}
 \right\},
\end{align*}
where $ u\sim_{F} v$ means $u$ and $v$ are $F$-related.
\end{rem}

\begin{pro}\label{brackets F-related}
Let \( F: M \to N \) be a smooth map between smooth manifolds. Suppose \( u_1, u_2 \in \mathfrak{X}(M) \) and \( v_1, v_2 \in \mathfrak{X}(N) \) are vector fields such that \( u_1 \sim_F v_1 \) and \( u_2 \sim_F v_2 \); that is, they are \(F\)-related. Then their Lie brackets are also \(F\)-related:
\[
[u_1, u_2] \sim_F [v_1, v_2].
\]
\end{pro}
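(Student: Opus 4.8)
The plan is to reduce the claim to the classical fact that $F$-relatedness is detected by the action of vector fields on pullbacks of smooth functions. Recall that $u \sim_F v$ (i.e.\ $dF_m(u_m) = v_{F(m)}$ for all $m \in M$) holds if and only if
\[
u(F^* g) = F^*(v\,g) \qquad \text{for all } g \in C^\infty(N),
\]
where we regard vector fields as derivations on smooth functions. First I would establish (or simply recall) this characterization, since it is the only nontrivial ingredient: the forward direction follows from the chain rule applied at each point, $u(F^*g)(m) = dF_m(u_m)(g) = v_{F(m)}(g) = F^*(vg)(m)$, and the reverse direction follows by testing against local coordinate functions on $N$.

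With this in hand, the proof is a short computation. Using the hypotheses $u_1 \sim_F v_1$ and $u_2 \sim_F v_2$, I would compute the action of $[u_1, u_2]$ on an arbitrary $F^*g$ by applying the functional characterization twice:
\begin{align*}
[u_1, u_2](F^* g)
&= u_1\big(u_2(F^*g)\big) - u_2\big(u_1(F^*g)\big) \\
&= u_1\big(F^*(v_2 g)\big) - u_2\big(F^*(v_1 g)\big) \\
&= F^*\big(v_1(v_2 g)\big) - F^*\big(v_2(v_1 g)\big) \\
&= F^*\big([v_1, v_2]\,g\big),
\end{align*}
where in the third line I apply $u_1 \sim_F v_1$ to the function $v_2 g \in C^\infty(N)$ and $u_2 \sim_F v_2$ to the function $v_1 g \in C^\infty(N)$. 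Since $g \in C^\infty(N)$ was arbitrary, the functional characterization (reverse direction) yields $[u_1, u_2] \sim_F [v_1, v_2]$, which is exactly the desired conclusion.

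There is no serious obstacle here, as this is a standard result in differential geometry; the only point requiring care is the clean statement and use of the functional characterization of $F$-relatedness, which cleanly linearizes the otherwise awkward pointwise condition involving $dF$. I would state that characterization as a preliminary remark (or cite a standard reference such as Lee's \emph{Introduction to Smooth Manifolds}) and then present the four-line computation above. This proposition is precisely what is needed to guarantee that the relative Lie bracket $[(u_1, v_1), (u_2, v_2)] = ([u_1, u_2], [v_1, v_2])$ stays within $\mathfrak{X}(F)$—that is, that the bracket of $F$-related pairs is again $F$-related—so the statement fits naturally as the closing lemma validating the relative Cartan calculus developed in this section.
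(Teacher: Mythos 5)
Your proposal is correct and follows essentially the same route as the paper's own proof: both arguments use the characterization of $F$-relatedness via the action of vector fields on pullbacks of smooth functions and then carry out the same two-step substitution to show $[u_1,u_2](F^*g) = F^*([v_1,v_2]g)$. Your write-up is in fact slightly more careful, since you state the functional characterization explicitly as an if-and-only-if before invoking it, whereas the paper uses it implicitly.
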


\begin{proof}
 Since \( u_1 \sim_F v_1 \) and \( u_2 \sim_F v_2 \), it follows that \( v_1(f) \circ F = u_1(f) \circ F \) and \( v_2(f) \circ F = u_2(f) \circ F \) for any smooth function \( f \) on \( N \). We need to check that:
    \[
    [v_1, v_2](f) \circ F = [u_1, u_2](f \circ F).
    \]
    
    We compute:
    \begin{align*}
        [v_1, v_2](f) \circ F &= \left(v_1(v_2(f)) - v_2(v_1(f))\right) \circ F \\
        &= v_1(v_2(f)) \circ F - v_2(v_1(f)) \circ F \\
        &= v_1(v_2(f) \circ F) - v_2(v_1(f) \circ F) \\
        &= u_1(v_2(f) \circ F) - u_2(v_1(f) \circ F) \\
        &= u_1(u_2(f \circ F)) - u_2(u_1(f \circ F)) \\
        &= [u_1, u_2](f \circ F).
    \end{align*}
    
    Therefore, \( [u_1, u_2] \sim_F [v_1, v_2] \).

\end{proof}

\begin{pro}\label{well-defined}
Consider the map
\[
\{\cdot,\cdot\}: \widetilde{\Omega^{n-1}_{\mathrm{Ham}}(F)}\times \widetilde{\Omega^{n-1}_{\mathrm{Ham}}(F)} \to \widetilde{\Omega^{n-1}_{\mathrm{Ham}}(F)}
\]
defined by
\[
\left\{ (u_1, v_1)\oplus (f_1,\beta_1), (u_2, v_2)\oplus (f_2,\beta_2) \right\}
=
\left( [u_1, u_2], [v_1,v_2]\right)\oplus \left(
\iota_{u_2}\iota_{u_1}\omega_M, \iota_{v_2}\iota_{v_1}\omega_N\right)
\]
This map is a skewsymmetric bilinear map and satisfies the Jacobi identity up to an exact form.
\end{pro}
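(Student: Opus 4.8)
The plan is to exploit the product structure of the bracket: its first slot is the relative Lie bracket $[(u_1,v_1),(u_2,v_2)] = ([u_1,u_2],[v_1,v_2])$ on $\mathfrak{X}(F)$, while its second slot is exactly the relative semi-bracket $\{(f_1,\beta_1),(f_2,\beta_2)\}_{\mathrm{s}} = (\iota_{u_2}\iota_{u_1}\omega_M,\iota_{v_2}\iota_{v_1}\omega_N)$ of Definition~\ref{bracket_def}. This decomposition reduces each of the three assertions to a statement about one of the two factors, both of which have already been established.

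First I would establish well-definedness, namely that the output lies in $\widetilde{\Omega^{n-1}_{\mathrm{Ham}}(F)}$. On the vector-field side, $[u_1,u_2]$ must be $F$-related to $[v_1,v_2]$, which is precisely Proposition~\ref{brackets F-related}. On the form side, the pair $\{(f_1,\beta_1),(f_2,\beta_2)\}_{\mathrm{s}}$ must be a relative Hamiltonian $(n-1)$-form whose associated Hamiltonian vector field is $([u_1,u_2],[v_1,v_2])$. This is exactly Proposition~\ref{proposition:bracket}(2), which yields $\mathrm{d}\{(f_1,\beta_1),(f_2,\beta_2)\}_{\mathrm{s}} = -\iota_{[(u_1,v_1),(u_2,v_2)]}(\omega_M,\omega_N)$. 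Uniqueness of the Hamiltonian vector field (Lemma~\ref{Uniqueness of hamiltonian vector}) then forces the compatibility relation defining $\widetilde{\Omega^{n-1}_{\mathrm{Ham}}(F)}$ to hold for the output pair, so the two pieces of data interlock correctly.

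Next, bilinearity is immediate, since both the Lie bracket and the iterated contraction are bilinear in their arguments. For skew-symmetry, the vector-field slot is skew because $[u_2,u_1]=-[u_1,u_2]$ (and likewise for the $v$'s), while the form slot is skew by Proposition~\ref{proposition:bracket}(1); hence the full bracket is skew-symmetric. Finally, for the Jacobi identity up to an exact form I would compute the Jacobiator componentwise in the Leibniz form used in Proposition~\ref{Jacobi-identity-up-toexact-form}. In the vector-field slot the genuine Jacobi identity for the Lie bracket on $\mathfrak{X}(F)$ holds strictly, so that component vanishes identically. In the form slot, Proposition~\ref{Jacobi-identity-up-toexact-form} delivers precisely $\mathrm{d}J$, with
\[
J\big((f_1,\beta_1),(f_2,\beta_2),(f_3,\beta_3)\big) = -\iota_{(u_1,v_1)}\iota_{(u_2,v_2)}\iota_{(u_3,v_3)}(\omega_M,\omega_N).
\]
Therefore the full Jacobiator equals $(0,0)\oplus\mathrm{d}J$, which is an exact relative form, being the image of the relative $(n-2)$-form $J$ under the differential $0\oplus\mathrm{d}$ of the pre-$n$-plectic complex.

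The hard part will be the well-definedness step, since that is the only place where the vector-field and form components must genuinely interact rather than be handled in isolation: one needs the Lie bracket of the two Hamiltonian vector fields to be again Hamiltonian and to match the semi-bracket, and one needs uniqueness to promote this from a possibility to a necessity. Once Proposition~\ref{proposition:bracket}(2) and Lemma~\ref{Uniqueness of hamiltonian vector} are invoked, the remaining verifications split cleanly across the two slots and reduce to the already-proven properties of the Lie bracket and the relative semi-bracket.
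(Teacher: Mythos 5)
The paper states Proposition~\ref{well-defined} without proof, so there is no written argument to compare against; your proposal supplies the natural one, assembling exactly the ingredients the paper has already established, and it is correct. The componentwise reduction is sound: Proposition~\ref{brackets F-related} gives $F$-relatedness of $[u_1,u_2]$ and $[v_1,v_2]$; Proposition~\ref{proposition:bracket} gives skew-symmetry of the form slot and the Hamiltonian property with vector field $[(u_1,v_1),(u_2,v_2)]$; the strict Jacobi identity for the Lie bracket on $\mathfrak{X}(F)$ kills the vector-field slot of the Jacobiator; and Proposition~\ref{Jacobi-identity-up-toexact-form} identifies the form slot with $\mathrm{d}J$, so the total Jacobiator is $0\oplus\mathrm{d}J$, exact for the differential $0\oplus\mathrm{d}$.

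One small correction: the appeal to Lemma~\ref{Uniqueness of hamiltonian vector} in the well-definedness step is unnecessary and slightly misplaced. Membership of the output in $\widetilde{\Omega^{n-1}_{\mathrm{Ham}}(F)}$ requires only that the pair satisfy the defining compatibility relation, i.e.\ $\mathrm{d}\{(f_1,\beta_1),(f_2,\beta_2)\}_{\mathrm{s}} = -\iota_{[(u_1,v_1),(u_2,v_2)]}(\omega_M,\omega_N)$, and that is precisely the statement of Proposition~\ref{proposition:bracket}(2); nothing needs to be ``forced'' by uniqueness. Indeed, uniqueness of Hamiltonian vector fields relies on nondegeneracy and is unavailable in the pre-$n$-plectic setting, which is exactly the setting the space $\widetilde{\Omega^{n-1}_{\mathrm{Ham}}(F)}$ is designed for (Theorem~\ref{theorem:Lie algebra of obs for pre-n-plectic}); the rest of your argument, which never uses uniqueness, goes through there verbatim.
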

Having established the necessary tools of Cartan calculus in the relative multisymplectic setting, we are now prepared to construct the \(L_\infty\)-algebra of relative observables. 
The goal of the next section is to define a higher Lie algebra structure on the space of relative Hamiltonian \((n{-}1)\)-forms, extending the construction of Rogers \cite{rogers2011higher} to the relative case.

\section{\Linfty-algebras of Relative Observables}

Since the relative Cartan formulas introduced in Section \ref{section:relative cartan magic formulas} satisfy analogous properties to those found in differential geometry, it is natural to expect relative counterparts to the results of \cite{Rogers_2011, callies2016homotopy}. The approach presented here adapts the methodology of \cite[Theorem 3.14]{rogers2011higher}, substituting the de Rham differential with the relative differential and extending the notion of observables to their relative counterparts.

The following theorem is the relative version of \cite[Theorem 3.14]{rogers2011higher}.

\begin{thm} \label{Relative L-infinity algebra main thm}
Given a relative $n$-plectic structure  $(F,\omega_M, \omega_N)$, there is a Lie $n$-algebra

\[
\resizebox{\textwidth}{!}{$
\begin{array}{ccccccccccccccc}
    0 & \longrightarrow & L_{n-1} & \longrightarrow & L_{n-2}  & \longrightarrow & \cdots & \longrightarrow & L_{k-2} & \longrightarrow & \cdots & \longrightarrow & L_{1} & \longrightarrow & L_0 \\
     &  & \parallel &  & \parallel &  &  &  & \parallel &  &  &  & \parallel &  & \parallel \\
     &  & \Omega^{0}(F) &  & \Omega^{1}(F)  &  &  &  & \Omega^{k-2}(F) &  &  &  & \Omega^{n-2}(F) &  & \Omega^{n-1}_{\textrm{Ham}}(F) \\
\end{array}
$}
\]

denoted $\mathrm{L}_\infty(F,\omega_M, \omega_N)=(\mathrm{L},\{l_{k} \})$ with 
\begin{itemize}
    \item 
underlying graded vector space \(L_0=\Omega_{\mathrm{Ham}}^{n-1}(F)\) and \(L_i=\Omega^{n-1-i}(F) \) for  \(0 \leq i < n-1\)

and 
\item maps  $\left \{l_{k} : \mathrm{L}^{\otimes k} \to \mathrm{L}\, \, |\, \,  1
  \leq k < \infty \right\}$ defined as
\[ 
l_{1}(f, \alpha)=\mathrm{d}(f, \alpha)=(F^*\alpha+\mathrm{d}f, -\mathrm{d}\alpha),
\]
if $\deg(f, \alpha)>0$ and 
\begingroup
\small 
\begin{align*}
&l_k\left((f_1,\alpha_1), \ldots, (f_k,\alpha_k)\right)
\\
&\qquad \qquad= \begin{cases}
0 & \text{if } \deg\left((f_1,\alpha_1)\otimes \cdots \otimes (f_k,\alpha_k)\right) > 0, \\[0.5em]
(-1)^{\frac{k}{2}+1} \iota_{(u_k, v_k)} \cdots \iota_{(u_1, v_1)} (\omega_M, \omega_N) 
& \begin{aligned}
&\text{if } \deg\left((f_1,\alpha_1)\otimes \cdots \otimes (f_k,\alpha_k)\right) = 0 \\
&\text{and } k \text{ is even},
\end{aligned} \\[0.5em]
(-1)^{\frac{k-1}{2}} \iota_{(u_k, v_k)} \cdots \iota_{(u_1, v_1)} (\omega_M, \omega_N) 
& \begin{aligned}
&\text{if } \deg\left((f_1,\alpha_1)\otimes \cdots \otimes (f_k,\alpha_k)\right) = 0 \\
&\text{and } k \text{ is odd}.
\end{aligned}
\end{cases}
\end{align*}
\endgroup

for $k>1$, where $(u_i, v_i)$ is the unique Hamiltonian vector field
associated to $(f_i, \alpha_{i}) \in \Omega_{\mathrm{Ham}}^{n-1}(F)$.
\end{itemize}
\end{thm}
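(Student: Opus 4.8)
The plan is to verify that $(\mathrm{L},\{l_k\})$ satisfies the three defining requirements of an $L_\infty$-algebra (Definition~\ref{Linfty algebra}): the maps $l_k$ are well-defined and land in the prescribed graded pieces, each $l_k$ is graded skew-symmetric, and the generalized Jacobi identities \eqref{generalized jacobi identity} hold for every $m\ge 1$. Because the relative Cartan operations of Definition~\ref{def:relative interior and Lie} obey exactly the same identities as their classical counterparts (Proposition~\ref{relative cartan magic formula}), the whole argument can be organized to run parallel to Rogers' proof of Theorem~\ref{def:RogersAlgebra1}, with the relative differential replacing the de Rham differential and with Lemma~\ref{lemma derivative} playing the role of Rogers' formula for the exterior derivative of an iterated contraction.

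First I would settle well-definedness. Contracting the relative $(n+1)$-form $(\omega_M,\omega_N)$ with $k$ Hamiltonian pairs lowers the relative degree by $k$, so on degree-$0$ inputs $l_k$ takes values in $\Omega^{n+1-k}(F)=L_{k-2}$, as required; in particular $l_k=0$ for $k>n+1$ for degree reasons. The one case needing an argument is $k=2$, where $l_2$ must land in $L_0=\Omega^{n-1}_{\mathrm{Ham}}(F)$: this is precisely the Hamiltonianity statement of Proposition~\ref{proposition:bracket}(2), which also identifies the Hamiltonian pair of $l_2\bigl((f_1,\alpha_1),(f_2,\alpha_2)\bigr)$ with the bracket $[(u_1,v_1),(u_2,v_2)]$. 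I would also record the consequence of Lemma~\ref{Uniqueness of hamiltonian vector} that every exact relative form is Hamiltonian with the zero pair $(0,0)$ as its unique Hamiltonian vector field, a fact used repeatedly below. Graded skew-symmetry of the $l_k$ then follows at once from the anti-symmetry of the relative contraction, Proposition~\ref{relative cartan magic formula}(2), since all nonzero higher brackets act only on degree-$0$ inputs, where the Koszul signs are trivial.

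The heart of the proof is the verification of \eqref{generalized jacobi identity}, which I would carry out by a case analysis on the degrees of the inputs, exploiting that $l_1=\mathrm{d}$ vanishes on $L_0$ and that $l_k$ ($k\ge 2$) is nonzero only when all its arguments have degree $0$. For $m=1$ the identity is $l_1\circ l_1=0$, i.e. the relative $\mathrm{d}^2=0$ of Lemma~\ref{lemma:d^2=0}. When some input carries positive degree, almost all composites vanish and the survivors collapse either to $\mathrm{d}^2=0$ or to terms of the form $l_k\bigl(\mathrm{d}(\cdots),\ldots\bigr)$, which vanish because the Hamiltonian pair of an exact form is $(0,0)$. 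The essential content lies in the all-degree-$0$ case with $m\ge 3$: there the only surviving terms are $l_1\bigl(l_m(x_1,\ldots,x_m)\bigr)=\mathrm{d}\bigl(l_m(x_1,\ldots,x_m)\bigr)$ together with the unshuffle sum of $l_{m-1}\bigl(l_2(x_{\sigma(1)},x_{\sigma(2)}),x_{\sigma(3)},\ldots\bigr)$. Writing both in terms of iterated contractions of $(\omega_M,\omega_N)$ and using Proposition~\ref{proposition:bracket}(2) to replace the Hamiltonian pair of $l_2(x_i,x_j)$ by $[(u_i,v_i),(u_j,v_j)]$, this identity becomes precisely Lemma~\ref{lemma derivative}.

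The main obstacle will be the sign bookkeeping. One must first check that the prefactors $(-1)^{k/2+1}$ and $(-1)^{(k-1)/2}$ in the definition of $l_k$ coincide with the Koszul sign $\epsilon(k)=-(-1)^{k(k+1)/2}$ of Theorem~\ref{def:RogersAlgebra1}, and then confirm that the combination of these with the permutation signs $(-1)^{\operatorname{sgn}(\sigma)}$, the Koszul signs $\epsilon(\sigma)$, and the structural factor $(-1)^{i(j-1)}$ in \eqref{generalized jacobi identity} matches exactly the signs produced by Lemma~\ref{lemma derivative}. Since the relative contraction and Lie derivative satisfy the same graded commutation rules as in the absolute case, this matching is formally identical to Rogers' computation; the only genuinely new input is that each manipulation is carried out componentwise in $\Omega^{\bullet-1}(M)\oplus\Omega^\bullet(N)$, which is legitimate precisely because of Proposition~\ref{relative cartan magic formula}.
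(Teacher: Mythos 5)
Your proposal follows essentially the same route as the paper's own proof: well-definedness via the degree count and Proposition~\ref{proposition:bracket}, skew-symmetry from Proposition~\ref{relative cartan magic formula}(2), and the generalized Jacobi identity reduced by the same degree case analysis to the surviving $j=1$ and $j=m-1$ terms, which combine into Lemma~\ref{lemma derivative}. The plan is correct and no further comparison is needed.
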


\begin{proof}
Let us show that the maps $\{l_{k}\}$
are well-defined, skew-symmetric, and $\deg(l_{k})=k-2$.

\paragraph{Let us show that $\deg(l_{k})=k-2$:} Because $\iota_{(u_k, v_k)}\cdots \iota_{(u_1, v_1)}  (\omega_M, \omega_N) \in
\Omega^{n+1-k}(F)=\Omega^{n-1-(k-2)}(F)=\mathrm{L}_{k-2}$, then $\deg(l_k)=k-2$. Since \((\omega_M, \omega_N)\) is an \((n+1)\)-form, then \(\iota_{(u_k, v_k)}\cdots \iota_{(u_1, v_1)}  (\omega_M, \omega_N)=0\) for \(k>n+1\) so that $l_{k} = 0 $ for $k>n+1$.

\paragraph{Let us show that the maps $\{l_{k}\}$
are well-defined:} We need to show that for any \((f_1,\alpha_1)\otimes \ldots \otimes (f_k,\alpha_k)\in L^{\otimes k}\), \(l_{k}\left((f_1,\alpha_1), \ldots, (f_k,\alpha_k) \right)\) is a Hamiltonian \((n-1)\)-form whenever \(\deg l_{k}\left((f_1,\alpha_1), \ldots, (f_k,\alpha_k) \right)=0.\) Since \(l_{k}\) has degree \(k-2\), then
\begin{align*}
  \deg l_{k}\left((f_1,\alpha_1), \ldots, (f_k,\alpha_k) \right)=k-2+  \deg \left((f_1,\alpha_1), \ldots, (f_k,\alpha_k) \right)  
\end{align*}

Thus,
\begin{align*}
  \deg l_{k}\left((f_1,\alpha_1), \ldots, (f_k,\alpha_k) \right)=0 \implies   \deg \left((f_1,\alpha_1), \ldots, (f_k,\alpha_k) \right)=2-k .
\end{align*}
Since \(L_i=0\) for \(i<0\), then it suffices to prove it for $k=1, 2$. 

\paragraph{For $k=1$:} If $(f,\alpha)\in L_0$, we have $l_1$ is of degree $-1$, so $l_1(f,\alpha)=0$ since $L_{-1}=0$. If $(f,\alpha)\in L_1$, then
\[ 
l_{1}(f, \alpha)=\mathrm{d}(f, \alpha),
\]
so that \(l_{1}(f, \alpha)\) is an exact $(n-1)$-form, hence it is Hamiltonian with corresponding Hamiltonian vector field being the pair $(0,0)$.

\paragraph{For $k=2$:} we have $l_2$ is of degree $0$, and for $\deg{(f_1,\alpha_1)\otimes(f_2,\alpha_2)}=0$, 
\begin{align*}
l_{2}\left((f_1,\alpha_1), (f_2,\alpha_2) \right)
& = \iota_{(u_2, v_2)} \iota_{(u_1, v_1)}  (\omega_M, \omega_N).
\end{align*}
Since $\deg{(f_1,\alpha_1)\otimes(f_2,\alpha_2)}=0$, then $\deg{(f_1,\alpha_1)}=0$ and $\deg{(f_2,\alpha_2)}=0$ so that $(f_1,\alpha_1)$ and $(f_2,\alpha_2)$ are Hamiltonian $(n-1)$-forms. By Proposition \ref{proposition:bracket}, their bracket $\{(f_1,\alpha_1), (f_2,\alpha_2)\}$ is Hamiltonian. But
$$\{(f_1,\alpha_1), (f_2,\alpha_2)\}=\iota_{(u_2, v_2)} \iota_{(u_1, v_1)}  (\omega_M, \omega_N) =l_{2}\left((f_1,\alpha_1), (f_2,\alpha_2) \right).$$
Hence $l_{2}\left((f_1,\alpha_1), (f_2,\alpha_2) \right)$ is Hamiltonian as desired. This proves that the maps $l_k$ are well-defined.

\paragraph{Let us show that the maps $l_k$ are skew-symmetric:}
If $(f_1,\alpha_1)\otimes \cdots\otimes (f_k,\alpha_k)\in \mathrm{L}^{\otimes \bullet}$ has
degree 0, then for all $\sigma \in \mathrm{S}_{k}$, $(f_{\sigma(1)},\alpha_{\sigma(1)})\otimes \cdots\otimes (f_{\sigma(k)},\alpha_{\sigma(k)})\in \mathrm{L}^{\otimes \bullet}$ has
degree 0. Also, since $(\omega_M, \omega_N) $ is skew-symmetric, we have
\[\iota_{(u_{\sigma(k)}, v_{\sigma(k)})}\cdots \iota_{(u_{\sigma(1)}, v_{\sigma(1)})}  (\omega_M, \omega_N)=(-1)^{\operatorname{sgn}(\sigma)}\iota_{(u_k, v_k)}\cdots \iota_{(u_1, v_1)}  (\omega_M, \omega_N) .\]

It follows that:
\[
\resizebox{\textwidth}{!}{$
\begin{aligned}
& l_{k}\left((f_{\sigma(1)},\alpha_{\sigma(1)}), \cdots, (f_{\sigma(k)},\alpha_{\sigma(k)}) \right) = \\
& \qquad\qquad =
\begin{cases}
(-1)^{\frac{k}{2}+1} \iota_{(u_{\sigma(k)}, v_{\sigma(k)})} \cdots \iota_{(u_{\sigma(1)}, v_{\sigma(1)})} (\omega_M, \omega_N) 
& \text{if } \deg\big((f_1,\alpha_1) \otimes \cdots \otimes (f_k,\alpha_k)\big) = 0 \\
& \text{and } k \text{ even}, \\
(-1)^{\frac{k-1}{2}} \iota_{(u_{\sigma(k)}, v_{\sigma(k)})} \cdots \iota_{(u_{\sigma(1)}, v_{\sigma(1)})} (\omega_M, \omega_N)
& \text{if } \deg\big((f_1,\alpha_1) \otimes \cdots \otimes (f_k,\alpha_k)\big) = 0 \\
& \text{and } k \text{ odd},
\end{cases} \\
& \qquad\qquad =
\begin{cases}
(-1)^{\frac{k}{2}+1} (-1)^{\operatorname{sgn}(\sigma)} \iota_{(u_k, v_k)} \cdots \iota_{(u_1, v_1)} (\omega_M, \omega_N)
& \text{if } \deg\big((f_1,\alpha_1) \otimes \cdots \otimes (f_k,\alpha_k)\big) = 0 \\
& \text{and } k \text{ even}, \\
(-1)^{\frac{k-1}{2}} (-1)^{\operatorname{sgn}(\sigma)} \iota_{(u_k, v_k)} \cdots \iota_{(u_1, v_1)} (\omega_M, \omega_N)
& \text{if } \deg\big((f_1,\alpha_1) \otimes \cdots \otimes (f_k,\alpha_k)\big) = 0 \\
& \text{and } k \text{ odd},
\end{cases} \\
& \qquad\qquad = (-1)^{\operatorname{sgn}(\sigma)} l_{k}((f_1,\alpha_1), \cdots, (f_k,\alpha_k)).
\end{aligned}
$}
\]
Furthermore, since \(\deg(f_i, \alpha_{i})=0\) for each \(i\), we have \(\epsilon(\sigma)=1\). This proves that $l_{k}$ is skew-symmetric.

\paragraph{Let us show that \(l_k\) satisfy the generalized Jacobi identity (Equation \ref{generalized jacobi identity} in Definition
\ref{Linfty algebra}):}
$1 \leq m < \infty :$

\begin{align*} 
   \sum_{\substack{i+j = m+1, \\ \sigma \in \mathrm{Sh}(i,m-i)}}
  (-1)^{\operatorname{sgn}(\sigma)}\epsilon(\sigma)(-1)^{i(j-1)} l_{j}
   (l_{i}(x_{\sigma(1)}, \dots, x_{\sigma(i)}), x_{\sigma(i+1)},
   \ldots, x_{\sigma(m)})=0.
\end{align*}

\begin{itemize}
    \item 

If $m=1$, then it is satisfied since $l_{1}$
is the relative differential, and the left-hand side of the Equation \ref{generalized jacobi identity} becomes
\begin{align*} 
   \sum_{\substack{i+j = 2, \\ \sigma \in \mathrm{Sh}(i,1-i)}}
  (-1)^{\operatorname{sgn}(\sigma)}\epsilon(\sigma)(-1)^{i(j-1)} l_{j}
   (l_{i}(x_{\sigma(1)}, \dots, x_{\sigma(i)}), x_{\sigma(i+1)},
   \ldots, x_{\sigma(m)})
\end{align*}
and $i=j=1$ so that $l_j=l_i=l_1=\mathrm{d}$ is the relative differential. Since $\mathrm{Sh}(1,0)= \{\mathrm{id}\}$, the equation above becomes,
 \(l_1\circ l_1=\mathrm{d}^2=0,\) which is always true because $\mathrm{d}$
 is the relative deRham differential. 

\item If $m=2$, then Equation \ref{generalized jacobi identity} becomes
\begin{align*} 
   \sum_{\substack{i+j = 3, \\ \sigma \in \mathrm{Sh}(i,2-i)}}
  (-1)^{\operatorname{sgn}(\sigma)}\epsilon(\sigma)(-1)^{i(j-1)} l_{j}
   (l_{i}(x_{\sigma(1)}, \dots, x_{\sigma(i)}), x_{\sigma(i+1)},
   \ldots, x_{\sigma(m)})=0.
\end{align*}

In this case, we have $(i,j)=(1, 2), (2,1)$. We have $\mathrm{Sh}(1,1)=\{\mathrm{id}, (12)\}$ and  $\mathrm{Sh}(2,0)=\{\mathrm{id}\}$. The equality we must prove in this case can be written as:
\begin{equation}\label{case m=2}
    l_{1}(l_{2}\left((f_1, \alpha_1),\, (f_2, \alpha_2)\right) - l_{2}\left(l_{1}(f_1, \alpha_1),\, (f_2, \alpha_2)\right) - l_{2}(l_{1}((f_2, \alpha_2)), (f_1, \alpha_1))=0.
\end{equation}

That is,
\[
l_{1}(l_{2}\left((f_1, \alpha_1),\, (f_2, \alpha_2)\right) - l_{2}\left(l_{1}(f_1, \alpha_1),\, (f_2, \alpha_2)\right) - (-1)^{\deg{(f_1, \alpha_1)}}l_{2}((f_1, \alpha_1),l_{1}((f_2, \alpha_2)))=0.
\]

\paragraph{Case 1:}
If  $\deg{(f_1,\alpha_1)\otimes  (f_2,\alpha_2)} \geq 2$, then  $\deg{(l_{1}(f_1, \alpha_1), (f_2, \alpha_2))} \geq 1$ since \(l_1\) is of degree \(-1\). It follows by definition of \(l_2\) that \[l_{2}\left(l_{1}(f_1, \alpha_1),\, (f_2, \alpha_2)\right)=(0, 0).\] A similar argument shows that 
\[l_{2}((f_1, \alpha_1),l_{1}((f_2, \alpha_2)))=(0,0).\]

Since $\deg{(f_1,\alpha_1)\otimes  (f_2,\alpha_2)} \geq 2$, then \(l_{2}\left((f_1, \alpha_1),\, (f_2, \alpha_2)\right) =(0,0)\) so that
 \[l_{1}(l_{2}\left((f_1, \alpha_1),\, (f_2, \alpha_2)\right) =l_1(0,0)=(0, 0).\]

It follows that Equation \ref{case m=2} holds trivially.

\paragraph{Case 2:}
If $\deg{(f_1,\alpha_1) }= 0$ and $\deg{  (f_2,\alpha_2)} = 1$, then \(l_1(f_1,\alpha_1)=(0, 0)\). 
It follows \[l_{2}\left(l_{1}(f_1, \alpha_1),\, (f_2, \alpha_2)\right)=l_{2}\left((0, 0),\, (f_2, \alpha_2)\right)=(0, 0).\] 

Since  $\deg{  (f_2,\alpha_2)} = 1>0$, then \(l_{1}((f_2, \alpha_2)))=\mathrm{d}(f_2, \alpha_2)\) is an exact \((n-1)\)-form, so it is Hamiltonian with corresponding Hamiltonian vector field the pair \((0,0)\). It follows by definition of \(l_2\) that  
 \[l_{2}\left((f_1, \alpha_1),\, l_{1}(f_2, \alpha_2)\right)=\iota_{(0, 0)}\iota_{(u_1, v_1)}  (\omega_M, \omega_N)=(0, 0).\]

 Since $\deg{(f_1,\alpha_1)\otimes  (f_2,\alpha_2)} =1>0$, then \(l_{2}\left((f_1, \alpha_1),\, (f_2, \alpha_2)\right) =(0,0)\) so that
 \[l_{1}(l_{2}\left((f_1, \alpha_1),\, (f_2, \alpha_2)\right) =l_1(0,0)=0.\]
Hence, Equation \ref{case m=2} holds trivially.

\paragraph{Case 3:}
If $\deg{(f_1,\alpha_1) }= 1$ and $\deg{  (f_2,\alpha_2)} = 0$, then \(l_1(f_2,\alpha_2)=(0, 0)\). 
It follows \[l_{2}\left((f_1, \alpha_1),\, l_{1}(f_2, \alpha_2)\right)=l_{2}\left((f_1, \alpha_1),\,(0, 0)\right)=(0, 0).\] 

Since $\deg{(f_1,\alpha_1) }= 1$, then \(l_1(f_1,\alpha_1)=\mathrm{d}(f_1,\alpha_1)\) is an exact \((n-1)\)-form, so it is Hamiltonian with corresponding Hamiltonian vector field the pair \((0,0)\). It follows by definition of \(l_2\) that  
 \[l_{2}\left(l_{1}(f_1, \alpha_1),\, (f_2, \alpha_2)\right)=\iota_{(u_2, v_2)}\iota_{(0, 0)}  (\omega_M, \omega_N)=(0, 0).\]

 Since $\deg{(f_1,\alpha_1)\otimes  (f_2,\alpha_2)} =1>0$, then \(l_{2}\left((f_1, \alpha_1),\, (f_2, \alpha_2)\right) =(0,0)\) so that
 \[l_{1}(l_{2}\left((f_1, \alpha_1),\, (f_2, \alpha_2)\right) =l_1(0,0)=0.\]
Again, Equation \ref{case m=2} holds trivially.

 \paragraph{Case 4:} 
If $\deg{(f_1,\alpha_1) }= 0$ and $\deg{  (f_2,\alpha_2)} = 0$, then \(l_{1}(f_1, \alpha_1)=0\) and  \(l_1(f_2,\alpha_2)=(0, 0)\). 

It follows that
 \[l_{2}\left(l_{1}(f_1, \alpha_1),\, (f_2, \alpha_2)\right)=l_{2}\left((0, 0),\,(f_2, \alpha_2)\right)=(0, 0)\] 
 and
 \[l_{2}\left((f_1, \alpha_1),\, l_{1}(f_2, \alpha_2)\right)=l_{2}\left((f_1, \alpha_1),\,(0, 0)\right)=(0, 0)\]

 Since    $\deg{(f_1,\alpha_1)\otimes  (f_2,\alpha_2)}=0$, then
 \begin{align*}
l_{2}\left((f_1,\alpha_1), (f_2,\alpha_2) \right)
& = \iota_{(u_2, v_2)} \iota_{(u_1, v_1)}  (\omega_M, \omega_N)   
\end{align*}
Since $\deg{(f_1,\alpha_1)\otimes(f_2,\alpha_2)}=0$, then $\deg{(f_1,\alpha_1)}=0$ and $\deg{(f_2,\alpha_2)}=0$ so that $(f_1,\alpha_1)$ and $(f_1,\alpha_1)$ are Hamiltonian $(n-1)$-forms. By Proposition \ref{proposition:bracket}, their bracket $\{(f_1,\alpha_1), (f_2,\alpha_2)\}$ is Hamiltonian. But
$$\{(f_1,\alpha_1), (f_2,\alpha_2)\}=\iota_{(u_2, v_2)} \iota_{(u_1, v_1)}  (\omega_M, \omega_N) =l_{2}\left((f_1,\alpha_1), (f_2,\alpha_2) \right).$$

Hence $l_{2}\left((f_1,\alpha_1), (f_2,\alpha_2) \right)\in L_0$. Since $\deg l_{2}\left((f_1,\alpha_1), (f_2,\alpha_2) \right)=0$, it follows that
$$l_1 (l_{2}\left((f_1,\alpha_1), (f_2,\alpha_2) \right))=0.$$
 
Again, Equation \ref{case m=2} holds trivially. In any case, we have proved that
\[
l_{1}(l_{2}\left((f_1, \alpha_1),\, (f_2, \alpha_2)\right) = l_{2}(l_{1}\left((f_1, \alpha_1),\, (f_2, \alpha_2)\right) + (-1)^{\deg{(f_1, \alpha_1)}}l_{2}((f_1, \alpha_1),l_{1}((f_2, \alpha_2))).
\]

\item Now assume $m > 2$. We will decompose the summation 
in Equation \ \ref{generalized jacobi identity} into two sums:
\[
\resizebox{\textwidth}{!}{$
\begin{aligned}
 &  \sum_{\substack{i+j = m+1 \\ \sigma \in \mathrm{Sh}(i,m-i)}}
  (-1)^{\operatorname{sgn}(\sigma)}\epsilon(\sigma)(-1)^{i(j-1)} l_{j}
   \left(l_{i}\left( (f_{\sigma(1)}, \alpha_{\sigma(1)}), \dots, (f_{\sigma(i)}, \alpha_{\sigma(i)})\right), (f_{\sigma(i+1)}, \alpha_{\sigma(i+1)}),
   \ldots, (f_{\sigma(m)}, \alpha_{\sigma(m)})\right) \\
   &= \sum_{j=2}^{m-2}
   \sum_{\sigma \in \mathrm{Sh}(i,m-i)} 
   (-1)^{\operatorname{sgn}(\sigma)}\epsilon(\sigma)(-1)^{i(j-1)} l_{j}
   \left(l_{i}\left( (f_{\sigma(1)}, \alpha_{\sigma(1)}), \dots, (f_{\sigma(i)}, \alpha_{\sigma(i)})\right), (f_{\sigma(i+1)}, \alpha_{\sigma(i+1)}),
   \ldots, (f_{\sigma(m)}, \alpha_{\sigma(m)})\right) \\
   &\quad + l_{1}\left( l_{m}\left((f_{\sigma(1)}, \alpha_{\sigma(1)}), \dots, (f_{\sigma(m)}, \alpha_{\sigma(m)}) \right)\right) \\
   &\quad + \sum_{\sigma \in \mathrm{Sh}(2,m-2)} 
(-1)^{\operatorname{sgn}(\sigma)}
\epsilon(\sigma)
l_{m-1}\left(l_{2}\left((f_{\sigma(1)}, \alpha_{\sigma(1)}), (f_{\sigma(2)}, \alpha_{\sigma(2)})\right), (f_{\sigma(3)}, \alpha_{\sigma(3)}), \ldots, (f_{\sigma(m)}, \alpha_{\sigma(m)})\right) \\
   &\quad + \sum_{\sigma \in \mathrm{Sh}(1,m-1)} 
(-1)^{\operatorname{sgn}(\sigma)}
\epsilon(\sigma)(-1)^{m-1} 
l_{m}\left(l_{1}\left(f_{\sigma(1)}, \alpha_{\sigma(1)}\right), (f_{\sigma(2)}, \alpha_{\sigma(2)}), \ldots, (f_{\sigma(m)}, \alpha_{\sigma(m)})\right)
\end{aligned}
$}
\]

where the last terms correspond to the case the case $j=1, m-1, m$, respectively. Depending on the value
of the index $j$, we show that each of these is zero, thereby proving the
theorem.

\paragraph{Case 1: $2\leq j\leq m-2$.}
We first consider the sum of the terms with $2 \leq j \leq m-2$.

\begin{equation} \label{term1}
\resizebox{5.8in}{0.25in}{$
\displaystyle \sum_{j=2}^{m-2}
\sum_{\sigma \in \mathrm{Sh}(i,m-i)} 
(-1)^{\operatorname{sgn}(\sigma)}\epsilon(\sigma)(-1)^{i(j-1)} 
l_{j} \left(
l_{i}\left( 
(f_{\sigma(1)}, \alpha_{\sigma(1)}), \dots, (f_{\sigma(i)}, \alpha_{\sigma(i)})
\right), 
(f_{\sigma(i+1)}, \alpha_{\sigma(i+1)}), \ldots, (f_{\sigma(m)}, \alpha_{\sigma(m)})
\right)
$}
\end{equation}

In this case we claim that for all $\sigma \in \mathrm{Sh}(i,m-i)$ we have 
\begin{align*}
    l_{j}
   \left(l_{i}\left( (f_{\sigma(1)}, \alpha_{\sigma(1)}), \dots, (f_{\sigma(i)}, \alpha_{\sigma(i)})\right), (f_{\sigma(i+1)}, \alpha_{\sigma(i+1)}),
   \ldots, (f_{\sigma(m)}, \alpha_{\sigma(m)})\right)=0.
\end{align*}

For a contradiction, assume there exists an unshuffle $\sigma \in \mathrm{Sh}(i,m-i)$ such that
\begin{align*}
    l_{j}
   \left(l_{i}\left( (f_{\sigma(1)}, \alpha_{\sigma(1)}), \dots, (f_{\sigma(i)}, \alpha_{\sigma(i)})\right), (f_{\sigma(i+1)}, \alpha_{\sigma(i+1)}),
   \ldots, (f_{\sigma(m)}, \alpha_{\sigma(m)})\right)\neq 0.
\end{align*}

By the definition of $l_{j} : \mathrm{L}^{\otimes   j} \to \mathrm{L}$, we must have
\begin{align*}
    \deg\left(l_{i}\left( (f_{\sigma(1)}, \alpha_{\sigma(1)}), \dots, (f_{\sigma(i)}, \alpha_{\sigma(i)})\right)\otimes (f_{\sigma(i+1)}, \alpha_{\sigma(i+1)})\otimes
   \ldots\otimes (f_{\sigma(m)}, \alpha_{\sigma(m)})\right)=0.
\end{align*}
That is,
{\small
\begin{align*}
\deg\!\left(
    l_{i}\!\left(
        (f_{\sigma(1)}, \alpha_{\sigma(1)}), \dots, (f_{\sigma(i)}, \alpha_{\sigma(i)})
    \right)
\right)
+ \deg\!\left(
    (f_{\sigma(i+1)}, \alpha_{\sigma(i+1)}) \otimes \cdots \otimes
    (f_{\sigma(m)}, \alpha_{\sigma(m)})
\right) = 0.
\end{align*}
}

This implies that
\begin{align*}
    \deg\left(l_{i}\left( (f_{\sigma(1)}, \alpha_{\sigma(1)}), \dots, (f_{\sigma(i)}, \alpha_{\sigma(i)})\right)\right)=0
\end{align*}
since $\deg\left((f_{\sigma(i+1)}, \alpha_{\sigma(i+1)})\otimes
   \ldots\otimes (f_{\sigma(m)}, \alpha_{\sigma(m)})\right)=0$. Since $l_i$ is of degree $i-2$, it follows that
   {\footnotesize\begin{align} \label{step2}
\deg\left(l_{i}\left( (f_{\sigma(1)}, \alpha_{\sigma(1)}), \dots, (f_{\sigma(i)}, \alpha_{\sigma(i)})\right)\right)
= \deg\left((f_{\sigma(i+1)}, \alpha_{\sigma(i+1)}) \otimes \cdots \otimes (f_{\sigma(m)}, \alpha_{\sigma(m)})\right) + i - 2 = 0.
\end{align}}

By assumption, \(l_{i}\left( (f_{\sigma(1)}, \alpha_{\sigma(1)}), \dots, (f_{\sigma(i)}, \alpha_{\sigma(i)})\right)\)  must be non-zero and $j < m-1$
implies $i>1$. Hence we must have $\deg\left((f_{\sigma(i+1)}, \alpha_{\sigma(i+1)})\otimes
   \ldots\otimes (f_{\sigma(m)}, \alpha_{\sigma(m)})\right)=0$ and
therefore, by Equation \ref{step2}, $i=2$. Since $i+j=m+1$, this
implies $j=m-1$, which contradicts the fact that $2\leq j\leq m-2$. So, no such
unshuffle could exist, and therefore, the sum (\ref{term1}) is zero.  

\paragraph{Case 2: $j=1$, $j=m-1$, and $j=m$.}
We next consider the sum of the terms $j=1$, $j=m-1$, and $j=m$:
{\footnotesize\begin{align} \label{the_sum}
 &l_{1}\left( l_{m}\left((f_{\sigma(1)}, \alpha_{\sigma(1)}), \dots, (f_{\sigma(m)},\alpha_{\sigma(m)}) \right)\right)\nonumber\\
&+ \sum_{\sigma \in \mathrm{Sh}(2,m-2)} 
(-1)^{\operatorname{sgn}(\sigma)}
\epsilon(\sigma)
l_{m-1}\left(l_{2}\left((f_{\sigma(1)}, \alpha_{\sigma(1)}), (f_{\sigma(2)}, \alpha_{\sigma(2)})\right), (f_{\sigma(3)}, \alpha_{\sigma(3)}),\ldots, (f_{\sigma(m)}, \alpha_{\sigma(m)})\right)\nonumber
 \\
&+ \sum_{\sigma \in \mathrm{Sh}(1,m-1)}  \negthickspace \negthickspace \negthickspace
(-1)^{\operatorname{sgn}(\sigma)}
\epsilon(\sigma)  (-1)^{m-1} 
l_{m}\left(l_{1}\left( f_{\sigma(1)}, \alpha_{\sigma(1)}\right), (f_{\sigma(2)}, \alpha_{\sigma(2)}),\hdots, (f_{\sigma(m)}, \alpha_{\sigma(m)})\right).
\end{align}}
Note that if $\sigma \in \mathrm{Sh}(1,m-1)$ and $\deg\left(l_{1}(f_{\sigma(1)}, \alpha_{\sigma(1)})\right) >0$, then
\[
l_{m}\left(l_{1}(f_{\sigma(1)}, \alpha_{\sigma(1)}), (f_{\sigma(2)}, \alpha_{\sigma(2)}),\ldots, (f_{\sigma(m)}, \alpha_{\sigma(m)})\right)=0
\]
by definition of the map $l_{m}$. On the other hand, 
if $\deg\left(l_{1}(f_{\sigma(1)}, \alpha_{\sigma(1)})\right) =0$, then
\[l_{1}(f_{\sigma(1)}, \alpha_{\sigma(1)})=\mathrm{d} (f_{\sigma(1)}, \alpha_{\sigma(1)})\] is Hamiltonian, and its Hamiltonian vector field is the zero vector field. Hence, the third
term in (\ref{the_sum}) is zero. 

Since the map $l_{2}$ is degree 0, we only need to consider the
first two terms of (\ref{the_sum}) in the case when $\deg\left((f_{\sigma(1)}, \alpha_{\sigma(1)})\otimes \dots \otimes (f_{\sigma(m)},\alpha_{\sigma(m)}) \right)
=0$. For the first term, we have:
{\footnotesize\begin{align*}
l_{1}\left(l_{m}\left((f_{\sigma(1)}, \alpha_{\sigma(1)}), \dots, (f_{\sigma(m)}, \alpha_{\sigma(m)}) \right)\right) =
\begin{cases}
(-1)^{\frac{m}{2}+1} \, \mathrm{d}\iota_{(u_m, v_m)} \cdots \iota_{(u_1, v_1)} (\omega_M, \omega_N) & \text{if } m \text{ even}, \\
(-1)^{\frac{m-1}{2}} \, \mathrm{d}\iota_{(u_m, v_m)} \cdots \iota_{(u_1, v_1)} (\omega_M, \omega_N) & \text{if } m \text{ odd}.
\end{cases}
\end{align*}}

Now consider the second term. If $(f_{i}, \alpha_{i}), \, (f_{j}, \alpha_{j}) \in \Omega_{\mathrm{Ham}}^{n-1}(F)$ are Hamiltonian
$(n-1)$forms, then, by Definition \ref{bracket_def},
\[l_{2}\left((f_{i}, \alpha_{i}), (f_{j}, \alpha_{j})\right)=\left\{(f_{i}, \alpha_{i}), (f_{j}, \alpha_{j})\right\}_{\mathrm{s}}.\]

By Proposition \ref{proposition:bracket}, 
\(l_{2}\left((f_{i}, \alpha_{i}), \, (f_{j}, \alpha_{j})\right)\) is
Hamiltonian, and its Hamiltonian vector field is $v_{\left\{(f_{i}, \alpha_{i}), (f_{j}, \alpha_{j})\right\}_{\mathrm{s}}}=[v_{(f_{i}, \alpha_{i})}, v_{(f_{j}, \alpha_{j})}]$.
Therefore for  $\sigma \in \mathrm{Sh}(2,m-2)$, we have

\begingroup
\small
\begin{multline*}
l_{m-1}\Big(l_{2}\big((f_{\sigma(1)}, \alpha_{\sigma(1)}), (f_{\sigma(2)}, \alpha_{\sigma(2)})\big), 
(f_{\sigma(3)}, \alpha_{\sigma(3)}), \ldots, (f_{\sigma(m)}, \alpha_{\sigma(m)})\Big) = \\
\begin{cases}
\begin{aligned}
(-1)^{\frac{m}{2}-1} \iota_{[(u_{\sigma(1)}, v_{\sigma(1)}), (u_{\sigma(2)}, v_{\sigma(2)})]} 
\cdots \iota_{(u_{\sigma(m)}, v_{\sigma(m)})} (\omega_M, \omega_N)
\end{aligned}
& \text{if } m \text{ is even}, \\[0.5em]
\begin{aligned}
(-1)^{\frac{m+1}{2}} \iota_{[(u_{\sigma(1)}, v_{\sigma(1)}), (u_{\sigma(2)}, v_{\sigma(2)})]} 
\cdots \iota_{(u_{\sigma(m)}, v_{\sigma(m)})} (\omega_M, \omega_N)
\end{aligned}
& \text{if } m \text{ is odd}.
\end{cases}
\end{multline*}
\endgroup

 Any unshuffle $\sigma \in \mathrm{Sh}(2,m-2)$ is completely determined by the choices of $\sigma(1)$ and $\sigma(2)$. Once $\sigma(1)$ and $\sigma(2)$ are chosen, say $\sigma(1)=i$ and $\sigma(2)=j$ (necessarily $i<j$), then $\sigma(3)=\min \{1, 2, 3, \cdots, m\}\setminus \{i, j\}$ and $\sigma(4)$ will be the smallest integer among the remaining and so on.

Since each $(f_i, \alpha_i)$ is degree 0, we can rewrite the sum over
$\sigma \in \mathrm{Sh}(2,m-2)$ as
{\footnotesize\begin{align*}
&\sum_{\sigma \in \mathrm{Sh}(2,m-2)}
(-1)^{\operatorname{sgn}(\sigma)}
\epsilon(\sigma)
l_{m-1}\left(l_{2}\left((f_{\sigma(1)}, \alpha_{\sigma(1)}), (f_{\sigma(2)}, \alpha_{\sigma(2)})\right), (f_{\sigma(3)}, \alpha_{\sigma(3)}), \ldots, (f_{\sigma(m)}, \alpha_{\sigma(m)})\right) = \\
&\sum_{1 \leq i < j \leq m} (-1)^{i+j-1} 
l_{m-1}\left(l_{2}\left((f_i, \alpha_i), (f_j, \alpha_j)\right), 
(f_1, \alpha_1), (f_2, \alpha_2), \ldots, 
\widehat{(f_i, \alpha_i)}, \ldots, 
\widehat{(f_j, \alpha_j)}, \ldots, 
(f_m, \alpha_m)\right).
\end{align*}}

Therefore, if $m$ is even, the sum (\ref{the_sum}) becomes
\begin{multline*}
(-1)^{\frac{m}{2}+1}\mathrm{d}\iota_{(u_m, v_m)} \cdots \iota_{(u_1, v_1)} (\omega_M, \omega_N)   
+ (-1)^{\frac{m}{2}} \sum_{1\leq i<j \leq m} (-1)^{i+j} \iota_{[(u_i, v_i), (u_j, v_j)]}
  \iota_{(u_1, v_1)} \\   \cdots 
\widehat{\iota_{(u_i,v_i)}}  \cdots 
  \widehat{\iota_{(u_j,v_j)}} \cdots \iota_{(u_m,v_m)}( \omega_M, \omega_N)
\end{multline*}
and, if $m$ is odd:
\begin{multline*}
(-1)^{\frac{m-1}{2}} \mathrm{d}\iota_{(u_m, v_m)} \cdots \iota_{(u_1, v_1)} (\omega_M, \omega_N)  
+ (-1)^{\frac{m-1}{2}} \sum_{1\leq i<j \leq m} (-1)^{i+j} \iota_{[(u_i, v_i), (u_j, v_j)]}
  \iota_{(u_1, v_1)} \\   \cdots 
\widehat{\iota_{(u_i,v_i)}}  \cdots 
  \widehat{\iota_{(u_j,v_j)}} \cdots \iota_{(u_m,v_m)}( \omega_M, \omega_N).
\end{multline*}
It then follows from Lemma \ref{lemma derivative} that, in either case, (\ref{the_sum}) is zero.
\end{itemize}
\end{proof}

\begin{thm} \label{theorem:Lie algebra of obs for pre-n-plectic}
Given a relative  pre-$n$-plectic structure \((F,\omega_M, \omega_N)\), there is a Lie $n$-algebra

\[
\resizebox{\textwidth}{!}{$
\begin{array}{ccccccccccccccc}
    0 & \longrightarrow & L_{n-1} & \longrightarrow & L_{n-2} & \longrightarrow & \cdots & \longrightarrow & L_{k-2} & \longrightarrow & \cdots & \longrightarrow & L_{1} & \longrightarrow & L_0 \\
     &  & \parallel &  & \parallel &  &  &  & \parallel &  &  &  & \parallel &  & \parallel \\
     &  & \Omega^{0}(F) &  & \Omega^{1}(F) &  &  &  & \Omega^{k-2}(F) &  &  &  & \Omega^{n-2}(F) &  & \widetilde{\Omega^{n-1}_{\textrm{Ham}}}(F)
\end{array}
$}
\]

denoted $\mathrm{\textbf{Ham}}_{\infty}(F,\omega_M, \omega_N)$, with 
\begin{itemize}
    \item underlying graded vector space \(L\):
\begin{equation*}
\begin{split}
L_{0} & = \widetilde{\Omega^{n-1}_{\mathrm{Ham}}}(F)=\{ (u,  v)\oplus (f,\alpha) \in \mathfrak{X}(F)\oplus \Omega_{\mathrm{Ham}}^{n-1}(F) \, \mid \, \mathrm{d}(f, \alpha)= -\iota_{(u,v)}(\omega_M, \omega_N) \} \\
L_{i} & = \Omega^{n-1-i}(F) ,\quad 0 \leq i < n-1,
\end{split}
\end{equation*}

and 
\item structure maps:
\begin{equation*}
\begin{split}
\tilde{l}_{1}(f, \alpha)&=
\begin{cases}
0\oplus \mathrm{d}(f, \alpha) & \text{if $\deg{(f, \alpha)}=1$},\\
\mathrm{d} (f, \alpha) & \text{if $\deg{(f, \alpha)} >1$,}
\end{cases}\\
\tilde{l}_{2}(x_{1},x_{2}) &= 
\begin{cases}
\left( [(u_1, v_{1}), (u_2, v_{2})], \{(f_1, \alpha_1), (f_2, \alpha_2)\} \right) &
\text{if $\deg(x_{1} \otimes x_{2})  = 0$,}\\
 0  & \text{otherwise},
\end{cases}
\end{split}
\end{equation*}
and, for $k > 2$: 
\[
l_k(x_1, \ldots, x_k) =
\begin{cases}
 0 & \text{if } \deg({x_1\otimes \cdots\otimes x_k})> 0\\
 \epsilon(k) \iota_{(u_k, v_k)}\cdots \iota_{(u_1, v_1)}  (\omega_M, \omega_N)  & \text{if } \deg({x_1\otimes \cdots\otimes x_k}) = 0
\end{cases}
\]
where \(x_i=(u_i,  v_i)\oplus (f_i,\alpha_i)\) in \(L_0\) for \(i=1,2, \ldots, k\),  and \(x_i=(f_i,\alpha_i)\) in \(L_j\), for \(j>0\), and for \(i=1,2, \ldots, k\).

\end{itemize}
\end{thm}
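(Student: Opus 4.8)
The plan is to follow the architecture of the proof of Theorem~\ref{Relative L-infinity algebra main thm} almost verbatim, the essential new ingredient being that the enlarged degree-zero space $L_0=\widetilde{\Omega^{n-1}_{\mathrm{Ham}}}(F)$ carries a \emph{designated} pair of Hamiltonian vector fields as part of its data. This is precisely what compensates for the loss of uniqueness (Lemma~\ref{Uniqueness of hamiltonian vector} is no longer available once nondegeneracy is dropped): every element $x=(u,v)\oplus(f,\alpha)$ of $L_0$ comes equipped with the vector fields $(u,v)$ to be fed into the contractions, so all structure maps remain unambiguous. I would first record that the supporting results used below---Proposition~\ref{proposition:bracket}, Proposition~\ref{brackets F-related}, Lemma~\ref{LocHam}, and Lemma~\ref{lemma derivative}---require only the closedness of $(\omega_M,\omega_N)$ together with the relative Cartan identities of Proposition~\ref{relative cartan magic formula}, and never invoke nondegeneracy; hence they hold unchanged for a relative pre-$n$-plectic structure.

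The first step is well-definedness. For $\tilde{l}_1$ I would check that on $L_1=\Omega^{n-2}(F)$ the element $0\oplus \mathrm{d}(f,\alpha)$ indeed lies in $\widetilde{\Omega^{n-1}_{\mathrm{Ham}}}(F)=L_0$: the relative form $\mathrm{d}(f,\alpha)$ is exact, hence Hamiltonian with the zero pair $(0,0)$ as its designated vector field, since $\iota_{(0,0)}(\omega_M,\omega_N)=0=-\mathrm{d}\,\mathrm{d}(f,\alpha)$; on $L_i$ with $i>1$ the map $\tilde{l}_1=\mathrm{d}$ lands in $L_{i-1}$ by the definition of the relative differential. For $\tilde{l}_2$ the only nontrivial output occurs on $L_0\otimes L_0$, where I must verify that the pair $\bigl([(u_1,v_1),(u_2,v_2)],\,\{(f_1,\alpha_1),(f_2,\alpha_2)\}_{\mathrm s}\bigr)$ lies in $L_0$: Proposition~\ref{brackets F-related} gives $[u_1,u_2]\sim_F[v_1,v_2]$, and Proposition~\ref{proposition:bracket}(2) gives $\mathrm{d}\{(f_1,\alpha_1),(f_2,\alpha_2)\}_{\mathrm s}=-\iota_{[(u_1,v_1),(u_2,v_2)]}(\omega_M,\omega_N)$, which is exactly the defining relation of $\widetilde{\Omega^{n-1}_{\mathrm{Ham}}}(F)$. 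For $k>2$ the output $\epsilon(k)\iota_{(u_k,v_k)}\cdots\iota_{(u_1,v_1)}(\omega_M,\omega_N)$ is a plain relative $(n+1-k)$-form, so it lies in $\Omega^{n+1-k}(F)=L_{k-2}$ with no Hamiltonian condition to check. The degree count $\deg l_k=k-2$ and the vanishing $l_k=0$ for $k>n+1$ follow as before, while skew-symmetry comes from the graded skew-symmetry of $(\omega_M,\omega_N)$ together with Proposition~\ref{relative cartan magic formula}(2) for the higher brackets, and from antisymmetry of the Lie bracket plus Proposition~\ref{proposition:bracket}(1) for $\tilde{l}_2$.

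The second step is the generalized Jacobi identity~\eqref{generalized jacobi identity}. For $m=1$ this is $\tilde{l}_1\circ\tilde{l}_1=0$, which reduces to $\mathrm{d}^2=0$ in the cone complex, the transition $L_2\to L_1\to L_0$ being handled by $\tilde{l}_1(\mathrm{d}(f,\alpha))=0\oplus \mathrm{d}^2(f,\alpha)=0$. For $m=2$ I would run the same four-way case analysis on $\deg(x_1)$ and $\deg(x_2)$ as in Theorem~\ref{Relative L-infinity algebra main thm}; the only point needing attention is that when both inputs lie in $L_0$ the output $\tilde{l}_2(x_1,x_2)$ again lies in $L_0$, so $\tilde{l}_1$ annihilates it for degree reasons, matching the vanishing of the remaining terms. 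For $m>2$ I would reproduce the decomposition of the sum according to the inner arity $j$: the terms with $2\le j\le m-2$ vanish by the degree-counting contradiction (an inner bracket of degree zero forces $i=2$, hence $j=m-1$), while the surviving terms $j=1,m-1,m$ are to be assembled into the identity of Lemma~\ref{lemma derivative}.

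The main obstacle is precisely this last assembly, and the genuinely new point relative to the nondegenerate case is bookkeeping rather than new analysis: I must confirm that the designated Hamiltonian vector field attached to $\tilde{l}_2(x_1,x_2)\in L_0$ is exactly $[(u_1,v_1),(u_2,v_2)]$, so that the iterated contractions produced by $l_{m-1}(\tilde{l}_2(x_1,x_2),x_3,\dots,x_m)$ match those in Lemma~\ref{lemma derivative} term by term. Since $L_0$ records this vector field by construction and Proposition~\ref{proposition:bracket} guarantees it is a valid choice, the signed sums over $\mathrm{Sh}(2,m-2)$ and over $1\le i<j\le m$ combine with $l_1\bigl(l_m(\cdots)\bigr)=\epsilon(m)\,\mathrm{d}\iota_{(u_m,v_m)}\cdots\iota_{(u_1,v_1)}(\omega_M,\omega_N)$ to cancel by Lemma~\ref{lemma derivative}, exactly as in the nondegenerate proof. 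Finally, since $L_i=0$ for $i\notin\{0,1,\dots,n-1\}$, the resulting $L_\infty$-algebra is concentrated in degrees $0$ through $n-1$ and is therefore a Lie $n$-algebra in the sense of Definition~\ref{Ln algebra}.
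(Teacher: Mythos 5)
Your proposal is correct and follows essentially the same route as the paper, whose own proof simply defers to Theorem~\ref{Relative L-infinity algebra main thm} ``while taking into account the bracket of pairs of vector fields for $l_1$ and $l_2$''; your observations that the supporting lemmas never use nondegeneracy, and that well-definedness of $\tilde{l}_2$ on $\widetilde{\Omega^{n-1}_{\mathrm{Ham}}}(F)$ follows from Propositions~\ref{brackets F-related} and~\ref{proposition:bracket}, are exactly the intended adaptation. The only point the paper flags that you leave implicit is that for $m=3$ the generalized Jacobi identity lands in $L_0$ and therefore also has a vector-field component, which must cancel separately via the ordinary Jacobi identity for the Lie bracket on $\mathfrak{X}(F)$ --- a trivial check, but worth one line since Lemma~\ref{lemma derivative} only accounts for the form component.
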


\begin{proof} 
Since the bracket of pairs of vector fields satisfies the Jacobi identity and the higher structure maps \((l_k)\) are identical to those of $L_{\infty}(M,\omega)$ given in Theorem \ref{Relative L-infinity algebra main thm}, then the proof of this theorem follows the same steps as in Theorem \ref{Relative L-infinity algebra main thm}, while taking into account the bracket of pairs of vector fields for \(l_1\) and \(l_2\).

\end{proof}

A similar result can be obtained by omitting the vector components. The following theorem provides a relative version of \cite[Theorem 4.6]{callies2016homotopy}.

\begin{thm}\label{n-algebra from relative structure}
Given a relative pre-\(n\)-plectic structure  \((F,\omega_M, \omega_N)\), there exists a Lie \(n\)-algebra 
\[
\resizebox{\textwidth}{!}{$
\begin{array}{ccccccccccccccc}
    0 & \longrightarrow & L_{n-1} & \longrightarrow & L_{n-2} & \longrightarrow & \cdots & \longrightarrow & L_{k-2} & \longrightarrow & \cdots & \longrightarrow & L_{1} & \longrightarrow & L_0 \\
     &  & \parallel &  & \parallel &  &  &  & \parallel &  &  &  & \parallel &  & \parallel \\
     &  & \Omega^{0}(F) &  & \Omega^{1}(F) &  &  &  & \Omega^{k-2}(F) &  &  &  & \Omega^{n-2}(F) &  & \Omega^{n-1}_{\textrm{Ham}}(F)
\end{array}
$}
\]
denoted again \(L_\infty(M, \omega) = (L, \{l_k\})\), with the underlying graded vector space:
\[
L_i =
\begin{cases} 
   \Omega^{n-1}_{\mathrm{Ham}}(F) & \text{if } i = 0, \\
    \Omega^{n-1-i}(F) & \text{if } 0 \leq i < n-1,
\end{cases}
\]
and structure maps
\[
l_k : L^{\otimes k} \to L \quad \text{for } 1 \leq k < \infty,
\]
defined as:
\[
l_1(\alpha) = \mathrm{d}(f,\alpha), \quad \text{if } \deg(f, \alpha)> 0,
\]
and
\[
\resizebox{\textwidth}{!}{$
l_k((f_1, \alpha_1), \ldots, (f_k, \alpha_k)) =
\begin{cases}
 0 & \text{if } \deg\left((f_1,\alpha_1)\otimes \cdots \otimes (f_k,\alpha_k)\right) > 0, \\[1ex]
 \epsilon(k) \, \iota_{(u_k, v_k)} \cdots \iota_{(u_1, v_1)} (\omega_M, \omega_N) & \text{if } \deg\left((f_1,\alpha_1)\otimes \cdots \otimes (f_k,\alpha_k)\right) = 0.
\end{cases}
$}
\]
for \(k > 1\), where \((u_i, v_{i})\) is any Hamiltonian vector field associated to \(f_i, \alpha_i) \in\Omega^{n-1}_{\mathrm{Ham}}(F)\).
\end{thm}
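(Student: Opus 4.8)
The plan is to exploit the fact that the structure maps $\{l_k\}$ defined here coincide with those of Theorem~\ref{Relative L-infinity algebra main thm}. A direct check confirms that the total Koszul sign $\epsilon(k) = -(-1)^{k(k+1)/2}$ reproduces the case-split signs used there: for instance $\epsilon(2) = 1 = (-1)^{2/2+1}$, $\epsilon(3) = -1 = (-1)^{(3-1)/2}$, $\epsilon(4) = -1 = (-1)^{4/2+1}$, and so on, so $\epsilon(k)$ equals $(-1)^{k/2+1}$ for $k$ even and $(-1)^{(k-1)/2}$ for $k$ odd. Since $l_1$ is again the relative differential, the verifications of $\deg(l_k) = k-2$, the vanishing $l_k = 0$ for $k > n+1$ (as $(\omega_M,\omega_N)$ is an $(n+1)$-form), and graded skew-symmetry carry over verbatim; these arguments rely only on the skew-symmetry of $(\omega_M,\omega_N)$ and degree bookkeeping, never on nondegeneracy.

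The one genuinely new point, and the main obstacle, is \textbf{well-definedness} of the higher brackets. In the nondegenerate setting of Theorem~\ref{Relative L-infinity algebra main thm}, each Hamiltonian form $(f_i,\alpha_i)$ determined a \emph{unique} Hamiltonian pair $(u_i,v_i)$ via Lemma~\ref{Uniqueness of hamiltonian vector}, so the formula for $l_k$ was automatically unambiguous. Dropping nondegeneracy, the equation $\iota_{(u,v)}(\omega_M,\omega_N) = -\mathrm{d}(f_i,\alpha_i)$ may admit many solutions, and I must show the iterated contraction is independent of the choices. The key observation is that if $(u_i,v_i)$ and $(u_i',v_i')$ are two Hamiltonian pairs for the same form, their difference $w_i := (u_i - u_i',\, v_i - v_i')$ satisfies $\iota_{w_i}(\omega_M,\omega_N) = (0,0)$. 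Changing one contraction at a time, linearity of the interior product together with the anti-commutativity relation $\iota_{(a,b)}\circ\iota_{(c,d)} = -\iota_{(c,d)}\circ\iota_{(a,b)}$ from Proposition~\ref{relative cartan magic formula}(2) allows me to slide the offending $\iota_{w_i}$ into the innermost slot, where it annihilates $(\omega_M,\omega_N)$; hence the two expressions agree and $l_k$ is well-defined.

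It then remains to confirm that $l_2$ lands in $L_0 = \Omega^{n-1}_{\mathrm{Ham}}(F)$ and that the generalized Jacobi identities hold. For $l_2$, Proposition~\ref{proposition:bracket} shows that $\{(f_1,\alpha_1),(f_2,\alpha_2)\}_{\mathrm{s}} = \iota_{(u_2,v_2)}\iota_{(u_1,v_1)}(\omega_M,\omega_N)$ is Hamiltonian, with associated Hamiltonian pair $[(u_1,v_1),(u_2,v_2)]$ for the chosen lifts; crucially, the proof of that proposition uses only closedness of $(\omega_M,\omega_N)$ and the defining Hamiltonian relation, so it remains valid in the pre-$n$-plectic case. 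For $k \geq 3$ the output lies in $\Omega^{n+1-k}(F)$, which carries no Hamiltonicity constraint. Finally, the Jacobi identity verification is structurally identical to that of Theorem~\ref{Relative L-infinity algebra main thm}: the cases $m=1,2$ reduce to $\mathrm{d}^2 = 0$ and the $l_1$--$l_2$ compatibility, while for $m > 2$ the decomposition into the $j$-summands collapses exactly as before, the $2 \leq j \leq m-2$ terms vanishing by the degree argument and the surviving $j = 1, m-1, m$ terms being controlled by Lemma~\ref{lemma derivative}. Since both Lemma~\ref{lemma derivative} and Proposition~\ref{proposition:bracket} hold once Hamiltonian pairs are fixed, no further nondegeneracy input is needed and the identity follows.
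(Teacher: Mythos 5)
Your proof is correct, but it takes a genuinely different route from the paper's. The paper disposes of this theorem in one line, citing the non-relative result of Callies--Fr\'egier--Rogers--Zambon together with the relative Cartan calculus of Section~\ref{section:relative cartan magic formulas}; it never addresses the issues you raise. You instead reduce to the paper's own Theorem~\ref{Relative L-infinity algebra main thm} (after correctly matching $\epsilon(k)=-(-1)^{k(k+1)/2}$ with the parity-split signs used there) and then isolate the one point where nondegeneracy actually entered, namely uniqueness of the Hamiltonian pair. Your well-definedness argument --- that two Hamiltonian pairs for the same form differ by some $w_i$ with $\iota_{w_i}(\omega_M,\omega_N)=(0,0)$, which can be slid to the innermost slot by Proposition~\ref{relative cartan magic formula}(2) and there annihilates $(\omega_M,\omega_N)$ --- is exactly the missing ingredient, and it also quietly justifies the steps in the Jacobi-identity verification where one takes the Hamiltonian pair of an exact form to be $(0,0)$ (in the pre-$n$-plectic case this is only \emph{a} choice, legitimized precisely by your independence-of-choice argument). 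Your observation that Proposition~\ref{proposition:bracket} and Lemma~\ref{lemma derivative} use only closedness and the Hamiltonian relation, never injectivity of $(\omega_M,\omega_N)^\flat$, is also accurate. What the paper's approach buys is brevity and an explicit link to the literature; what yours buys is a self-contained proof that actually closes the gap the citation leaves open, since the external theorem is stated for a single manifold and the transfer to the cone complex is exactly what needs checking.
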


\begin{proof}
    The proof follows from \cite[Theorem 4.6]{callies2016homotopy} and the relative Cartan calculus (Section \ref{section:relative cartan magic formulas}).
\end{proof}

\begin{pro}\label{morphism between two Lie algebras}
Let $(F,\omega_M, \omega_N)$ be a pre-$n$-plectic structure. 
\begin{enumerate}
\item
The chain map
\begin{equation*}
\pi : \mathrm{\textbf{Ham}}_{\infty}(F,\omega_M, \omega_N) \twoheadrightarrow \mathfrak{X}_{\mathrm{Ham}}(F) \label{pi_map}
\end{equation*}
defined to be the projection $(u, v)\oplus (f,\alpha) \mapsto (u, v)$ in degree 0, and
trivial in all lower degrees lifts to a strict morphism of $L_{\infty}$-algebras.

\item 
If $(F,\omega_M, \omega_N)$ is a relative $n$-plectic structure, then the chain map
\[
\xymatrix{
\Omega^0(F) \ar[r]^-{\mathrm{d}} \ar[d]^{\mathrm{id}} & \Omega^{1}(F) \ar[d]^{\mathrm{id}} \ar[r]^-{\mathrm{d}} & \cdots \ar[r]^-{\mathrm{d}}
& \Omega^{n-2}(F) \ar[d]^{\mathrm{id}} \ar[r]^-{\mathrm{d}} & \Omega_{\mathrm{Ham}}^{n-1}(F) \ar[d]^{\phi} \\
\Omega^0(F) \ar[r]^-{\mathrm{d}} & \Omega^{1}(F) \ar[r]^-{\mathrm{d}} & \cdots \ar[r]^-{\mathrm{d}} &
\Omega^{n-2}(F) \ar[r]^-{0 \oplus \mathrm{d}} & \widetilde{\Omega_{\mathrm{Ham}}^{n-1}(F)} 
}
\]

with $\phi(f, \alpha) = (u, v) \oplus (f, \alpha)$, where $(u, v)$ is the unique Hamiltonian vector field associated to $(f, \alpha)$, lifts to a strict $L_{\infty}$-quasi-isomorphism 
\[
L_{\infty}(F,\omega_M, \omega_N) \xrightarrow{\sim} \mathrm{\textbf{Ham}}_{\infty}(F,\omega_M, \omega_N).
\]

\end{enumerate}
\end{pro}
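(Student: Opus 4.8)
The plan is to treat both statements as assertions about \emph{strict} $L_\infty$-morphisms, for which the higher components $F_{\geq 2}$ vanish by definition; consequently it suffices to exhibit the degree-$1$ chain map and verify that it intertwines every structure map, i.e.\ $F_1\circ l_k = l'_k\circ F_1^{\otimes k}$ for all $k\geq 1$. For $k=1$ this is the chain-map condition, for $k=2$ it is compatibility with the binary bracket, and for $k\geq 3$ it is compatibility with the higher brackets; once strictness is established, part~(2) additionally requires that $F_1=\phi$ induce an isomorphism on cohomology in the sense of Definition~\ref{def:Linf-quasi-iso}.

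For part~(1) I would verify directly that $\pi$ respects the three nontrivial types of structure map. The target $\mathfrak{X}_{\mathrm{Ham}}(F)$ is concentrated in degree $0$ with zero differential and only a binary bracket $[(u_1,v_1),(u_2,v_2)]=([u_1,u_2],[v_1,v_2])$, which lands again in $\mathfrak{X}_{\mathrm{Ham}}(F)$ by Proposition~\ref{proposition:bracket}. Since $\tilde l_1(f,\alpha)=0\oplus \mathrm{d}(f,\alpha)$ has vanishing vector-field component, $\pi\circ\tilde l_1=0$, matching the trivial differential on the target; since $\tilde l_2$ in degree $0$ has vector-field component exactly $[(u_1,v_1),(u_2,v_2)]$, the projection $\pi$ carries it to $[\pi(-),\pi(-)]$; and since $\tilde l_k$ for $k\geq 3$ lands in strictly positive degree where $\pi$ is zero, all higher compatibilities hold trivially. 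Hence $\pi$ is a strict $L_\infty$-morphism.

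For part~(2) the crucial structural observation is that nondegeneracy makes $\phi$ a \emph{bijection} in degree $0$: by Lemma~\ref{Uniqueness of hamiltonian vector} each Hamiltonian form $(f,\alpha)$ has a \emph{unique} associated pair $(u,v)$, so $(f,\alpha)\mapsto (u,v)\oplus(f,\alpha)$ is inverse to the projection onto the form component, and $\phi$ is the identity in all lower degrees. I would first check $\phi$ is a chain map: on $L_{<n-1}$ it is the identity and commutes with $\mathrm{d}$ automatically, while on the top step $\phi(\mathrm{d}(f,\alpha))=(0,0)\oplus\mathrm{d}(f,\alpha)$ because an exact relative form has zero Hamiltonian vector field by nondegeneracy, which is precisely the target differential $0\oplus\mathrm{d}$. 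Next, $k=2$ compatibility follows from Proposition~\ref{proposition:bracket}: $l_2((f_1,\alpha_1),(f_2,\alpha_2))$ equals the semi-bracket, whose unique Hamiltonian vector field is $[(u_1,v_1),(u_2,v_2)]$, so $\phi(l_2(\cdots))=([u_1,u_2],[v_1,v_2])\oplus\{\cdots\}=\tilde l_2(\phi(-),\phi(-))$. For $k\geq 3$ both $l_k$ and $\tilde l_k$ land in a positive-degree space $\Omega^{n-1-(k-2)}(F)$ on which $\phi=\mathrm{id}$, and they are given by the same iterated contraction of $(\omega_M,\omega_N)$ once one checks that the sign $\epsilon(k)=-(-1)^{k(k+1)/2}$ of Theorem~\ref{theorem:Lie algebra of obs for pre-n-plectic} coincides with the even/odd prescription $(-1)^{k/2+1}$, $(-1)^{(k-1)/2}$ of Theorem~\ref{Relative L-infinity algebra main thm}; a short parity computation confirms this. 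Thus $\phi$ is a strict $L_\infty$-morphism, and because it is even a degreewise bijection it is an isomorphism of the underlying complexes, hence in particular a quasi-isomorphism.

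The main obstacle is not any single estimate but the careful matching of conventions: one must confirm that the binary bracket $\tilde l_2$ on $\widetilde{\Omega^{n-1}_{\mathrm{Ham}}}(F)$ really does reproduce, under $\phi$, both the vector-field datum (via Proposition~\ref{proposition:bracket}) and the form datum (the semi-bracket), and that the higher-bracket signs in the two theorems genuinely agree. All of this rests on the uniqueness of Hamiltonian vector fields, which is exactly where nondegeneracy of $(\omega_M,\omega_N)$ enters and is the reason part~(2) needs the full relative $n$-plectic hypothesis rather than only the pre-$n$-plectic one.
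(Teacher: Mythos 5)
Your proposal is correct and follows essentially the same route as the paper: both parts are verified by checking the strict intertwining conditions $F_1\circ l_k = l'_k\circ F_1^{\otimes k}$ componentwise, with Proposition~\ref{proposition:bracket} handling $k=2$ and degree reasons handling $k\geq 3$; your parity check that $\epsilon(k)=-(-1)^{k(k+1)/2}$ agrees with the even/odd prescription of Theorem~\ref{Relative L-infinity algebra main thm} is a worthwhile detail the paper leaves implicit. The one place you diverge is the final quasi-isomorphism step: the paper computes $H_0$ of both complexes and checks that $\phi_1$ induces an isomorphism between the two quotients, whereas you observe that nondegeneracy makes $\phi$ a degreewise bijection (identity in positive degrees, bijective in degree $0$ by Lemma~\ref{Uniqueness of hamiltonian vector}) and hence an isomorphism of the underlying complexes; your version is cleaner and gives the stronger conclusion directly, provided one reads $\widetilde{\Omega^{n-1}_{\mathrm{Ham}}}(F)$ as consisting of $F$-related pairs, which is what the uniqueness lemma requires.
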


\begin{proof} \qquad {}

\begin{enumerate}
        
    \item We have
    {\footnotesize\begin{align*}
        \pi \tilde{l}_{2} \left(  (u_1, v_{1})\oplus (f_1, \alpha_1), (u_2, v_{2})\oplus (f_2, \alpha_2) \right) &= \pi \left( [(u_1, v_{1}), (u_2, v_{2})], \{(f_1, \alpha_1), (f_2, \alpha_2)\} \right) \\
        &=\left[(u_1, v_{1}), (u_2, v_{2})\right]\\
         &=  \left[\pi\left((u_1, v_{1})\oplus (f_1, \alpha_1)\right), \pi\left( (u_2, v_{2})\oplus (f_2, \alpha_2)\right)\right].
    \end{align*}}

This proves that \(\pi : \mathrm{\textbf{Ham}}_{\infty}(F,\omega_M, \omega_N) \twoheadrightarrow \mathfrak{X}_{\mathrm{Ham}}(F)\)  lifts to a strict morphism of $L_{\infty}$-algebras as desired.

\item 
    We have

    \begin{align*}
        \tilde{l}_{2}\phi^{\otimes 2} \left(   (f_1, \alpha_1), (f_2, \alpha_2) \right) &=  \tilde{l}_{2} \left( \phi\left( f_1, \alpha_1\right), \phi\left( f_2, \alpha_2\right) \right)\\
         &=  \tilde{l}_{2} \left(  (u_1, v_{1})\oplus (f_1, \alpha_1),  (u_2, v_{2})\oplus (f_2, \alpha_2)\right) \\
          &=\left( [(u_1, v_{1}), (u_2, v_{2})], \{(f_1, \alpha_1), (f_2, \alpha_2)\} \right)\\
          &= \left([u_1, u_2]\oplus \{f_1, f_2\}, [v_1, v_2]\oplus \{\alpha_1,\alpha_2\}\right).
    \end{align*}

    and 
    \begin{align*}
        \phi l_{2} \left( (u_1, v_{1})\oplus (f_1, \alpha_1), (u_2, v_{2})\oplus (f_2, \alpha_2) \right) &= \phi \left(\iota_{(u_2, v_2)} \iota_{(u_1, v_1)}  (\omega_M, \omega_N)\right)       \\&=\phi \left( \{(f_1, \alpha_1), (f_2, \alpha_2)\} \right) \\
        &=\phi \left( \{f_1, f_2\}, \{\alpha_1,\alpha_2\} \right) \\
         &=  \left([u_1, u_2]\oplus \{f_1, f_2\}, [v_1, v_2]\oplus \{\alpha_1,\alpha_2\}\right)\\
         &=\tilde{l}_{2}\phi^{\otimes 2} \left(   (f_1, \alpha_1), (f_2, \alpha_2) \right).
    \end{align*}

    Hence,
    \begin{align*}
        \tilde{l}_{2} \phi^{\otimes 2} &= \phi l_{2}.
    \end{align*}
    
    Moreover, if \(k>2\) and \(\deg{(f_1,\alpha_1)\otimes \cdots\otimes (f_k,\alpha_k)} = 0\),
     \begin{align*}
        \tilde{l}_{k}\phi^{\otimes k} \left(   (f_1, \alpha_1),\ldots, (f_k, \alpha_k) \right) &=  \tilde{l}_{k} \left( \phi\left( f_1, \alpha_1\right),\ldots, \phi\left( f_k, \alpha_k\right) \right)\\
         &=  \tilde{l}_{k} \left(  (u_1, v_{1})\oplus (f_1, \alpha_1),\ldots,  (u_k, v_k)\oplus (f_k, \alpha_k)\right) \\
         &=\varsigma(k) \iota_{(u_k, v_k)}\cdots \iota_{(u_1, v_1)}  (\omega_M, \omega_N)\\
          &=l_k((f_1, \alpha_1), \ldots, (f_k, \alpha_k)).
    \end{align*}

    If  \(k>2\) and \(\deg{(f_1,\alpha_1)\otimes \cdots\otimes (f_k,\alpha_k)}>  0\), then \(l_k((f_1, \alpha_1), \ldots, (f_k, \alpha_k))=0\) and 
     \begin{align*}
        \tilde{l}_{k}\phi^{\otimes k} \left(   (f_1, \alpha_1),\ldots, (f_k, \alpha_k) \right) &=  \tilde{l}_{k} \left( \phi\left( f_1, \alpha_1\right),\ldots, \phi\left( f_k, \alpha_k\right) \right)\\
         &=  \tilde{l}_{k} \left(  (u_1, v_{1})\oplus (f_1, \alpha_1),\ldots,  (u_k, v_k)\oplus (f_k, \alpha_k)\right) \\
         &=0=l_k((f_1, \alpha_1), \ldots, (f_k, \alpha_k)).
    \end{align*}
    
    It follows that
    
\begin{align*}
    \tilde{l}_{k} \phi^{\otimes k} &= l_{k} \quad \forall k > 2.
\end{align*}

Hence, the cochain map lifts to a strict $L_{\infty}$-morphism.

\emph{Quasi-isomorphism on cohomology.}
In degrees $\ge 1$, $\phi_1=\mathrm{id}$, so it induces the identity on cohomology.
In degree~$0$ one computes
\[
H_0\big(L_\infty(F,\omega_M,\omega_N)\big)
=\Omega^{n-1}_{\mathrm{Ham}}(F)\big/\mathrm{im}(d),\quad
H_0\big(\mathrm{Ham}_\infty(F,\omega_M,\omega_N)\big)
=\widetilde{\Omega^{n-1}_{\mathrm{Ham}}(F)}\big/\mathrm{im}(0\oplus d).
\]
The map $\phi_1$ induces an isomorphism between these quotients, since
$(u,v)$ is uniquely determined by $(f,\alpha)$ (relative $n$-plectic nondegeneracy),
and the boundaries match via $(0\oplus d)$ on the target corresponding to $d$
on the source. Therefore $H_0(\phi_1)$ is an isomorphism.

\smallskip
Thus $\phi_\bullet$ is a strict $L_\infty$ quasi-isomorphism, completing the proof.
\end{enumerate}
\end{proof}

\section{The Differential Graded Leibniz algebra associated to a relative  \nplectic structure}
Using a similar construction as the previous, replacing the semi-bracket by the hemi-bracket leads to the notion of differential graded Leibniz algebras (see \cite[Proposition 6.3]{Rogers_2011}).

\begin{defn}\cite[Defintion 6.2]{Rogers_2011} \label{dg_leibniz algebra}
A \emph{differential graded Leibniz algebra}
$(L,\delta, \bra \cdot, \cdot\ket)$ is a graded vector
space $L$ equipped with a degree -1 linear map $\delta : L \to
L$ and a degree 0 bilinear map $\bra \cdot, \cdot\ket : L \otimes L
\to L$ such that the following identities hold:
\begin{gather}
\delta \circ \delta =0 \label{Leibniz derivative}\\
\delta \bra x, y\ket = \bra \delta x, y\ket +
(-1)^{|x|}  \bra x, \delta y\ket  \label{Leibniz bracket derivative 1}\\
 \bra x,   \bra y, z\ket\ket= \bra \bra x, y\ket, z\ket + (-1)^{|x| |y|}
\bra y,   \bra x, z\ket\ket\label{Leibniz Jacobi identity for hemi},
\end{gather}
for all $x,y,z \in L$.

\end{defn}

The following proposition presents the relative version of the construction given in \cite[Proposition 6.3]{Rogers_2011}:

\begin{pro}\label{proposition: Leibniz dg algebra}
Given a relative $n$-plectic structure  $(F,\omega_M, \omega_N)$, there is a differential
graded Leibniz algebra $\mathrm{Leib}(F,\omega_M, \omega_N)=(L,\delta, \bra \cdot, \cdot\ket)$ with 
\begin{itemize}
    \item 
underlying
graded vector space \(L_0=\Omega_{\mathrm{Ham}}^{n-1}(F)\) and \(L_i=\Omega^{n-1-i}(F)\) for  \(0 < i \leq n-1\),
 \item 
 and maps $ \delta : L \to L$, $\bra \cdot, \cdot\ket: L
\otimes L \to L$ defined    by
\[
\delta(f, \alpha)=\mathrm{d}(f, \alpha)=(F^*\alpha+\mathrm{d}f, -\mathrm{d}\alpha),
\]
if $\deg{(f, \alpha)} > 0$ and
\[
\bra (f, \alpha), (g, \beta)\ket=
\begin{cases}
\mathcal{L}_{(u_1, v_1)}(g, \beta) & \text{if $\deg{(f, \alpha)\otimes (g, \beta)}=0$,}\\
(0, 0) & \text{if $\deg{(f, \alpha)\otimes (g, \beta)} >0$,}
\end{cases}
\]
where $(u_1, v_1)$ is the Hamiltonian vector field associated to $(f, \alpha)$.
\end{itemize}
\end{pro}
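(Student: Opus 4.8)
The plan is to verify the three defining identities of a differential graded Leibniz algebra from Definition \ref{dg_leibniz algebra} directly, exploiting the relative Cartan calculus established in Proposition \ref{relative cartan magic formula} and the properties of the hemi-bracket proved earlier. The identity \eqref{Leibniz derivative}, namely $\delta \circ \delta = 0$, is immediate: since $\delta$ is the relative de Rham differential on forms of positive degree, this is exactly the statement $\mathrm{d}^2 = 0$ already recorded in Lemma \ref{lemma:d^2=0}. So the substance lies in the derivation property \eqref{Leibniz bracket derivative 1} and the Leibniz--Jacobi identity \eqref{Leibniz Jacobi identity for hemi}.

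For the derivation property $\delta\bra x,y\ket = \bra \delta x, y\ket + (-1)^{|x|}\bra x,\delta y\ket$, I would split into cases on the degrees of $x=(f,\alpha)$ and $y=(g,\beta)$, since the bracket is nonzero only when $\deg x = 0$. When $\deg x > 0$ both sides vanish by the degree conventions (the bracket kills positive-degree first arguments, and $\delta x$ then has degree one lower but $\bra\delta x,y\ket$ still requires $\deg(\delta x)=0$, i.e. $\deg x = 1$, which must be checked). The essential case is $\deg x = 0$: here $\bra x,y\ket = \mathcal{L}_{(u_1,v_1)}(g,\beta)$, and I would compute $\delta\mathcal{L}_{(u_1,v_1)}(g,\beta)$ using part (3) of Proposition \ref{relative cartan magic formula}, which gives $\mathrm{d}\circ\mathcal{L}_{(u,v)} = \mathcal{L}_{(u,v)}\circ\mathrm{d}$ for $F$-related pairs. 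Since $\delta x = 0$ when $\deg x = 0$, the right-hand side reduces to $(-1)^{0}\bra x,\delta y\ket = \mathcal{L}_{(u_1,v_1)}\mathrm{d}(g,\beta)$, and the two sides match precisely by that commutation relation.

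The Leibniz--Jacobi identity \eqref{Leibniz Jacobi identity for hemi} is where the main work lies, and it is the step I expect to be the principal obstacle. The identity $\bra x,\bra y,z\ket\ket = \bra\bra x,y\ket,z\ket + (-1)^{|x||y|}\bra y,\bra x,z\ket\ket$ should, in the degree-zero case, reduce to the standard commutator relation for Lie derivatives, part (1) of Proposition \ref{relative cartan magic formula}, namely $\mathcal{L}_{(u_1,v_1)}\mathcal{L}_{(u_2,v_2)} - \mathcal{L}_{(u_2,v_2)}\mathcal{L}_{(u_1,v_1)} = \mathcal{L}_{[(u_1,v_1),(u_2,v_2)]}$. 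The delicate point is that this requires knowing that the Hamiltonian vector field associated to the hemi-bracket $\bra x,y\ket = \mathcal{L}_{(u_1,v_1)}(g,\beta)$ is exactly the Lie bracket $[(u_1,v_1),(u_2,v_2)]$; this is supplied by Corollary \ref{derivative of hemi bracket}. I would therefore first invoke that corollary to identify the Hamiltonian vector field of $\bra x,y\ket$, then expand all three terms as iterated Lie derivatives and reduce to the bracket-of-Lie-derivatives identity, handling the sign $(-1)^{|x||y|}$ (which is trivial when both arguments have degree zero). The remaining mixed-degree cases (where some argument has positive degree) must be checked separately; there the bracket vanishes on the offending slot, and I would confirm that all terms degenerate to $(0,0)$ consistently, using that $\mathcal{L}_{(u,v)}$ preserves degree and that positive-degree forms have trivial associated Hamiltonian vector fields. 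The bookkeeping across these degree cases, rather than any single computation, is the genuine difficulty.
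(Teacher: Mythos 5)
Your proposal is correct and follows essentially the same route as the paper's proof: a degree-by-degree case analysis verifying the three axioms, with $\mathrm{d}^2=0$ handling the first, the commutation $\mathrm{d}\circ\mathcal{L}_{(u,v)}=\mathcal{L}_{(u,v)}\circ\mathrm{d}$ handling the derivation property in the only nontrivial case, and the Lie-derivative commutator identity together with Corollary \ref{derivative of hemi bracket} (identifying the Hamiltonian vector field of the hemi-bracket as the bracket of the vector fields) handling the Leibniz--Jacobi identity. You correctly flag the two genuinely delicate points the paper also addresses: the $\deg x=1$ subcase, where $\delta x$ has degree $0$ but is exact so its Hamiltonian vector field is $(0,0)$, and the mixed-degree Jacobi cases, where the fact that $\mathcal{L}_{(u,v)}$ preserves degree forces the inner bracket into a positive-degree slot where the outer bracket vanishes by definition.
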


\begin{proof}

Since \(\delta=\mathrm{d}\) is the relative differential, then it is a linear map of degree \(-1\). The bilinearity of \(\bra \cdot, \cdot\ket\) follows from the bilinearity of the hemi-bracket and the zero map. 

\paragraph{Next, let us show that the bilinear map \(\bra \cdot, \cdot\ket\) is of degree 0. }
\begin{itemize}

    \item If $\deg (f, \alpha)\otimes (g, \beta)=0$, then \(\bra (f, \alpha), (g, \beta)\ket=\mathcal{L}_{(u_1, v_1)}(g, \beta)\). Since the Lie
derivative is a degree zero derivation, it follows that
    \[\deg \left( \bra (f, \alpha)\otimes (g, \beta)\ket\right)=\deg \left(\mathcal{L}_{(u_1, v_1)}(g, \beta)\right)=\deg (g, \beta)=0. \]
    But
    \[\deg \left( (f, \alpha)\otimes (g, \beta)\right)= \deg   (f, \alpha)+\deg  (g, \beta)=0+\deg  (g, \beta)=\deg  (g, \beta).\]
    Hence 
    \[\deg \left( \bra (f, \alpha)\otimes (g, \beta)\ket\right)=\deg \left(  (f, \alpha)\otimes (g, \beta)\right). \]

    \item If $\deg (f, \alpha)>0$, then \((f, \alpha)\otimes (g, \beta)\in L_{m}=\Omega^{n-1-m}(F)\), where \(m= \deg{\left((f, \alpha)\otimes (g, \beta)\right)} \). But  \(\bra (f, \alpha), (g, \beta)\ket=(0, 0)\) is the zero \((n-1-m)\)-form which is in \(L_{m-1}=\Omega^{n-m}(F)\). Hence 
    \[\deg \left( \bra (f, \alpha)\otimes (g, \beta)\ket\right)=m-1=-1+\deg \left((f, \alpha)\otimes (g, \beta)\right). \]
    
    This proves that the bilinear map $\bra \cdot, \cdot\ket: L
\otimes L \to L$  is of degree 0 as desired.

\end{itemize}

\paragraph{Let us show that Equation \eqref{Leibniz derivative}, Equation \eqref{Leibniz bracket derivative 1}, and Equation \eqref{Leibniz Jacobi identity for hemi} hold.} 

\vspace{1cm}
For  Equation \eqref{Leibniz derivative}, it is clear that \(\delta\circ\delta =\mathrm{d}^2 =(0, 0)\) by Lemma \ref{lemma:d^2=0}.  

Let us show that Equation  \eqref{Leibniz bracket derivative 1} holds.
\begin{itemize}
    \item 

    If \( \deg(f, \alpha) > 1 \), then by definition, \(\bra (f, \alpha), (g, \beta) \ket = (0, 0)\), which implies that:
\[
\delta \bra (f, \alpha), (g, \beta) \ket = \delta (0, 0) = (0, 0).
\]
Additionally, since \(\delta\) is a linear map of degree \(-1\), it follows that \(\deg(\delta(f, \alpha)) > 0\). Therefore, by definition, we also have:
\[
\delta \bra (f, \alpha), \delta(g, \beta) \ket = (0, 0).
\]

Now, since \(\deg(g, \beta) = 0\), we have \(\delta(g, \beta) = (0, 0)\) by definition. This gives:
\[
\bra (f, \alpha), \delta(g, \beta) \ket = (0, 0).
\]
Thus, Equation \eqref{Leibniz bracket derivative 1} holds trivially, as all terms involved in the equation are equal to \((0, 0)\).

\item 
If $\deg{ (f, \alpha)} =1$, then
$\bra (f, \alpha), (g, \beta)\ket=\bra (f, \alpha), \delta(g, \beta)\ket=0$ for all $\delta(g, \beta) \in L$
by definition. Since \(\delta\) is a derivation of degree -1, then \(\deg(\delta (f, \alpha))=0\). Moreover, 
\[\mathrm{d}\delta (f, \alpha)=\mathrm{d}^2 (f, \alpha)=(0, 0)\]
so that the Hamiltonian vector field associated to $\delta (f, \alpha)$ is the zero vector. It follows that
\[\bra \delta (f, \alpha), (g, \beta)\ket=\mathcal{L}_{(0,0)}(g, \beta)=(0,0).\]
In this case, Equation \eqref{Leibniz bracket derivative 1} holds trivially, as all the terms involved are equal to \((0, 0)\).

 \item If $\deg{(f, \alpha)} = 0$ and $\deg{(g, \beta)} = 0$, then $\deg{\bra (f, \alpha), (g, \beta) \ket} = 0$ since \(\bra \cdot, \cdot\ket\) is of degree 0. It follows by definition of \(\delta\) that
\[
\delta \bra (f, \alpha), (g, \beta) \ket = (0, 0).
\]
Since $\delta$ is a derivation of degree $-1$, we have $\deg(\delta (f, \alpha)) = \deg(\delta (g, \beta)) = -1$, implying that $\delta(f, \alpha) = (0, 0)$ and $\delta(g, \beta) = (0, 0)$ by definition.

It follows that all the terms in Equation \eqref{Leibniz bracket derivative 1} vanish by definition, so that Equation \eqref{Leibniz bracket derivative 1} holds trivially.

\item If 
$\deg{(f, \alpha)}=0$ and $\deg{(g, \beta)} > 0$, then 
\begin{equation}\label{equation dervmm}
\delta \bra (f, \alpha), (g, \beta)\ket= \mathrm{d} \mathcal{L}_{(u_1, v_1)} (g, \beta) =
\mathcal{L}_{(u_1, v_1)} \mathrm{d}(g, \beta) =\bra (f, \alpha), \delta(g, \beta)\ket.
\end{equation}
Furthermore, since \(\deg{(f, \alpha)}=0\), then \(\delta (f, \alpha)=(0,0)\) by definition. It follows that 
\[ \bra \delta (f, \alpha), (g, \beta)\ket= \bra (0, 0), (g, \beta)\ket=(0,0).\]
\end{itemize}
 Equation \eqref{Leibniz bracket derivative 1} reduces to 
 \[\delta \bra (f, \alpha), (g, \beta)\ket= 
\mathcal{L}_{(u_1, v_1)} \mathrm{d}(g, \beta) =\bra (f, \alpha), \delta(g, \beta)\ket\]
which follows by \eqref{equation dervmm}. Hence, Equation \eqref{Leibniz bracket derivative 1} holds.

It remains only to show that Equation \eqref{Leibniz Jacobi identity for hemi} holds. For this, let $(f, \alpha), (g, \beta), (k, \gamma) \in L$. Let us show that
\[\bra (f, \alpha), \bra (g, \beta), (k, \gamma)\ket\ket=\bra \bra (f, \alpha), (g, \beta)\ket, (k, \gamma)\ket +
(-1)^{\deg{(f, \alpha)}\deg{(g, \beta)}}\bra (g, \beta), \bra (f, \alpha), (k, \gamma) \ket\ket.\]
\begin{itemize}
    \item If \(\deg{(f, \alpha)}>0\), then 
    \[\bra (f, \alpha), \bra (g, \beta), (k, \gamma)\ket\ket=\bra \bra (f, \alpha), (g, \beta)\ket, (k, \gamma)\ket =
\bra (g, \beta), \bra (f, \alpha), (k, \gamma) \ket\ket=(0,0),\]
so that Equation \eqref{Leibniz Jacobi identity for hemi} holds trivially.

 \item If \(\deg{(f, \alpha)}=0\) and \(\deg{(g, \beta)}>0\), then 
    \[\bra (f, \alpha), \bra (g, \beta), (k, \gamma)\ket\ket=
\bra (g, \beta), \bra (f, \alpha), (k, \gamma) \ket\ket=(0,0).\]
Also, \(\bra (f, \alpha), (g, \beta)\ket=\mathcal{L}_{(u_1, v_1)}(f, \beta)\) so that  \(\deg(\bra (f, \alpha), (g, \beta)\ket)=\deg (g, \beta) >0\) since the Lie derivative is a degree 0 derivation. Since  \(\deg(\bra (f, \alpha), (g, \beta)\ket) >0\), it follows, by definition, that 
\[\bra \bra (f, \alpha), (g, \beta)\ket, (k, \gamma)\ket =(0,0).\]
Hence Equation \eqref{Leibniz Jacobi identity for hemi} holds trivially as all the terms involved are zero.

\item 
If \(\deg{(f, \alpha)}=0\) and \(\deg{(g, \beta)}=0\), then 

\begin{align*}
\bra (f, \alpha), \bra (g, \beta), (k, \gamma)\ket\ket &= \mathcal{L}_{(u_1, v_1)}\bra (g, \beta), (k, \gamma)\ket\\
&=\mathcal{L}_{(u_1, v_1)}\mathcal{L}_{(u_2, v_2)} (k, \gamma)
\end{align*}
Now, using the identity
\[\mathcal{L}_{[(u_1, v_1), (u_2, v_2)]}=\mathcal{L}_{(u_1, v_1)}\mathcal{L}_{(u_2, v_2)}-\mathcal{L}_{(u_2, v_2)}\mathcal{L}_{(u_1, v_1)},\]
it follows that
\[\mathcal{L}_{(u_1, v_1)}\mathcal{L}_{(u_2, v_2)}(k, \gamma)=\mathcal{L}_{[(u_1, v_1), (u_2, v_2)]}(k, \gamma)+\mathcal{L}_{(u_2, v_2)}\mathcal{L}_{(u_1, v_1)}(k, \gamma),\]

so that
\begin{align*}
\bra (f, \alpha), \bra (g, \beta), (k, \gamma)\ket\ket 
&=\mathcal{L}_{(u_1, v_1)}\mathcal{L}_{(u_2, v_2)} (k, \gamma)\\
&=\mathcal{L}_{[(u_1, v_1), (u_2, v_2)]}(k, \gamma)+\mathcal{L}_{(u_2, v_2)}\mathcal{L}_{(u_1, v_1)}(k, \gamma)\\
&=\bra \bra (f, \alpha),  (g, \beta)\ket, (k, \gamma)\ket+\bra (g, \beta), \bra (f, \alpha), (k, \gamma)\ket\ket.
\end{align*}

This completes the proof.

\end{itemize}

\end{proof}

In this chapter, we have constructed \(L_\infty\)-algebra structures associated with relative observables (Theorem \ref{Relative L-infinity algebra main thm} and Theorem \ref{theorem:Lie algebra of obs for pre-n-plectic}), demonstrating that relative observables align naturally with the framework of observables in \(n\)-plectic (or multisymplectic) geometry. Our results extend the well-known constructions from multisymplectic geometry (\cite{rogers2011higher}, \cite{callies2016homotopy}) to the relative setting. Moreover, we have shown that the differential graded Leibniz algebra structure naturally emerges from relative \(n\)-plectic structures. 

As an application, we will explore in the next chapter how the relative \(3\)-form \((\omega, \eta)\) arising from quasi-Hamiltonian \(G\)-spaces induces a Lie \(2\)-algebra. This will lead us to the existence of a homotopy moment map. This will allow to view quasi-Hamiltonian \(G\)-spaces as relative versions of \(2\)-plectic manifolds.

\chapter{Homotopy moment for the Lie \Twoalgebra arising from a quasi-Hamiltonian \Gspace}
\label{chapter 5}

In this chapter, we investigate the Lie \(2\)-algebra structure associated with a quasi-Hamiltonian \(G\)-space and construct the corresponding homotopy moment map.

Let \((M, \omega, \Phi)\) be a quasi-Hamiltonian \(G\)-space, where \(\omega \in \Omega^2(M)\) is a \(G\)-invariant 2-form and \(\Phi: M \to G\) is a group-valued moment map. The Cartan 3-form \(\eta \in \Omega^3(G)\) on the Lie group \(G\) combines with \(\omega\) to form a pair \((\omega, \eta) \in \Omega^2(M) \oplus \Omega^3(G)\). As established in Proposition~\ref{closed} and Theorem~\ref{Non-degeneracy general}, this pair defines a \emph{closed} and \emph{non-degenerate relative 3-form} with respect to the map \(\Phi\).

By Theorem~\ref{Relative L-infinity algebra main thm}, such a relative 3-form endows the space of relative observables with the structure of a Lie \(2\)-algebra. Furthermore, by definition of a quasi-Hamiltonian \(G\)-space, the Lie group \(G\) acts on the manifold \(M\), and this action induces a naturally associated infinitesimal action at the level of vector fields:
\[
(u, v): \mathfrak{g} \to \mathfrak{X}(\Phi),
\]
where \(\mathfrak{X}(\Phi)\) denotes the space of pairs \((u, v)\), with \(u \in \mathfrak{X}(M)\), \(v \in \mathfrak{X}(G)\), and such that \(u\) and \(v\) are \(\Phi\)-related.

The objective of this chapter is to prove that this infinitesimal action lifts to a  \emph{homotopy moment map} in the sense of \(L_\infty\)-algebra theory.

\section{Hemi-strict and Semi-Strict Lie \Twoalgebras corresponding to a  quasi-Hamiltonian \Gspace}

In this section, we explicitly describe this Lie \(2\)-algebra structure using the relative versions of the \emph{hemi-bracket} and \emph{semi-bracket}. These brackets yield two closely related Lie \(2\)-algebra structures—respectively \emph{hemi-strict} and \emph{semi-strict}—mirroring the constructions presented in Section~\ref{Lie 2-lagebra of a 2-plecticmanifold} for \(2\)-plectic manifolds. Our goal is to adapt those results to the relative setting defined by the moment map \(\Phi: M \to G\).

The following theorem gives a semi-strict Lie 2-algebra paralleling that arising from a 2-plectic manifold given in Theorem \ref{semistrict}.

\begin{thm} \label{theorem:semi strict Lie2}
Let $(M, \Phi, \omega)$ be a quasi-Hamiltonian $G$-space.  There exists a Lie $2$-algebra 
\[\begin{array}{ccccccccccccccc}
     \cdots 0& \longrightarrow & L_{1} & \longrightarrow & L_0\\
      &  & \parallel &  & \parallel \\
        &  & \Omega^{0}(\Phi) &  & \Omega^{1}_{\textrm{Ham}}(\Phi)\\ 
\end{array}\]
denoted $L_\infty(M,\Phi,\omega) = (L, [\cdot,\cdot], J)$, 
with the following structures:
\begin{itemize}
\item the underlying vector spaces are \(L_0 =\Omega^{1}( \Phi)\)

and 
 $L_1 = \Omega^{0}( \Phi) $;
\item the differential 
\begin{align*}
\mathrm{d} :   L_1 &\to L_0 \\
f &\mapsto ( \Phi^* f,  -\mathrm{d}f)
\end{align*}
is the relative differential;
\item the alternator is 
\begin{align*}
S: L_0\times L_0 & \to   L_1
\end{align*}
is identically zero:
  \[
  S((f_1, \beta_1), (f_2, \beta_2)) = 0;
  \]
\item the bracket $\left\{\cdot,\cdot\right\}_h: L_i \otimes L_j \to L_{i+j}$ for $i,j = 0,1$ and $i+j\leq 1$ is the hemi-bracket

\item the Jacobiator is the trilinear map, 
\begin{align*}
J : L_0 \otimes L_0 \otimes L_0 &\to   L_1 
\end{align*}
defined by
\[J\left(( f_1, \beta_1), \, ( f_2, \beta_2), \, ( f_3,  \beta_3)\right)=-\iota_{(u_1, v_1)} \iota_{ (u_2, v_2)}\iota_{(u_3, v_3)}(\omega, \eta);\]
where $(u_i, v_{_i})$ denotes the unique Hamiltonian vector field corresponding $(f_i, \beta_i)$ for $i=1,2,3$.
\end{itemize}
\end{thm}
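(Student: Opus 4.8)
The plan is to obtain this statement as the $n=2$ specialization of Theorem~\ref{Relative L-infinity algebra main thm}, applied to the smooth map $\Phi\colon M\to G$ and the relative $3$-form $(\omega,\eta)\in\Omega^{3}(\Phi)$. The two hypotheses needed to invoke that theorem are already in hand: Proposition~\ref{closed} shows $(\omega,\eta)$ is closed, and Theorem~\ref{Non-degeneracy general} shows it is nondegenerate, so $(\Phi,\omega,\eta)$ is a relative $2$-plectic structure. Theorem~\ref{Relative L-infinity algebra main thm} then produces a Lie $2$-algebra whose underlying $2$-term complex is exactly $L_1=\Omega^{0}(\Phi)\xrightarrow{\,l_1\,}L_0=\Omega^{1}_{\mathrm{Ham}}(\Phi)$, matching the claimed grading; moreover $l_k=0$ for $k>n+1=3$, so only $l_1,l_2,l_3$ survive and the output is genuinely a $2$-term structure.

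Next I would match the three surviving multibrackets with the differential, bracket, and Jacobiator. For $l_1$, evaluating the relative differential on $(0,f)\in\Omega^{0}(\Phi)$ gives $\mathrm{d}(0,f)=(\Phi^{*}f,-\mathrm{d}f)$, which is precisely the stated differential $\mathrm{d}\colon L_1\to L_0$. For $l_2$ on degree-zero inputs, the formula of Theorem~\ref{Relative L-infinity algebra main thm} (with $k=2$ even, sign $(-1)^{k/2+1}=1$) reduces to $\iota_{(u_2,v_2)}\iota_{(u_1,v_1)}(\omega,\eta)$, which is the relative semi-bracket of Definition~\ref{bracket_def}; by Proposition~\ref{proposition:bracket} this bracket is skew-symmetric and again Hamiltonian-valued. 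For $l_3$ (here $k=3$ odd, sign $(-1)^{(k-1)/2}=-1$) one gets a relative $0$-form, i.e.\ an element of $\Omega^{0}(\Phi)=L_1$, which after reordering the three pairwise-anticommuting contractions coincides, up to the sign dictated by the conventions, with $-\iota_{(u_1,v_1)}\iota_{(u_2,v_2)}\iota_{(u_3,v_3)}(\omega,\eta)$, the claimed Jacobiator $J$.

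Finally I would pass from the $2$-term $L_\infty$ description to the Lie $2$-algebra data $([\cdot,\cdot],S,J)$ of Definition~\ref{def-Lie-alegra3} using the equivalence of \cite[Proposition~8]{baez2004higher}. Under this equivalence the bracket is $l_2$, the Jacobiator is $l_3$, and the alternator $S$ is the chain homotopy witnessing the failure of the bracket to be strictly skew-symmetric. Since the semi-bracket is \emph{strictly} skew-symmetric (Proposition~\ref{proposition:bracket}), this homotopy may be taken to be zero, so $S=0$ and the resulting Lie $2$-algebra is semistrict in the sense of Definition~\ref{definition:hemistrict-al}. The coherence identities of Definition~\ref{def-Lie-alegra3} then follow from the generalized Jacobi identities (Equation~\ref{generalized jacobi identity}) already verified for $L_\infty(\Phi,\omega,\eta)$ in the proof of Theorem~\ref{Relative L-infinity algebra main thm}; in the $2$-plectic case the essential one expresses Jacobi-up-to-exactness, which is exactly Proposition~\ref{Jacobi-identity-up-toexact-form}.

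I expect the main obstacle to be purely the sign and convention bookkeeping in this last step: reconciling the graded-skew multibracket $l_3$ of the $L_\infty$ formalism with the antisymmetric Jacobiator $J$ of Definition~\ref{def-Lie-alegra3}, getting the contraction order and overall sign to agree with $-\iota_{(u_1,v_1)}\iota_{(u_2,v_2)}\iota_{(u_3,v_3)}(\omega,\eta)$, and confirming that each coherence equation reduces to an identity already established. No new geometric input is required beyond closedness, nondegeneracy, skew-symmetry, and the Jacobi-up-to-exact property, all of which are available from the preceding results.
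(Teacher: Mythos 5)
Your proposal is correct and follows exactly the paper's route: the paper's entire proof is the one-line observation that the result is the $n=2$ case of Theorem~\ref{Relative L-infinity algebra main thm}, with closedness and nondegeneracy supplied by Proposition~\ref{closed} and Theorem~\ref{Non-degeneracy general}, so your more detailed matching of $l_1,l_2,l_3$ with the differential, bracket, and Jacobiator only fills in what the paper leaves implicit. Your identification of $l_2$ with the \emph{semi}-bracket is in fact the right reading: the statement's reference to the hemi-bracket appears to be a slip, since a semistrict structure with trivial alternator and nontrivial Jacobiator requires the strictly skew-symmetric semi-bracket, exactly as in the introduction's version of this theorem.
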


 \begin{proof}

The proof follows from Theorem \ref{Relative L-infinity algebra main thm} in the case \(n=2\).

\end{proof}

\begin{lem}\label{hemi-hamil}
Let \( (f, \alpha),\, (g, \beta) \in \Omega^{1}_{\mathrm{Ham}}(\Phi) \) be relative Hamiltonian 1-forms with corresponding Hamiltonian vector fields \( (u_1, v_1) \) and \( (u_2, v_2) \), respectively. Then the hemi-bracket
\[
\left\{ (f, \alpha),\, (g, \beta) \right\}_{\mathrm{h}}
\]
is again a relative Hamiltonian 1-form. Moreover, its associated Hamiltonian vector field is the Lie bracket of the original ones:
\[
v_{\left\{ (f, \alpha), (g, \beta) \right\}_{\mathrm{h}}} = \left[ (u_1, v_1), (u_2, v_2) \right] = \left( [u_1, u_2],\, [v_1, v_2] \right).
\]
\end{lem}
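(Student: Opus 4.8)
The plan is to recognize this lemma as the $n=2$ specialization of the general relative result already established, namely Corollary~\ref{derivative of hemi bracket}, instantiated at $F=\Phi$ and $(\omega_M,\omega_N)=(\omega,\eta)$. The first step is to confirm that this specialization is legitimate: by Proposition~\ref{closed} the relative $3$-form $(\omega,\eta)\in\Omega^3(\Phi)$ is closed, and by Theorem~\ref{Non-degeneracy general} it is nondegenerate, so $(\Phi,\omega,\eta)$ is a genuine relative $2$-plectic structure. This is exactly the hypothesis under which the relative Cartan calculus and the bracket identities of Section~\ref{section:relative cartan magic formulas} were proved, so every tool used below applies.

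Next I would exhibit the candidate Hamiltonian vector field. Since $(f,\alpha),(g,\beta)\in\Omega^1_{\mathrm{Ham}}(\Phi)$, their associated pairs $(u_1,v_1),(u_2,v_2)\in\mathfrak{X}_{\mathrm{Ham}}(\Phi)$ consist of $\Phi$-related vector fields. By Proposition~\ref{brackets F-related}, $[u_1,u_2]\sim_\Phi[v_1,v_2]$, so the pair $([u_1,u_2],[v_1,v_2])=[(u_1,v_1),(u_2,v_2)]$ again lies in $\mathfrak{X}(\Phi)$ and is therefore an admissible candidate for a relative Hamiltonian vector field. It remains to verify that it satisfies the Hamilton--De Donder--Weyl equation of Definition~\ref{hamiltonian} with respect to $\{(f,\alpha),(g,\beta)\}_{\mathrm{h}}$.

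That verification is supplied directly by Corollary~\ref{derivative of hemi bracket}, which gives
\[
\mathrm{d}\,\{(f,\alpha),(g,\beta)\}_{\mathrm{h}}
= -\,\iota_{[(u_1,v_1),(u_2,v_2)]}(\omega,\eta).
\]
This is precisely the defining equation exhibiting $\{(f,\alpha),(g,\beta)\}_{\mathrm{h}}$ as a relative Hamiltonian $1$-form whose Hamiltonian vector field is $[(u_1,v_1),(u_2,v_2)]$. Finally, to conclude that this vector field is \emph{the} associated one---and in particular equals $([u_1,u_2],[v_1,v_2])$---I would invoke the uniqueness of Hamiltonian vector fields in the nondegenerate setting (Lemma~\ref{Uniqueness of hamiltonian vector}), which forces any solution of the Hamilton equation to coincide with the Lie bracket.

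I do not expect a genuine obstacle here: the substantive content (computing the exterior derivative of the semi-bracket and relating the two brackets by an exact term via Cartan's magic formula) was already carried out in Proposition~\ref{proposition:bracket} and Proposition~\ref{hemi bracket-semi bracket}. The only mild care needed is to keep the $\Phi$-relatedness condition explicit---uniqueness of the Hamiltonian pair uses both the nondegeneracy of $\eta$ and the transversality $\ker\omega_m\cap\ker\mathrm{d}\Phi_m=\{0\}$ from the quasi-Hamiltonian axioms---so that the phrase ``the associated Hamiltonian vector field'' is well-posed.
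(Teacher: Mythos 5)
Your proposal is correct and follows essentially the same route as the paper: the paper's proof of Lemma~\ref{hemi-hamil} simply re-runs the computation of Corollary~\ref{derivative of hemi bracket} in the case $n=2$, $F=\Phi$, $(\omega_M,\omega_N)=(\omega,\eta)$, using Proposition~\ref{hemi bracket-semi bracket}, $\mathrm{d}^2=0$, and Proposition~\ref{proposition:bracket}, which is exactly the chain of results you invoke. Your added remarks on $\Phi$-relatedness of the bracket (Proposition~\ref{brackets F-related}) and uniqueness of the Hamiltonian pair (Lemma~\ref{Uniqueness of hamiltonian vector}) are sound and, if anything, make the argument slightly more complete than the paper's.
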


\begin{proof}
  We begin by referring to Proposition \ref{hemi bracket-semi bracket}, we have:
  \[
    \{(f, \alpha), \, (g, \beta)\}_{\mathrm{s}} = \{(f, \alpha), \, (g, \beta)\}_{\mathrm{s}} + d\iota_{(u_1, v_1)}(g, \beta).
  \]

  With this in hand, let's calculate the exterior derivative \( \mathrm{d} \) of both sides:
  \begin{align*}
    \mathrm{d}\{(f, \alpha), \, (g, \beta)\}_\mathrm{h} &= \mathrm{d}\{(f, \alpha), \, (g, \beta)\}_{\mathrm{s}} + \mathrm{d}^2\iota_{(u_1, v_1)}(g, \beta) \\
    &= \mathrm{d}\{(f, \alpha), \, (g, \beta)\}_{\mathrm{s}}
  \end{align*}
  where we used the fact that \( \mathrm{d}^2 = 0 \). By Proposition \ref{proposition:bracket},  the Hamiltonian vector corresponding to \(\{(f, \alpha), \, (g, \beta)\}_{\mathrm{s}}\) is given by \([(u_1, v_1), \, (u_2, v_2)]\). It follows that 
  \[
    d\{(f, \alpha), \, (g, \beta)\}_{\mathrm{s}} = -\iota_{[(u_1, v_1), \, (u_2, v_2)]} (\omega, \, \eta).
  \]

  Therefore, 
  \begin{align*}
    \mathrm{d}\{(f, \alpha), \, (g, \beta)\}_\mathrm{h} &= \mathrm{d}\{(f, \alpha), \, (g, \beta)\}_{\mathrm{s}} + \mathrm{d}^2\iota_{(u_1, v_1)}(g, \beta) \\
    &= \mathrm{d}\{(f, \alpha), \, (g, \beta)\}_{\mathrm{s}}
  \end{align*}
  This proves that \( [(u_1, v_1), \, (u_2, v_2)] \) is the Hamiltonian vector field associated to the form \( \left\{ (f, \alpha), \, (g, \beta) \right\}_{\mathrm{h}} \), thereby completing the proof.
\end{proof}

The skew-symmetry of the hemi-bracket holds up to an exact term. More precisely, for any \((f, \alpha),\, (g, \beta) \in \Omega^{1}_{\mathrm{Ham}}(\Phi)\), we have
\[
\left\{ (f, \alpha),\, (g, \beta) \right\}_{\mathrm{h}} + \mathrm{d}S((f, \alpha), (g, \beta)) = -\left\{ (g, \beta),\, (f, \alpha) \right\}_{\mathrm{h}},
\]
where \(S\) is a bilinear map—called the \emph{alternator}—defined by
\begin{align}\label{alternator-def}
S : \Omega^{1}_{\mathrm{Ham}}(\Phi) \times \Omega^{1}_{\mathrm{Ham}}(\Phi) &\longrightarrow C^\infty(G), \\
((f, \alpha),\, (g, \beta)) &\longmapsto -\left( \iota_{(u_1, v_1)}(g, \beta) + \iota_{(u_2, v_2)}(f, \alpha) \right), \nonumber
\end{align}
with \((u_1, v_1)\) and \((u_2, v_2)\) denoting the Hamiltonian vector fields corresponding to \((f, \alpha)\) and \((g, \beta)\), respectively.
 The alternator \(S\) quantifies the exact correction required to restore strict skew-symmetry.

\begin{lem}\label{hemi-antil}
Let \( (f, \alpha),\, (g, \beta) \in \Omega^{1}_{\mathrm{Ham}}(\Phi) \) be relative Hamiltonian \(1\)-forms. Then the hemi-bracket satisfies the following identity:
\[
\left\{ (f, \alpha),\, (g, \beta) \right\}_{\mathrm{h}} + \mathrm{d}S\left((f, \alpha),\, (g, \beta)\right) 
= -\left\{ (g, \beta),\, (f, \alpha) \right\}_{\mathrm{h}},
\]
where the alternator map \(S\) is defined as in \eqref{alternator-def}.
\end{lem}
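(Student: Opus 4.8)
The plan is to reduce everything to the already-established relation between the hemi-bracket and the semi-bracket (Proposition~\ref{hemi bracket-semi bracket}) together with the skew-symmetry of the semi-bracket (Proposition~\ref{proposition:bracket}). Concretely, I would first apply Proposition~\ref{hemi bracket-semi bracket} twice—once in each order of the arguments—to rewrite both hemi-brackets appearing in the claimed identity. Writing $(u_1,v_1)$ and $(u_2,v_2)$ for the Hamiltonian vector fields of $(f,\alpha)$ and $(g,\beta)$ respectively, this gives
\[
\{(f,\alpha),(g,\beta)\}_{\mathrm{h}} = \{(f,\alpha),(g,\beta)\}_{\mathrm{s}} + \mathrm{d}\iota_{(u_1,v_1)}(g,\beta),
\]
and, swapping the roles of the two forms,
\[
\{(g,\beta),(f,\alpha)\}_{\mathrm{h}} = \{(g,\beta),(f,\alpha)\}_{\mathrm{s}} + \mathrm{d}\iota_{(u_2,v_2)}(f,\alpha).
\]

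Next I would substitute the explicit definition of the alternator from \eqref{alternator-def}, namely $S((f,\alpha),(g,\beta)) = -\bigl(\iota_{(u_1,v_1)}(g,\beta) + \iota_{(u_2,v_2)}(f,\alpha)\bigr)$, and apply the relative differential. By linearity of $\mathrm{d}$ this yields $\mathrm{d}S((f,\alpha),(g,\beta)) = -\mathrm{d}\iota_{(u_1,v_1)}(g,\beta) - \mathrm{d}\iota_{(u_2,v_2)}(f,\alpha)$. Adding this to the first displayed expression for $\{(f,\alpha),(g,\beta)\}_{\mathrm{h}}$, the term $\mathrm{d}\iota_{(u_1,v_1)}(g,\beta)$ cancels against its negative, leaving
\[
\{(f,\alpha),(g,\beta)\}_{\mathrm{h}} + \mathrm{d}S((f,\alpha),(g,\beta)) = \{(f,\alpha),(g,\beta)\}_{\mathrm{s}} - \mathrm{d}\iota_{(u_2,v_2)}(f,\alpha).
\]

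Finally I would invoke the skew-symmetry of the semi-bracket (Proposition~\ref{proposition:bracket}), which gives $\{(f,\alpha),(g,\beta)\}_{\mathrm{s}} = -\{(g,\beta),(f,\alpha)\}_{\mathrm{s}}$. Replacing the first summand on the right-hand side above and comparing with the second hemi-to-semi expansion shows that the result equals $-\bigl(\{(g,\beta),(f,\alpha)\}_{\mathrm{s}} + \mathrm{d}\iota_{(u_2,v_2)}(f,\alpha)\bigr) = -\{(g,\beta),(f,\alpha)\}_{\mathrm{h}}$, which is exactly the claim. The argument is essentially a bookkeeping of exact terms, so I do not expect a genuine obstacle; the only point requiring care is to keep straight which Hamiltonian vector field attaches to which argument when the hemi-bracket is swapped, since the hemi-bracket is asymmetric and Proposition~\ref{hemi bracket-semi bracket} always contracts with the Hamiltonian vector field of the \emph{first} entry. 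Once that matching is respected, the cancellation is automatic.
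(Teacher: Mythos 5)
Your proposal is correct and follows essentially the same route as the paper's own proof: both expand the two hemi-brackets via Proposition~\ref{hemi bracket-semi bracket}, invoke the skew-symmetry of the semi-bracket from Proposition~\ref{proposition:bracket}, and observe that the exact terms reassemble into $-\mathrm{d}S$. The only cosmetic difference is that the paper sums the two hemi-brackets and identifies the result with $-\mathrm{d}S$, whereas you move $\mathrm{d}S$ to the left-hand side first; the cancellations are identical.
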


\begin{proof}
Let \((u_1, v_1)\) and \((u_2, v_2)\) be Hamiltonian vectors corresponding to \( (f, \alpha) \) and \( (g, \beta) \) respectively. By  Proposition \ref{hemi bracket-semi bracket}, we have:
    \begin{align*}
        \{(f, \alpha), \, (g, \beta)\}_{\mathrm{h}} &= \{(f, \alpha), \, (g, \beta)\}_{\mathrm{s}} + d\iota_{(u_1, v_1)}(g, \beta),
    \end{align*}

    as well as:
    \begin{align*}
        \{(g, \beta), \, (f, \alpha)\}_{\mathrm{h}} &= \{(g, \beta), \, (f, \alpha)\}_{\mathrm{s}} + d\iota_{(u_2, v_2)}(f, \alpha) \\
        &= -\{(f, \alpha), \, (g, \beta)\}_{\mathrm{s}} + d\iota_{(u_2, v_2)}(f, \alpha) &&\text{(by skew-symmetry of \( \{\cdot, \cdot\}_{\mathrm{s}} \))}.
    \end{align*}

    Summing these yields:
    \begin{align*}
        \{(f, \alpha), \, (g, \beta)\}_{\mathrm{h}} + \{(g, \beta), \, (f, \alpha)\}_{\mathrm{h}} &=-\{(f, \alpha), \, (g, \beta)\}_{\mathrm{s}}+ d\iota_{(u_1, v_1)}(g, \beta) \\
        &+\{(f, \alpha), \, (g, \beta)\}_{\mathrm{s}}+ d\iota_{(u_2, v_2)}(f, \alpha) \\
        &= d \left( \iota_{(u_1, v_1)}(g, \beta) + \iota_{(u_2, v_2)}(f, \alpha) \right) \\
        &= -dS\left((f, \alpha), \, (g, \beta)\right).
    \end{align*}
This completes the proof of the lemma.
\end{proof}

\begin{rem}
Lemma~\ref{hemi-antil} demonstrates that the hemi-bracket fails to satisfy strict skew-symmetry; rather, it is skew-symmetric only up to an exact form determined by the alternator map \(S\). In contrast, the semi-bracket is strictly skew-symmetric, as shown in Proposition~\ref{proposition:bracket}.

However, the roles reverse when it comes to the Jacobi identity. As established in Proposition~\ref{Jacobi-identity-up-toexact-form}, the semi-bracket satisfies the Jacobi identity only up to an exact form, reflecting its semi-strict \(L_\infty\)-algebra structure. On the other hand, the hemi-bracket satisfies the Jacobi identity strictly, as we will see in the following theorem.

\end{rem}

\begin{thm}[Hemi-bracket and the Jacobi Identity]\label{theorem:jacobi-hemi}
The hemi-bracket \( \{ \cdot, \cdot \}_{\mathrm{h}} \) satisfies the Jacobi identity strictly. Specifically, for any three relative Hamiltonian 1-forms \( (f, \alpha),\, (g, \beta),\, (k, \gamma) \in \Omega^{1}_{\mathrm{Ham}}(M,\, \Phi) \), the following identity holds:
\begin{align*}
\left\{(f, \alpha),\, \left\{ (g, \beta),\, (k, \gamma) \right\}_{\mathrm{h}} \right\}_{\mathrm{h}} 
= \left\{ \left\{ (f, \alpha),\, (g, \beta) \right\}_{\mathrm{h}},\, (k, \gamma) \right\}_{\mathrm{h}} 
+ \left\{ (g, \beta),\, \left\{ (f, \alpha),\, (k, \gamma) \right\}_{\mathrm{h}} \right\}_{\mathrm{h}}.
\end{align*}
\end{thm}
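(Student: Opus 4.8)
The plan is to reduce the identity entirely to the commutator formula for relative Lie derivatives established in Proposition~\ref{relative cartan magic formula}(1), together with the identification of the Hamiltonian vector field of a hemi-bracket provided by Lemma~\ref{hemi-hamil}. First I would fix notation: let $(u_1,v_1)$, $(u_2,v_2)$, and $(u_3,v_3)$ denote the unique Hamiltonian vector fields associated to $(f,\alpha)$, $(g,\beta)$, and $(k,\gamma)$ respectively, which exist and are unique by Lemma~\ref{Uniqueness of hamiltonian vector}.

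Next I would expand each of the three hemi-brackets using the defining formula $\{(f,\alpha),(g,\beta)\}_{\mathrm{h}}=\mathcal{L}_{(u_1,v_1)}(g,\beta)$ from Definition~\ref{bracket_def}. The left-hand side becomes
\[
\left\{(f,\alpha),\{(g,\beta),(k,\gamma)\}_{\mathrm{h}}\right\}_{\mathrm{h}}
=\mathcal{L}_{(u_1,v_1)}\mathcal{L}_{(u_2,v_2)}(k,\gamma),
\]
and the second term on the right becomes $\mathcal{L}_{(u_2,v_2)}\mathcal{L}_{(u_1,v_1)}(k,\gamma)$. For the first term on the right, the crucial input is Lemma~\ref{hemi-hamil}, which guarantees that $\{(f,\alpha),(g,\beta)\}_{\mathrm{h}}$ is again a relative Hamiltonian $1$-form whose associated Hamiltonian vector field is precisely the Lie bracket $[(u_1,v_1),(u_2,v_2)]=([u_1,u_2],[v_1,v_2])$. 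Consequently
\[
\left\{\{(f,\alpha),(g,\beta)\}_{\mathrm{h}},(k,\gamma)\right\}_{\mathrm{h}}
=\mathcal{L}_{[(u_1,v_1),(u_2,v_2)]}(k,\gamma).
\]

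With these three expressions in hand, the desired identity is equivalent to
\[
\mathcal{L}_{(u_1,v_1)}\mathcal{L}_{(u_2,v_2)}(k,\gamma)
=\mathcal{L}_{[(u_1,v_1),(u_2,v_2)]}(k,\gamma)
+\mathcal{L}_{(u_2,v_2)}\mathcal{L}_{(u_1,v_1)}(k,\gamma),
\]
which is exactly Proposition~\ref{relative cartan magic formula}(1) applied to the relative form $(k,\gamma)$, rearranged. I would therefore simply invoke that proposition to close the argument. I expect no genuine obstacle here: the only subtlety worth flagging explicitly is the appeal to Lemma~\ref{hemi-hamil} to justify that the hemi-bracket in the first right-hand term really is Hamiltonian with Lie-bracket vector field, so that the hemi-bracket $\{\,\cdot\,,(k,\gamma)\}_{\mathrm{h}}$ is well-defined on it; once this is granted, the statement is a direct transcription of the relative commutator identity and requires no further computation.
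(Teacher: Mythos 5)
Your proposal is correct and follows essentially the same route as the paper's own proof: both expand the hemi-brackets as relative Lie derivatives, invoke Lemma~\ref{hemi-hamil} to identify the Hamiltonian vector field of $\{(f,\alpha),(g,\beta)\}_{\mathrm{h}}$ with $[(u_1,v_1),(u_2,v_2)]$, and then conclude by the commutator identity of Proposition~\ref{relative cartan magic formula}(1). If anything, your write-up is cleaner, since the paper's displayed chain contains some typographical slips (e.g.\ $H$ and $F$ interchanged in intermediate lines) that your version avoids.
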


\begin{proof}
For simplicity, let us denote by \(F= (f, \alpha) \), \(G=(g, \beta) \), and \(H=(k, \gamma )\) in $\Omega^{1}_{\mathrm{Ham}}( \Phi)$ and $v_F$, $v_G$, and $v_H$ the corresponding pairs of Hamiltonian vectors associated to $F$, $G$, and $H$, respectively. We want to show that
\[
\{\{F, G\}_{\mathrm{h}}, H\}_{\mathrm{h}} + \{G, \{ F, H\}_{\mathrm{h}}\}_{\mathrm{h}} = \{F, \{G, H\}_{\mathrm{h}}\}_{\mathrm{h}}.
\]

Given that the hemi-bracket is defined as \(\{F, G\}_{\mathrm{h}} = \mathcal{L}_{v_F} G\), where \(v_F\) is the Hamiltonian vector field associated with \(F\) and \(\mathcal{L}_{v_F}\) is the Lie derivative along \(v_F\), we have:

\begin{align*}
\{\{F, G\}_{\mathrm{h}}, H\}_{\mathrm{h}} + \{G, \{ F, H\}_{\mathrm{h}}\}_{\mathrm{h}}  & = \mathcal{L}_{v_{\{F, G\}_{\mathrm{h}}}} H + \mathcal{L}_{v_G} \{ F, H\}_{\mathrm{h}}\\
   &=\mathcal{L}_{[v_F, v_G]} F + \mathcal{L}_{v_G} \mathcal{L}_{v_H} F  \quad \text{by Lemma \ref{hemi-hamil}}\\
   &=\mathcal{L}_{v_H}\mathcal{L}_{v_G}  F  \quad \text{by Proposition \ref{relative cartan magic formula}}\\
   &=\mathcal{L}_{v_H}\{G, H\}_{\mathrm{h}}  \quad \text{by definition}\\
   &=\{F, \{G, H\}_{\mathrm{h}}\}_{\mathrm{h}} \quad \text{by definition}.
\end{align*}
Therefore, the hemi-bracket satisfies the Jacobi identity as desired.
\end{proof}

In line with the work of Baez, Hoffnung, and Rogers \cite[Theorem 4.4]{Baez_2009}, we have the following Lie 2-algebra in the relative cohomology as expected. 

\begin{thm} \label{theorem:hemi strict Lie2}
Let $(M, \Phi, \omega)$ be a quasi-Hamiltonian $G$-space.  There exists a Lie $2$-algebra 
\[\begin{array}{ccccccccccccccc}
     \cdots 0& \longrightarrow & L_{1} & \longrightarrow & L_0\\
      &  & \parallel &  & \parallel \\
        &  & \Omega^{0}(\Phi) &  & \Omega^{1}_{\textrm{Ham}}(\Phi)\\ 
\end{array}\]
denoted $L_\infty(M,\Phi,\omega)_h = (L, [\cdot,\cdot], J)$, 
with the following structures:
\begin{itemize}
\item the underlying vector spaces are \(L_0 =\Omega^{1}( \Phi)\)
and 
 $L_1 = \Omega^{0}( \Phi) = \{0\}\times C^\infty(G)$;
\item the differential 
\begin{align*}
\mathrm{d} :   L_1 &\to L_0 \\
f &\mapsto ( \Phi^* f,  -\mathrm{d}f)
\end{align*}
is the relative differential;
\item the alternator is the map
\begin{align*}
S: L_0\times L_0 & \to   L_1
\end{align*}
defined by 
\[S
\left(( f_1, \beta_1), \, ( f_2, \beta_2)\right) = -\left( \iota_{(u_{1},\, {v_1})}\left(f_2, \,\beta_2\right)+ \iota_{(u_{2},\, {v_2})}\left(f_1, \,\beta_1\right)\right)\]
\item the bracket $\left\{\cdot,\cdot\right\}_h: L_i \otimes L_j \to L_{i+j}$ for $i,j = 0,1$ and $i+j\leq 1$ is the hemi-bracket

\item the Jacobiator is the identity chain homotopy map, hence given by the trilinear map 
\begin{align*}
J : L_0 \otimes L_0 \otimes L_0 &\to   L_1 
\end{align*}
defined by
\[J\left(( f_1, \beta_1), \, ( f_2, \beta_2), \, ( f_3,  \beta_3)\right)=(0, 0);\]
where $(u_i, v_{_i})$ denotes the unique hamiltonian vector field corresponding $(f_i, \beta_i)$ for $i=1,2,3$.
\end{itemize}
\end{thm}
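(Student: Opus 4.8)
The plan is to verify that the data $(L,\{\cdot,\cdot\}_{\mathrm h},S,J)$ listed in the statement satisfies the axioms of a hemistrict Lie $2$-algebra in the sense of Definition~\ref{def-Lie-alegra3} and Definition~\ref{definition:hemistrict-al}. This is the $n=2$ specialization of the relative framework: by Proposition~\ref{closed} the relative $3$-form $(\omega,\eta)\in\Omega^3(\Phi)$ is closed, and by Theorem~\ref{Non-degeneracy general} it is nondegenerate, so $(\Phi,\omega,\eta)$ is a relative $2$-plectic structure and the entire relative Cartan calculus of Section~\ref{section:relative cartan magic formulas} applies (in particular, Hamiltonian vector fields are unique). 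The underlying $2$-term complex $L_1=\Omega^0(\Phi)\xrightarrow{\;\mathrm d\;}L_0=\Omega^1_{\mathrm{Ham}}(\Phi)$ and its relative differential are exactly those of the differential graded Leibniz algebra $\mathrm{Leib}(\Phi,\omega,\eta)$ constructed in Proposition~\ref{proposition: Leibniz dg algebra}; since $\mathrm d(f)=(\Phi^*f,-\mathrm df)$ is an exact relative $1$-form, it is Hamiltonian with zero Hamiltonian vector field, so the complex is well defined.

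First I would record the structural facts already in hand. The hemi-bracket is a chain map because its compatibility with $\mathrm d$ is precisely the derivation identity \eqref{Leibniz bracket derivative 1} of $\mathrm{Leib}(\Phi,\omega,\eta)$; that the hemi-bracket of two Hamiltonian $1$-forms is again Hamiltonian, with Hamiltonian vector field $[(u_1,v_1),(u_2,v_2)]$, is Lemma~\ref{hemi-hamil}, so $\{\cdot,\cdot\}_{\mathrm h}$ lands in $L_0$. Lemma~\ref{hemi-antil} shows that the map $S$ of \eqref{alternator-def} is a genuine alternator, i.e.\ a chain homotopy from $\{x,y\}_{\mathrm h}$ to $-\{y,x\}_{\mathrm h}$, while Theorem~\ref{theorem:jacobi-hemi} shows the hemi-bracket satisfies the Jacobi identity strictly in degree $0$; the degree-$1$ instances follow from the graded Leibniz identity \eqref{Leibniz Jacobi identity for hemi} (the sign $(-1)^{|x||y|}$ is harmless, since any term with two positive-degree inputs vanishes by the definition of the bracket). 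This justifies taking the Jacobiator $J$ to be the identity chain homotopy, so the algebra is hemistrict.

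It remains to check the four coherence equations of Definition~\ref{def-Lie-alegra3} with $J=0$. Three of them are immediate: the large pentagon identity \eqref{big_J} has every summand containing a factor $J$ or a bracket whose $L_1$-valued output is paired against an $L_0$-element, and $\langle a,z\rangle=0$ whenever $\deg a>0$, so both sides vanish; the same observation gives $-[S(x,y),z]=0$, matching $J(x,y,z)+J(y,x,z)=0$; and $S(x,[y,z])=S([y,z],x)$ holds because $S$ is manifestly symmetric in its two arguments. The main obstacle is the remaining equation, which with $J=0$ reads
\[
[x,S(y,z)]_{\mathrm h}=S([x,y]_{\mathrm h},z)+S(y,[x,z]_{\mathrm h}),
\]
the equivariance of the alternator under the Hamiltonian flow. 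I would prove it by writing $[x,S(y,z)]_{\mathrm h}=\mathcal L_{(u_1,v_1)}S(y,z)$, expanding $S(y,z)$ via \eqref{alternator-def}, and commuting the relative Lie derivative past each relative contraction using Proposition~\ref{relative cartan magic formula}(4), namely $\mathcal L_{(u_1,v_1)}\iota_{(u_2,v_2)}-\iota_{(u_2,v_2)}\mathcal L_{(u_1,v_1)}=\iota_{[(u_1,v_1),(u_2,v_2)]}$. Since $\mathcal L_{(u_1,v_1)}(f_3,\alpha_3)=[x,z]_{\mathrm h}$ and, by Lemma~\ref{hemi-hamil}, $[(u_1,v_1),(u_2,v_2)]$ is the Hamiltonian vector field of $[x,y]_{\mathrm h}$, the four resulting terms coincide term-by-term with those obtained by expanding $S([x,y]_{\mathrm h},z)+S(y,[x,z]_{\mathrm h})$. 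The only real care needed is in tracking the signs coming from the relative contraction convention $\iota_{(u,v)}(\alpha,\beta)=(\iota_u\alpha,-\iota_v\beta)$; once these are fixed consistently with Lemma~\ref{hemi-antil}, the identity closes and the verification is complete.
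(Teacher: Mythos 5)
Your proposal is correct, and it rests on exactly the ingredients the paper invokes: the closedness and nondegeneracy of $(\omega,\eta)$, the relative Cartan calculus of Proposition~\ref{relative cartan magic formula}, Lemma~\ref{hemi-hamil}, Lemma~\ref{hemi-antil}, and Theorem~\ref{theorem:jacobi-hemi}. The difference is one of method: the paper's proof is a deferral to the absolute case, citing \cite[Theorem 4.4]{Baez_2009} together with ``the properties of the hemi-bracket established in the previous lemmas,'' whereas you carry out the verification of the axioms of Definition~\ref{def-Lie-alegra3} directly. What your route buys is self-containedness: in particular, the coherence law
\[
\{x,S(y,z)\}_{\mathrm h}=S(\{x,y\}_{\mathrm h},z)+S(y,\{x,z\}_{\mathrm h}),
\]
which is genuinely needed once $J=0$ and is not proved anywhere else in the paper, and your derivation of it from Proposition~\ref{relative cartan magic formula}(4) together with the identification of $[(u_1,v_1),(u_2,v_2)]$ as the Hamiltonian vector field of $\{x,y\}_{\mathrm h}$ (Lemma~\ref{hemi-hamil}) is exactly right --- the four terms match pairwise as you claim. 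Your dispatching of the remaining coherence equations (pentagon, $-[S(x,y),z]=0$ via the vanishing of the bracket on positive-degree first arguments, and the symmetry of $S$) is also correct. The only point I would tighten is the alternator: Lemma~\ref{hemi-antil} gives the chain-homotopy identity only in degree $0$, and the mixed-degree instance $\{x,a\}_{\mathrm h}+\{a,x\}_{\mathrm h}=\pm S(x,\mathrm{d}a)$ (with $a\in L_1$) should be checked separately; it reduces to $\mathcal L_{(u_1,v_1)}a=\iota_{(u_1,v_1)}\mathrm{d}a$ and $S(x,\mathrm{d}a)=-\iota_{(u_1,v_1)}\mathrm{d}a$, so whether it closes depends on the sign convention for chain homotopies, a convention the paper itself leaves implicit. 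This is a wrinkle inherited from the paper rather than a defect of your argument, but a complete write-up should fix the convention and record that one line.
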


\begin{proof}
The result follows from the general construction of Lie \(2\)-algebras of observables given in \cite[Theorem 4.4]{Baez_2009}, together with the relative Cartan calculus and the properties of the hemi-bracket established in the previous lemmas and propositions.
\end{proof}

\begin{thm}\label{isomorphism}
The Lie \(2\)-algebras \( L_\infty(M, \omega, \Phi)_{\mathrm{h}} \) and \( L_\infty(M, \omega, \Phi)_{\mathrm{s}} \), constructed using the hemi-bracket and the semi-bracket respectively, are isomorphic as Lie \(2\)-algebras.
\end{thm}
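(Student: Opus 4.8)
The plan is to realize $L_\infty(M,\omega,\Phi)_{\mathrm h}$ and $L_\infty(M,\omega,\Phi)_{\mathrm s}$ as two presentations of the same object by writing down an explicit isomorphism in the sense of Definition~\ref{homo of Lie 2-algebras}. Both are built on the identical $2$-term complex $\bigl(\Omega^0(\Phi)\xrightarrow{\mathrm d}\Omega^1_{\mathrm{Ham}}(\Phi)\bigr)$, with $L_1=\Omega^0(\Phi)$, $L_0=\Omega^1_{\mathrm{Ham}}(\Phi)$ and the same relative differential, so I take the underlying chain map to be $\phi_\bullet=\mathrm{id}$. All of the content is then carried by a degree-$1$ chain homotopy $\varphi\colon L_\bullet\otimes L_\bullet\to L_\bullet$, which I define on $L_0\otimes L_0$ by
\[
\varphi\bigl((f,\alpha),(g,\beta)\bigr) := -\,\iota_{(u_1,v_1)}(g,\beta) = \bigl(0,\ \iota_{v_1}\beta\bigr)\in L_1,
\]
where $(u_1,v_1)$ is the unique Hamiltonian vector field of $(f,\alpha)$, and set $\varphi=0$ in all other bidegrees for degree reasons. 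Because $\phi_\bullet=\mathrm{id}$ is a chain isomorphism, once $(\phi_\bullet,\varphi)$ is shown to be a morphism it is automatically invertible—the pair $(\mathrm{id},-\varphi)$ provides the inverse, running hemi\,$\to$\,semi—so the task reduces entirely to verifying the two structural identities of Definition~\ref{homo of Lie 2-algebras}, taking $L_\infty(M,\omega,\Phi)_{\mathrm s}$ as source and $L_\infty(M,\omega,\Phi)_{\mathrm h}$ as target.

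The bracket-compatibility identity splits into two short cases. For $x=(f,\alpha)$, $y=(g,\beta)$ in $L_0$ one has $\mathrm dx=\mathrm dy=0$, so the condition collapses to $\{x,y\}_{\mathrm s}-\{x,y\}_{\mathrm h}=\mathrm d\,\varphi(x,y)$, which is exactly Proposition~\ref{hemi bracket-semi bracket} rewritten as $\{x,y\}_{\mathrm s}-\{x,y\}_{\mathrm h}=-\mathrm d\,\iota_{(u_1,v_1)}(g,\beta)$. For the mixed case $x\in L_0$, $a\in L_1$, the only surviving term is $\varphi(x,\mathrm da)=-\iota_{(u_1,v_1)}(\mathrm da)$, so the requirement becomes $\mathcal L_{(u_1,v_1)}a=\iota_{(u_1,v_1)}\mathrm da$; this is the relative Cartan magic formula (Proposition~\ref{relative cartan magic formula}, part~(5)) together with $\iota_{(u_1,v_1)}a=0$, valid since $a\in\Omega^0(\Phi)$. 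The counterpart with the arguments swapped follows by skew-symmetry of the bracket. As a consistency check, the symmetrization $\varphi(x,y)+\varphi(y,x)=-\bigl(\iota_{(u_1,v_1)}(g,\beta)+\iota_{(u_2,v_2)}(f,\alpha)\bigr)$ reproduces precisely the alternator $S$ of Theorem~\ref{theorem:hemi strict Lie2}, confirming that $\varphi$ transports the vanishing alternator of the semistrict model to that of the hemistrict one.

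The main work is the coherence condition \eqref{coherence}. Since the target Jacobiator $J_{\mathrm h}$ vanishes and $\phi_\bullet=\mathrm{id}$, it becomes the requirement that
\[
J_{\mathrm s}(x,y,z)=\varphi(x,\{y,z\}_{\mathrm s})-\varphi(\{x,y\}_{\mathrm s},z)-\varphi(y,\{x,z\}_{\mathrm s})-\{\varphi(x,y),z\}_{\mathrm h}+\{x,\varphi(y,z)\}_{\mathrm h}-\{y,\varphi(x,z)\}_{\mathrm h}.
\]
The leading term already does most of the work: unwinding $\{y,z\}_{\mathrm s}=\iota_{(u_2,v_2)}\iota_{(u_3,v_3)}(\omega,\eta)$ gives $\varphi(x,\{y,z\}_{\mathrm s})=-\iota_{(u_1,v_1)}\iota_{(u_2,v_2)}\iota_{(u_3,v_3)}(\omega,\eta)=J_{\mathrm s}(x,y,z)$, so the identity reduces to the vanishing of the remaining five terms. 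I would establish this by expanding each $\varphi$ into a single contraction and each hemi-bracket into a relative Lie derivative, then collapsing the resulting combination of one- and two-fold contractions of $(\omega,\eta)$ using the relative Cartan identities of Proposition~\ref{relative cartan magic formula}—principally the $\iota$–$\mathcal L$ commutator (part~(4)) and the magic formula (part~(5))—together with the Hamiltonicity relation $\mathrm d\{\cdot,\cdot\}_{\mathrm s}=-\iota_{[\cdot,\cdot]}(\omega,\eta)$ from Proposition~\ref{proposition:bracket}. This is the relative analogue of the computation underlying the equivalence of the hemistrict and semistrict Lie $2$-algebras of a $2$-plectic manifold in \cite{Baez_2009}, and it is where essentially all the bookkeeping resides; the same telescoping that produces the semistrict Jacobiator in Proposition~\ref{Jacobi-identity-up-toexact-form} is what forces the five terms to cancel.

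In summary, the verification of bracket compatibility is immediate from Proposition~\ref{hemi bracket-semi bracket} and the relative Cartan calculus, the alternator of Theorem~\ref{theorem:hemi strict Lie2} is recovered as the symmetrization of $\varphi$, and the single genuine obstacle is the coherence identity, whose closure rests on the relative Cartan identities. Since $\phi_\bullet=\mathrm{id}$ is invertible, the resulting morphism $(\mathrm{id},\varphi)$ is an isomorphism of Lie $2$-algebras, giving $L_\infty(M,\omega,\Phi)_{\mathrm h}\cong L_\infty(M,\omega,\Phi)_{\mathrm s}$.
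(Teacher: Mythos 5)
Your proposal is correct and is essentially the paper's own argument made explicit: the paper proves this theorem by citing the hemistrict/semistrict equivalence of Baez--Hoffnung--Rogers together with the relative Cartan calculus, and the isomorphism constructed there is precisely your pair $(\mathrm{id},\varphi)$ with $\varphi\bigl((f,\alpha),(g,\beta)\bigr)=-\iota_{(u_1,v_1)}(g,\beta)$ transported to the relative setting, with the bracket-compatibility and alternator checks reducing to Proposition~\ref{hemi bracket-semi bracket} and the relative magic formula exactly as you say. The one step you defer --- the five-term cancellation in the coherence identity \eqref{coherence} --- is likewise not carried out in the paper's proof, so your write-up is at least as complete as the published one; if you do expand it, be careful with the ordering of contractions, since the paper's own sign convention for $\{\cdot,\cdot\}_{\mathrm s}$ in Definition~\ref{bracket_def} is internally inconsistent and your identification of the leading term with $J_{\mathrm s}(x,y,z)$ depends on which of the two stated expressions one adopts.
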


\begin{proof}
The result follows from the general equivalence between hemi-strict and semi-strict Lie \(2\)-algebras established in \cite[Theorem 4.6]{Baez_2009}, together with the adaptation of Cartan calculus to the relative setting as developed in the preceding sections.
\end{proof}

\section {The Atiyah Relative Lie \Twoalgebra and Courant Relative Lie \Twoalgebra}\label{Realtive-Atiya-Courant}

We now examine two important examples of Lie \(2\)-algebras in the relative setting: the Atiyah and Courant relative Lie \(2\)-algebras. In this section, we adapt the classical constructions of the Atiyah and Courant Lie \(2\)-algebras to the relative setting defined by the moment map \(\Phi: M \to G\).

\begin{thm}[Relative Atiyah Lie \(2\)-Algebra]\label{theorem: atiyah lie 2-algebra}
Let \((M, \omega, \Phi)\) be a quasi-Hamiltonian \(G\)-space. Then there exists a Lie \(2\)-algebra
\[\begin{array}{ccccccccccccccc}
     \cdots 0& \longrightarrow & L_{1} & \longrightarrow & L_0\\
      &  & \parallel &  & \parallel \\
        &  & \Omega^{0}(\Phi) &  & \mathfrak{X}(\Phi)\\ 
\end{array}\]
denoted \(L(M, \omega, \Phi)_{\mathfrak{atiyah}} = (L, \mathrm{d}, \bra \cdot, \cdot \ket_\mathfrak{a}, J)\), called the \emph{Atiyah Lie 2-algebra}, with the following structure:
\begin{itemize}
\item The underlying vector spaces are \(L_0 = \mathfrak{X}(\Phi)\) and \(L_1 = \Omega^0(\Phi)\).
\item The differential \(\mathrm{d}: L_1 \to L_0\) is defined by \(\mathrm{d}(0,\, f) = 0\).
\item The bracket operation \(\bra \cdot, \cdot \ket_\mathfrak{a}: L_i \otimes L_j \to L_{i+j}\) for \(i,j = 0,1\) and \(i+j\leq 1\) is defined by:
\[
\begin{aligned}
    &  \bra (u_1, v_1), (u_2, v_2) \ket_\mathfrak{a} = \left( [u_1, u_2], [v_1, v_2] \right), \\
    &  \bra (u, v), (0, f) \ket_\mathfrak{a} = \left(0, \mathcal{L}_{v} f \right),   \\
    &  \bra (0, f), (u, v) \ket_\mathfrak{a} = \left(0, 0 \right), \\
    &  \bra (0, f), (0, g) \ket_\mathfrak{a} = \left(0, 0 \right),
\end{aligned}
\]
with all other brackets being zero by degree reasons.

\item The Jacobiator is the trilinear map
    \[
    J : L_0 \times L_0 \times L_0 \to L_1
    \]
    given by
    \[
    J\left((u_1, v_1),\, (u_2, v_2),\, (u_3, v_3)\right) = -\iota_{(u_1, v_1)} \iota_{(u_2, v_2)} \iota_{(u_3, v_3)}(\omega, \eta),
    \]
    where \((\omega, \eta)\) is the relative 3-form defined by the quasi-Hamiltonian structure.
\end{itemize}
\end{thm}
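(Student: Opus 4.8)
The plan is to check that the quadruple $(L,\mathrm{d},\bra\cdot,\cdot\ket_{\mathfrak a},J)$ satisfies the axioms of a Lie $2$-algebra, working with the concrete $2$-term $L_\infty$ presentation of Definition~\ref{def-Lie-alegra2} (equivalently the homotopy-bracket form of Definition~\ref{def-Lie-alegra3}), and reading the mixed brackets graded-skew-symmetrically as $L_\infty$ brackets. The conceptual point is that this is the \emph{relative} incarnation of the classical Atiyah Lie $2$-algebra attached to a pre-$2$-plectic manifold (the algebra $\mathfrak{atiyah}(M,\omega)$ of Proposition~\ref{Atiyah-Courant}), so I would mirror the classical verification line by line, replacing the de~Rham differential, the contraction $\iota$, and the Lie derivative $\mathcal L$ by their relative analogues from Section~\ref{section:relative cartan magic formulas}. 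The only geometric input needed is that $(\omega,\eta)$ is a \emph{closed} relative $3$-form, which is Proposition~\ref{closed}; in particular nondegeneracy (Theorem~\ref{Non-degeneracy general}) plays no role here, since the Atiyah algebra is already defined at the pre-$2$-plectic level.

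First I would clear the bookkeeping. A degree count gives that $\iota_{(u_1,v_1)}\iota_{(u_2,v_2)}\iota_{(u_3,v_3)}(\omega,\eta)$ is a relative $0$-form whose $M$-component vanishes automatically (a $2$-form contracted three times), so $J$ indeed takes values in $L_1=\Omega^0(\Phi)=\{0\}\times C^\infty(G)$ and each bracket has the asserted target. Because the Atiyah differential $\mathrm{d}$ is the zero map, $\mathrm{d}\circ\mathrm{d}=0$ is immediate and the two chain-map conditions $\mathrm{d}\bra x,a\ket=\bra x,\mathrm{d}a\ket$ and $\mathrm{d}\bra a,b\ket=\bra a,\mathrm{d}b\ket$ become vacuous. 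That the degree-$0$ bracket $([u_1,u_2],[v_1,v_2])$ is well defined and skew-symmetric on $\mathfrak{X}(\Phi)$, and satisfies the Jacobi identity \emph{strictly}, follows from Proposition~\ref{brackets F-related} together with the fact that $\mathfrak{X}(\Phi)$ is a Lie algebra; consequently the $L_0$-level Jacobi condition (condition (3) of Definition~\ref{def-Lie-alegra2}) holds trivially, both sides being zero once $\mathrm{d}=0$.

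The two conditions carrying real content are those coupling $J$ to the brackets. The mixed-degree condition (condition (4) of Definition~\ref{def-Lie-alegra2}) has vanishing right-hand side here, being $\langle x,y,\mathrm{d}a\rangle$ with $\mathrm{d}a=0$, and its left-hand side reduces to the statement $\mathcal L_{[v_1,v_2]}=\mathcal L_{v_1}\mathcal L_{v_2}-\mathcal L_{v_2}\mathcal L_{v_1}$ acting on functions, which is exactly Proposition~\ref{relative cartan magic formula}(1); so $J$ does not yet enter. The genuine obstacle is the top coherence condition for the Jacobiator---condition (5) of Definition~\ref{def-Lie-alegra2}, equivalently Equation~\eqref{big_J} of Definition~\ref{def-Lie-alegra1}, equivalently the generalized Jacobi identity~\eqref{generalized jacobi identity} at $m=4$. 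This is the pentagon-type relation among the cyclic contractions $\iota\iota\iota(\omega,\eta)$ and the Lie-derivative action $\mathcal L_{v}\,\iota\iota\iota(\omega,\eta)$, and it is precisely where closedness is used: I expect it to collapse, after applying relative Cartan's magic formula \ref{relative cartan magic formula}(5) to rewrite each $\mathcal L$ as $\iota\,\mathrm d+\mathrm d\,\iota$, to the relative iterated-contraction identity of Lemma~\ref{lemma derivative}, the leftover term $\iota_{(\cdot)}\mathrm{d}(\omega,\eta)$ vanishing by Proposition~\ref{closed}.

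I anticipate that the difficulty is not conceptual but combinatorial: the main effort will go into tracking the Koszul and unshuffle signs consistently across the four-fold bracketings, so that the terms of \eqref{big_J} cancel against the output of Lemma~\ref{lemma derivative}. As a cross-check, and a route that avoids most of this computation, one can note that the relative analogue of the Fiorenza--Rogers sequence $L_\infty(\Phi,\omega,\eta)\to\mathfrak{courant}\to\mathfrak{atiyah}$ (cf.\ Proposition~\ref{Atiyah-Courant}) realizes $\mathfrak{atiyah}(M,\omega,\Phi)$ as the target of an $L_\infty$-morphism out of the algebra already produced in Theorem~\ref{Relative L-infinity algebra main thm}; the higher Jacobi identities for the Atiyah brackets are then inherited, which is also how the present theorem dovetails with Theorem~\ref{theorem: courant lie 2-algebra}.
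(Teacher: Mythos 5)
Your proposal is correct and follows essentially the same route as the paper's proof: a direct verification of the $2$-term $L_\infty$ axioms, using Proposition~\ref{brackets F-related} for well-definedness of the degree-$0$ bracket, observing that the chain-map conditions are vacuous because the Atiyah differential is zero, and reducing the remaining identities to relative Cartan calculus together with closedness of $(\omega,\eta)$ from Proposition~\ref{closed}. You in fact supply more detail than the paper, whose proof merely asserts the Jacobiator coherence condition without deriving it. Two remarks. First, your choice to read the mixed brackets graded-skew-symmetrically is not optional: taken literally as printed, $\bra (0,f),(u,v)\ket_\mathfrak{a}=(0,0)$ rather than $-\left(0,\mathcal{L}_v f\right)$ would make the mixed Jacobi identity (condition (4) of Definition~\ref{def-Lie-alegra2}) fail, since the commutator $\mathcal{L}_{[v_1,v_2]}f-\mathcal{L}_{v_1}\mathcal{L}_{v_2}f+\mathcal{L}_{v_2}\mathcal{L}_{v_1}f=0$ needs all three terms; so your reading is the right one. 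Second, Lemma~\ref{lemma derivative} is stated, and its proof only works, for \emph{Hamiltonian} pairs of vector fields (it uses $\mathcal{L}_{(u,v)}(\omega_M,\omega_N)=0$), whereas here $L_0=\mathfrak{X}(\Phi)$ consists of arbitrary $\Phi$-related pairs, so you cannot invoke that lemma directly in the top coherence condition. The correct replacement is the invariant (Palais) formula for the relative exterior derivative evaluated on four pairs of $\Phi$-related vector fields: setting $\mathrm{d}(\omega,\eta)=0$ there yields precisely condition (5), including the Lie-derivative terms that Lemma~\ref{lemma derivative} suppresses. This is consistent with, and makes precise, your observation that closedness is the only geometric input; with that substitution your argument goes through.
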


\begin{proof}
We begin by verifying that the bracket operation \( \bra \cdot, \cdot \ket_\mathfrak{a} \) is well-defined, skew-symmetric, and degree-compatible.

Let \( (u_i, v_i) \in \mathfrak{X}(\Phi) \) for \( i = 1, 2 \). Since the Lie bracket of \(\Phi\)-related vector fields remains \(\Phi\)-related (cf.~Proposition~\ref{brackets F-related}), we have:
\[
\bra (u_1, v_1),\, (u_2, v_2) \ket_\mathfrak{a} = ([u_1, u_2],\, [v_1, v_2]) \in \mathfrak{X}(\Phi).
\]
Skew-symmetry follows from the skew-symmetry of the Lie bracket. Furthermore, as \(\deg(\bra \cdot, \cdot \ket_\mathfrak{a}) = 0\), the bracket preserves the grading on \(L_0\) and \(L_1\).

Next, we verify that the differential \( \mathrm{d}: L_1 \to L_0 \), defined by \( \mathrm{d}(0, f) = 0 \), is a chain map. For any \( (u, v) \in \mathfrak{X}(\Phi) \) and \( (0, f) \in \Omega^0(\Phi) \), we compute:
\[
\mathrm{d} \bra (u, v),\, (0, f) \ket_\mathfrak{a} = \mathrm{d}(0, \mathcal{L}_{v} f) = 0 = \bra \mathrm{d}(0, f),\, (u, v) \ket_\mathfrak{a},
\]
establishing that \( \mathrm{d} \) is compatible with the bracket.

To verify the Jacobi identity up to homotopy, we consider the Jacobiator
\[
J : L_0 \times L_0 \times L_0 \to L_1,
\]
defined by
\[
J\left((u_1, v_1),\, (u_2, v_2),\, (u_3, v_3)\right) = -\iota_{(u_1, v_1)}\iota_{(u_2, v_2)}\iota_{(u_3, v_3)}(\omega, \eta).
\]
This trilinear map provides the correction term ensuring that the Jacobi identity holds up to homotopy. That is, for all \( x, y, z, w \in L_0 \), the following coherence condition is satisfied:
\begin{align*}
&\bra x, J(y, z, w) \ket_\mathfrak{a} + J(x, \bra y, z \ket_\mathfrak{a}, w) + J(x, z, \bra y, w \ket_\mathfrak{a}) + \bra J(x, y, z), w \ket_\mathfrak{a} + \bra z, J(x, y, w) \ket_\mathfrak{a} \\
&= J(x, y, \bra z, w \ket_\mathfrak{a}) + J(\bra x, y \ket_\mathfrak{a}, z, w) + \bra y, J(x, z, w) \ket_\mathfrak{a} + J(y, \bra x, z \ket_\mathfrak{a}, w) + J(y, z, \bra x, w \ket_\mathfrak{a}).
\end{align*}

Lastly, the alternator in this Lie \(2\)-algebra is trivial—that is, the identity chain homotopy—so it satisfies the required symmetry conditions automatically.

Thus, the data \( (L,\, \mathrm{d},\, \bra \cdot, \cdot \ket_\mathfrak{a},\, J) \) defines a Lie \(2\)-algebra structure.
\end{proof}

The following theorem establishes the Courant Lie \(2\)-algebra in the relative setting, as determined by the closed form \( (\omega, \eta) \) arising from a quasi-Hamiltonian \(G\)-space.

\begin{thm}[Relative Courant Lie \(2\)-Algebra]\label{theorem: courant lie 2-algebra}
Let \((M, \Phi, \omega)\) be a quasi-Hamiltonian \(G\)-space. Then there exists a Lie \(2\)-algebra,
\[
\begin{array}{ccccccccccccccc}
     \cdots \to 0 & \longrightarrow & L_{1} & \xrightarrow{\mathrm{d}} & L_0 \\
      &  & \parallel &  & \parallel \\
      &  & \Omega^{0}(\Phi) &  & \mathfrak{X}(\Phi) \oplus \Omega^1(\Phi)
\end{array}
\]
denoted \(L_\infty(M, \omega, \Phi)_{\mathfrak{courant}} = (L, \mathrm{d}, \bra \cdot, \cdot \ket_\mathfrak{c}, J)\), with the following structure:
\begin{itemize}
    \item The underlying graded vector space is:
    \[
    L_0 = \mathfrak{X}(\Phi) \oplus \Omega^1(\Phi), \quad L_1 = \Omega^0(\Phi).
    \]

    \item The differential \( \mathrm{d} : L_1 \to L_0 \) is the relative de Rham differential:
    \[
    \mathrm{d}(f) = \left(\Phi^* f,\ - \mathrm{d}f\right).
    \]

    \item The binary and ternary brackets \( \bra \cdot, \cdot \ket_\mathfrak{c} \) are defined as follows:
   \[
\begin{aligned}
&\bra f \ket_1^{\mathfrak{c}}= \left(\Phi^*f, - d f\right)\\
    & \bra (u, v) + (g, \theta), (0, f) \ket_2^{\mathfrak{c}} = \left(0, \, -\frac{1}{2} \iota_{v} d f\right), \\
    & \bra (u_1, v_1) + (g_1, \theta_1), (u_2, v_2) + (g_2, \theta_2)\ket_3^{\mathfrak{c}} =\left( [u_1, u_2], \, [v_1, v_2]\right)+\left(  \mathcal{L}_{(u_1, v_1)}(g_2, \theta_2) - \mathcal{L}_{(u_2, v_2)}(g_1, \theta_1) \right.\\
   &\qquad\qquad\qquad\qquad\left. - \frac{1}{2} d(\iota_{ (u_1, v_1)}(g_2, \theta_2)-\iota_{(u_2, v_2)}(g_1, \theta_1)-\iota_{(u_1, v_1)} \iota_{ (u_2, v_2)}(\omega, \eta)\right),\\
     & \bra (u_1, v_1) + (g_1, \theta_1), (u_2, v_2) + (g_2, \theta_2), (u_3, v_3) + (g_3, \theta_3)\ket_3^{\mathfrak{c}} \\
     &=  - \frac{1}{6} \left(\langle \bra (u_1, v_1) + (g_1, \theta_1), (u_2, v_2) + (g_2, \theta_2)\ket^\mathfrak{c}_2 , (u_3, v_3) + (g_3, \theta_3)\rangle +\mathrm{cyc. perm.}\right)
\end{aligned}
\]
    for \( f \in \Omega^0(\Phi) \), \( x_i = (u_i, v_i) + (g_i, \theta_i) \in \mathfrak{X}(\Phi) \oplus \Omega^1(\Phi) \), \( i = 1,2,3 \). Here, “cyc.\ perm.” stands for the sum over the cyclic permutations of the inputs: \((x_1, x_2, x_3) \mapsto (x_2, x_3, x_1)\) and \((x_3, x_1, x_2)\).

    \item The symmetric pairing \( \langle \cdot, \cdot \rangle : L_0 \otimes L_0 \to L_1 \) is defined by:
    \[
    \langle (u_1, v_1) + (g_1, \theta_1),\ (u_2, v_2) + (g_2, \theta_2) \rangle := \iota_{(u_1, v_1)}(g_2, \theta_2) + \iota_{(u_2, v_2)}(g_1, \theta_1).
    \]
\end{itemize}
All other brackets vanish for degree reasons.

 \[
    \bra x_1, x_2, x_3 \ket_3^{\mathfrak{c}} := -\frac{1}{6} \sum_{\sigma \in \mathbb{Z}_3} 
    \left\langle \bra x_{\sigma(1)}, x_{\sigma(2)} \ket_2^{\mathfrak{c}},\ x_{\sigma(3)} \right\rangle,
    \]
    where the sum is over cyclic permutations \(\sigma \in \mathbb{Z}_3\). That is,
    \[
    \bra x_1, x_2, x_3 \ket_3^{\mathfrak{c}} = -\frac{1}{6} \left(
    \left\langle \bra x_1, x_2 \ket_2, x_3 \right\rangle + 
    \left\langle \bra x_2, x_3 \ket_2, x_1 \right\rangle +
    \left\langle \bra x_3, x_1 \ket_2, x_2 \right\rangle \right).
    \]
    \textit{In other words, “cyc. perm.” refers to summing over the three cyclic permutations of the inputs.}

    \item The symmetric pairing \(\langle \cdot, \cdot \rangle : L_0 \otimes L_0 \to L_1\) is defined by:
    \[
    \langle (x_1, \alpha_1),\ (x_2, \alpha_2) \rangle := \iota_{x_1} \alpha_2 + \iota_{x_2} \alpha_1.
    \]
\end{thm}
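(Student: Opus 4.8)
The plan is to recognize this statement as the relative counterpart of the Courant $L_\infty$-algebra of Fiorenza, Rogers, and Schreiber~\cite{Fiorenza2014} (cf.\ Proposition~\ref{Atiyah-Courant}), and to prove it by transporting their verification to the cone complex $\Omega^\bullet(\Phi)$ using the relative Cartan calculus assembled in Section~\ref{section:relative cartan magic formulas}. Because the skew-symmetrized Courant bracket $\bra\cdot,\cdot\ket_2^{\mathfrak c}$ is antisymmetric by construction while only the Jacobiator $\bra\cdot,\cdot,\cdot\ket_3^{\mathfrak c}$ is nontrivial, the appropriate axiom set is that of a semistrict Lie $2$-algebra in the Baez--Crans sense (Definition~\ref{def-Lie-alegra1}): a skew chain-map bracket together with a Jacobiator satisfying the coherence relation~\eqref{big_J}. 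The decisive structural point is that every step in the absolute proof is a formal manipulation of the Cartan operations $\iota$, $\mathcal L$, $\mathrm d$ and the vector-field bracket; since Proposition~\ref{relative cartan magic formula} reproduces all five Cartan identities exactly in the relative setting and Proposition~\ref{brackets F-related} ensures that the bracket of $\Phi$-related pairs is again $\Phi$-related, each manipulation carries over verbatim with $(u,v)$ in place of $v$, the relative differential in place of $\mathrm d$, and the closed relative $3$-form $(\omega,\eta)$ in place of $\omega$.

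First I would check that all structure maps are well defined. The vector-field component $([u_1,u_2],[v_1,v_2])$ of $\bra\cdot,\cdot\ket_2^{\mathfrak c}$ lies in $\mathfrak X(\Phi)$ precisely by Proposition~\ref{brackets F-related}, and its $1$-form component is assembled from relative Lie derivatives, relative contractions, the relative differential, and $(\omega,\eta)$, hence lies in $\Omega^1(\Phi)=L_0$; the pairing $\langle\cdot,\cdot\rangle$ manifestly takes values in $\Omega^0(\Phi)=L_1$. Skew-symmetry of $\bra\cdot,\cdot\ket_2^{\mathfrak c}$ follows termwise: the Lie-derivative and $\mathrm d$-correction terms are antisymmetric by inspection, and $\iota_{(u_1,v_1)}\iota_{(u_2,v_2)}(\omega,\eta)$ is antisymmetric by the anticommutation of relative contractions (Proposition~\ref{relative cartan magic formula}, part~(2)); consequently the alternator vanishes. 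For the chain-complex and chain-map conditions I would use $\mathrm d\circ\mathrm d=(0,0)$ (Lemma~\ref{lemma:d^2=0}) together with the relative magic formula (Proposition~\ref{relative cartan magic formula}, part~(5)) and closedness $\mathrm d(\omega,\eta)=(0,0)$ (Proposition~\ref{closed}) to show that $\mathrm d$ intertwines $\bra\cdot,\cdot\ket_2^{\mathfrak c}$ on source and target.

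The substantive content is the Jacobiator relation and the coherence equation~\eqref{big_J}. For the Jacobiator I would expand the iterated Courant bracket and cancel terms using the commutator identity $\mathcal L_{(u,v)}\iota_{(a,b)}-\iota_{(a,b)}\mathcal L_{(u,v)}=\iota_{[(u,v),(a,b)]}$ (Proposition~\ref{relative cartan magic formula}, part~(4)) and the magic formula, so that the obstruction to the Jacobi identity collapses to a differentiated triple contraction of $(\omega,\eta)$; comparing with Lemma~\ref{lemma derivative} then identifies it with the differential of $\bra\cdot,\cdot,\cdot\ket_3^{\mathfrak c}$ and pins down the normalization $-\tfrac16$. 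I expect the principal obstacle to be the top coherence relation~\eqref{big_J}: in the Courant case this is the longest and most error-prone computation, and the relative setting adds the burden of tracking the $\Phi^*$-pullback contributions generated by the relative differential and verifying their cancellation through closedness of the pair $(\omega,\eta)$ rather than of a single form. I would organize this step by substituting the explicit $\bra\cdot,\cdot,\cdot\ket_3^{\mathfrak c}$ into both sides of~\eqref{big_J} and reducing everything, via the $\mathrm{Ad}$-invariance of the pairing and the relative Cartan identities, to the graded Jacobi identity on $\mathfrak X(\Phi)$.

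As an independent cross-check, and a potential shortcut for part of the argument, I would transport the $L_\infty$-morphisms of Proposition~\ref{Atiyah-Courant} to the relative setting, obtaining maps $L_\infty(M,\omega,\Phi)\xrightarrow{\phi}L_\infty(M,\omega,\Phi)_{\mathfrak{courant}}\xrightarrow{\psi}L(M,\omega,\Phi)_{\mathfrak{atiyah}}$ into the relative Atiyah Lie $2$-algebra of Theorem~\ref{theorem: atiyah lie 2-algebra}; compatibility of the coherence data along $\psi$ gives a second route to the ternary bracket and confirms the pairing. The direct verification of~\eqref{big_J} nonetheless remains the cleanest path, and the construction is complete once that identity is established.
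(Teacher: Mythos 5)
The paper gives no proof of Theorem~\ref{theorem: courant lie 2-algebra}: the statement is followed immediately by Proposition~\ref{prop:Courant-Atiyah-morphisms}, and even the neighbouring Atiyah case (Theorem~\ref{theorem: atiyah lie 2-algebra}) receives only a short well-definedness check plus a restatement of the coherence condition. So there is no argument in the text to compare yours against; what you propose is strictly more than the paper supplies. Your strategy --- transporting the Fiorenza--Rogers--Schreiber verification to the cone complex $\Omega^\bullet(\Phi)$, on the grounds that the absolute proof only ever invokes the Cartan identities, closedness of the $3$-form, and $F$-relatedness of brackets, all of which are reproduced by Proposition~\ref{relative cartan magic formula}, Proposition~\ref{closed}, and Proposition~\ref{brackets F-related} --- is exactly the methodology the paper uses for its other relative results (e.g.\ Theorems~\ref{Relative L-infinity algebra main thm} and~\ref{n-algebra from relative structure}), so it is the natural and correct route.

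Two loose ends should be tightened if you execute the plan. First, the failure of the Jacobi identity for $\bra\cdot,\cdot\ket_2^{\mathfrak c}$ does not collapse to a differentiated triple contraction of $(\omega,\eta)$ alone: the ternary bracket $-\tfrac16\sum\langle\bra x_i,x_j\ket_2^{\mathfrak c},x_k\rangle$ also carries the contributions of the $1$-form components $(g_i,\theta_i)$ through the pairing, and these must be matched against the exact correction terms $-\tfrac12\,\mathrm d\bigl(\iota_{(u_1,v_1)}(g_2,\theta_2)-\iota_{(u_2,v_2)}(g_1,\theta_1)\bigr)$ in the binary bracket; Lemma~\ref{lemma derivative} only accounts for the $(\omega,\eta)$ part. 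Second, the proposed cross-check via $\psi$ into the Atiyah algebra cannot certify the $L_\infty$-axioms on the Courant side: an $L_\infty$-structure is not pulled back along a candidate morphism, so Proposition~\ref{prop:Courant-Atiyah-morphisms} presupposes rather than establishes Theorem~\ref{theorem: courant lie 2-algebra}. You correctly fall back on the direct verification of the coherence relation~\eqref{big_J}; that computation, lengthy but formal, is the one piece that actually needs to be written out.
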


These two relative Lie \(2\)-algebras—the Atiyah and Courant Lie \(2\)-algebras associated to a quasi-Hamiltonian \(G\)-space—are connected by a natural sequence of \(L_\infty\)-morphisms. The following proposition describes this relationship explicitly.

\begin{pro}\label{prop:Courant-Atiyah-morphisms}
Let \((M, \Phi, \omega)\) be a quasi-Hamiltonian \(G\)-space. Let 
\[
\mathfrak{atiyah}(M, \Phi, \omega) \quad \text{and} \quad \mathfrak{courant}(M, \Phi, \omega)
\]
denote the Atiyah and Courant relative Lie \(2\)-algebras, respectively. Then there exists a natural sequence of \(L_\infty\)-morphisms:
\[
L_\infty(M, \Phi, \omega) \xrightarrow{\phi} \mathfrak{courant}(M, \Phi, \omega) \xrightarrow{\psi} \mathfrak{atiyah}(M, \Phi, \omega),
\]
where the nontrivial components of the morphism \(\phi\) are given by:
\begin{align*}
    &\phi_1\big((u, v) + (g, \theta)\big) = (v, \Phi_*v) + (g, \theta), \qquad \phi_1((0, f)) = (0, f), \\
    &\phi_2\left( (v_1, \Phi_*v_1) + (g_1, \theta_1),\, (v_2, \Phi_*v_2) + (g_2, \theta_2)\right) 
    = -\tfrac{1}{2} \left( \iota_{(v_1, \Phi_*v_1)}(g_2, \theta_2) - \iota_{(v_2, \Phi_*v_2)}(g_1, \theta_1) \right),
\end{align*}
and the nontrivial components of the morphism \(\psi\) are:
\begin{align*}
    &\psi_1\big((v, \Phi_*v) + (g, \theta)\big) = (v, \Phi_*v), \qquad \psi_1((0, f)) = (0, f), \\
    &\psi_2\left( (v_1, \Phi_*v_1) + (g_1, \theta_1),\, (v_2, \Phi_*v_2) + (g_2, \theta_2) \right) 
    = -\tfrac{1}{2} \left( \iota_{(v_1, \Phi_*v_1)}(g_2, \theta_2) - \iota_{(v_2, \Phi_*v_2)}(g_1, \theta_1) \right).
\end{align*}
\end{pro}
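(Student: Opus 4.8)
The plan is to treat this proposition as the relative analogue of Proposition \ref{Atiyah-Courant}, reproducing the arguments of Rogers \cite[Theorem 7.1]{rogers20132plectic} (for $\phi$) and of Fiorenza et al.\ \cite[Proposition 5.2.3]{Fiorenza2014} (for $\psi$), but with the ordinary de Rham differential, interior product, and Lie derivative replaced everywhere by their relative counterparts $\mathrm{d}$, $\iota_{(u,v)}$, and $\mathcal{L}_{(u,v)}$. The reason this substitution is legitimate is Proposition \ref{relative cartan magic formula}: the relative operations obey exactly the same formal Cartan identities as the classical ones, so each algebraic manipulation in the original proofs survives once we read it in the cone complex $\Omega^\bullet(\Phi)$ with the relative $3$-form $(\omega,\eta)$ in place of a $2$-plectic form.

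First I would record the degree bookkeeping that collapses the problem to low arities. All three Lie $2$-algebras $L_\infty(M,\Phi,\omega)$, $\mathfrak{courant}(M,\Phi,\omega)$, and $\mathfrak{atiyah}(M,\Phi,\omega)$ are concentrated in degrees $0$ and $1$. A component $F_k$ of an $L_\infty$-morphism raises homological degree by $k-1$, so on degree-zero inputs it lands in degree $k-1$; since the targets vanish in degrees $\geq 2$, every component $\phi_k,\psi_k$ with $k\geq 3$ is identically zero. Hence $\phi$ and $\psi$ are completely described by the stated $\phi_1,\phi_2$ and $\psi_1,\psi_2$, and the $L_\infty$-morphism relations need only be checked for $n=1,2,3$.

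Next I would verify the identities of arities $1$ and $2$. For $n=1$, the chain-map condition $\phi_1\ell_1=\ell_1'\phi_1$ (and likewise for $\psi_1$) is immediate, since the differential in each algebra is the relative de Rham differential $f\mapsto(\Phi^*f,-\mathrm{d}f)$ and both $\phi_1,\psi_1$ restrict to the identity on the degree-one part $\Omega^0(\Phi)$. For $n=2$, the relation asserting that the binary bracket is preserved up to the homotopy $\phi_2$ (resp.\ $\psi_2$) reduces, after inserting the explicit semi-bracket, the Courant bracket $\bra\cdot,\cdot\ket_2^{\mathfrak{c}}$, and the Atiyah bracket $\bra\cdot,\cdot\ket_\mathfrak{a}$, to the relative Cartan magic formula of Proposition \ref{relative cartan magic formula}. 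The discrepancy between the two binary brackets is exactly the exact term $-\tfrac12\,\mathrm{d}\!\big(\iota_{(u_1,v_1)}(g_2,\theta_2)-\iota_{(u_2,v_2)}(g_1,\theta_1)\big)$, which is precisely what $\ell_1'\phi_2$ contributes, so the identity closes.

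The main obstacle will be the $n=3$ coherence condition, which matches the Jacobiators of source and target through $\phi_2$ (with $\phi_3=0$, and similarly for $\psi$). The decisive input here is the relative iterated-contraction identity, Lemma \ref{lemma derivative}, applied to $(\omega,\eta)$: it rewrites $\mathrm{d}\,\iota_{(u_1,v_1)}\iota_{(u_2,v_2)}(\omega,\eta)$ as a sum of bracket-contraction terms, which must then be reconciled with the source Jacobiator $J=-\iota_{(u_1,v_1)}\iota_{(u_2,v_2)}\iota_{(u_3,v_3)}(\omega,\eta)$ and with the pairing-based Courant Jacobiator $\bra\cdot,\cdot,\cdot\ket_3^{\mathfrak{c}}$. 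The genuine difficulty is combinatorial rather than conceptual: one must track the Koszul signs produced by permuting the degree-zero inputs together with the numerical factors $-\tfrac12$ and $-\tfrac16$ in the Courant construction. Using Proposition \ref{proposition:bracket} to identify $v_{\{\cdot,\cdot\}}=[\,\cdot,\cdot\,]$ and the relative Jacobi-up-to-exact identity of Proposition \ref{Jacobi-identity-up-toexact-form}, these terms cancel in pairs, and the coherence condition holds, establishing that $\phi$ and $\psi$ are $L_\infty$-morphisms.
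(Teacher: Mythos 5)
Your proposal matches the paper's proof, which simply records that the verification ``mirrors the calculations of Proposition~\ref{Atiyah-Courant}, adapted to the relative setting via the relative Cartan calculus''; your version actually supplies more structure (the degree-truncation argument, the role of Lemma~\ref{lemma derivative} and Proposition~\ref{Jacobi-identity-up-toexact-form} at arity $3$) than the paper itself writes down. One small correction: the differential of the relative Atiyah Lie $2$-algebra is zero rather than the relative de Rham differential, so the $n=1$ condition for $\psi$ holds because $\psi_1$ annihilates the form component of $\mathrm{d}_{\mathfrak{c}}f=(\Phi^*f,-\mathrm{d}f)$, not because the two differentials agree.
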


\begin{proof}
The verification of the \(L_\infty\)-morphism conditions for both \(\phi\) and \(\psi\) follows by applying the relative Cartan calculus and using the structure identities of the Courant and Atiyah Lie \(2\)-algebras. The calculations mirror those found in Proposition~\ref{Atiyah-Courant}, adapted to the relative setting.
\end{proof}

\section{Homotopy moment maps for quasi-Hamiltonian spaces}
\label{sec:homotopy_moment_maps}

In this section, we construct a homotopy moment map associated with the Lie $2$-algebra of relative observables arising from a quasi-Hamiltonian $G$-space. We begin by recalling the notion of morphisms from Lie algebras to Lie $2$-algebras.

Let $\mathfrak{g}$ be a Lie algebra and $(L, l_1, l_2, l_3)$ a Lie $2$-algebra. We are particularly interested in $L_\infty$-morphisms $\mathfrak{g} \to L$ where the $2$-bracket satisfies the following vanishing condition:

\begin{equation}
\label{eq:vanishing_condition}
\text{(P)}\qquad \text{for $k=2,3$} \quad l_k(x_1,\ldots,  x_k) = 0 \quad \text{whenever} \quad \sum_{i=0}^k|x_i|  > 0.
\end{equation}

\begin{defn}\cite[Propsition 3.8]{callies2016homotopy}\label{def:lie2-morphism}
Let \( (\mathfrak{g}, [\cdot, \cdot]) \) be a Lie algebra and \((L, l_1, l_2, l_3)\) a Lie 2-algebra satisfying property (P). A \emph{morphism} \(f: \mathfrak{g} \to L \) consists of:
\begin{itemize}
\item A degree 0 linear map \( f_1: \mathfrak{g} \to L \)
\item A degree 1 skew-symmetric bilinear map \( f_2: \mathfrak{g} \otimes \mathfrak{g} \to L \)
\end{itemize}
satisfying the following conditions for all \( x_i \in \mathfrak{g} \):
\begin{align}
&f_1([x_1, x_2])=l_1f_2(x_1, x_2) +l_2\left(f_1(x_1), f_1(x_2)\right) \label{eq:lie2-cond1} \\
&f_2([x_1, x_2], x_3)-f_2([x_1, x_3], x_2)+f_2([x_2, x_3], x_1) = l_3\left(f_1(x_1), f_1(x_2), f_1(x_2)\right) \label{eq:lie2-cond2} 
\end{align}
\end{defn}

\begin{defn}\cite[Definition 5.1]{callies2016homotopy} 
Let \((M,\omega)\) be an \(n\)-plectic manifold.	Let $v_{-}:\mathfrak g\to \mathfrak X(M)$ be a multisymplectic Lie algebra action.
	A \emph{homotopy moment map} corresponding to $v_{-}$ is 
	an $L_\infty$-morphism $(f)=\{f_i\}_{i=1, \ldots, n}$ from $\mathfrak g$ to $L_\infty(M,\omega)$ satisfying $$df_1(x)=-\iota_{v_x}\omega \qquad \forall x\in\mathfrak{g}.$$ 
	
\end{defn}
More conceptually, a homotopy moment is an $L_\infty$-morphism $(f):\mathfrak{g}\to L_\infty(M,\omega)$ lifting the action $v:\mathfrak{g}\to \mathfrak{X}(M)$, 
 that is, an $L_\infty$-morphism $(f):\mathfrak{g}\to L_\infty(M,\omega)$ making the following diagram commute (in the category of $L_\infty$-algebras):

\begin{equation*} 
\label{the_lift}
\xymatrix{
    &&  L_\infty (M,\omega) \ar[d]^{\pi} \\
     \mathfrak{g} \ar @{-->}[urr]^{(f)} \ar[rr]^{v_{-}} && \mathfrak{X}_{\mathrm{Ham}}(M)
}
\end{equation*}
where the cochain map \(\pi : L_\infty (M,\omega)  \to \mathfrak{X}_{\mathrm{Ham}}(M)\)
defined to be the projection \((v, \alpha) \mapsto v\) in degree 0 and trivial in all lower degrees.

\vspace{1cm}

 Let \( (M, \Phi, \omega) \) be a quasi-Hamiltonian \( G \)-space and defined in Defintion \ref{def:quasiHamiltonianGSpace}, and let \( \mathfrak{g} \) denote the Lie algebra corresponding to \(G\). The action of \(G\) on \(M\) induces an infinitesimal action \begin{align*}
  u_{-}:\mathfrak{g} &\to  \mathfrak{X}(M)\\ 
   x & \mapsto  u_{x}
  \end{align*}
  Moreover, $G$ acts on itself by conjugation, so it induces an infinitesimal action
\begin{align*}
  v_{-}:\mathfrak{g} &\to  \mathfrak{X}(G)\\ 
   x & \mapsto  v_{x}
  \end{align*}
where \(v_{x}=x^L-x^R\) denotes the difference between the left and right invariant vector fields associated with \(x\).

 Since $\Phi$ is $G$-equivariant, then $u_x$ is $\Phi$-related to $v_x$ for each $x\in \mathfrak{g}$. Combining these two actions, we obtain an infinitesimal action on the space of \(\Phi\)-related vector fields
  \begin{align*}
  (u, v):\mathfrak{g} &\to  \mathfrak{X}(\Phi)\\ 
   x & \mapsto  \left(u_x, v_{x}\right)
  \end{align*}

\begin{lem}\label{lemma homotopy relative}
   Let \( (M, \Phi, \omega) \) be a quasi-Hamiltonan \(G\)-space. The induced action  
     \begin{align*}
  (u, v):\mathfrak{g} &\to  \mathfrak{X}(\Phi)\\ 
   x & \mapsto  \left(u_x, v_{x} \right)
  \end{align*}
 is Hamiltonian.
\end{lem}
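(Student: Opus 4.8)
The plan is to show that for each $x \in \mathfrak{g}$, the relative vector field $(u_x, v_x) \in \mathfrak{X}(\Phi)$ is a Hamiltonian vector field in the sense of Definition~\ref{hamiltonian}; that is, to exhibit an explicit relative Hamiltonian $1$-form $(f_x, \alpha_x) \in \Omega^1_{\mathrm{Ham}}(\Phi)$ whose associated Hamiltonian vector field is $(u_x, v_x)$. By Definition~\ref{hamiltonian}, this amounts to finding $(f_x, \alpha_x)$ satisfying
\[
\mathrm{d}(f_x, \alpha_x) = -\iota_{(u_x, v_x)}(\omega, \eta),
\]
which, unpacking both the relative differential and the relative contraction, is the pair of equations
\[
\Phi^* \alpha_x + \mathrm{d} f_x = -\iota_{u_x}\omega
\qquad\text{and}\qquad
-\mathrm{d}\alpha_x = \iota_{v_x}\eta.
\]

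First I would guess the correct candidate for the $1$-form. Motivated by the equivariant extension of the Cartan $3$-form (Proposition~\ref{cor:closed-equivariant-extension}) and the defining axiom (2) of a quasi-Hamiltonian $G$-space, the natural choice on the $G$-side is the component of the Maurer--Cartan forms paired with $x$; that is, I would set
\[
\alpha_x = -\tfrac{1}{2}\big((\theta^L + \theta^R)\cdot x\big) \in \Omega^1(G),
\qquad f_x = 0 \in \Omega^0(M),
\]
so that $(f_x, \alpha_x) = \big(0,\, -\tfrac{1}{2}(\theta^L+\theta^R)\cdot x\big)$. This is exactly the first component $f_1(x)$ of the conjectured homotopy moment map in Theorem~\ref{proposition: relative moment map}, so the lemma is really the statement that this component lands in $\Omega^1_{\mathrm{Ham}}(\Phi)$ with the prescribed Hamiltonian vector field.

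Next I would verify the two defining equations separately. The $M$-equation $\Phi^*\alpha_x + \mathrm{d} f_x = -\iota_{u_x}\omega$ becomes, since $f_x = 0$,
\[
\Phi^*\Big(-\tfrac{1}{2}(\theta^L+\theta^R)\cdot x\Big) = -\iota_{u_x}\omega,
\]
which is precisely axiom (2), $\iota_{u_x}\omega = \tfrac{1}{2}\Phi^*\big((\theta^L+\theta^R)\cdot x\big)$, of Definition~\ref{def:quasiHamiltonianGSpace}, so this holds on the nose. For the $G$-equation $-\mathrm{d}\alpha_x = \iota_{v_x}\eta$, I would compute $\iota_{v_x}\eta$ using the contraction identity from Proposition~\ref{cor:closed-equivariant-extension}, where it is shown that
\[
\iota_{v_x}\eta = -\mathrm{d}\Big(\tfrac{\theta^L+\theta^R}{2}\Big)\cdot x = -\mathrm{d}\alpha_x,
\]
which is exactly the required identity. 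Thus both components of the Hamilton--De Donder--Weyl equation are satisfied, and $(0, \alpha_x)$ is a relative Hamiltonian $1$-form with Hamiltonian vector field $(u_x, v_x)$.

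The only genuine subtlety — and the point I would treat most carefully — is the $\Phi$-relatedness of the pair $(u_x, v_x)$, which is a standing requirement in the definition of $\mathfrak{X}(\Phi)$ and hence in Definition~\ref{hamiltonian}. Here I would invoke the $G$-equivariance of the moment map $\Phi$: differentiating the equivariance relation $\Phi(g\cdot m) = g\,\Phi(m)\,g^{-1}$ at the identity shows that the infinitesimal generator $u_x$ on $M$ is $\Phi$-related to the conjugation-action generator $v_x = x^L - x^R$ on $G$, which is noted in the paragraph immediately preceding the lemma. With $\Phi$-relatedness secured, the verification above shows $(u_x, v_x)$ is Hamiltonian for every $x \in \mathfrak{g}$, which is the assertion that the induced action is Hamiltonian. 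The main obstacle is thus not computational but conceptual: making sure the candidate form is correctly identified and that the two separate component equations are matched to axiom (2) and to the contraction formula of Proposition~\ref{cor:closed-equivariant-extension}, respectively, with consistent sign conventions.
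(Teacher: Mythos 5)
Your proof is correct and follows essentially the same route as the paper: the same candidate form $\bigl(0,\,-\tfrac{1}{2}(\theta^L+\theta^R)\cdot x\bigr)$, with the $M$-component verified by axiom (2) of Definition~\ref{def:quasiHamiltonianGSpace} and the $G$-component by the contraction identity $\iota_{v_x}\eta=-\mathrm{d}\bigl(\tfrac{\theta^L+\theta^R}{2}\bigr)\cdot x$, which the paper rederives inline rather than citing Proposition~\ref{cor:closed-equivariant-extension}. Your explicit check of the $\Phi$-relatedness of $(u_x,v_x)$ via equivariance of $\Phi$ is a small addition the paper only records in the paragraph preceding the lemma.
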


\begin{proof}
    Let \(x\in \mathfrak{g}\). We want to prove that \((u_x, v_{x})\) is Hamiltonian. That is, want to find \((\alpha, \beta) \in \Omega^1(\Phi)\) such that 
    \[\mathrm{d}(\alpha, \beta)=-\iota_{(u_x, v_{x})} (\omega, \eta).\]
    That is 
    \[\left(\Phi^*\beta +\mathrm{d}\alpha, -\mathrm{d} \beta \right)=-\left(\iota_{u_x }\omega, -\iota_{ v_{x}}\eta \right).\]
    We need to solve for \(\alpha\) and \(\beta\) the equations
    \begin{align}
        \Phi^*\beta +\mathrm{d}\alpha&= -\iota_{u_x }\omega \label{5.5}\\
       \mathrm{d} \beta&=- \iota_{ v_{x}}\eta \label{5.6}
    \end{align}
    By definition of a quasi-Hamiltonian $G$-space, we have 
    \begin{align} \label{5.7}
        \iota_{u_x}\omega =  \Phi^*\left(\frac{\theta^L + \theta^R}{2}\right) \cdot x 
    \end{align} for all  $x\in \mathfrak{g}$

From equation \eqref{5.7},  $\beta=-\frac{\theta^L + \theta^R}{2}$ and $\alpha=0$ is a solution of \eqref{5.5}. It is remaining only to show that equation \eqref{5.6} is satisfied. For this, we have

\begin{align*}
\iota_{ v_{x}}\eta &=\frac{1}{12}\left( \iota_{ v_{x}}\theta^{L}\cdot [\theta^{L},\theta^{L}]-  \theta^{L}\cdot \iota_{ v_{x}}[\theta^{L},\theta^{L}]\right) \\
&=\frac{1}{12}\left(\iota_{ v_{x}}\theta^{L}\cdot [\theta^{L},\theta^{L}]- \theta^{L}\cdot [\iota_{ v_{x}}\theta^{L},\theta^{L}]+ \theta^{L}\cdot [\theta^{L},\iota_{ v_{x}}\theta^{L}]\right) \\
&=\frac{1}{4} \iota_{ v_{x}}\theta^{L}\cdot [\theta^{L},\theta^{L}] \\
&=-\frac{1}{2} \iota_{ v_{x}}\theta^{L}\cdot  d\theta^{L}
\end{align*}

For the conjugation action, $v_x=x^L-x^R$ and $\iota_{v_x}\theta^L=x-\mathrm{Ad}_{g^{-1}}x$ (a $\mathfrak g$-valued $0$-form), 
so the contraction used above is pointwise well-defined.
Using \(v_{x}={x}^L-{x}^R\) in the last equality, we have 

\begin{align*}
\iota_{v_{x}}\eta &=-\frac{1}{2}\iota_{v_{x}}\theta^{L}\cdot d\theta^{L} \\
&=-\frac{1}{2} (1-\mathrm{Ad}_{g^{-1}})x\cdot d\theta^{L} \\
&=-\frac{1}{2} d\theta^{L}\cdot x +\frac{1}{2} \mathrm{Ad}_{g}d\theta^{L}\cdot x
\end{align*}

From \(\mathrm{Ad}_{g}d(g^{-1}dg)=-gg^{-1}dgg^{-1}dgg^{-1}=-d(dg\,g^{-1})\), we continue and find,

\begin{align}\label{5.8}
    \iota_{v_{x}}\eta =-\frac{1}{2} d\theta^{L}\cdot x -\frac{1}{2} d\theta^{R}\cdot x =-d\left(\frac{\theta^{L}+\theta^{R}}{2}\right)\cdot x
\end{align}

Hence equation \eqref{5.6} is satisfied. This proves that the action is Hamiltonian.
\end{proof}

\begin{thm}\label{proposition: relative moment map}
Let \(L_\infty(M, \Phi, \omega)\) be the Lie 2-algebra associated with the quasi-Hamiltonian \(G\)-space \( (M, \Phi, \omega) \), as defined in Theorem \ref{theorem:semi strict Lie2}. 

Let 
  \begin{align*}
  (u, v):\mathfrak{g} &\to  \mathfrak{X}(\Phi)\\ 
   x & \mapsto  (u_x, v_{x})
  \end{align*}
be the infinitesimal action of \( \mathfrak{g} \) on the space of relative vector fields \( \mathfrak{X}(\Phi) \). 

Then this action lifts to an \(L_\infty\)-morphism 
\[
(f) = (f_1, f_2): \mathfrak{g} \to L_\infty(M, \Phi, \omega)
\]
such that the diagram
\begin{equation*}
\label{the_lift}
\xymatrix{
    &&  L_\infty (M, \Phi, \omega) \ar[d]^{\pi} \\
     \mathfrak{g} \ar @{-->}[urr]^{(f)} \ar[rr]^{(u_{-}, v_{-})} && \mathfrak{X}_{\mathrm{Ham}}(\Phi)
}
\end{equation*}
commutes.

The components of the \(L_\infty\)-morphism \( (f_1, f_2) \) are given by:
\begin{align*}
    f_1: \mathfrak{g} &\to L_\infty(M, \Phi, \omega), \\
    x &\mapsto \left( 0, -\left(\frac{\theta^{L}+\theta^{R}}{2}\right)\cdot x \right),
\end{align*}
and
\begin{align*}
    f_2: \mathfrak{g} \otimes \mathfrak{g} &\to L_\infty(M, \Phi, \omega), \\
    x \otimes y &\mapsto \left( 0, \frac{1}{2} \iota_{v_x} \left( \left( \theta^L + \theta^R \right)\cdot y \right) \right).
\end{align*}

\end{thm}

\begin{proof}
By Lemma \ref{lemma homotopy relative}, we have 
\begin{align*}
    \mathrm{d}f_1(x) &= -\iota_{(u_x, v_x)}(\omega, \eta)
\end{align*}

We prove that the map 
\begin{align*}
    f_2: \mathfrak{g} \otimes \mathfrak{g} &\to L_\infty(M, \Phi, \omega) \\
    x \otimes y &\mapsto \left( 0, \frac{1}{2} \left( \iota_{v_x} \left( \theta^L + \theta^R \right) \cdot y \right)\right)
\end{align*}
is skew-symmetric, i.e., \( f_2(x \otimes y) = -f_2(y \otimes x) \).

We have
\begin{align*}
    f_2(x \otimes y) &= \left( 0, \frac{1}{2} \left( \iota_{v_x} \left( \theta^L + \theta^R \cdot y \right) \right)\right) \\
    &= \left( 0, \frac{1}{2} \left( (\theta^L + \theta^R)(v_x) \cdot y \right) \right)
\end{align*}

But
\begin{align*}
    \theta^L(v_x)&= \theta^L(x^{L}-x^{R})=\theta^L(x^{L})-\theta^L(x^{R})=x-\theta^L(x^{R})=x-\mathrm{Ad}_{g^{-1}}(x)
\end{align*}
and
\begin{align*}
    \theta^R(v_x)&= \theta^R(x^{L}-x^{R})=\theta^R(x^{L})-\theta^R(x^{R})=\theta^R(x^{L})-x=\mathrm{Ad}_{g}(x)-x
\end{align*}
so that
\begin{align}\label{equAdj}
    (\theta^L + \theta^R)(v_x)&= \mathrm{Ad}_{g}(x)-\mathrm{Ad}_{g^{-1}}(x).
\end{align}

It follows that 

\begin{align*}
    f_2(x \otimes y) &=  \left( 0, \frac{1}{2} \left( \mathrm{Ad}_{g}(x) - \mathrm{Ad}_{g^{-1}}\right)(x) \cdot y \right)\\
       &= - \left( 0, \frac{1}{2} \left( x \cdot (\mathrm{Ad}_{g} - \mathrm{Ad}_{g^{-1}})(y) \right) \right)\quad \text{by Ad-invariance of the inner product}\\
    &= -f_2(y \otimes x).
\end{align*}

This proves the skewsymmetry of the map \(f_2\).

It is remaining only to show that equations \eqref{eq:lie2-cond1} and \eqref{eq:lie2-cond2} are satisfied. For equation \eqref{eq:lie2-cond1}, we have:
\begin{align}
f_1([x, y]) &= \left(0, -\frac{1}{2}\left( \theta^L + \theta^R \right)\cdot [x, y] \right)
\end{align}
Using the identity
\begin{align}
    \frac{1}{2}\left( \theta^L + \theta^R  \right)\cdot [x, y] + \iota_{v_x}\iota_{v_y}\eta = \frac{1}{2}\mathrm{d}\iota_{v_x}\left( \theta^L + \theta^R  \right)\cdot y,
\end{align}
we obtain
\begin{align}
f_1([x, y]) &= \left(0, \iota_{v_x}\iota_{v_y}\eta - \frac{1}{2}\mathrm{d}\iota_{v_x}\left( \theta^L + \theta^R\right) \cdot y \right).
\end{align}
We have:
\begin{align*}
l_1 f_2(x, y) &= l_1 \left( 0, \frac{1}{2} \iota_{v_x} \left( \theta^L + \theta^R\right)  \cdot y \right)\\
&= \mathrm{d} \left( 0, \frac{1}{2} \iota_{v_x} \left( \theta^L + \theta^R  \right) \cdot y\right) \\
&= \left( \Phi^* \left( \frac{1}{2} \iota_{v_x} \left( \theta^L + \theta^R  \right)\cdot y \right), -\mathrm{d} \left( \frac{1}{2} \iota_{v_x} \left( \theta^L + \theta^R  \right)\cdot y \right) \right).
\end{align*}

By Lemma \ref{lemma homotopy relative}, \(f_1(x)\) and \(f_1(y)\) are Hamiltonian (so they are both of degree 0), with Hamiltonian vector fields \((u_x, v_x)\) and \((u_y, v_y)\) respectively. It follows that
\begin{align}\label{6ham}
    l_2\left(f_1(x), f_1(y)\right) &= \iota_{(u_y, v_y)}\iota_{(u_x, v_x)}(\omega, \eta)
\end{align}
Hence,
\begin{align*} 
  & l_1f_2(x, y) + l_2\left(f_1(x), f_1(y)\right) = \left( \Phi^* \left( \frac{1}{2} \iota_{v_x} \left( \theta^L + \theta^R  \right) \cdot y \right), -\mathrm{d} \left( \frac{1}{2} \iota_{v_x} \left( \theta^L + \theta^R  \right) \cdot y \right) \right) \\
   & \hspace{5cm} + \iota_{(u_y, v_y)}\iota_{(u_x, v_x)}(\omega, \eta) \\
   & = \left( \frac{1}{2}\Phi^* \left( \iota_{v_x} \left( \theta^L + \theta^R  \right)\cdot y \right) + \iota_{u_y}\iota_{u_x}\omega, -\frac{1}{2}\mathrm{d} \iota_{v_x} \left( \theta^L + \theta^R  \right)\cdot y + \iota_{v_y}\iota_{v_x}\eta \right) \\
   & = \left(0, \iota_{v_x}\iota_{v_y}\eta - \frac{1}{2}\mathrm{d} \iota_{v_x} \left( \theta^L + \theta^R  \right)\cdot y\right) \\
   & = f_1([x, y])
\end{align*}

Hence, equation \eqref{eq:lie2-cond1} holds.

The identity 
\begin{align*}
f_2([x_1, x_2], x_3)-f_2([x_1, x_3], x_2)+f_2([x_2, x_3], x_1) = l_3\left(f_1(x_1), f_1(x_2), f_1(x_2)\right)  
\end{align*}
follows from Lemma \ref{triple contraction of cartan form}.
This completes the proof.
\end{proof}

 \bibliographystyle{plainnat}
 \bibliography{References}

\appendix

\end{document}